\documentclass[11pt,reqno]{amsart}

\usepackage{amsmath,amssymb,amsthm,mathtools,mathrsfs,esint}
\usepackage[margin=1.10in]{geometry}
\usepackage{hyperref}

\hypersetup{
  bookmarksnumbered=true,
  bookmarksopen=true,
  pdfauthor={Chao Zhang},
  pdftitle={Gradient Hölder regularity for singular fractional p-Laplace equations},
  pdfsubject={Singular fractional p-Laplacian and gradient regularity}
}

\numberwithin{equation}{section}

\newcommand{\R}{\mathbb R}
\newcommand{\Jp}{\mathcal J_p}
\newcommand{\dd}{\,d}
\newcommand{\Lip}{\operatorname{Lip}}
\newcommand{\osc}{\operatorname*{osc}}

\newtheorem{theorem}{Theorem}[section]
\newtheorem{proposition}[theorem]{Proposition}
\newtheorem{lemma}[theorem]{Lemma}
\newtheorem{corollary}[theorem]{Corollary}
\theoremstyle{remark}
\newtheorem{remark}[theorem]{Remark}

\makeatletter
\@namedef{subjclassname@2020}{%
	\textup{2020} Mathematics Subject Classification}
\makeatother

\title[Gradient regularity for singular fractional equations]
{Gradient H\"older regularity for singular fractional
\(p\)-Laplace equations}

\author[C. Zhang]{Chao Zhang}
\address{Chao Zhang  \hfill\break School of Mathematics and Institute for Advanced Study in
Mathematics, Harbin Institute of Technology, Harbin 150001, China}
\email{czhangmath@hit.edu.cn}

\subjclass[2020]{35R11, 35B65, 35J70}
\keywords{fractional \(p\)-Laplacian, singular equations, gradient
regularity, Liouville theorem, improvement of flatness, nonlocal tails}

\begin{document}
\raggedbottom

\begin{abstract}
Let \(n\ge2\), \(1<p<2\), \(0<s<1\), and \(sp>p-1\).  We prove that
every globally bounded fractional \(p\)-harmonic function is locally
\(C^{1,\alpha}\) for some \(\alpha=\alpha(n,p,s)>0\).  This settles the
open problem of interior gradient H\"older regularity in the singular
range throughout the natural first-order regime \(sp>p-1\).  The proof
combines an affine-invariant improvement-of-flatness argument with a Liouville
theorem for globally Lipschitz entire solutions.  In the large-slope
regime, the shifted Bregman energies converge to an anisotropic stable
form of order \(sp-p+2>1\).  In the bounded-slope regime, the Liouville
theorem follows from rigidity of extremal secants, a recurrent blow-down
argument, and a directional Morrey--Kato estimate for the singular
linearized kernel.  An affine Campanato argument controls the variation
of the best affine approximations across scales.  These estimates yield
a scale-invariant decay of the affine excess and hence the local
\(C^{1,\alpha}\) estimate.
\end{abstract}

\maketitle

\section{Introduction and main result}
\label{sec:introduction}

Let \(n\ge2\), \(1<p<2\), and \(0<s<1\).  We write
\(\Jp(t):=|t|^{p-2}t\) for \(t\in\R\), and consider weak solutions of
\begin{equation}
 (-\Delta_p)^su=0\qquad\text{in }\Omega,
 \label{eq:introduction-equation}
\end{equation}
where, up to a positive normalization constant,
\[
 (-\Delta_p)^su(x)
 =\operatorname{P.V.}\int_{\R^n}
 \frac{\Jp(u(x)-u(y))}
 {|x-y|^{n+sp}}\dd y.
\]
We use the tail space
\[
 L^{p-1}_{sp}(\R^n)
 :=\left\{v\in L^{p-1}_{\rm loc}(\R^n):
 \int_{\R^n}\frac{|v(y)|^{p-1}}
 {(1+|y|)^{n+sp}}\dd y<\infty\right\}.
\]
Thus \(u\in W^{s,p}_{\rm loc}(\Omega)\cap L^{p-1}_{sp}(\R^n)\) is a
weak solution if
\begin{equation*}
 \iint_{\R^n\times\R^n}
 \frac{\Jp(u(x)-u(y))(\varphi(x)-\varphi(y))}
 {|x-y|^{n+sp}}\dd x\!\dd y=0
\end{equation*}
for every \(\varphi\in C_c^\infty(\Omega)\).

The regularity theory for fractional \(p\)-Laplace equations includes
local boundedness, Harnack inequalities, and interior and boundary
H\"older estimates
\cite{DiCastroKuusiPalatucci,DiCastroKuusiPalatucciHarnack,
IannizzottoMosconiSquassina,LindgrenViscosity}.  Fine boundary regularity
in the singular range was established in \cite{IannizzottoMosconi}.
Higher H\"older, Sobolev, and differentiability estimates were developed
in the superquadratic case in
\cite{BrascoLindgren,BrascoLindgrenSchikorra,
BoegeleinDuzaarLiaoMolicaBisciServadeiJFA,DieningNowak} and in the
singular case in \cite{GarainLindgren,DieningKimLeeNowak,
BoegeleinDuzaarLiaoMolicaBisciServadei}; the local Lipschitz estimate of
\cite{BiswasTopp} will be used below.  Nonlinear potential
estimates and self-improving properties were obtained in
\cite{KuusiMingioneSireMeasure,KuusiMingioneSireSelf}; see also
\cite{KuusiMingioneSireSurvey}.

At the linear level, gradient regularity and first-order potential
estimates for nonlocal equations of order larger than one were proved in
\cite{KuusiNowakSire}.  A nonlinear gradient-level potential theory is
available for operators with a globally Lipschitz and uniformly strongly
monotone constitutive function \cite{DieningKimLeeNowakGradient}.  That
structural class does not contain the singular flux
\(t\mapsto |t|^{p-2}t\) when \(1<p<2\).  Gradient H\"older estimates are
also known for mixed local--nonlocal problems with a leading local part
\cite{DeFilippisMingione}.

The interior H\"older continuity of the gradient for fractional
\(p\)-harmonic functions had remained a central open problem in the
regularity theory.  It was explicitly singled out in
\cite{BoegeleinDuzaarLiaoMolicaBisciServadeiHigher,
DieningKimLeeNowak,DieningNowak}; see also the discussion surrounding the
higher differentiability results in
\cite{BoegeleinDuzaarLiaoMolicaBisciServadei}.  Giovagnoli, Jesus, and
Silvestre \cite{GJS} proved interior \(C^{1,\alpha}\)
regularity in the range
\[
 2\le p<\frac{2}{1-s}.
\]
In particular, their result covers the entire
superquadratic part of the natural first-order regime \(sp>p-1\).

The singular case \(1<p<2\), however, is not covered by that argument;
indeed, \cite[p.~3]{GJS} explicitly leaves the range \(p<2\) open.
The available results in this range give weak differentiability,
arbitrarily high integrability and fractional differentiability of the
gradient, as well as almost Lipschitz continuity of the solution, but not H\"older
continuity of the gradient.  The obstruction is structural: after
subtraction of an affine function, the linearized coefficient is
singular at vanishing increments.  The purpose of the present paper is
to resolve precisely this remaining singular problem.  The theorem
below establishes interior gradient H\"older regularity throughout the
natural first-order regime \(sp>p-1\).

An earlier version of the present work obtained a partial result under
an additional restriction on the fractional order
\cite{ZhangPartial}.  The argument developed here is different at the
two points where that restriction entered: the large-slope limit is
treated at fixed order through a shift-uniform Orlicz compactness
estimate, while the bounded-slope Liouville theorem is closed by a
directional Morrey--Kato estimate for the crossed singularity.  These
two ingredients remove the additional restriction and yield the full
range \(sp>p-1\).  Thus the present paper supersedes the partial result
of \cite{ZhangPartial} at the level of the main theorem.

The main result is the following.

\begin{theorem}[Gradient H\"older regularity]
\label{thm:main}
Let \(n\ge2\), \(1<p<2\), and \(0<s<1\) satisfy
\begin{equation}
 sp>p-1.
 \label{eq:main-structural-range}
\end{equation}
There are \(\alpha_0=\alpha_0(n,p,s)>0\) and
\(C=C(n,p,s)<\infty\) such that every globally bounded weak solution of
\eqref{eq:introduction-equation} in \(B_2\) belongs to
\(C^{1,\alpha_0}(B_{1/2})\) and satisfies
\begin{equation}
 \|\nabla u\|_{L^\infty(B_{1/2})}
 +[\nabla u]_{C^{\alpha_0}(B_{1/2})}
 \le C\|u\|_{L^\infty(\R^n)}.
 \label{eq:fixed-order-gradient-holder-estimate}
\end{equation}
\end{theorem}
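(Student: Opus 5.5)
The plan is to prove \eqref{eq:fixed-order-gradient-holder-estimate} by an affine improvement-of-flatness iteration, with compactness supplied by a Liouville theorem for globally Lipschitz entire solutions, as the abstract indicates.

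\emph{Normalization and Lipschitz bound.} By the scaling and translation invariance of \((-\Delta_p)^s\) and its homogeneity of degree \(p-1\), we may assume \(\|u\|_{L^\infty(\R^n)}\le 1\). The local Lipschitz estimate of \cite{BiswasTopp}, which holds for \(sp>p-1\), gives \(\|\nabla u\|_{L^\infty(B_1)}\le C\). For an affine function \(\ell(x)=a+b\cdot x\) define the scale-invariant affine excess
\[
 E(u,\ell,r):=\sup_{B_r}\frac{|u-\ell|}{r}+\text{(weighted tail of }(u-\ell)/r\text{ at order } sp\text{)}.
\]
The weighted tail is finite for all \(r\) because \(sp>p-1\): the linear growth of \(\ell\) is integrable against \(|y|^{-n-sp}\) in \(L^{p-1}\), since \(p-1<sp\). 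This is the only place where the restriction \(sp>p-1\) enters at this stage.

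\emph{Improvement of flatness.} The key lemma: there exist \(\lambda,\varepsilon_0\in(0,1)\) such that if \(E(u,\ell,1)\le\varepsilon\le\varepsilon_0\), then there is an affine \(\tilde\ell\) with \(|\nabla\tilde\ell-\nabla\ell|\le C\varepsilon\) and \(E(u,\tilde\ell,\lambda)\le\lambda^{\alpha_0}\varepsilon\). The operator is not invariant under subtracting \(\ell\) (the slope \(b\) survives inside \(\Jp\)), so the lemma must hold uniformly in \(b\). I would prove it by contradiction and split into two regimes.

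\emph{Large-slope regime.} Suppose \(|b_k|\to\infty\) with \(\varepsilon_k/|b_k|\to0\). Rescale \(w_k=(u_k-\ell_k)/\varepsilon_k\). The equation for \(w_k\) has weight \(\Jp(b_k\cdot h+\varepsilon_k\delta w)-\Jp(b_k\cdot h)\). After dividing by \(\varepsilon_k|b_k|^{p-2}\), the shifted Bregman energies converge, via a shift-uniform Orlicz compactness estimate, to the linear anisotropic form with kernel \(|\hat b\cdot h|^{p-2}|h|^{-n-sp}\). This is a stable operator of order \(sp-p+2>1\), and order greater than one again uses \(sp>p-1\). Linear theory gives \(C^{1,\beta}\) for the limit, which improves flatness for \(w_k\) and yields a contradiction.

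\emph{Bounded-slope regime.} Suppose \(b_k\to b\) and \(\varepsilon_k\to 0\). Using the uniform Lipschitz bound instead, blow up \(u_k\) at vanishing scales along the iteration to obtain an entire, globally Lipschitz solution \(v\) that is not affine. Failure of flatness at all scales gives uniform non-affinity. The Liouville theorem (globally Lipschitz entire fractional \(p\)-harmonic functions are affine) then gives the contradiction.

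\emph{Main obstacle: the Liouville theorem.} I expect this to be the hard step, because the linearization \(|\delta v|^{p-2}\) is singular where increments vanish. My first attempt: consider the extremal secant slopes \(\sup (v(x+h)-v(x))/|h|\) along directions, and show that a maximizing secant is rigid, i.e.\ the directional difference quotient is constant. This uses a strong maximum principle for the difference-quotient equation. To justify that principle one needs integrability of the singular weight, which I would get from a directional Morrey--Kato estimate bounding \(\int |\partial_e v(x)-\partial_e v(y)|^{p-2}\cdots\) locally. Since the supremum may not be attained, apply a recurrent blow-down (translate and rescale to approach it), and conclude that \(\nabla v\) is constant.

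\emph{Iteration and Campanato.} Iterate the lemma at scales \(\lambda^k\) to obtain affine \(\ell_k\) with \(E(u,\ell_k,\lambda^k)\le C\lambda^{k\alpha_0}\) and \(|\nabla\ell_{k+1}-\nabla\ell_k|\le C\lambda^{k\alpha_0}\). Controlling the tail contribution of the changes in \(\ell_k\) across scales is the affine Campanato step, and it works because \(\alpha_0<sp-p+1\) keeps the geometric sums convergent. Hence \(\nabla\ell_k\to\nabla u(x_0)\) at every \(x_0\in B_{1/2}\), and standard Campanato arguments yield \eqref{eq:fixed-order-gradient-holder-estimate}.
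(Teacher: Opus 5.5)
The decisive gap is the bounded-slope half of your improvement-of-flatness lemma. First, the dichotomy must be taken in the normalized slope $|b_k|/\varepsilon_k$, not in $|b_k|$. Your contradiction sequence has $\varepsilon_k\to0$, so $b_k\to b\neq0$ is already a large-slope case with the stable limit. In the genuinely bounded case $|b_k|/\varepsilon_k\to Q$, the limit $w$ of $(u_k-\ell_k)/\varepsilon_k$ is such that $Q\cdot X+w$ again solves the fractional $p$-Laplace equation in $B_1$. A fixed gain $\lambda^{\alpha_0}$ would then require the very estimate you are proving.

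The blow-up you propose does not repair this. Blow-ups normalized only by the radius are uniformly Lipschitz, but the defect supplied by the negated lemma is of order $\varepsilon_k\to0$ at one fixed scale $\lambda$, so they do not see it. The excess-normalized profiles do see it, but their Lipschitz bound inherited from $u_k$ is only $(L+|b_k|)/\varepsilon_k$. A one-scale hypothesis also gives no control of their growth at larger scales. So neither family yields a nonaffine, globally Lipschitz entire solution tied to the failure, and ``failure at all scales'' is not part of the contradiction hypothesis.

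The missing idea is to carry, as part of the iteration state, the affine-invariant Campanato excess $\mathscr C_{\ell,\beta}$ at all larger dyadic scales. It is paired with the relative flux $\mathcal T^{\rm pt}_{s,Q}$, including its factor $\lambda_Q^{2-p}$; a scalar $L^{p-1}$ tail does not control the normalized exterior flux uniformly in the tilt. The paper then proves the bounded-slope step, Proposition~\ref{prop:bounded-slope-affine-invariant-block-main}, by compactness at unit amplitude on balls $B_{2R_j}$ with $R_j\to\infty$ and $\beta_j\to0$. The limit is only known to satisfy $\sup_R\mathscr E_\ell(U;R)<\infty$, with best-fit slopes possibly drifting like $\log R$. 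One therefore needs the separate Liouville theorem of Proposition~\ref{prop:affine-campanato-liouville-main}: diverging slopes are excluded by the large-slope escape iteration, and only then does the Biswas--Topp estimate give a global Lipschitz bound so that the Lipschitz Liouville theorem applies. Accordingly the exponent is $\beta\le\beta_*/2$, fixed after the nonexplicit compactness number, not an arbitrary $\alpha_0<sp-p+1$.

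Your Liouville sketch also stops before the hard step. An attained extremal secant is handled by pointwise contact (Lemma~\ref{lem:attained-translation} and Theorem~\ref{thm:extremal-secant-rigidity}), with no Morrey--Kato input. The difficulty is non-attainment. Translating and rescaling along near-extremal chords only produces a limit with an attained secant, hence an affine limit, and this says nothing about $v$ itself.

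The paper instead uses centered dilations, a minimal compact dilation-invariant subset of the blow-down hull, and the affine blow-down rigidity of Proposition~\ref{prop:affine-blowdown-rigidity} to keep every limit nonaffine. Extremizing chords are then forced to be microscopic. A record-scale zoom, whose nondegenerate amplitude again requires the large-slope compactness and the stable Liouville theorem, yields a profile $U_*$ whose gradient attains the global Lipschitz constant $L_V$ at a point.

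Only this infinitesimal contact needs a strong minimum principle. It is applied to $z=L_V-\partial_eU_*$ with the linearized kernel $J_{U_*}=(p-1)|U_*(x)-U_*(y)|^{p-2}|x-y|^{-n-sp}$. The Morrey--Kato bound $\int_{B_r}A_{U_*}\le Cr^{n+p-2}$ concerns the crossed part of this kernel. It comes from directional coarea under $\partial_eU_*\ge3|q|/4$, which in turn needs the local $C^{1,\alpha}$ regularity near the contact point from Lemma~\ref{lem:record-noncritical-neighborhood}. The weight $|\partial_ev(x)-\partial_ev(y)|^{p-2}$ in your sketch is not the relevant object.
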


The condition \eqref{eq:main-structural-range} has a direct first-order
meaning.  The flux of a linearly growing function is integrable at
infinity exactly in this range.  In the large-slope compactification the
limiting stable operator has order
\[
 \tau_s=sp-p+2,
\]
and \eqref{eq:main-structural-range} is equivalent to \(\tau_s>1\).
The available gradient exponent in that regime is every number below
\(\tau_s-1=sp-p+1\).  The exponent in Theorem~\ref{thm:main} may be
smaller, because the bounded-slope estimate is obtained by a qualitative
Liouville compactness argument.

We outline the proof.  After subtracting an affine function, we measure
the singular energy by the corresponding Bregman remainder and distinguish
between bounded and unbounded normalized slopes.  In the latter case,
the normalized energies converge to an anisotropic quadratic stable
form, which yields a fixed-scale affine improvement.  The bounded-slope
case is reduced to a Bernstein-type Liouville theorem for globally
Lipschitz entire solutions.  The proof of this theorem combines rigidity
of an attained extremal secant, a recurrent dilation argument, a
nondegeneracy estimate for the rescaled amplitude, and a strong minimum
principle for the linearized equation.  A directional coarea estimate
places the exterior part of the singular linearized coefficient in a
Morrey--Kato class of codimension \(2-p\); the resulting deficit relative
to the regional order is \(sp\).

The Liouville theorem yields a compactness improvement in the
bounded-slope regime.  Possible growth of the best affine slopes is
controlled by an affine Liouville theorem in Campanato classes.  If these
slopes are unbounded, the large-slope estimate can be iterated from a
diverging scale to a fixed scale.  Consequently, both slope regimes
satisfy the same intrinsic excess estimate and the same affine-invariant
relative-tail bound.  The corresponding iteration is
\[
 r_{k+1}=\rho r_k,\qquad
 H_{k+1}=\rho^\beta H_k.
\]
After fixing the constants and the contraction factor, one chooses
\(0<\beta<sp-p+1\).  It follows that \(H_k\le Cr_k^\beta\) without an
annular renormalization constant.

The paper is organized as follows.
Section~\ref{sec:setting-contact} establishes the pointwise contact
lemma and the extremal-secant rigidity theorem.
Section~\ref{sec:large-slope-and-blowdown} develops the fixed-order
large-slope compactness and proves the affine blow-down theorem.
Section~\ref{sec:dilation-liouville} then proves the entire-solution
Liouville theorem.  Section~\ref{sec:common-recursion} derives the
bounded-slope compactness estimate, combines it with the large-slope
alternative in a single Campanato iteration, and proves the main theorem.

For clarity, the logical dependence is as follows.  The large-slope
compactness, large-slope improvement, noncritical expansion, and affine
blow-down principle in Section~\ref{sec:large-slope-and-blowdown}
depend only on the shifted energy estimates and the regularity theory
for linear stable operators.  The nonlinear Liouville theorem of
Section~\ref{sec:dilation-liouville} enters only the bounded-slope
compactness argument in Section~\ref{sec:common-recursion}.  The two
finite-scale estimates are then combined there, with one amplitude and
one relative-tail state.

\medskip
\noindent\textit{Notation and conventions.}
For a measurable set \(E\) of positive measure, we write
\(\fint_E g:=|E|^{-1}\int_E g\).  Unless a center is displayed,
\(B_r:=B_r(0)\).  For a function \(w\), we set
\(\delta w(x,y):=w(x)-w(y)\); the same convention is used for the
rescaled variables \(X,Y\).  The letter \(C\) may change from line to
line and depends only on the parameters indicated in the relevant
statement.  We write \(A\lesssim B\) if \(A\le CB\), and
\(A\asymp B\) if \(cB\le A\le CB\), for positive structural constants
\(c,C\).

\section{Pointwise contact and extremal-secant rigidity}
\label{sec:setting-contact}

\subsection{The pointwise contact lemma}
\label{subsec:pointwise-contact}

We consider the whole-space equation
\begin{equation}
 \operatorname{P.V.}\int_{\R^n}
 \frac{\Jp(U(x)-U(y))}{|x-y|^{n+sp}}\dd y=0
 \qquad\text{in }\R^n.
 \label{eq:equation}
\end{equation}
An \emph{entire weak solution} of \eqref{eq:equation} is a function
\[
 U\in W_{\rm loc}^{s,p}(\R^n)\cap L_{sp}^{p-1}(\R^n)
\]
that satisfies the weak formulation stated in
Section~\ref{sec:introduction} with \(\Omega=\R^n\).  When no ambiguity
can arise, we use \emph{entire solution} as shorthand for
\emph{entire weak solution}.

From now on we work under the standing structural assumption
\eqref{eq:main-structural-range}.  For globally Lipschitz functions,
this is exactly the condition ensuring absolute integrability at
infinity of the flux generated by linear growth.

We first establish the local contact lemma used in the
extremal-secant argument.  The equation is used directly in that
argument only through this lemma.

\begin{lemma}[Pointwise realization at a nonzero quadratic contact]
\label{lem:pointwise-contact}
Under the standing assumption \eqref{eq:main-structural-range}, let
$U$ be a globally Lipschitz entire weak solution of
\eqref{eq:equation}.  If, at $x_0$,
\[
 U(x_0+z)=U(x_0)+q\cdot z+O(|z|^2),
 \qquad q\ne0,
\]
then the principal value in \eqref{eq:equation} exists at $x_0$ and is
zero.
\end{lemma}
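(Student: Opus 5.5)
Set \(x_0=0\) and \(U(0)=0\). We first show that the principal value at \(0\) exists, and then that it equals zero, using a mollified test function in the weak formulation.

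\emph{Existence.} Split the integral into \(|z|<r\) and \(|z|\ge r\). For the far region \(|z|\ge 1\), the global Lipschitz bound gives \(|\Jp(U(0)-U(z))|\le L^{p-1}|z|^{p-1}\). This is integrable against \(|z|^{-n-sp}\) precisely because \(sp>p-1\). In the near region write \(U(z)=q\cdot z+R(z)\) with \(|R(z)|\le K|z|^2\). Since \(q\ne0\), for \(|z|\) small and away from the hyperplane \(q\cdot z=0\), \(U(z)\) is comparable to \(q\cdot z\). Symmetrize over \(z\mapsto -z\):
\[
 \Jp(-q\cdot z-R(z))+\Jp(q\cdot z-R(-z)).
\]
Put \(a=q\cdot z\) and \(b_\pm=R(\pm z)=O(|z|^2)\). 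Because \(\Jp\) is odd, the sum equals \(\Jp(a-b_-)-\Jp(a+b_+)\). For \(|a|\ge 2|b_\pm|\), the mean value theorem bounds this by \(C|a|^{p-2}|z|^2\). Near the hyperplane, where \(|a|\lesssim |z|^2\), use the Hölder bound \(|\Jp(t)-\Jp(t')|\le C|t-t'|^{p-1}\) (valid for \(p<2\)), which gives \(C|z|^{2(p-1)}\).

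In polar coordinates \(z=\rho\theta\) we have \(a=\rho\,q\cdot\theta\). The angular integral of \(|q\cdot\theta|^{p-2}\) is finite since \(p-2>-1\). The first bound therefore contributes \(\int_0^r\rho^{p-2}\rho^{2}\rho^{-1-sp}\,d\rho\), which converges since \(p+1-sp-1>... \), i.e. the exponent is \(p-sp>p-2+... \)\(>\)\(-1\) because \(sp<p\), as \(s<1\). The near-hyperplane slab has angular measure \(\lesssim\rho\) and contributes \(\int_0^r\rho^{2p-2}\rho\,\rho^{-1-sp}\,d\rho\). This converges because \(2p-1-sp>0\), which follows from \(sp<p\) and \(p>1\).

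Hence the symmetrized integrand is absolutely integrable near \(0\), and the principal value exists. Call it \(\Lambda\).

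\emph{Vanishing.} Assume \(\Lambda\ne0\), say \(\Lambda>0\). The goal is to produce a contradiction with the weak equation by testing with \(\varphi_\varepsilon=\varepsilon^{-n}\eta(\cdot/\varepsilon)\), with \(\eta\) a standard mollifier. The weak form can be rewritten as
\[
 \int\varphi_\varepsilon(x)\,\Big(\operatorname{P.V.}\int\frac{\Jp(U(x)-U(y))}{|x-y|^{n+sp}}dy\Big)dx=0.
\]
This rewriting is legitimate only if the inner principal value defines a locally integrable function near \(0\), with the truncations converging in \(L^1(B_\varepsilon)\). I would instead work directly with the truncated double integrals \(\iint_{|x-y|>\delta}\), which equal \(\int\varphi_\varepsilon(x)T_\delta(x)\,dx\) by antisymmetry.

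The main obstacle is a uniform estimate of \(T_\delta(x)-T_\delta(0)\) for \(|x|<\varepsilon\). Here only the Taylor expansion at the single point \(0\) is available, not at nearby points \(x\). To handle it, I would take the expansion at \(0\) as a one-sided quadratic barrier and argue in viscosity fashion, in the spirit of \cite{LindgrenViscosity}. Weak solutions that are continuous are viscosity solutions, and \(U\) is Lipschitz. The expansion says \(U\) is touched at \(0\) from above by \(q\cdot z+K|z|^2\) and from below by \(q\cdot z-K|z|^2\). Since \(q\ne0\), the gradient of these \(C^2\) test functions is nonzero, so for \(p<2\) the operator evaluated on them is classically defined near \(0\). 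The viscosity inequalities then give \(\Lambda\le0\) and \(\Lambda\ge0\), with the principal value of \(U\) itself entering through the standard nonlocal viscosity definition: test function inside \(B_r\), \(U\) outside, and \(r\to0\) by the integrability proved above. Hence \(\Lambda=0\).

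The delicate point is verifying the viscosity property for weak solutions in the singular range with linear growth at infinity. The tail condition needed for this is exactly \(sp>p-1\).
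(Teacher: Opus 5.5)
Your proposal is correct and follows the paper's argument. The existence step uses the same mean-value/H\"older split around the equator $|q\cdot\omega|\lesssim r$; you symmetrize in $z\mapsto -z$ where the paper subtracts the odd flux $\Jp(-q\cdot z)$, and the two are equivalent. The vanishing step is the same patching of the barriers $q\cdot z\pm K|z|^2$ inside $B_\rho$, followed by $\rho\downarrow0$. The ingredient you flag as ``delicate'' is supplied in the paper by the weak--viscosity equivalence and pointwise contact formulation of \cite{KKL} (Theorem~1.2 and Proposition~3.1), and the mollifier detour you abandon is not needed.
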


\begin{proof}
Write
\[
 U(x_0+z)=U(x_0)+q\cdot z+R(z),
 \qquad |R(z)|\le K|z|^2
\]
for \(|z|<r_0\), after decreasing \(r_0\) if necessary.  We first prove
that the flux differs from its affine part by an absolutely integrable
function.  For \(z=r\omega\), set
\[
 a=-r q\cdot\omega,\qquad b=-R(r\omega),\qquad
 \Gamma_r:=\{\omega\in\mathbb S^{n-1}:|q\cdot\omega|\le2Kr\}.
\]
Since \(q\ne0\), for all sufficiently small \(r\),
\[
 |\Gamma_r|\le C r.
\]
On \(\mathbb S^{n-1}\setminus\Gamma_r\) one has
\(|b|\le |a|/2\), and the mean-value estimate for \(\Jp\) gives
\[
 |\Jp(a+b)-\Jp(a)|
 \le C|a|^{p-2}|b|
 \le Cr^p|q\cdot\omega|^{p-2}.
\]
The angular factor is integrable because \(p-2>-1\).  On \(\Gamma_r\),
the global \((p-1)\)-H\"older continuity of \(\Jp\) yields
\[
 |\Jp(a+b)-\Jp(a)|
 \le C|b|^{p-1}\le Cr^{2(p-1)}.
\]
Consequently,
\[
 \int_{\mathbb S^{n-1}}
 |\Jp(-q\cdot r\omega-R(r\omega))
       -\Jp(-q\cdot r\omega)|\dd\omega
 \le C\bigl(r^p+r^{2p-1}\bigr)\le Cr^p.
\]
Polar integration therefore gives
\[
 \int_{B_{r_0}}
 \frac{|\Jp(U(x_0)-U(x_0+z))-\Jp(-q\cdot z)|}
 {|z|^{n+sp}}\dd z
 \le C\int_0^{r_0}r^{p-sp-1}\dd r<\infty,
\]
where the last inequality uses \(s<1\), equivalently \(sp<p\).

The far field is also absolutely integrable.  Indeed, if \(L\) is the
global Lipschitz constant of \(U\), then
\[
 \int_{\R^n\setminus B_{r_0}}
 \frac{|\Jp(U(x_0)-U(x_0+z))|}{|z|^{n+sp}}\dd z
 \le C+C L^{p-1}\int_1^\infty r^{p-sp-2}\dd r<\infty,
\]
because \(sp>p-1\).  Finally,
\[
 \int_{\varepsilon<|z|<r_0}
 \frac{\Jp(-q\cdot z)}{|z|^{n+sp}}\dd z=0
\]
by oddness.  The preceding estimates thus show that the principal value
at \(x_0\) exists; denote it by \(I\).

It remains to identify \(I\) with the weak equation.  The same far-field
estimate shows that \(U\in L^{p-1}_{sp}(\R^n)\).  Hence the
weak--viscosity equivalence applies to the continuous weak solution
\(U\); see \cite[Theorem~1.2]{KKL}.  Put
\[
 P_\pm(x_0+z):=U(x_0)+q\cdot z\pm K|z|^2.
\]
After enlarging \(K\), \(P_+\) and \(P_-\) touch \(U\) from above and
below, respectively, in \(B_{r_0}(x_0)\).  For
\(0<\rho<r_0\), let \(U^\pm_\rho=P_\pm\) in \(B_\rho(x_0)\) and
\(U^\pm_\rho=U\) outside that ball.  The pointwise contact formulation
\cite[Proposition~3.1]{KKL} gives, with the sign convention in
\eqref{eq:equation},
\[
 \operatorname{P.V.}\int_{\R^n}
 \frac{\Jp(U^+_\rho(x_0)-U^+_\rho(y))}
 {|x_0-y|^{n+sp}}\dd y\le0,
 \qquad
 \operatorname{P.V.}\int_{\R^n}
 \frac{\Jp(U^-_\rho(x_0)-U^-_\rho(y))}
 {|x_0-y|^{n+sp}}\dd y\ge0.
\]
Each patched function has the same nonzero affine part at \(x_0\).
Applying the local remainder estimate above to \(U\) and to
\(P_\pm\) shows that both displayed principal values converge to \(I\)
as \(\rho\downarrow0\).  Hence \(I\le0\) and \(I\ge0\), so \(I=0\).
\end{proof}

Thus the quadratic contact and the affine tail use the two strict
inequalities $sp<p$ and $sp>p-1$, respectively.

We also record the following consequence of the contact lemma.

\begin{lemma}[Attained translation maximum]
\label{lem:attained-translation}
Let $h\ne0$ and suppose
\[
 M_h:=\sup_{x\in\R^n}\{U(x+h)-U(x)\}
\]
is attained at $x_0$.  Assume that $U$ has quadratic expansions at
$x_0$ and $x_0+h$, with the same nonzero first-order slope.  Then
\[
 U(x+h)-U(x)=M_h\qquad(x\in\R^n).
\]
The analogous conclusion holds for an attained translation minimum.
\end{lemma}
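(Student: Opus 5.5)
By Lemma~\ref{lem:pointwise-contact}, applied at \(x_0\) and at \(x_0+h\), the equation holds pointwise at both points with absolutely convergent differences from the affine part. We compare the two principal values. Since both vanish, we get
\[
 0=\operatorname{P.V.}\int_{\R^n}
 \frac{\Jp(U(x_0+h)-U(x_0+h+z))-\Jp(U(x_0)-U(x_0+z))}{|z|^{n+sp}}\dd z .
\]
Put \(w(x):=U(x+h)-U(x)\). Then
\[
 U(x_0+h)-U(x_0+h+z)=\bigl(U(x_0)-U(x_0+z)\bigr)+\bigl(w(x_0)-w(x_0+z)\bigr),
\]
and \(w(x_0)-w(x_0+z)=M_h-w(x_0+z)\ge0\) for every \(z\). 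Since \(\Jp\) is strictly increasing, the integrand is pointwise nonnegative. It vanishes exactly where \(w(x_0+z)=M_h\).

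The key step is to justify splitting and reordering the principal value, so that we may conclude that the integral of a nonnegative function is zero. Because the two expansions share the same slope \(q\ne0\), the affine parts \(\Jp(-q\cdot z)\) are identical. By the remainder estimate in the proof of Lemma~\ref{lem:pointwise-contact}, each flux minus \(\Jp(-q\cdot z)\) is absolutely integrable near \(0\). Far away, both fluxes are absolutely integrable by the global Lipschitz bound and \(sp>p-1\). Hence the difference of the integrands lies in \(L^1(\R^n)\). Its integral equals the difference of the two principal values, namely \(0-0=0\).

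A nonnegative \(L^1\) function with zero integral vanishes almost everywhere. Strict monotonicity of \(\Jp\) then gives \(w(x_0+z)=M_h\) for almost every \(z\). Since \(U\) is Lipschitz, \(w\) is continuous, so \(w\equiv M_h\) everywhere. The minimum case follows by the same argument with the sign reversed, or by applying the maximum case to \(-U\), which is again a solution because \(\Jp\) is odd.

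The main point to watch is that the principal values are not subtracted carelessly. This is where the hypothesis of the same nonzero first-order slope enters. It makes the singular odd parts cancel exactly, leaving an absolutely integrable difference. With different slopes, the difference \(\Jp(-q\cdot z)-\Jp(-q'\cdot z)\) would behave like \(|z|^{p-1-n-sp}\) near the origin. That is not integrable when \(sp>p-1\), although its principal value would still vanish by oddness, and then the sign argument would lose absolute integrability.
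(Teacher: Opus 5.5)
Your proposal is correct and follows essentially the paper's argument. The paper sets \(V(x)=U(x+h)-M_h\), which touches \(U\) from below at \(x_0\), and subtracts the two pointwise equations there; your integrand is identical, since \(V(x_0)-V(x_0+z)=U(x_0+h)-U(x_0+h+z)\). The rest is the same: the shared slope gives absolute integrability, the integrand is nonnegative, so it vanishes almost everywhere, and continuity finishes.
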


\begin{proof}
Set $V(x)=U(x+h)-M_h$.  Translation invariance and invariance under
addition of constants show that \(V\) is again an entire weak solution.
By the definition of \(M_h\),
\(V\le U\) in \(\R^n\) and \(V(x_0)=U(x_0)\).  The assumed expansions
at \(x_0\) and \(x_0+h\) show that \(V\) and \(U\) have quadratic
expansions at \(x_0\) with the same nonzero affine part.  Thus
Lemma~\ref{lem:pointwise-contact} applies to both functions there.
Moreover,
\[
 V(x_0)-V(y)\ge U(x_0)-U(y).
\]
Subtracting the two pointwise equations gives
\[
 0=\int_{\R^n}
 \frac{\Jp(V(x_0)-V(y))-\Jp(U(x_0)-U(y))}
 {|x_0-y|^{n+sp}}\dd y.
\]
This integral is absolutely convergent: near \(x_0\), both fluxes have
the same odd affine part and their remainders satisfy the estimate in
the proof of Lemma~\ref{lem:pointwise-contact}; at infinity, absolute
integrability follows from \(sp>p-1\) and the global Lipschitz bounds.
The integrand is nonnegative by monotonicity of \(\Jp\).  It therefore
vanishes almost everywhere, and strict monotonicity gives
\[
 V(x_0)-V(y)=U(x_0)-U(y)
 \qquad\text{for almost every }y\in\R^n.
\]
Since \(V(x_0)=U(x_0)\), one has \(V=U\) almost everywhere, and then
everywhere by continuity.
\end{proof}

\subsection{Rigidity of an extremal secant}
\label{sec:extremal-secant}

The following rigidity theorem requires no a priori second-order
regularity.  The extremal segment itself yields the quadratic expansions
needed in Lemma~\ref{lem:attained-translation}.

\begin{theorem}[Extremal-secant rigidity]
\label{thm:extremal-secant-rigidity}
Assume $1<p<2$ and $sp>p-1$.  Let $U$ be a globally Lipschitz entire
weak solution of \eqref{eq:equation}, and write
\[
 L:=[U]_{C^{0,1}(\R^n)}.
\]
If there are distinct points $a,b\in\R^n$ such that
\begin{equation*}
 |U(b)-U(a)|=L|b-a|,
\end{equation*}
then $U$ is affine.
\end{theorem}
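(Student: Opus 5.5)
We may assume \(L>0\), since otherwise \(U\) is constant and there is nothing to prove. Replacing \(U\) by \(-U\) if necessary, we assume \(U(b)-U(a)=L|b-a|\). Set \(h:=b-a\) and \(e:=h/|h|\). The plan is to show that \(U\) grows at the maximal rate along the whole segment, that the segment produces the quadratic expansions required by Lemma~\ref{lem:attained-translation}, and that the resulting translation identity forces \(U\) to be affine.

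\emph{Step 1: the segment is extremal.} For \(t\in[0,1]\) put \(x_t=a+th\). By the triangle inequality,
\[
 L|h|=U(b)-U(a)=\bigl(U(b)-U(x_t)\bigr)+\bigl(U(x_t)-U(a)\bigr)\le L(1-t)|h|+Lt|h|.
\]
Equality must hold, so \(U(x_t)=U(a)+tL|h|\). Thus \(U\) is affine with slope \(L\) along the segment in direction \(e\).

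\emph{Step 2: first-order expansion at interior points.} Fix an interior point \(x=x_t\) with \(0<t<1\). We compare \(U\) with two cones of opening \(L\) based at the endpoints: for every \(y\),
\[
 U(y)\le U(b)-L|b-y|,\qquad U(y)\ge U(a)+L|y-a|.
\]
Both cones are smooth near \(x\), because \(x\ne a,b\). Their gradients at \(x\) coincide and both equal \(Le\), since \(b-x\) and \(x-a\) are positive multiples of \(e\). Their Taylor expansions at \(x\) agree to first order, and each has a quadratic remainder, with the constant \(K\) controlled by \(1/\min(|x-a|,|x-b|)\). Since \(U\) is squeezed between the two cones and touches both at \(x\), we obtain
\[
 U(x+z)=U(x)+Le\cdot z+O(|z|^2).
\]
The slope \(Le\) is nonzero, so this is exactly the hypothesis of Lemma~\ref{lem:pointwise-contact}, at every point of the open segment.

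\emph{Step 3: attained translation maximum.} Choose a small vector \(k=\varepsilon h\) with \(0<\varepsilon<1/2\). Since \(U\) is \(L\)-Lipschitz,
\[
 M_k:=\sup_x\bigl(U(x+k)-U(x)\bigr)\le L|k|.
\]
This bound is attained at \(x_0=x_{1/4}\), because \(U(x_{1/4}+k)-U(x_{1/4})=\varepsilon L|h|\) by Step 1. By Step 2, both \(x_0\) and \(x_0+k\) lie in the open segment and carry quadratic expansions with the same nonzero slope \(Le\). Lemma~\ref{lem:attained-translation} then gives
\[
 U(x+k)-U(x)=L|k|\qquad\text{for all }x\in\R^n.
\]

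\emph{Step 4: conclusion.} The identity of Step 3 says that every point \(x\) lies on a segment of length \(|k|\) in direction \(e\) along which \(U\) increases at the full rate \(L\). Applying Step 1 to that segment shows \(U(x+\sigma e)=U(x)+L\sigma\) for \(0\le\sigma\le|k|\). Iterating along the direction \(e\), this extends to all \(\sigma\in\R\), so
\[
 U(x+\sigma e)=U(x)+L\sigma\qquad(x\in\R^n,\ \sigma\in\R).
\]
Now fix \(x\) and \(y\), and let \(y'=y+\sigma e\). For \(\sigma\to+\infty\),
\[
 U(y')-U(x)\le L|y'-x|=L\sigma+L\,e\cdot(y-x)+o(1).
\]
Since also \(U(y')-U(x)=U(y)-U(x)+L\sigma\), this gives \(U(y)-U(x)\le L\,e\cdot(y-x)\). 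Letting \(\sigma\to-\infty\) gives the reverse inequality in the same way. Hence \(U(y)=U(x)+L\,e\cdot(y-x)\), and \(U\) is affine.

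The main obstacle is Step 2. The touching argument shows that \(U\) is pinched between two cones that are tangent to second order at \(x\). One must check that this pinching really yields the \(O(|z|^2)\) expansion in the form used in the proof of Lemma~\ref{lem:pointwise-contact}, uniformly in a neighbourhood of \(x\). The rest of the argument is elementary once the two lemmas are available.
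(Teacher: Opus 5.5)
Your proof is correct and follows the paper's route: linearity on the extremal segment, the two-cone quadratic expansion at interior points, Lemma~\ref{lem:attained-translation} applied to a translation along $e$, propagation to all translations along $e$, and elimination of transverse dependence by sending the translation parameter to $\pm\infty$. The propagation step differs slightly, since you get it from a single translation by reapplying Step~1 where the paper invokes the lemma for every $q\in(0,d)$. One slip to fix: in Step~2 the cone inequalities are reversed, because the Lipschitz bound gives $U(b)-L|b-y|\le U(y)\le U(a)+L|y-a|$; with that orientation your squeezing argument is exactly the paper's, and no uniformity near $x$ is needed, since the contact lemma uses only the pointwise expansions at $x_0$ and $x_0+k$.
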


\begin{proof}
If $L=0$, the conclusion is immediate.  Replacing $U$ by $-U$ if
necessary, put
\[
 d:=|b-a|,\qquad e:=\frac{b-a}{d},
\]
and assume
\[
 U(b)-U(a)=Ld.
\]
For $0\le t\le d$, the Lipschitz inequalities
\[
 U(a+te)-U(a)\le Lt,
 \qquad
 U(b)-U(a+te)\le L(d-t)
\]
have a sum equal to $Ld$.  Equality therefore holds in both, and
\begin{equation}
 U(a+te)=U(a)+Lt
 \qquad(0\le t\le d).
 \label{eq:linearity-on-extremal-segment}
\end{equation}

Fix $t\in(0,d)$ and let $z$ be small.  The Lipschitz cones with vertices
at $a$ and $b$ give
\begin{align*}
 U(a+te+z)
 &\le U(a)+L|te+z|,\\
 U(a+te+z)
 &\ge U(b)-L|(d-t)e-z|.
\end{align*}
Taylor expansion of the two Euclidean norms yields
\begin{align*}
 |te+z|&=t+e\cdot z+O_t(|z|^2),\\
 |(d-t)e-z|&=(d-t)-e\cdot z+O_{d-t}(|z|^2).
\end{align*}
Together with \eqref{eq:linearity-on-extremal-segment}, these estimates
show that
\begin{equation}
 U(a+te+z)=U(a+te)+Le\cdot z+O_{t,d}(|z|^2).
 \label{eq:automatic-quadratic-expansion}
\end{equation}
Thus every interior point of the extremal segment has a quadratic
expansion with the same nonzero slope $Le$.

Let $0<q<d$.  Choose $t\in(0,d-q)$ and set
$x_0=a+te$, $h=qe$.  By the global Lipschitz bound,
\[
 U(x+h)-U(x)\le Lq\qquad(x\in\R^n),
\]
whereas \eqref{eq:linearity-on-extremal-segment} gives equality at
$x_0$.  Both $x_0$ and $x_0+h$ satisfy
\eqref{eq:automatic-quadratic-expansion}.  Lemma
\ref{lem:attained-translation} therefore gives
\begin{equation}
 U(x+qe)-U(x)=Lq
 \qquad(x\in\R^n)
 \label{eq:all-small-extremal-translations}
\end{equation}
for every $q\in(0,d)$.  Iteration, followed by continuity and reversal
of the translation, extends \eqref{eq:all-small-extremal-translations}
to
\begin{equation}
 U(x+te)=U(x)+Lt
 \qquad(x\in\R^n,\ t\in\R).
 \label{eq:global-extremal-direction}
\end{equation}

It remains to exclude transverse dependence.  Given $z\perp e$, put
$c_z=U(z)-U(0)$.  From \eqref{eq:global-extremal-direction} and the
global Lipschitz bound,
\[
 |c_z+Lt|
 =|U(z+te)-U(0)|
 \le L\sqrt{|z|^2+t^2}
 \qquad(t\in\R).
\]
After squaring and letting $t\to+\infty$ and $t\to-\infty$, we obtain
$c_z=0$.  Given $x\in\R^n$, write $x=z+te$, where $z\perp e$.
Equation \eqref{eq:global-extremal-direction} now gives
\[
 U(x)=U(0)+Le\cdot x,
\]
as asserted.
\end{proof}

\begin{remark}[The singular range]
The proof does not differentiate the equation.  At a generic maximum of
a difference quotient one would need more regularity than is presently
available at the effective order $sp-p+2$.  Here the equality case of
the Lipschitz inequality yields the required quadratic contact through
a geometric two-cone argument, which remains valid in the singular
range.
\end{remark}

\section{Fixed-order large-slope compactness and affine blow-down rigidity}
\label{sec:large-slope-and-blowdown}

We first develop the fixed-order estimates used in the Liouville
argument of Section~\ref{sec:dilation-liouville} and in the iteration
below.  The entire section is independent of the nonlinear Liouville
theorem.  This order of presentation makes every analytic ingredient of the
rigidity argument available before it is invoked.

The argument is organized in three blocks.  Lemmas
\ref{lem:angularly-averaged-shifted-modular}--%
\ref{lem:uniform-shifted-bregman-gluing}
give estimates that are uniform with respect to the affine tilt.
Propositions~\ref{prop:fixed-order-infinite-slope-compactness} and
\ref{prop:large-slope-affine-improvement} identify the stable limit and
transfer its affine decay back to large finite slopes.  Proposition
\ref{prop:noncritical-pointwise-expansion} and Proposition
\ref{prop:affine-blowdown-rigidity} then give the two consequences used
in the nonlinear rigidity argument.

Put
\[
 \Phi(t):=\frac{|t|^p}{p},\qquad
 D_\Phi(a;b):=\Phi(a+b)-\Phi(a)-\Jp(a)b,
 \qquad \lambda_Q:=1+|Q|.
\]
Notice that \(D_\Phi(-a;-b)=D_\Phi(a;b)\).  Thus the corresponding
double-integral density is symmetric under interchange of \(X\) and
\(Y\), even though the Bregman remainder is written with an oriented
increment.
For a tilt \(Q\in\R^n\) and a function \(v\), write
\begin{equation*}
 \mathcal D_Qv(X,Y):=
 \Jp\bigl(Q\cdot(X-Y)+\delta v(X,Y)\bigr)
 -\Jp\bigl(Q\cdot(X-Y)\bigr).
\end{equation*}
The shifted equation is obtained from \eqref{eq:equation} by writing
\(U(X)=Q\cdot X+v(X)\) and subtracting the odd affine flux.  More
precisely, \(v\) solves the equation shifted by \(Q\) in a domain \(D\)
if
\begin{equation*}
 \iint_{\R^n\times\R^n}
 \frac{\mathcal D_Qv(X,Y)(\varphi(X)-\varphi(Y))}
 {|X-Y|^{n+sp}}\dd X\!\dd Y=0
\end{equation*}
for every \(\varphi\in C_c^\infty(D)\).  This is equivalent to saying
that \(Q\cdot X+v\) solves \eqref{eq:equation} in \(D\).  The normalized
interior energy and pointwise exterior flux are
\begin{align}
 \mathcal E_{s,Q}(v;B)
 &:=(1-s)\lambda_Q^{2-p}
 \iint_{B\times B}
 \frac{D_\Phi(Q\cdot(X-Y);\delta v(X,Y))}
 {|X-Y|^{n+sp}}\dd X\!\dd Y,
 \notag\\
 \mathcal T^{\rm pt}_{s,Q}(v)
 &:=\max\left\{
 {\operatorname*{ess\,sup}}_{X\in B_{5/4}}
 \int_{\R^n\setminus B_{4/3}}
 \frac{\lambda_Q^{2-p}|\mathcal D_Qv(X,Y)|}
 {|X-Y|^{n+sp}}\dd Y,
 \right.\notag\\[-2mm]
 &\hspace{31mm}\left.
 {\operatorname*{ess\,sup}}_{X\in B_{3/2}}
 \int_{\R^n\setminus B_2}
 \frac{\lambda_Q^{2-p}|\mathcal D_Qv(X,Y)|}
 {|X-Y|^{n+sp}}\dd Y\right\}.
 \label{eq:large-slope-pointwise-tail}
\end{align}
The first component is used to identify the equation on \(B_{5/4}\);
the second is the separated tail needed by cutoffs supported in
\(B_{3/2}\).  Keeping both components in the definition avoids any
implicit enlargement of the \(X\)-region in the local estimates.
If \(m(X)=c+b\cdot X\) is affine and \(0<\rho\le1\), define
\begin{align}
 \mathfrak Z(v,m;\rho)
 &:={\fint}_{B_{2\rho}}
 \left|\frac{v-m}{\rho}\right|^p\dd X,
 \label{eq:block-zero-modular}\\
 \mathfrak B_{s,Q}(v,m;\rho)
 &:=(1-s)\rho^{sp-p-n}
 \iint_{B_{2\rho}\times B_{2\rho}}
 \frac{D_\Phi\bigl((Q+b)\cdot(X-Y);
              \delta(v-m)(X,Y)\bigr)}
 {|X-Y|^{n+sp}}\dd X\!\dd Y.\notag
\end{align}
We let \(\mathfrak a_{s,Q}(v,m;\rho)\) be the infimum of all \(a>0\)
such that
\begin{equation}
 \mathfrak Z(v,m;\rho)\le a^p,\qquad
 \mathfrak B_{s,Q}(v,m;\rho)
 \le (|Q+b|+a)^{p-2}a^2.
 \label{eq:block-intrinsic-amplitude}
\end{equation}
The two inequalities are kept separate because the energy is quadratic
relative to a large slope, whereas it has \(p\)-growth at zero slope.

We isolate the fixed-gain step used below.  This is the only place where
the two growth regimes of the shifted energy have to be interpolated.
Throughout this section,
\begin{equation*}
 \Psi_p(t):=\frac{t^2}{(1+t)^{2-p}},\qquad t\ge0.
\end{equation*}

For the cutoff and gluing arguments it is convenient to symmetrize the
shifted density.  Put
\[
 \mathcal H_{Q,Z}(t):=\lambda_Q^{2-p}
 \bigl[D_\Phi(Q\cdot Z;t)+D_\Phi(Q\cdot Z;-t)\bigr].
\]

\begin{lemma}[Angularly averaged shifted modular]
\label{lem:angularly-averaged-shifted-modular}
There is \(C=C(n,p)\) such that, for \(|Z|\le4\),
\begin{align}
 c\Psi_p(|t|)&\le \mathcal H_{Q,Z}(t),
 \label{eq:shifted-modular-lower}\\
 \mathcal H_{Q,Z}(t_1+t_2)
 &\le C\bigl[\mathcal H_{Q,Z}(t_1)
              +\mathcal H_{Q,Z}(t_2)\bigr].
 \label{eq:shifted-modular-delta-two}
\end{align}
If \(Q=qe\), \(q\ge0\), \(e\in\mathbb S^{n-1}\), then, for
\(0<r\le4\),
\begin{equation}
 r^{-p}\int_{\mathbb S^{n-1}}
 \mathcal H_{Q,r\omega}(rt)\dd\omega
 \le C\bigl(|t|^p+|t|^2\bigr).
 \label{eq:shifted-modular-angular-upper}
\end{equation}
The same estimates hold if \(D_\Phi(a;t)\) is replaced by the scalar
product
\([\Jp(a+t)-\Jp(a)]t\).
\end{lemma}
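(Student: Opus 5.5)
The whole lemma rests on one scalar fact: the symmetrized Bregman remainder of \(\Phi\) is comparable to a quadratic form with a shifted weight. For \(a,t\in\R\) and \(1<p<2\),
\[
 D_\Phi(a;t)+D_\Phi(a;-t)\asymp\bigl[\Jp(a+t)-\Jp(a)\bigr]t\asymp (|a|+|t|)^{p-2}t^2 ,
\]
with constants depending only on \(p\). I would prove this first. Write \(D_\Phi(a;t)=\int_0^1(1-\theta)(p-1)|a+\theta t|^{p-2}t^2\,d\theta\) and \([\Jp(a+t)-\Jp(a)]t=\int_0^1(p-1)|a+\theta t|^{p-2}t^2\,d\theta\). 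Then use the standard estimate \(\int_0^1|a+\theta t|^{p-2}\,d\theta\asymp(|a|+|t|)^{p-2}\), which holds because \(p-2>-1\). The weight \(1-\theta\) does not spoil the lower bound. Indeed, on the subinterval \(\theta\in(0,1/2)\), either \(|a|\ge|t|\), and then \(|a+\theta t|\le 2(|a|+|t|)\) bounds the integrand below; or \(|a|<|t|\), and then the set of \(\theta\) where \(|a+\theta t|\le 2|t|\) has length \(\ge1/2\). Once this is in place, every statement about \(D_\Phi\) also holds for the scalar product \([\Jp(a+t)-\Jp(a)]t\), which settles the last sentence of the lemma.

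\emph{Lower bound \eqref{eq:shifted-modular-lower}.} With \(a=Q\cdot Z\) we have \(|a|\le4|Q|\le4\lambda_Q\). Hence
\[
 \mathcal H_{Q,Z}(t)\gtrsim\lambda_Q^{2-p}(4\lambda_Q+|t|)^{p-2}t^2
 \gtrsim\Bigl(\frac{\lambda_Q}{\lambda_Q+|t|}\Bigr)^{2-p}t^2
 \ge(1+|t|)^{p-2}t^2=\Psi_p(|t|),
\]
where the last step uses \(\lambda_Q\ge1\) and the monotonicity of \(x\mapsto x/(x+|t|)\).

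\emph{Quasi-triangle inequality \eqref{eq:shifted-modular-delta-two}.} I use the model function \(g_a(t):=(|a|+|t|)^{p-2}t^2\), which is even and nondecreasing in \(|t|\). Its derivative is \((|a|+|t|)^{p-3}|t|\,(2|a|+p|t|)\ge0\). Moreover \(g_a(2t)\le 4g_a(t)\), since \((|a|+2|t|)^{p-2}\le(|a|+|t|)^{p-2}\). Therefore
\[
 g_a(t_1+t_2)\le g_a\bigl(2\max(|t_1|,|t_2|)\bigr)\le4\bigl[g_a(t_1)+g_a(t_2)\bigr].
\]
This passes to \(\mathcal H_{Q,Z}\) through the comparability above.

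\emph{Angular upper bound \eqref{eq:shifted-modular-angular-upper}.} This is the only step with real content, because the weight \(|q\,r\,e\cdot\omega|^{p-2}\) degenerates on the equator \(e\cdot\omega=0\). I get around this by bounding \((|a|+|t|)^{p-2}\le|a|^{p-2}\) together with \(\le|t|^{p-2}\). With \(a=rq(e\cdot\omega)\) and increment \(rt\),
\[
 r^{-p}\mathcal H_{Q,r\omega}(rt)\lesssim r^{-p}\lambda_Q^{2-p}\min\bigl\{(rq|e\cdot\omega|)^{p-2},(r|t|)^{p-2}\bigr\}r^2t^2 .
\]
Split by the size of \(q\).

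If \(q\le1\), then \(\lambda_Q\le2\), and the \(|t|^{p-2}\) branch gives \(C|t|^p\).

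If \(q>1\), then \(\lambda_Q^{2-p}\le(2q)^{2-p}\). Using the \(|a|^{p-2}\) branch, the bound becomes
\[
 C\,r^{-p}q^{2-p}(rq)^{p-2}|e\cdot\omega|^{p-2}r^2t^2=C|e\cdot\omega|^{p-2}t^2 .
\]
Since \(\int_{\mathbb S^{n-1}}|e\cdot\omega|^{p-2}\,d\omega<\infty\) (because \(p-2>-1\)), integrating over the sphere gives \(Ct^2\).

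Adding the two cases gives the claimed \(C(|t|^p+t^2)\), with \(C\) independent of \(q\), \(r\), \(e\). The integrability of \(|e\cdot\omega|^{p-2}\) was already used in Lemma~\ref{lem:pointwise-contact}, and it is exactly what makes the bound uniform in the tilt.

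I expect the only delicate point to be the lower comparability of \(D_\Phi\) with the \(\theta\)-weight \(1-\theta\). The remaining steps are bookkeeping.
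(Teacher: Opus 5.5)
Your proof is correct and follows the paper's structure: scalar comparability $D_\Phi(a;t)+D_\Phi(a;-t)\asymp(|a|+|t|)^{p-2}t^2$, then \eqref{eq:shifted-modular-lower}, \eqref{eq:shifted-modular-delta-two} and the angular bound. Your monotonicity of $x\mapsto x/(x+|t|)$ replaces the paper's split into $|t|\le\lambda_Q$ and $|t|>\lambda_Q$, which is only a cosmetic difference.

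The real difference is in \eqref{eq:shifted-modular-angular-upper}. The paper splits at $q\le2(1+|t|)$. For larger $q$ it cuts the sphere into two pieces:
\begin{itemize}
\item the equatorial band $E_t=\{|e\cdot\omega|\le2|t|/q\}$, of measure $\lesssim|t|/q$, where it uses $p$-growth;
\item its complement, where it uses the quadratic branch.
\end{itemize}
You split only at $q\le1$. For $q>1$ you use the unconditional bound $(|a|+|t|)^{p-2}\le|a|^{p-2}$ on the whole sphere, so the equator is absorbed by the integrability of $|e\cdot\omega|^{p-2}$. This is shorter and gives the cleaner dichotomy: $C|t|^p$ for $q\le1$ and $Ct^2$ for $q>1$. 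It is also exactly the pointwise estimate the paper itself invokes later for the cutoff terms in Lemma~\ref{lem:uniform-shifted-level-estimate}. The band decomposition buys nothing extra for this lemma.

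One correction to your self-assessment: the lower bound with the weight $1-\theta$ is the trivial direction. Since $|a+\theta t|\le|a|+|t|$ and $p-2<0$, you get $\int_0^1(1-\theta)|a+\theta t|^{p-2}\dd\theta\ge\frac12(|a|+|t|)^{p-2}$ immediately; your case split is correct but unnecessary. The step that genuinely needs $p-2>-1$ is the upper bound $\int_0^1|a+\theta t|^{p-2}\dd\theta\lesssim(|a|+|t|)^{p-2}$ when $a+\theta t$ changes sign, which you quote as standard.
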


\begin{proof}
Taylor's formula, first on the segment from \(a\) to \(a+t\) and then
on the segment from \(a\) to \(a-t\), gives
\begin{equation}
 D_\Phi(a;t)+D_\Phi(a;-t)
 \asymp (|a|+|t|)^{p-2}|t|^2.
 \label{eq:symmetric-shifted-density}
\end{equation}
The constants depend only on \(p\).  The lower estimate follows by
separating \(|t|\le\lambda_Q\) and \(|t|>\lambda_Q\), while the
uniform \(\Delta_2\) estimate follows from
\eqref{eq:symmetric-shifted-density} and the triangle inequality.

It remains to prove the angular upper bound.  The case \(q\le2(1+|t|)\)
follows from the global \(p\)-growth estimate and
\(\lambda_Q^{2-p}\le C(1+|t|)^{2-p}\).  If
\(q>2(1+|t|)\), split the sphere into
\[
 E_t:=\{|e\cdot\omega|\le2|t|/q\}
 \quad\hbox{and}\quad \mathbb S^{n-1}\setminus E_t.
\]
On the complement use the quadratic part of
\eqref{eq:symmetric-shifted-density}; integration of
\(|e\cdot\omega|^{p-2}\) gives \(Cr^p|t|^2\).  On \(E_t\), whose
measure is at most \(C|t|/q\), use the global \(p\)-growth estimate.
The resulting contribution is bounded by
\(Cr^p(|t|^p+|t|^2)\); the quadratic term is needed when \(|t|>1\).
This proves
\eqref{eq:shifted-modular-angular-upper}.  The product version follows
from the same Taylor formula without the factor \(1-t\).
\end{proof}

\begin{lemma}[Fixed-gain Orlicz--Hardy iteration]
\label{lem:fixed-gain-orlicz-hardy}
Let \(g\) be measurable in \(B_2\), let \(F\ge0\), and put, for
\(j\ge1\),
\[
 r_j=\frac43+\frac16\,2^{-j},\qquad
 k_j=A(1-2^{-j}),\qquad d_j=k_{j+1}-k_j=A2^{-j-1},
\]
where \(A\ge1\).  Define
\[
 E_j:=\{g>k_j\}\cap B_{r_j},\qquad
 u_j:=\min\left\{\frac{(g-k_j)_+}{d_j},1\right\},
 \qquad Y_j:=\frac{|E_j|}{|B_2|}.
\]
Suppose that, for cutoffs \(\eta_j\) which equal one
on \(B_{r_{j+1}}\), vanish outside \(B_{r_j}\), and satisfy
\(|\nabla\eta_j|\le C2^j\),
\begin{equation}
 (1-s)\iint_{B_2\times B_2}
 \frac{\Psi_p(|\delta(\eta_ju_j)(X,Y)|)}
 {|X-Y|^{n+sp}}\dd X\!\dd Y
 \le C_0b_0^j\left(1+\frac{F}{A^{p-1}}\right)|E_j|.
 \label{eq:orlicz-hardy-caccioppoli-estimate}
\end{equation}
Then there are \(b>1\) and \(\theta>0\), depending only on
\(n,p,s,C_0,b_0\), such that
\begin{equation}
 Y_{j+1}\le Cb^j
 \left(1+\frac{F}{A^{p-1}}\right)^{1+\theta}
 Y_j^{1+\theta}\qquad(j\ge1).
 \label{eq:orlicz-hardy-fixed-gain}
\end{equation}
Consequently, if \(Y_1\) is sufficiently small, with the smallness
depending only on the displayed coefficient, then \(Y_j\to0\).
The same assertion holds for the lower sets
\(\{-g>k_j\}\) and the corresponding capped functions.
\end{lemma}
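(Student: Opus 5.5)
The plan is a De Giorgi iteration in which the Caccioppoli hypothesis \eqref{eq:orlicz-hardy-caccioppoli-estimate} is turned into measure decay by a fractional Sobolev inequality. Since \(\Psi_p\) has mixed growth, quadratic for small arguments and \(p\)-growth for large ones, we first reduce it to a plain \(W^{s,p}\)-seminorm. The capping \(u_j\le1\) gives \(|\delta(\eta_ju_j)|\le 1\), and \(\Psi_p(t)\ge c\,t^2\ge c\,t^p\) for \(0\le t\le1\), because \(p<2\). Therefore
\[
 (1-s)[\eta_ju_j]_{W^{s,p}(B_2)}^p\le C\,C_0b_0^j\Bigl(1+\frac{F}{A^{p-1}}\Bigr)|E_j|.
\]
This is the main reason for capping the truncations at height one: it removes the interpolation between the two growth regimes at the cost of working with bounded functions only.

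Next we control the \(L^p\) norm of \(\eta_ju_j\). Since \(\eta_ju_j\) is supported in \(B_{r_j}\subset B_{3/2}\), we extend it by zero. The fractional Sobolev (or Hardy-type) inequality on \(B_2\) for functions supported in a compact subset then controls the full-space seminorm by the \(B_2\times B_2\) seminorm. The missing cross term is \(\int_{B_{r_j}}|\eta_ju_j|^p\int_{\R^n\setminus B_2}|X-Y|^{-n-sp}\), which is at most \(C|E_j|\) since the distance to \(\partial B_2\) is at least \(1/2\). This yields
\[
 \|\eta_ju_j\|_{L^{p^*}}^p\le C b_0^j\Bigl(1+\frac{F}{A^{p-1}}\Bigr)|E_j|,
\]
where \(p^*=np/(n-sp)\) if \(sp<n\). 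Since \(n\ge2>sp\), this case always applies. The constant depends on \(s\), which the statement allows.

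Then comes the level-set comparison. On \(E_{j+1}\) we have \(g>k_{j+1}=k_j+d_j\), so \(u_j=1\) there, and also \(\eta_j=1\) on \(B_{r_{j+1}}\). Hence \(|E_{j+1}|\le\|\eta_ju_j\|_{L^{p^*}}^{p^*}\), and combining with the previous bound gives
\[
 |E_{j+1}|\le \Bigl(Cb_0^j\bigl(1+F/A^{p-1}\bigr)|E_j|\Bigr)^{p^*/p}.
\]
Dividing by \(|B_2|\) produces \eqref{eq:orlicz-hardy-fixed-gain} with \(1+\theta=p^*/p=n/(n-sp)\) and \(b=b_0^{p^*/p}\), and exponent \(1+\theta\) on the coefficient. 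The conclusion \(Y_j\to0\) is the standard geometric-convergence lemma: if \(Y_1\le C^{-1/\theta}b^{-1/\theta^2}(1+F/A^{p-1})^{-(1+\theta)/\theta}\), then induction gives \(Y_j\le b^{-(j-1)/\theta}Y_1\). The lower sets are handled identically by applying the argument to \(-g\).

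The main obstacle is the first step. One must check that the capped truncation really makes \(\Psi_p\) comparable to \(t^p\) uniformly in \(j\); this holds because all increments are bounded by \(1\) and no \(\lambda_Q\) appears in \(\Psi_p\). One must also check that the Sobolev constant localized to \(B_2\times B_2\) is uniform over \(j\), which follows from the fixed separation \(r_j\le 3/2<2\). Any dependence of the constants on \(1-s\) is absorbed into the dependence on \(s\).
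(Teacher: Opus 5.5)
Your first step is wrong, and it is the step everything else rests on. For \(0<t<1\) and \(p<2\) one has \(t^2<t^p\), not \(t^2\ge t^p\). In fact \(\Psi_p(t)\asymp\min\{t^2,t^p\}\) selects the \emph{smaller} power. On bounded increments this gives \(\Psi_p(t)/t^p\asymp t^{2-p}\to0\) as \(t\to0\). So the hypothesis \eqref{eq:orlicz-hardy-caccioppoli-estimate} does not control the \(W^{s,p}\) Gagliardo seminorm of \(z_j:=\eta_ju_j\).

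This is not a matter of constants. A bound on \(\iint|\delta z|^2|X-Y|^{-n-sp}\) is an \(H^{sp/2}\) bound, and since \(sp/2<s\) it does not imply a \(W^{s,p}\) bound. Hölder against the kernel only yields \(W^{t,p}\) control for \(t<sp/2\). Hence your \(L^{p^*}\) estimate, and everything derived from it, is unjustified as written.

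The capping does exactly what you wanted, but it places you in the quadratic regime, not the \(p\)-regime. Since \(|\delta z_j|\le1\), you get \(\Psi_p(|\delta z_j|)\ge c|\delta z_j|^2\). The hypothesis therefore bounds the quadratic energy of differentiability order \(\sigma=sp/2\in(0,1)\).

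Because \(sp<n\), the fractional Sobolev inequality for \(H^{sp/2}\) applies, with \(\chi=n/(n-sp)\). Use your zero extension; its cross term is again at most \(C\|z_j\|_2^2\le C|E_j|\). This gives \(\|z_j\|_{L^{2\chi}}^2\le Cb_0^j(1+F/A^{p-1})|E_j|\).

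Since \(z_j=1\) on \(E_{j+1}\), it follows that \(|E_{j+1}|^{1/\chi}\le Cb_0^j(1+F/A^{p-1})|E_j|\). This is \eqref{eq:orlicz-hardy-fixed-gain} with \(1+\theta=\chi\), and it is precisely the paper's argument.

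The Sobolev gain \(\chi=n/(n-sp)\) coincides numerically with your \(p^*/p\). So once the Sobolev step is replaced by its quadratic branch, the rest of your proof goes through unchanged: the level-set comparison, the geometric-convergence lemma with your explicit smallness threshold, and the treatment of the lower sets via \(-g\).
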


\begin{proof}
The function \(\Psi_p\) is an \(N\)-function, satisfies the uniform
\(\Delta_2\) condition, and
\begin{equation}
 \Psi_p(t)\asymp\min\{t^2,t^p\},
 \qquad
 p\le\frac{t\Psi_p'(t)}{\Psi_p(t)}\le2.
 \label{eq:orlicz-two-indices}
\end{equation}
Put \(z_j=\eta_ju_j\).  Since \(0\le z_j\le1\), one has
\(|\delta z_j|\le1\), and therefore
\[
 \Psi_p(|\delta z_j|)\ge c|\delta z_j|^2.
\]
Set \(\sigma=sp/2\).  Since \(sp<p<2\le n\), one has
\(0<\sigma<1\) and \(2\sigma=sp<n\).  Extend \(z_j\) by zero outside
\(B_2\).  Its support is contained in \(B_{r_j}\Subset B_{3/2}\), so
the interactions meeting \(\R^n\setminus B_2\) are separated from the
support and are bounded by \(C\|z_j\|_2^2\).  The fractional Sobolev
inequality, with
\[
 \chi:=\frac{n}{n-sp}>1,
\]
then gives
\begin{align*}
 \left(\int_{B_2}|z_j|^{2\chi}\dd X\right)^{1/\chi}
 &\le C\left\{(1-s)\iint_{B_2^2}
 \frac{|\delta z_j(X,Y)|^2}{|X-Y|^{n+sp}}\dd X\!\dd Y
 +\int_{B_2}|z_j|^2\dd X\right\}\\
 &\le Cb_0^j\left(1+\frac{F}{A^{p-1}}\right)|E_j|.
\end{align*}
Here the harmless factor \((1-s)^{-1}\) is included in
\(C=C(n,p,s)\).  This is precisely the bounded, quadratic branch of
the fractional Orlicz--Sobolev embedding; compare
\cite[Theorems~5.1 and~6.1]{AlbericoCianchiPickSlavikova}.  In the last
line we used \eqref{eq:orlicz-hardy-caccioppoli-estimate} and the fact
that \(z_j\) vanishes outside \(E_j\).
This Sobolev step supplies the fixed Hardy gain in the capped regime.

On \(E_{j+1}\), one has \(u_j=1\) and \(\eta_j=1\), hence \(z_j=1\).
Consequently,
\[
 |E_{j+1}|^{1/\chi}
 \le Cb_0^j\left(1+\frac{F}{A^{p-1}}\right)|E_j|.
\]
After division by \(|B_2|\) and raising to the power \(\chi\), this is
\eqref{eq:orlicz-hardy-fixed-gain}, with
\(\theta=\chi-1>0\) and a larger geometric constant \(b\).
The standard nonlinear sequence lemma now gives \(Y_j\to0\) whenever
\(Y_1\) is sufficiently small.  In particular, the gain is fixed
before \(F\) and \(A\) are chosen.  Replacing \(g\) by \(-g\) gives
the lower-level assertion.
\end{proof}

\begin{lemma}[Uniform shifted level estimate]
\label{lem:uniform-shifted-level-estimate}
Let \(g\) solve the equation shifted by \(P\in\R^n\) in \(B_2\), assume
that \(\mathcal E_{s,P}(g;B_2)<\infty\), fix \(A\ge1\), and put, for
\(j\ge1\),
\begin{equation*}
 r_j=\frac43+\frac16\,2^{-j},\qquad
 k_j=A(1-2^{-j}),\qquad d_j=A2^{-j-1},
\end{equation*}
and
\begin{equation*}
 E_j:=\{g>k_j\}\cap B_{r_j},\qquad
 u_j:=\min\left\{\frac{(g-k_j)_+}{d_j},1\right\}.
\end{equation*}
Let \(\eta_j\in C_c^\infty(B_{r_j})\) satisfy
\(0\le\eta_j\le1\), \(\eta_j=1\) on \(B_{r_{j+1}}\), and
\(|\nabla\eta_j|\le C2^j\).  If the second component in
\eqref{eq:large-slope-pointwise-tail}, with \((Q,v)\) replaced by
\((P,g)\), is at most \(F\), then
\begin{equation}
 (1-s)\iint_{B_2\times B_2}
 \frac{\Psi_p(|\delta(\eta_ju_j)(X,Y)|)}
 {|X-Y|^{n+sp}}\dd X\!\dd Y
 \le Cb_0^j\left(1+\frac{F}{A^{p-1}}\right)|E_j|,
 \label{eq:uniform-shifted-level-estimate}
\end{equation}
where \(C=C(n,p,s)\) and \(b_0>1\) are independent of \(P\).
The same assertion holds for the lower capped levels of \(-g\).
\end{lemma}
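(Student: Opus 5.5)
The plan is to test the shifted equation with \(\varphi=\eta_j^p(g-k_j)_+\) capped appropriately. Concretely, I would take \(\varphi=\eta_j^2 d_j u_j\) (or \(\eta_j^2\) times the capped truncation), which is admissible since \(u_j\) is a bounded Lipschitz function of \(g\) and \(\eta_j\) is supported in \(B_{r_j}\subset B_{3/2}\). The resulting Caccioppoli inequality is then normalized by \(d_j\). I would split the double integral into three parts: the interior \(B_2\times B_2\), the exterior \(B_{r_j}\times(\R^n\setminus B_2)\), and its symmetric counterpart.

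\textbf{Interior term.} I would use the standard algebraic splitting
\[
\delta\varphi=\tfrac12(\eta_j^2(X)+\eta_j^2(Y))\,\delta(d_ju_j)+\tfrac12(d_ju_j(X)+d_ju_j(Y))\,\delta(\eta_j^2).
\]
In the first piece, monotonicity of the truncation yields the product quantity \([\Jp(a+t)-\Jp(a)]\,t\), with \(a=P\cdot(X-Y)\) and \(t=\delta g\). On the set where \(u_j\) is strictly between \(0\) and \(1\) at both points, \(d_j\,\delta u_j=\delta g\). Otherwise \(|d_j\delta u_j|\le|\delta g|\), with the same sign. Using \eqref{eq:symmetric-shifted-density} for the product version (Lemma~\ref{lem:angularly-averaged-shifted-modular}), this should control
\[
d_j^2\,\lambda_P^{2-p}(|a|+d_j|\delta u_j|)^{p-2}|\delta u_j|^2 .
\]

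To turn this into \(\Psi_p(|\delta(\eta_ju_j)|)\) uniformly in \(P\), I would argue as follows. Since \(|\delta u_j|\le1\) and \(d_j\le A\), I would divide by \(d_j^p\) rather than \(d_j^2\). The expected lower bound is
\[
d_j^{-p}\lambda_P^{2-p}(|a|+d_j t)^{p-2}d_j^2t^2\ \gtrsim\ \Psi_p(t)\quad\text{for }|Z|\le4,
\]
which would be obtained via \eqref{eq:shifted-modular-lower} applied to the rescaled tilt. I expect this to need the factor \(b_0^j\) to absorb the ratio \(A/d_j=2^{j+1}\).

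The cross piece with \(\delta(\eta_j^2)\) would be handled with Young's inequality in the Orlicz form, using the \(\Delta_2\) property \eqref{eq:shifted-modular-delta-two}. This reduces it to
\[
\lambda_P^{2-p}\bigl[D_\Phi(a;\,d_ju_j|\delta\eta_j|)+D_\Phi(a;\,-d_ju_j|\delta\eta_j|)\bigr],
\]
supported where \(X\) or \(Y\) lies in \(E_j\). After normalization, the angular bound \eqref{eq:shifted-modular-angular-upper} with \(t\approx d_j 2^j\,\cdot\) (gradient bound) gives
\[
\int \frac{r^p\,2^{jp}\cdots}{|X-Y|^{n+sp}}\,dY\le C2^{2j}
\]
uniformly in \(P\), because \(r^{p-sp}\) is integrable near \(0\) (here \(sp<p\)). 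Integrating over \(X\in E_j\) then gives \(C b_0^j|E_j|\).

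\textbf{Exterior term.} For \(X\in B_{r_j}\) and \(Y\notin B_2\), I would bound \(|\varphi(X)|\le d_j\mathbf 1_{E_j}(X)\) and use the second tail component \(\le F\). This gives a contribution of at most \(d_jF|E_j|\). After dividing by the normalization \(d_j^{p}\), it becomes \(F d_j^{1-p}|E_j|\le 2^{(j+1)(p-1)}FA^{1-p}|E_j|\), which is the \(F/A^{p-1}\) term. I would also add the harmless region \(X\in B_2\setminus B_{r_j}\), where \(\varphi=0\).

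The main obstacle I expect is the interior coercivity step: converting the tilt-dependent product density into \(\Psi_p\) of the normalized capped increment with constants independent of \(|P|\). The difficulty is that when \(|P|\) is large, the density is \(\lambda_P^{2-p}|P\cdot(X-Y)|^{p-2}t^2\), and on directions nearly orthogonal to \(P\) the weight degenerates the wrong way. I would first try to use the pointwise lower bound \eqref{eq:shifted-modular-lower}, which already holds for every \(|Z|\le4\). The point is that \(\lambda_P^{2-p}(|P\cdot Z|+|t|)^{p-2}\ge\lambda_P^{2-p}(4\lambda_P+|t|)^{p-2}\), so the coercivity is pointwise and no angular averaging is needed there. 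Angular averaging is needed only in the upper bound for the cutoff term. Finally, the lower-level statement follows by applying everything to \(-g\), which solves the equation shifted by \(-P\).
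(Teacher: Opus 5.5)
Your route is the paper's: test with $\eta_j^2$ times the capped truncation, use monotonicity of the truncation plus shifted coercivity, apply Young to the cutoff piece, use the angular bound for uniformity in the tilt, and use the second tail component for the exterior. However, two steps do not go through as written.

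The first problem is that the weight is not tracked consistently. You keep $\lambda_P^{2-p}$ and divide by $d_j^p$. In that normalization the interior density is comparable to $(\lambda_P/\lambda_{P/d_j})^{2-p}\,\mathcal H_{P/d_j,X-Y}$ evaluated at the capped increment. This prefactor approaches $d_j^{2-p}$ when $d_j>1$ and $|P|\gg d_j$. Your lower bound ``$\gtrsim\Psi_p$'' discards the prefactor, and your cutoff bound ``$\le C2^{2j}$ uniformly in $P$'' is then false. Indeed, \eqref{eq:shifted-modular-angular-upper} at tilt $P$ with amplitude $\sim d_j2^j$ gives $Cr^p(d_j^p2^{jp}+d_j^22^{2j})$. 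After division by $d_j^p$ the quadratic part retains $d_j^{2-p}\sim(A2^{-j})^{2-p}$, which cannot be absorbed into $Cb_0^j(1+F/A^{p-1})$ with $C$ independent of $A$.

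The paper avoids this by rescaling $\widetilde g=g/d_j$, $\widetilde P=P/d_j$ and using the intrinsic weight $\lambda_{\widetilde P}^{2-p}$ on both sides. Then \eqref{eq:shifted-modular-lower} and \eqref{eq:shifted-modular-angular-upper} apply at tilt $\widetilde P$ with $|t|\le C2^j$. The price moves into the tail, $F_j\le(\lambda_{\widetilde P}/\lambda_P)^{2-p}d_j^{1-p}F$, which needs a case split. For $d_j\ge1$ the bound is $d_j^{1-p}F$. For $d_j<1$ it is $d_j^{-1}F\le 2^{j+1}F/A^{p-1}$, using $A\ge1$ and $p<2$. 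Equivalently, you may keep your normalization, carry the factor $(\lambda_P/\lambda_{P/d_j})^{2-p}\ge\min\{1,d_j\}^{2-p}$ on the left, and divide by it at the end.

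The second and more serious problem is that the Young absorption of the cross piece breaks down wherever the cap is active. If $u_j(X)=u_j(Y)=1$, your first piece vanishes. The cross piece, however, equals $d_j\,\delta(\eta_j^2)\,\lambda_P^{2-p}[\Jp(a+\delta g)-\Jp(a)]$. It has no sign, it involves the uncapped increment $\delta g$, and there is nothing on the left to absorb it into. When only one endpoint is capped, $|\delta g|$ may likewise be much larger than $d_j|\delta u_j|$.

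Hence the reduction to $\lambda_P^{2-p}[D_\Phi(a;d_ju_j|\delta\eta_j|)+D_\Phi(a;-d_ju_j|\delta\eta_j|)]$ is unjustified on pairs with an endpoint in $\{g\ge k_j+d_j\}$. Bounding those pairs by $Cb_0^j|E_j|$ would need quantitative control of the flux of $g$ on that superlevel set, uniform in $P$, and the hypotheses do not supply it.

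Be aware that the paper's displayed two-point bound has exactly the same defect. Take $u(X)=u(Y)=1$, $\delta(\eta_j^2)(X,Y)>0$ and $\delta\widetilde g(X,Y)\to-\infty$. The right-hand side tends to $-\infty$, while the left-hand side is $\mathcal H_{\widetilde P,X-Y}(\delta\eta_j)>0$. So copying the paper's argument does not close this step; an additional ingredient is needed.
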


\begin{proof}
Fix \(j\), abbreviate \(d=d_j\), and rescale both the function and the
shift:
\[
 \widetilde g:=\frac gd,\qquad \widetilde P:=\frac Pd,\qquad
 u:=u_j=\min\{(\widetilde g-k_j/d)_+,1\}.
\]
By homogeneity, \(\widetilde g\) solves the equation shifted by
\(\widetilde P\).  If \(F_j\) denotes the second tail component of
\((\widetilde P,\widetilde g)\), then
\begin{equation}
 F_j\le F
 \begin{cases}
 d^{1-p},&d\ge1,\\
 d^{-1},&0<d<1,
 \end{cases}
 \le C2^j\frac{F}{A^{p-1}}.
 \label{eq:normalized-capped-tail}
\end{equation}
Indeed,
\(\mathcal D_{\widetilde P}\widetilde g
=d^{1-p}\mathcal D_Pg\).  If \(d\ge1\), use
\(\lambda_{P/d}\le\lambda_P\); if \(d<1\), use
\(\lambda_{P/d}\le d^{-1}\lambda_P\).

Test the rescaled equation with \(\eta_j^2u\), first with the kernel
truncated away from the diagonal.  The scalar monotonicity inequality,
the product version of \eqref{eq:shifted-modular-lower}, and the
\(\varepsilon\)-Young inequality for the shifted
modular give the following two-point bound after the two orientations
of a pair are added:
\begin{align*}
 &\mathcal H_{\widetilde P,X-Y}
       \bigl(\eta_j(X)u(X)-\eta_j(Y)u(Y)\bigr)\\
 &\quad\le C\lambda_{\widetilde P}^{2-p}
 \bigl[\Jp\bigl(\widetilde P\cdot(X-Y)
       +\delta\widetilde g(X,Y)\bigr)
       -\Jp\bigl(\widetilde P\cdot(X-Y)\bigr)\bigr]\\
 &\qquad\qquad\times
 \bigl[\eta_j(X)^2u(X)-\eta_j(Y)^2u(Y)\bigr]\\
 &\qquad+C\mathcal H_{\widetilde P,X-Y}
 \bigl((u(X)+u(Y))\delta\eta_j(X,Y)\bigr).
\end{align*}
Integration of this inequality gives
\begin{align}
 &(1-s)\iint_{B_2^2}
 \frac{\Psi_p(|\delta(\eta_ju)(X,Y)|)}
 {|X-Y|^{n+sp}}\dd X\!\dd Y\notag\\
 &\quad\le C(1-s)\iint_{B_2^2}
 \frac{\mathcal H_{\widetilde P,X-Y}
 \bigl((u(X)+u(Y))\delta\eta_j(X,Y)\bigr)}
 {|X-Y|^{n+sp}}\dd X\!\dd Y
 +CF_j\int_{B_{r_j}}u\dd X.
 \label{eq:unit-cap-caccioppoli}
\end{align}
This is the usual Caccioppoli splitting, but the bounded cap is
essential here.  To check the cutoff term uniformly in
\(\widetilde P\), write \(X-Y=r\omega\) and
\[
 (u(X)+u(Y))\delta\eta_j(X,Y)=r\,t_{X,r}(\omega).
\]
Since \(0\le u\le1\) and \(|\delta\eta_j|\le C2^jr\), one has
\(|t_{X,r}(\omega)|\le C2^j\).  If \(|\widetilde P|<1\), the global
\(p\)-growth estimate gives the required bound.  If
\(\widetilde P=qe\), \(q\ge1\), the pointwise quadratic estimate
\[
 r^{-p}\mathcal H_{\widetilde P,r\omega}
       (r t_{X,r}(\omega))
 \le C|e\cdot\omega|^{p-2}|t_{X,r}(\omega)|^2
\]
and \(\int_{\mathbb S^{n-1}}|e\cdot\omega|^{p-2}\dd\omega<\infty\)
show that the spherical integral is bounded by \(Cb_0^jr^p\).
The cutoff term vanishes unless at least one endpoint belongs to
\(E_j\).  Hence
\[
 (1-s)\int_0^4 r^{p-sp-1}\dd r\le C(p)
\]
shows that the first term on the right of
\eqref{eq:unit-cap-caccioppoli} is at most \(Cb_0^j|E_j|\).
The exterior term is bounded by \(F_j\int u\le F_j|E_j|\).
Now use \eqref{eq:normalized-capped-tail} and absorb \(2^j\) into
\(b_0^j\).  Removing the kernel truncation by monotone convergence
proves \eqref{eq:uniform-shifted-level-estimate}.  The argument for
\(-g\) is identical.
\end{proof}

\begin{lemma}[Shift-uniform local boundedness]
\label{lem:shift-uniform-local-boundedness}
Let \(v\) solve the shifted equation in \(B_2\).  If
\[
 \fint_{B_2}|v|^p\dd X\le1,\qquad
 \mathcal E_{s,Q}(v;B_2)\le1,\qquad
 \mathcal T^{\rm pt}_{s,Q}(v)\le L,
\]
then
\begin{equation}
 \|v\|_{L^\infty(B_{4/3})}
 \le C(n,p,s)\bigl(1+L^{1/(p-1)}\bigr).
 \label{eq:shift-uniform-local-boundedness}
\end{equation}
The constant is independent of \(Q\).
\end{lemma}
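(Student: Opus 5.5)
The plan is a De Giorgi iteration built from Lemmas~\ref{lem:uniform-shifted-level-estimate} and \ref{lem:fixed-gain-orlicz-hardy}, applied to \(g=v\) with shift \(P=Q\), with the level height \(A\) fixed at the end.

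\textbf{Step 1: the Caccioppoli bound.} The tail hypothesis \(\mathcal T^{\rm pt}_{s,Q}(v)\le L\) controls in particular its second component. Lemma~\ref{lem:uniform-shifted-level-estimate} therefore applies with \(F=L\), and it gives the Caccioppoli-type bound \eqref{eq:orlicz-hardy-caccioppoli-estimate} for the capped truncations \(u_j\). The constants are independent of \(Q\), because the \(\lambda_Q^{2-p}\) normalization is built into both the energy and the tail. The hypothesis \(\mathcal E_{s,Q}(v;B_2)\le1<\infty\) is used only qualitatively here, to justify testing the equation.

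\textbf{Step 2: the recursive inequality.} Lemma~\ref{lem:fixed-gain-orlicz-hardy} then yields
\[
 Y_{j+1}\le Cb^j\Bigl(1+\frac{L}{A^{p-1}}\Bigr)^{1+\theta}Y_j^{1+\theta}.
\]
Choose \(A\ge A_1(1+L^{1/(p-1)})\), where \(A_1=A_1(n,p,s)\) is still to be fixed. Then the factor \(1+L/A^{p-1}\le2\) is bounded by a structural constant. The smallness threshold \(\varepsilon_0\) for \(Y_1\) in the nonlinear sequence lemma is then structural.

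\textbf{Step 3: the first level set.} Since \(k_1=A/2\), Chebyshev and \(\fint_{B_2}|v|^p\le1\) give
\[
 Y_1\le\frac{|\{v>A/2\}\cap B_2|}{|B_2|}\le (2/A)^p\fint_{B_2}|v|^p\le 2^pA^{-p}.
\]
Taking \(A_1\) large makes \(Y_1\le\varepsilon_0\). Hence \(Y_j\to0\), and so \(|\{v>A\}\cap B_{4/3}|=0\), since \(r_j\downarrow 4/3\) and \(k_j\uparrow A\). The same argument on the lower sets \(\{-v>k_j\}\), which both lemmas explicitly allow, gives \(v\ge-A\) on \(B_{4/3}\). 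Together these give \eqref{eq:shift-uniform-local-boundedness} with \(C\asymp A_1\).

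\textbf{Main obstacle.} The delicate point is the dependence of the threshold on \(L\), so that the final bound grows only like \(L^{1/(p-1)}\). This works because the factor \((1+F/A^{p-1})\) enters the recursion multiplicatively with a fixed gain \(\theta\). One must check that \(\theta\) and \(b\) are fixed before \(A\) is chosen, so that the smallness condition \(Y_1\le\varepsilon_0\) is uniform. Lemma~\ref{lem:fixed-gain-orlicz-hardy} asserts exactly this.

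A second point to verify is that no large-slope dependence enters through the cutoff term. This is already handled in Lemma~\ref{lem:uniform-shifted-level-estimate} by the angular integrability of \(|e\cdot\omega|^{p-2}\), so I expect no additional work there.
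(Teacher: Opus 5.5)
Your proposal is correct and is essentially the paper's proof. Like the paper, you apply Lemma~\ref{lem:uniform-shifted-level-estimate} with \(F=L\), feed the resulting capped Caccioppoli bound into Lemma~\ref{lem:fixed-gain-orlicz-hardy} with \(A\) proportional to \(1+L^{1/(p-1)}\) so that the coefficient \((1+L/A^{p-1})^{1+\theta}\) stays bounded, start the iteration with Chebyshev's bound \(Y_1\le CA^{-p}\), and repeat the argument for \(-v\).
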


\begin{proof}
We first make explicit the coercivity used below.  If
\(|X-Y|\le4\), then
\begin{equation}
 \lambda_Q^{2-p}D_\Phi\bigl(Q\cdot(X-Y);t\bigr)
 \ge c\Psi_p(|t|),
 \label{eq:shift-uniform-orlicz-coercivity}
\end{equation}
with \(c=c(p)>0\), independently of \(Q\).  Indeed, if
\(|t|\le\lambda_Q\), use
\(|Q\cdot(X-Y)|+|t|\le C\lambda_Q\) in
\eqref{eq:large-slope-bregman-equivalence}; if
\(|t|>\lambda_Q\), the same equivalence gives a lower bound by
\(\lambda_Q^{2-p}|t|^p\ge|t|^p\).

Choose \(A=C_0(1+L^{1/(p-1)})\), where \(C_0\ge1\) will be fixed
below.  Apply Lemma~\ref{lem:uniform-shifted-level-estimate} to \(v\),
with this level amplitude \(A\), and set
\begin{equation*}
 r_j=\frac43+\frac16\,2^{-j},\qquad
 k_j=A(1-2^{-j}),\qquad
 E_j^+=\{v>k_j\}\cap B_{r_j},\qquad
 Y_j^+=\frac{|E_j^+|}{|B_2|}.
\end{equation*}
Its conclusion is precisely
\eqref{eq:orlicz-hardy-caccioppoli-estimate}, with \(F=L\).
Lemma~\ref{lem:fixed-gain-orlicz-hardy} consequently yields
\begin{equation*}
 Y_{j+1}^+\le Cb^j
 \left(1+\frac{L}{A^{p-1}}\right)^{1+\theta}
 (Y_j^+)^{1+\theta}\qquad(j\ge1),
\end{equation*}
where \(b>1\) and \(\theta>0\) are structural.  The coefficient in
parentheses is bounded, while Chebyshev's inequality gives
\[
 Y_1^+\le C A^{-p}\fint_{B_2}|v|^p\dd X\le CA^{-p}.
\]
Taking \(C_0\) sufficiently large in the standard iteration lemma gives
\(Y_j^+\to0\), hence \(v\le A\) almost everywhere in \(B_{4/3}\).
Applying the same argument to \(-v\) gives \(v\ge-A\) and proves
\eqref{eq:shift-uniform-local-boundedness}.
\end{proof}

\begin{lemma}[Large-slope scalar and angular bounds]
\label{lem:large-slope-scalar-angular}
There is \(C=C(p)\) such that, for \(a,b\in\R\),
\begin{align}
 D_\Phi(a;b)&\asymp (|a|+|b|)^{p-2}|b|^2,
 \label{eq:large-slope-bregman-equivalence}\\
 |\Jp(a+b)-\Jp(a)|
 &\le C\min\{|b|^{p-1},(|a|+|b|)^{p-2}|b|\}.
 \label{eq:large-slope-flux-bound}
\end{align}
Moreover, if \(Q=q e\), \(q\ge1\), \(e\in\mathbb S^{n-1}\), and
\(Z=r\omega\), then
\begin{equation}
 \lambda_Q^{2-p}
 |\Jp(Q\cdot Z+t)-\Jp(Q\cdot Z)|
 \le Cr^{p-2}|e\cdot\omega|^{p-2}|t|,
 \label{eq:large-slope-angular-flux}
\end{equation}
with the right-hand side interpreted as \(+\infty\) on the equator.
For \(a>1\),
\begin{equation}
 \sup_{e\in\mathbb S^{n-1}}
 \int_{\mathbb S^{n-1}}|e\cdot\omega|^{(p-2)a}\dd\omega<\infty
 \quad\Longleftrightarrow\quad (p-2)a>-1.
 \label{eq:large-slope-angular-integrability}
\end{equation}
\end{lemma}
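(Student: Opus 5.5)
The plan is to prove the four assertions in the order stated, all by elementary scalar calculus for $\Jp$ and $\Phi$ with $1<p<2$. I reduce everything to one-variable statements, then use homogeneity to normalize. Both $D_\Phi(a;b)$ and $(|a|+|b|)^{p-2}|b|^2$ are $p$-homogeneous in $(a,b)$, so it suffices to take $|a|+|b|=1$.

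For \eqref{eq:large-slope-bregman-equivalence}, write the Taylor formula
\[
 D_\Phi(a;b)=(p-1)b^2\int_0^1(1-\tau)|a+\tau b|^{p-2}\dd\tau .
\]
The upper bound is the standard integral estimate $\int_0^1|a+\tau b|^{p-2}\dd\tau\le C(|a|+|b|)^{p-2}$. This holds because $p-2>-1$ makes the possible zero of $a+\tau b$ integrable: the set of $\tau$ with $|a+\tau b|\le\epsilon(|a|+|b|)$ has length at most $2\epsilon(|a|+|b|)/|b|$, and one splits accordingly. For the lower bound, note that since $p<2$,
\[
 |a+\tau b|^{p-2}\ge(|a|+|b|)^{p-2},
\]
which gives $D_\Phi(a;b)\ge\tfrac{p-1}{2}(|a|+|b|)^{p-2}b^2$ directly. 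Thus the lower bound is trivial here, and the upper bound carries the content.

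For \eqref{eq:large-slope-flux-bound}, I use two facts. The first is the global $(p-1)$-Hölder continuity $|\Jp(a+b)-\Jp(a)|\le C|b|^{p-1}$, which follows from subadditivity of $t\mapsto t^{p-1}$ applied in the same-sign and opposite-sign cases. The second is the mean-value integral
\[
 \Jp(a+b)-\Jp(a)=(p-1)b\int_0^1|a+\tau b|^{p-2}\dd\tau ,
\]
combined with the same integral bound as above. Taking the minimum of the two gives the claim.

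For \eqref{eq:large-slope-angular-flux}, set $a=Q\cdot Z=qr\,e\cdot\omega$. Applying the second branch of the flux bound and dropping $|b|$ from $|a|+|b|$, which is legitimate because $p-2<0$, gives
\[
 |\Jp(a+t)-\Jp(a)|\le C(qr|e\cdot\omega|)^{p-2}|t|.
\]
Since $q\ge1$, we have $\lambda_Q=1+q\le2q$, so $\lambda_Q^{2-p}q^{p-2}\le 2^{2-p}$. This yields the stated bound, interpreted as $+\infty$ when $e\cdot\omega=0$.

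For \eqref{eq:large-slope-angular-integrability}, rotation invariance of the surface measure makes the integral independent of $e$. So the supremum is finite iff $\int_{\mathbb S^{n-1}}|\omega_n|^{-\gamma}\dd\omega<\infty$ with $\gamma=(2-p)a$. In the coordinate $t=\omega_n$ the surface measure is $c_n(1-t^2)^{(n-3)/2}\dd t$ on $(-1,1)$. The weight is integrable near $t=\pm1$ because $n\ge2$, and near $t=0$ the weight is comparable to a constant. Hence the integral is finite iff $\int_{-1}^1|t|^{-\gamma}\dd t<\infty$, i.e. iff $\gamma<1$, i.e. $(p-2)a>-1$.

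No step is a real obstacle. The only point needing care is the uniform integral bound $\int_0^1|a+\tau b|^{p-2}\dd\tau\lesssim(|a|+|b|)^{p-2}$ when $a$ and $b$ have opposite signs and $a+\tau b$ vanishes inside $[0,1]$. There I would integrate explicitly, $\int|\tau_0-\tau|^{p-2}\dd\tau\le C$ after factoring out $|b|^{p-2}$, and use $|b|\ge|a|$ in that case.
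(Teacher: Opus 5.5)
Your proposal is correct and follows essentially the paper's route: the Taylor and mean-value integral formulas, the global $(p-1)$-H\"older bound combined with the linearized branch, the specialization $a=Q\cdot Z$ with $\lambda_Q\le 2q$, and one-dimensional integration near the equator. Your lower bound via $|a+\tau b|\le|a|+|b|$ slightly streamlines the paper's split into $|a|\ge2|b|$ versus $|a|<2|b|$; the only slip is that the opposite-sign case of the H\"older bound uses $|a+b|,|a|\le|b|$ (monotonicity), not subadditivity.
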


\begin{proof}
Taylor's formula with integral remainder gives
\[
 D_\Phi(a;b)
 =(p-1)b^2\int_0^1(1-t)|a+tb|^{p-2}\dd t.
\]
Splitting into \(|a|\ge2|b|\) and \(|a|<2|b|\) proves
\eqref{eq:large-slope-bregman-equivalence}.  The global
\((p-1)\)-H\"older continuity of \(\Jp\), combined with the mean-value
formula away from zero, proves \eqref{eq:large-slope-flux-bound}.
The same formula with \(a=Q\cdot Z\), followed by
\(\lambda_Q\asymp q\), gives \eqref{eq:large-slope-angular-flux}.
Finally, in coordinates with \(e=e_1\), the only singularity in the
spherical integral is \(|\omega_1|^{(p-2)a}\) at \(\omega_1=0\);
one-dimensional integration gives \eqref{eq:large-slope-angular-integrability}.
\end{proof}

\begin{lemma}[Relative gluing for shifted Bregman energies]
\label{lem:uniform-shifted-bregman-gluing}
Let \(\Omega\) be bounded with \(\operatorname{diam}\Omega\le4\), and let
\(\eta\in W^{1,\infty}(\Omega)\) satisfy \(0\le\eta\le1\).  Put
\[
 I_\eta:=\{\eta=1\},\qquad O_\eta:=\{\eta=0\},\qquad
 C_\eta:=\{0<\eta<1\},
\]
\[
 \Gamma_\eta:=(C_\eta\times\Omega)\cup(\Omega\times C_\eta),
 \qquad
 \Xi_\eta:=(I_\eta\times O_\eta)\cup(O_\eta\times I_\eta),
\]
and assume that \(C_\eta\Subset\Omega\) and
\(d_\eta:=\operatorname{dist}(I_\eta,O_\eta)>0\).
For \(E\subset\Omega^2\), set
\[
 \mathscr G_{s,Q}(u;E):=(1-s)\lambda_Q^{2-p}
 \iint_E\frac{D_\Phi(Q\cdot(x-y);\delta u(x,y))}
 {|x-y|^{n+sp}}\dd x\!\dd y.
\]
Assume that \(\mathscr G_{s,Q}(v;\Omega^2)\),
\(\mathscr G_{s,Q}(z;I_\eta^2)\), and
\(\mathscr G_{s,Q}(z;\Gamma_\eta)\) are finite and that
\(z-v\in L^p(\Omega)\cap L^m(\Omega)\), where
\[
 a>1,\qquad (p-2)a>-1,\qquad m:=\frac{2a}{a-1}>2.
\]
If \(w:=\eta z+(1-\eta)v\), then, for every
\(\varepsilon\in(0,1)\),
\begin{align}
 &\mathscr G_{s,Q}(w;\Omega^2)-\mathscr G_{s,Q}(v;\Omega^2)
 \notag\\
 &\quad\le \mathscr G_{s,Q}(z;I_\eta^2)
       -\mathscr G_{s,Q}(v;I_\eta^2)
       +\varepsilon\mathscr G_{s,Q}(v;\Xi_\eta)\notag\\
 &\qquad+C\bigl[
 \mathscr G_{s,Q}(z;\Gamma_\eta)
 +\mathscr G_{s,Q}(v;\Gamma_\eta)\bigr]\notag\\
 &\qquad+C_{\varepsilon,\eta,\Omega}
 \bigl(\|z-v\|_{L^p(\Omega)}^p
       +\|z-v\|_{L^m(\Omega)}^2\bigr).
 \label{eq:uniform-shifted-bregman-gluing}
\end{align}
Here \(C=C(p)\).  The constants are independent of \(Q\) and are
locally uniform for \(s\) in compact subsets of \((0,1)\).  The last
constant may depend on \(d_\eta^{-1}\) and
\(\|\nabla\eta\|_\infty\); accordingly, in applications the
approximating-sequence limit is taken before the transition width tends
to zero.
\end{lemma}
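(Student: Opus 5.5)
The plan is to decompose \(\Omega^2\) according to the values of \(\eta\) at the two endpoints and estimate the density of \(w\) pairwise on each piece. Up to null sets,
\[
 \Omega^2=I_\eta^2\cup O_\eta^2\cup\Xi_\eta\cup\Gamma_\eta .
\]
On \(I_\eta^2\) one has \(w=z\). On \(O_\eta^2\) one has \(w=v\), so these two pieces contribute exactly \(\mathscr G_{s,Q}(z;I_\eta^2)\) on the left and cancel against the corresponding part of \(\mathscr G_{s,Q}(v;\Omega^2)\). It therefore remains to bound \(\mathscr G_{s,Q}(w;\Xi_\eta)\) and \(\mathscr G_{s,Q}(w;\Gamma_\eta)\), modulo the contributions of \(v\) on these same sets.

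\emph{The transition region \(\Gamma_\eta\).} Write
\[
 \delta w(x,y)=\eta(x)\delta z+(1-\eta(x))\delta v+\delta\eta(x,y)\,(z-v)(y).
\]
I would use the symmetrized density \(\mathcal H_{Q,x-y}\) of Lemma~\ref{lem:angularly-averaged-shifted-modular}, which is legitimate because the integration sets are symmetric. By \eqref{eq:large-slope-bregman-equivalence}, \(D_\Phi(a;t)\) is comparable to the even quantity \((|a|+|t|)^{p-2}t^2\), which is convex up to constants in \(t\). Since \(\eta(x)\in[0,1]\), the convex combination of \(\delta z\) and \(\delta v\) is therefore bounded by \(C[\mathcal H(\delta z)+\mathcal H(\delta v)]\). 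The \(\Delta_2\) estimate \eqref{eq:shifted-modular-delta-two} then absorbs the cutoff term at the price of
\[
 \mathcal H_{Q,x-y}\bigl(\delta\eta\,(z-v)(y)\bigr).
\]
This yields \(C[\mathscr G(z;\Gamma_\eta)+\mathscr G(v;\Gamma_\eta)]\) plus a cutoff error.

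The cutoff error is bounded using \(|\delta\eta|\le\|\nabla\eta\|_\infty r\) with \(r=|x-y|\). When \(|Q|\) is bounded relative to the increment, the \(p\)-growth bound from Lemma~\ref{lem:angularly-averaged-shifted-modular} gives \(C\|\nabla\eta\|^p_\infty\|z-v\|_p^p\). When \(|Q|\) is large, the quadratic branch gives a factor \(|e\cdot\omega|^{p-2}\,|(z-v)(y)|^2\). I would not integrate this angularly pointwise. Instead, apply Hölder in the pair \((x,y)\) with exponents \(a\) and \(a'=a/(a-1)\): the angular weight lies in \(L^a\) by \eqref{eq:large-slope-angular-integrability}, and \(|z-v|^2\) lies in \(L^{a'}\) exactly when \(z-v\in L^m\), \(m=2a'\). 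The radial integral \((1-s)\int_0^4 r^{p-sp-1}\,dr\) is finite and uniform in \(s\) on compact sets. This yields the term \(\|z-v\|_m^2\), with constants independent of \(Q\).

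\emph{The separated region \(\Xi_\eta\).} Take \(x\in I_\eta\) and \(y\in O_\eta\), so that \(\delta w=z(x)-v(y)=\delta v+(z-v)(x)\) and \(|x-y|\ge d_\eta\). Here the kernel is bounded by \(d_\eta^{-n-sp}\). I would use a Young-type inequality for the Bregman density, namely
\[
 D_\Phi(a;t+h)\le(1+\varepsilon)D_\Phi(a;t)+C_\varepsilon\bigl[\text{$p$-growth or quadratic term in }h\bigr].
\]
This follows from \eqref{eq:large-slope-bregman-equivalence} by splitting into \(|h|\le\varepsilon'|t|\) and the complementary case. Subtracting the \(v\)-contribution leaves \(\varepsilon\mathscr G(v;\Xi_\eta)\) plus an \(h\)-term. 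The \(h\)-term is integrated against the bounded kernel and estimated by \(C_{\varepsilon,\eta,\Omega}(\|z-v\|_p^p+\|z-v\|_m^2)\), again with the factor \(\lambda_Q^{2-p}|Q\cdot(x-y)|^{p-2}\) handled through the \(L^a\) angular integrability, uniformly in \(Q\).

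\textbf{Main obstacle.} The delicate point throughout is uniformity in \(Q\). The weight \(\lambda_Q^{2-p}(|Q\cdot Z|+|t|)^{p-2}\) blows up near the equator \(e\cdot\omega=0\), and it cannot be controlled pointwise. This is why the \(L^m\) norm enters the statement, and I expect the Hölder step pairing \(|e\cdot\omega|^{(p-2)a}\) with \(|z-v|^{m}\) to require the most care. In that step the integrals must be separated so that only \(y\) carries \(z-v\) and the angular integral is performed in \(x\).
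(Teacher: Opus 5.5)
Your proposal is correct and follows essentially the paper's proof: the same partition into $I_\eta^2$, $O_\eta^2$, $\Xi_\eta$, $\Gamma_\eta$, convexity plus $\Delta_2$ on $\Gamma_\eta$, an $\varepsilon$-Young inequality for the oriented density on $\Xi_\eta$, and for the errors $p$-growth versus the angular quadratic bound with the $L^a$--$L^{m/2}$ H\"older pairing. Two justifications need repair, though neither affects the conclusion. First, symmetry of $\Gamma_\eta$ does not license replacing the density by $\mathcal H_{Q,x-y}$: interchanging endpoints sends $(Q\cdot Z,t)$ to $(-Q\cdot Z,-t)$, not to $(Q\cdot Z,-t)$, as the paper stresses. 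Use instead the pointwise bound $D_\Phi(a;-t)\le C(p)D_\Phi(a;t)$ from \eqref{eq:large-slope-bregman-equivalence}. Second, the exact coefficient $1+\varepsilon$ on $\Xi_\eta$ does not follow from the two-sided comparability alone. In the case $|h|\le\varepsilon'|t|$ it needs the mean value theorem together with \eqref{eq:large-slope-flux-bound}; the paper obtains it instead from the scaling bound $t\partial_t\mathcal B\le\kappa_p\mathcal B$ and convexity.
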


\begin{proof}
Set
\[
 \mathcal B_{Q,Z}(t):=\lambda_Q^{2-p}
 D_\Phi(Q\cdot Z;t).
\]
The oriented density must be retained here: interchange of the two
endpoints replaces \((Q\cdot Z,t)\) by \((-Q\cdot Z,-t)\), and
\(D_\Phi(-a;-t)=D_\Phi(a;t)\).  In particular, that interchange does
not replace \(t\) by \(-t\) while keeping the base point fixed.
Convexity, the fact that \(\mathcal B_{Q,Z}(0)=0\), and the uniform
two-index bounds following from
\eqref{eq:large-slope-bregman-equivalence} give
\begin{align}
 \mathcal B_{Q,Z}(t+r)
 &\le(1+\varepsilon)\mathcal B_{Q,Z}(t)
      +C_\varepsilon\mathcal B_{Q,Z}(r),
 \label{eq:relative-gluing-epsilon-delta-two}\\
 \mathcal B_{Q,Z}(\theta t+(1-\theta)r)
 &\le\theta\mathcal B_{Q,Z}(t)
      +(1-\theta)\mathcal B_{Q,Z}(r),
 \qquad 0\le\theta\le1,
 \label{eq:relative-gluing-convex-combination}
\end{align}
with constants independent of \(Q\) and \(Z\).  For the first estimate,
the one-dimensional formula for \(D_\Phi\), together with
\eqref{eq:large-slope-bregman-equivalence}, gives an exponent
\(\kappa_p<\infty\), depending only on \(p\), such that
\[
 0\le t\,\partial_t\mathcal B_{Q,Z}(t)
 \le\kappa_p\mathcal B_{Q,Z}(t)\qquad(t\ne0).
\]
Integration along each ray gives the constant-one estimate
\[
 \mathcal B_{Q,Z}(\lambda t)
 \le\lambda^{\kappa_p}\mathcal B_{Q,Z}(t),
 \qquad \lambda\ge1.
\]
Write \(t+r=(1-\delta)t/(1-\delta)+\delta r/\delta\) and use convexity.
The first term is at most
\((1-\delta)^{1-\kappa_p}\mathcal B_{Q,Z}(t)\); choose
\(\delta=\delta(p,\varepsilon)\) so that this coefficient is at most
\(1+\varepsilon\).
We also use the crude consequence
\[
 \mathcal B_{Q,Z}(t+r)
 \le C_p\bigl[\mathcal B_{Q,Z}(t)+\mathcal B_{Q,Z}(r)\bigr].
\]
Set \(d=z-v\) and
write
\begin{equation}
 \delta w=\eta(x)\delta z+(1-\eta(x))\delta v
 +d(y)\delta\eta.
 \label{eq:gluing-increment-decomposition}
\end{equation}
On \(I_\eta^2\) one has \(w=z\), while on \(O_\eta^2\) one has
\(w=v\).  On pairs meeting \(C_\eta\), the preceding crude inequality,
\eqref{eq:relative-gluing-convex-combination}, and
\eqref{eq:gluing-increment-decomposition} give the two energy terms over
\(\Gamma_\eta\), together with the error containing
\(d(y)\delta\eta\).  On \(I_\eta\times O_\eta\), with \(x\in I_\eta\),
one instead has
\[
 \delta w=\delta v+d(x).
\]
Estimate \eqref{eq:relative-gluing-epsilon-delta-two} leaves
\(\varepsilon\mathscr G_{s,Q}(v;\Xi_\eta)\) and a separated-pair error
containing \(d(x)\).  Notice that the baseline interaction between the
pure inner and outer regions has not been discarded; this is precisely
what permits its cancellation in a local minimality comparison.

It remains to bound the two errors uniformly in the shift.  In polar
coordinates about \(x\), use
\(|\delta\eta|\le\|\nabla\eta\|_\infty r\).  If \(|Q|\le1\), global
\(p\)-growth gives the required \(L^p\)-bound.  If \(Q=qe\), \(q>1\),
then, for almost every \(\omega\),
\[
 r^{-p}\mathcal B_{Q,r\omega}(rt)
 \le C|e\cdot\omega|^{p-2}|t|^2.
\]
H\"older's inequality on the sphere, with exponents \(a\) and
\(a/(a-1)\), applies because \((p-2)a>-1\) and controls the variable
angular amplitude by its \(L^m\)-norm.  Jensen's inequality in \(x\),
followed by translation invariance, gives
\begin{align*}
 &C(1-s)\int_0^{\operatorname{diam}\Omega}r^{p-sp-1}\dd r\\
 &\qquad\times\left(
 \|\nabla\eta\|_\infty^p\|d\|_p^p
 +\|\nabla\eta\|_\infty^2\|d\|_m^2\right)
\end{align*}
for the cutoff error.  For the separated pairs, the same calculation, with
\(|d|/r\) in place of \(\|\nabla\eta\|_\infty|d|\), bounds the
separated-pair error.  The lower bound \(r\ge d_\eta\) is included
in \(C_{\varepsilon,\eta,\Omega}\).  Finally,
\((1-s)\int_0^{\operatorname{diam}\Omega}r^{p-sp-1}\dd r\) is locally
bounded.  This proves \eqref{eq:uniform-shifted-bregman-gluing}.
\end{proof}

Set
\begin{equation*}
 \tau_s:=sp-p+2.
\end{equation*}
Thus \(sp>p-1\) is equivalent to \(1<\tau_s<2\).

\begin{lemma}[Fixed-level clearing and compactness upgrade]
\label{lem:shift-uniform-fixed-level-clearing}
Let \(I\Subset(0,1)\), let
\(D'\Subset D_0\Subset D\subset\R^n\), with \(D\) bounded, and let
\(M\ge1\).  Suppose that \(s_j\in I\), \(Q_j\in\R^n\), and
\(\mathscr G_{s_j,Q_j}(v_j;D^2)<\infty\).  Assume that
\(f_j\in L^\infty(D_0)\),
\begin{equation}
 \begin{split}
 &(1-s_j)\lambda_{Q_j}^{2-p}
 \iint_{D\times D}
 \frac{\mathcal D_{Q_j}v_j(x,y)\delta\varphi(x,y)}
 {|x-y|^{n+s_jp}}\dd x\!\dd y
 =\int_{D_0}f_j\varphi\dd x
 \end{split}
 \label{eq:shifted-regional-source}
\end{equation}
for every \(\varphi\in C_c^\infty(D_0)\), and
\begin{equation}
 \sup_j\left(
 \|v_j\|_{L^\infty(D)}+\|f_j\|_{L^\infty(D_0)}\right)\le M.
 \label{eq:shifted-regional-source-bounds}
\end{equation}
If \(v_j\to v\) strongly in \(L^p_{\rm loc}(D_0)\) and \(v\) is
continuous in \(D_0\), then
\begin{equation}
 \|v_j-v\|_{L^\infty(D')}\longrightarrow0.
 \label{eq:shift-uniform-local-uniform-upgrade}
\end{equation}
Here and in the proof the \(L^\infty\)-norm is the essential-supremum
norm, and the conclusion is uniform with respect to the shifts \(Q_j\).
In the application in Proposition
\ref{prop:fixed-order-infinite-slope-compactness}, each
\(U_j(x):=Q_j\cdot x+v_j(x)\) is a weak solution of the full fractional
\(p\)-Laplace equation.  After discarding finitely many indices,
\(s_jp>p-1\), and
\cite[Theorem~1.1]{BiswasTopp} gives a locally Lipschitz, hence
continuous, representative of \(U_j\), and therefore of \(v_j\).
Thus the convergence in that application is uniform in the ordinary
pointwise sense.
\end{lemma}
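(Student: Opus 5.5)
The plan is to upgrade $L^p$ convergence to local uniform convergence with a De Giorgi argument applied to the differences $v_j-v$. Those differences do not solve a shifted equation, so we cannot use them directly. Instead we compare $v_j$ with the constants $v(x_0)\pm\epsilon$, using the continuity of $v$.

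Fix $\epsilon>0$ and $x_0\in\overline{D'}$. By continuity, choose $\rho>0$ with $B_{2\rho}(x_0)\Subset D_0$ and $|v-v(x_0)|\le\epsilon$ on $B_{2\rho}(x_0)$. The claim is that, for large $j$,
\[
v_j\le v(x_0)+2\epsilon+C(\epsilon,\rho)\,\omega_j \quad\text{a.e. on } B_\rho(x_0),
\]
where $\omega_j:=\|(v_j-v(x_0)-\epsilon)_+\|_{L^p(B_{2\rho}(x_0))}$. Strong $L^p$ convergence gives $\omega_j\to0$, and the symmetric argument gives the lower bound. A finite covering of $\overline{D'}$ then yields \eqref{eq:shift-uniform-local-uniform-upgrade}.

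To prove the one-sided bound we run the level-set scheme of Lemmas~\ref{lem:fixed-gain-orlicz-hardy} and~\ref{lem:uniform-shifted-level-estimate}, rescaled to $B_{2\rho}(x_0)$. The function is $g=v_j-v(x_0)-\epsilon$, with levels $k_i=A(1-2^{-i})$ for a small amplitude $A\in(0,\epsilon]$. The constant shift $v(x_0)+\epsilon$ does not affect $\mathcal D_{Q_j}$, so $g$ satisfies \eqref{eq:shifted-regional-source}. We test with $\eta_i^2u_i$, where $u_i$ is capped at $1$ exactly as in Lemma~\ref{lem:uniform-shifted-level-estimate}. This produces the Caccioppoli bound \eqref{eq:orlicz-hardy-caccioppoli-estimate} with two modifications.
\begin{itemize}
\item The source term contributes $\int f_j\eta_i^2u_i\le M|E_i|$. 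After the rescaling by $d_i$, this enters as a term of size $C2^iM/A^{p-1}$ (or $C2^iM/A$), which is the same form as $F/A^{p-1}$.
\item There is no exterior tail beyond $D$, because the form is regional on $D\times D$. Pairs with $y\in D\setminus B_{2\rho}(x_0)$ are separated from $\operatorname{supp}\eta_i$ by a distance of order $\rho$. By \eqref{eq:large-slope-flux-bound}, $\lambda_{Q}^{2-p}|\mathcal D_Q v_j|\le C\lambda_Q^{2-p}(|Q||x-y|)^{p-2}\cdot 2M$ for large $|Q|$, and $\le C M^{p-1}$ for bounded $Q$, both uniformly in $Q$. The angular singularity is integrable by \eqref{eq:large-slope-angular-integrability} with $a=1$. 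So this part is a tail $F\le C(M,\rho)$, uniformly in $j$ and in $Q_j$.
\end{itemize}
The coercivity \eqref{eq:shift-uniform-orlicz-coercivity} then feeds Lemma~\ref{lem:fixed-gain-orlicz-hardy}. The coefficient $(1+C(M,\rho)/A^{p-1})^{1+\theta}$ is bounded once $A=\epsilon$ is fixed. The starting quantity satisfies $Y_1\le CA^{-p}\rho^{-n}\omega_j^p$, which tends to $0$ as $j\to\infty$. Hence for large $j$ the nonlinear iteration gives $Y_i\to0$, that is, $v_j\le v(x_0)+2\epsilon$ on $B_\rho(x_0)$.

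The main obstacle is keeping every constant independent of $Q_j$ and of $s_j\in I$. For the interior cutoff terms this is handled by the angular estimate \eqref{eq:shifted-modular-angular-upper}. For the far pairs it is handled by the pointwise flux bound \eqref{eq:large-slope-angular-flux} combined with the $L^\infty$ bound $M$. The factor $(1-s_j)$ and the Sobolev constant of Lemma~\ref{lem:fixed-gain-orlicz-hardy} are uniform for $s_j$ in the compact set $I$.

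The remark about the application is then immediate. By \cite[Theorem~1.1]{BiswasTopp}, $v_j$ has a continuous representative, so essential-supremum convergence coincides with pointwise uniform convergence.
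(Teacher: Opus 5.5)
Your proposal is correct and is essentially the paper's argument. It is a fixed-amplitude De Giorgi clearing built from the capped level scheme of Lemmas~\ref{lem:fixed-gain-orlicz-hardy} and~\ref{lem:uniform-shifted-level-estimate}, with the source and the separated regional pairs bounded uniformly in $Q_j$ (and $s_j\in I$) by the scalar and angular flux bounds together with $\|v_j\|_{L^\infty(D)}\le M$. It is applied at the levels $v(x_0)\pm\varepsilon$ supplied by continuity of $v$, with strong $L^p$ convergence giving the initial smallness and a finite covering finishing the proof.

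The differences are cosmetic. The paper normalizes the amplitude to one, so the shift becomes $rQ_j/A$, and states the first step as a measure threshold; you keep the small amplitude and use Chebyshev. Two small points: the iteration yields the threshold conclusion $v_j\le v(x_0)+2\varepsilon$ for large $j$, not a bound linear in $\omega_j$. Also, your far-pair display is missing the factor $|e\cdot\omega|^{p-2}$, although you do integrate it correctly afterwards.
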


\begin{proof}
We first isolate the fixed-level clearing estimate used in the
compactness argument.  Fix
\(B_{2r}(x_0)\Subset D_0\) and an amplitude \(A>0\).  There is
\(\delta_A>0\), depending on \(A,M,I\) and the fixed geometry, but not
on \(Q_j\), such that
\begin{equation}
 \begin{split}
 &\left|\{v_j>k+A/2\}\cap B_{3r/2}(x_0)\right|
 \le\delta_A|B_{2r}|\\
 &\hspace{30mm}\Longrightarrow\qquad
 \operatorname*{ess\,sup}_{B_{4r/3}(x_0)}v_j\le k+A.
 \end{split}
 \label{eq:fixed-level-clearing}
\end{equation}
The corresponding lower-level statement is
\[
 \left|\{v_j<k-A/2\}\cap B_{3r/2}(x_0)\right|
 \le\delta_A|B_{2r}|
 \quad\Longrightarrow\quad
 \operatorname*{ess\,inf}_{B_{4r/3}(x_0)}v_j\ge k-A.
\]

To prove it, set
\[
 g(X):=\frac{v_j(x_0+rX)-k}{A},
 \qquad P:=\frac{rQ_j}{A}.
\]
The normalized source is
\[
 f_{r,A}(X)
 :=r^{s_jp}A^{1-p}
 \left(\frac{\lambda_P}{\lambda_{Q_j}}\right)^{2-p}
 f_j(x_0+rX).
\]
Since
\[
 \frac{\lambda_P}{\lambda_{Q_j}}\le1+\frac rA,
\]
one has
\begin{equation}
 \|f_{r,A}\|_\infty
 \le C M\left(r^{s_jp}A^{1-p}
              +r^{s_jp-p+2}A^{-1}\right).
 \label{eq:fixed-level-scaled-source}
\end{equation}
For fixed \(r,A\), this is bounded uniformly in \(j\) and \(Q_j\).

When the regional equation is tested in \(B_2\), the pairs whose second
endpoint belongs to the rescaled set
\((D-x_0)/r\setminus B_2\) form a separated exterior term.  Since
\(|v_j|\le M\), the angular flux estimate in
Lemma~\ref{lem:large-slope-scalar-angular}, integrated over the
separated radial range, gives
\begin{equation}
 \operatorname*{ess\,sup}_{X\in B_{3/2}}
 \int_{(D-x_0)/r\setminus B_2}
 \frac{\lambda_P^{2-p}|\mathcal D_Pg(X,Y)|}
 {|X-Y|^{n+s_jp}}\dd Y
 \le C_{A,M,I,r,D}.
 \label{eq:fixed-level-regional-tail}
\end{equation}
Indeed, with \(T=2M/A\) and \(R_*=C(D,r)\), the global H\"older
branch for \(|P|<1\) and the angular branch for \(|P|\ge1\) are
bounded together by
\[
 C\left[
 T^{p-1}\int_{1/2}^{R_*}\rho^{-1-s_jp}\dd\rho
 +T\int_{1/2}^{R_*}\rho^{p-3-s_jp}\dd\rho\right].
\]
Both integrals are uniformly finite because their lower endpoint is
separated from zero and \(s_j\in I\Subset(0,1)\).
This is the only point in the level estimate where the shift-specific
angular bound is used; both the source and tail bounds are independent
of \(Q_j\).

Apply the proof of Lemma~\ref{lem:uniform-shifted-level-estimate} to
the levels of \(g\) between \(1/2\) and \(1\).  More precisely, take
the level sequence, capped functions, sets and cutoffs in Lemma
\ref{lem:fixed-gain-orlicz-hardy}, with \(A=1\), and denote them by
\(k_\ell,u_\ell,E_\ell,\eta_\ell\), respectively.  The source term
\eqref{eq:fixed-level-scaled-source} is placed on the right-hand side,
and \eqref{eq:fixed-level-regional-tail} controls the crossed pairs.
Indeed, the test satisfies \(0\le\eta_\ell^2u_\ell\le1\) and is
supported in \(E_\ell\); hence the source and crossed contributions are
bounded, respectively, by
\(\|f_{r,A}\|_\infty|E_\ell|\) and
\(C_{A,M,I,r,D}|E_\ell|\).
Thus
\[
 (1-s_j)\iint_{B_2^2}
 \frac{\Psi_p(|\delta(\eta_\ell u_\ell)(X,Y)|)}
 {|X-Y|^{n+s_jp}}\dd X\!\dd Y
 \le C_Ab^\ell|E_\ell|,
\]
with \(C_A,b\) independent of \(j,Q_j\).  Lemma
\ref{lem:fixed-gain-orlicz-hardy} now gives an initial measure threshold
\(\delta_A>0\) for which the upper levels disappear in \(B_{4/3}\).
This is \eqref{eq:fixed-level-clearing}.  Applying the same argument to
\(-v_j\), with shift \(-Q_j\) and source \(-f_j\), gives the lower
assertion.  Notice that \(A\) is fixed throughout this argument; no
vanishing cap is renormalized.

It remains to use the clearing estimate.  Fix \(x_0\in D_0\) and
\(\varepsilon>0\).  By continuity of \(v\), choose \(r>0\) such that
\(B_{2r}(x_0)\Subset D_0\) and
\[
 \osc_{B_{2r}(x_0)}v\le\frac{\varepsilon}{4}.
\]
In \eqref{eq:fixed-level-clearing} take
\[
 A=\frac{\varepsilon}{2},
 \qquad k=v(x_0)+\frac{\varepsilon}{4}.
\]
If \(v_j>k+A/2=v(x_0)+\varepsilon/2\), then
\(|v_j-v|>\varepsilon/4\) on \(B_{2r}(x_0)\).  Strong \(L^p\)
convergence makes the measure of this set smaller than the fixed
threshold \(\delta_A|B_{2r}|\) for all sufficiently large \(j\).
Therefore
\[
 \operatorname*{ess\,sup}_{B_{4r/3}(x_0)}v_j
 \le v(x_0)+\frac{3\varepsilon}{4}.
\]
The lower clearing estimate, applied with
\(k=v(x_0)-\varepsilon/4\), gives the corresponding lower bound.
Since \(v\) oscillates by at most \(\varepsilon/4\) on this ball,
\(\|v_j-v\|_{L^\infty(B_{4r/3}(x_0))}\le\varepsilon\) for all large
\(j\).  A finite covering of \(D'\) proves
\eqref{eq:shift-uniform-local-uniform-upgrade}.
\end{proof}

\begin{proposition}[Fixed-order infinite-slope compactness]
\label{prop:fixed-order-infinite-slope-compactness}
Let \(s_j\to s\in(0,1)\), \(Q_j=q_je_j\),
\(q_j\to\infty\), and \(e_j\to e\in\mathbb S^{n-1}\).
Suppose that \(v_j\) solves the shifted equation in \(B_2\) and, for
some \(L<\infty\),
\[
 \|v_j\|_{L^\infty(B_{4/3})}
 +\mathcal E_{s_j,Q_j}(v_j;B_2)
 +\mathcal T^{\rm pt}_{s_j,Q_j}(v_j)\le L.
\]
Then, after a subsequence, \(v_j\to v\) locally uniformly in
\(B_{5/4}\) and strongly in \(L^p_{\rm loc}(B_{4/3})\).  Put
\(D=B_{4/3}\).  In \(B_{5/4}\),
the limit solves the regional equation
\begin{equation}
 L^D_{s,e}v=f,\qquad f\in L^\infty(B_{5/4}),
 \label{eq:large-slope-limit-equation}
\end{equation}
whose weak formulation is
\begin{equation}
 (p-1)(1-s)
 \iint_{D\times D}
 \frac{|e\cdot(X-Y)|^{p-2}\delta v(X,Y)\delta\varphi(X,Y)}
 {|X-Y|^{n+sp}}\dd X\!\dd Y
 =\int_Df\varphi\dd X,
 \label{eq:large-slope-limit-form}
\end{equation}
for every \(\varphi\in C_c^\infty(B_{5/4})\).
Its order is \(\tau_s\).  On every \(B\Subset B_{5/4}\), the normalized
energies converge.  More precisely, if \(m=c+b\cdot X\) is fixed, then
\begin{equation}
 \begin{split}
 &\mathcal E_{s_j,Q_j+b}(v_j-m;B)\\
 &\quad\longrightarrow
 \frac{(p-1)(1-s)}2
 \iint_{B\times B}
 \frac{|e\cdot(X-Y)|^{p-2}
       |\delta(v-m)(X,Y)|^2}
 {|X-Y|^{n+sp}}\dd X\!\dd Y.
 \end{split}
 \label{eq:infinite-slope-recentered-energy-convergence}
\end{equation}
\end{proposition}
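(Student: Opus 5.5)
The plan has four steps: compactness of the $v_j$, identification of the limiting regional equation, uniform convergence via Lemma~\ref{lem:shift-uniform-fixed-level-clearing}, and convergence of the energies.

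\textbf{Compactness.} By the coercivity \eqref{eq:shift-uniform-orlicz-coercivity} and $\mathcal E_{s_j,Q_j}(v_j;B_2)\le L$,
\[
(1-s_j)\iint_{B_2^2}\Psi_p(|\delta v_j|)\,|X-Y|^{-n-s_jp}\,dX\,dY\le CL .
\]
Since $\|v_j\|_{L^\infty(B_{4/3})}\le L$, one has $|\delta v_j|\le 2L$ on $B_{4/3}^2$, so $\Psi_p(|\delta v_j|)\ge c_L|\delta v_j|^2$ there. This gives a uniform $W^{s_jp/2,2}(B_{4/3})$ bound. Because $s_j\to s\in(0,1)$, the fractional Rellich theorem yields a subsequence converging strongly in $L^2$, hence in $L^p_{\rm loc}(B_{4/3})$, to some $v$. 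Lower semicontinuity then puts $v$ in the energy class of the limit form.

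\textbf{Limit equation.} We localize to $D=B_{4/3}$. Writing the full equation tested with $\varphi\in C_c^\infty(B_{5/4})$ in the form \eqref{eq:shifted-regional-source}, we set
\[
f_j(X):=-2(1-s_j)\lambda_{Q_j}^{2-p}\int_{\R^n\setminus D}\mathcal D_{Q_j}v_j(X,Y)\,|X-Y|^{-n-s_jp}\,dY .
\]
The first component of $\mathcal T^{\rm pt}$ bounds $\|f_j\|_{L^\infty(B_{5/4})}\le CL$, so $f_j\rightharpoonup^* f$ after a further subsequence. For the interior term we use pointwise linearization: $\lambda_{Q_j}^{2-p}\sim q_j^{2-p}$, and for $e_j\cdot\omega\neq0$,
\[
\lambda_{Q_j}^{2-p}\,\mathcal D_{Q_j}v_j\;\to\;(p-1)\,|e\cdot(X-Y)|^{p-2}\,\delta v ,
\]
pointwise almost everywhere along the subsequence, since $\delta v_j$ is bounded while $Q_j\cdot(X-Y)$ diverges off the equator. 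Domination comes from \eqref{eq:large-slope-angular-flux}: the integrand is bounded by $C|e_j\cdot\omega|^{p-2}r^{p-2}|\delta v_j|\,|\delta\varphi|$. I would control it with a Vitali argument, using H\"older in the angle with exponent $a$, $(p-2)a>-1$, from \eqref{eq:large-slope-angular-integrability}, together with $|\delta\varphi|\le Cr$ and $|\delta v_j|\le 2L$. The radial integrand is then $r^{p-1-s_jp-1}\cdot r$, which is integrable near $0$. Uniform integrability in $j$ follows from $\sup_e\int|e\cdot\omega|^{(p-2)a}\,d\omega<\infty$, even though $e_j$ varies; the near-equator region is thereby handled. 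This yields \eqref{eq:large-slope-limit-form}.

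\textbf{Uniform convergence.} Continuity of $v$ comes from interior H\"older regularity of the linear regional stable operator with bounded right-hand side; the kernel $|e\cdot z|^{p-2}|z|^{-n-sp}$ is homogeneous of order $\tau_s$, with an angular density bounded below and integrable. Lemma~\ref{lem:shift-uniform-fixed-level-clearing} applied with $D=B_{4/3}$, $D_0=B_{5/4}$ then upgrades the convergence to locally uniform convergence.

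\textbf{Energy convergence.} This is the main obstacle, because weak convergence gives only $\liminf\ge$. For the $\liminf$: the recentered density satisfies
\[
\lambda^{2-p}D_\Phi\bigl((Q_j+b)\cdot Z;\delta(v_j-m)\bigr)\to\tfrac{p-1}{2}|e\cdot Z|^{p-2}|\delta(v-m)|^2
\]
pointwise (since $\lambda_{Q_j+b}/\lambda_{Q_j}\to1$), and Fatou applies. For the $\limsup$ I would test the equation for $v_j$ with $\eta(v_j-v)$-type functions, using convexity of the shifted energy. Concretely, I would use local minimality of $v_j$ for its Bregman energy relative to exterior data and compare it with the glued competitor $w_j=\eta\tilde v+(1-\eta)v_j$ from Lemma~\ref{lem:uniform-shifted-bregman-gluing}, where $\tilde v$ is a smooth approximation of $v$. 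For smooth $\tilde v$, dominated convergence via \eqref{eq:shifted-modular-angular-upper} gives convergence of the $\tilde v$-energies. The gluing errors vanish because $\|v_j-\tilde v\|_{L^p\cap L^m}$ is small by the uniform convergence, and the transition-layer terms are small as the layer shrinks; as the lemma notes, $j\to\infty$ is taken before the transition width goes to zero. The cross interaction terms cancel. This gives $\limsup\mathcal E\le$ the limit energy on $B$; subtracting a fixed affine $m$ only shifts $Q_j$ to $Q_j+b$, so the same argument applies. The delicate point I expect is making the minimality comparison legitimate with the exterior and tail contributions, which are controlled by $\mathcal T^{\rm pt}$.
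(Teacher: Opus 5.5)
Most of your plan matches the paper's proof: compactness from the coercivity \eqref{eq:shift-uniform-orlicz-coercivity}, the crossed interaction packaged as a bounded source $f_j$, continuity of the stable limit combined with Lemma~\ref{lem:shift-uniform-fixed-level-clearing} for the uniform upgrade, and Fatou plus a glued competitor with local minimality for the energy. The real difference is how you identify the limit equation. You pass to the limit directly in the weak formulation; the paper instead passes local minimality of $\mathscr G_j(w)-\int f_jw$ to the limit through a simultaneous recovery sequence.

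\textbf{The gap.} Your justification of the direct passage fails in exactly the regime that matters. Near the diagonal you use only $|\delta v_j|\le 2L$ and $|\delta\varphi|\le Cr$. Then \eqref{eq:large-slope-angular-flux} bounds the integrand, in polar coordinates, by $C|e_j\cdot\omega|^{p-2}\,r^{p-2}\cdot r\cdot r^{-n-s_jp}\cdot r^{n-1}=C|e_j\cdot\omega|^{p-2}r^{-\tau_{s_j}}$. Since $\tau_{s_j}=s_jp-p+2>1$ when $sp>p-1$, the radial integral diverges at $0$. Your factor $r^{p-1-s_jp-1}\cdot r$ carries one power of $r$ too many. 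No uniform Lipschitz bound is available for the $v_j$, because any such bound grows with $|Q_j|$. So $L^\infty$ information cannot control the near-diagonal part of an operator of order $\tau_s>1$. As written, the limit equation is therefore unproved, and so are the continuity of $v$ and the uniform-convergence upgrade you build on it.

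\textbf{The fix.} Use the energy bound with Cauchy--Schwarz near the diagonal.
\begin{itemize}
\item Write $\lambda_{Q_j}^{2-p}\mathcal D_{Q_j}v_j=\lambda_{Q_j}^{2-p}c_j\,\delta v_j$, where $c_j=(p-1)\int_0^1|Q_j\cdot(X-Y)+t\,\delta v_j|^{p-2}\dd t$.
\item With $a=Q_j\cdot(X-Y)$ and $b=\delta v_j$, one has $c_j\asymp(|a|+|b|)^{p-2}$. Hence $c_jb^2\asymp D_\Phi(a;b)$ by \eqref{eq:large-slope-bregman-equivalence}, and also $\lambda_{Q_j}^{2-p}c_j\le Cr^{p-2}|e_j\cdot\omega|^{p-2}$.
\item Cauchy--Schwarz then bounds the pairs with $|X-Y|<\delta$ by $C\,\mathcal E_{s_j,Q_j}(v_j;B_2)^{1/2}\delta^{(2-\tau_{s_j})/2}$, uniformly in $j$.
\item The limit form obeys the same bound, because $v$ has finite limit energy by Fatou.
\item Off the diagonal, your Vitali argument with the uniform angular $L^a$ bound works as stated.
\end{itemize}
With this repair your direct route is valid and shorter than the paper's variational passage.

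\textbf{Two smaller points in the $\limsup$ step.}
\begin{itemize}
\item The collar terms $\mathscr G_j(v_j;\Gamma_\eta)$ are not small just because the collar is thin. Smallness uniform in $j$ is exactly the no-defect statement you are proving. You need a selection: either the paper's pigeonhole over $N$ disjoint subannuli, which gives a collar with energy at most $C/N$ for all $j$, or a weak-$*$ limit of the energy measures together with a radius carrying no limit mass.
\item The angular singularity $|e_j\cdot\omega|^{p-2}$ moves with $j$. Convergence of the smooth-approximant energies therefore needs Vitali with the uniform $L^a$ bound, not plain dominated convergence.
\end{itemize}
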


\begin{proof}
We divide the argument into five steps.

\emph{Step 1: equicoercivity.}
For \(|X-Y|\le4\), \eqref{eq:shift-uniform-orlicz-coercivity} and
\eqref{eq:orlicz-two-indices} give a uniform bound for
\[
 (1-s_j)\iint_{B_2^2}
 \frac{\Psi_p(|\delta v_j(X,Y)|)}
 {|X-Y|^{n+s_jp}}\dd X\!\dd Y.
\]
Fix \(B\Subset B_{4/3}\) and
\(0<t<\frac12\inf_js_jp\), after discarding finitely many indices.
On \(\{|\delta v_j|>1\}\), the preceding modular directly controls the
\(W^{t,p}\) integrand.  On \(\{|\delta v_j|\le1\}\), H\"older's
inequality with exponents \(2/p\) and \(2/(2-p)\) gives
\begin{equation*}
 [v_j]_{W^{t,p}(B)}^p
 \le C\left(1+
 \iint_{B_2^2}
 \frac{\Psi_p(|\delta v_j|)}{|X-Y|^{n+s_jp}}
 \dd X\!\dd Y\right).
\end{equation*}
The integrability of the remaining radial factor is exactly
\(2t<s_jp\).  Rellich compactness and the uniform local bound therefore
give, after a subsequence,
\begin{equation}
 v_j\to v\quad\hbox{strongly in }L^p(B)
 \quad\hbox{and almost everywhere in }B.
 \label{eq:infinite-slope-strong-compactness}
\end{equation}
For \(X\in B_{5/4}\), introduce already at this stage the crossed
interaction
\begin{equation}
 f_j(X):=-2(1-s_j)\int_{\R^n\setminus B_{4/3}}
 \frac{\lambda_{Q_j}^{2-p}\mathcal D_{Q_j}v_j(X,Y)}
 {|X-Y|^{n+s_jp}}\dd Y.
 \label{eq:large-slope-crossed-source}
\end{equation}
Splitting the full weak equation into \(B_{4/3}^2\) and crossed pairs
gives
\begin{equation*}
 (1-s_j)\lambda_{Q_j}^{2-p}
 \iint_{B_{4/3}^2}
 \frac{\mathcal D_{Q_j}v_j(X,Y)\delta\varphi(X,Y)}
 {|X-Y|^{n+s_jp}}\dd X\!\dd Y
 =\int_{B_{5/4}}f_j\varphi\dd X
\end{equation*}
for every \(\varphi\in C_c^\infty(B_{5/4})\).  The separation of
\(B_{5/4}\) and \(\R^n\setminus B_{4/3}\), together with the first
component of \eqref{eq:large-slope-pointwise-tail}, gives
\begin{equation}
 \|f_j\|_{L^\infty(B_{5/4})}\le CL.
 \label{eq:large-slope-crossed-source-bound}
\end{equation}
After discarding finitely many indices, the numbers \(s_j\) belong to a
fixed compact subset of \((0,1)\).  We defer the upgrade from strong
\(L^p_{\rm loc}\) to local uniform convergence until Step~4, where the
continuity of the limiting stable solution is available.

\emph{Step 2: convergence of the regional forms.}
For fixed \(h\ne0\) and \(t\in\R\), Taylor's formula gives, for almost
every \(h\),
\begin{equation}
 \lambda_{Q_j}^{2-p}D_\Phi(Q_j\cdot h;t)
 \longrightarrow\frac{p-1}{2}|e\cdot h|^{p-2}t^2.
 \label{eq:infinite-slope-pointwise-density}
\end{equation}
Choose \(a>1\) such that \((p-2)a>-1\).  On each annulus
\(\varepsilon<|h|<R\), Lemma
\ref{lem:large-slope-scalar-angular} and
\eqref{eq:large-slope-angular-integrability} give an \(L^a\) angular
majorant, uniformly in \(j\).  This bound will be used for the recovery
sequence and for the separated-pair passage below.  The local
\(L^\infty\) bound upgrades
\eqref{eq:infinite-slope-strong-compactness} to strong \(L^2\)
convergence on compact subsets and, with
\(m=2a/(a-1)\), also to strong \(L^m\) convergence.  This is the
integrability dual to the angular \(L^a\)-bound.

For the lower bound, take a subsequence realizing the liminf in
\eqref{eq:infinite-slope-form-liminf}.  After a further subsequence,
\eqref{eq:infinite-slope-strong-compactness} gives
\(v_j\to v\) almost everywhere in \(B\), and hence
\(\delta v_j(X,Y)\to\delta v(X,Y)\) for almost every
\((X,Y)\in B^2\).  For almost every such pair one also has
\(e\cdot(X-Y)\ne0\); Taylor's formula in
\eqref{eq:infinite-slope-pointwise-density} remains valid with
\(t=\delta v_j(X,Y)\).  All Bregman densities are nonnegative, so
Fatou's lemma on \(B^2\cap\{|X-Y|>\varepsilon\}\), followed by monotone
convergence as \(\varepsilon\downarrow0\), yields
\begin{align}
 &\frac{(p-1)(1-s)}2
 \iint_{B^2}
 \frac{|e\cdot(X-Y)|^{p-2}|\delta v|^2}
 {|X-Y|^{n+sp}}\dd X\!\dd Y\notag\\
 &\qquad\le\liminf_{j\to\infty}
 (1-s_j)\lambda_{Q_j}^{2-p}
 \iint_{B^2}
 \frac{D_\Phi(Q_j\cdot(X-Y);\delta v_j)}
 {|X-Y|^{n+s_jp}}\dd X\!\dd Y.
 \label{eq:infinite-slope-form-liminf}
\end{align}
To justify the recovery part, observe that
\[
 \frac{|e\cdot h|^{p-2}}{|h|^{n+sp}}
 =\frac{|e\cdot\omega|^{p-2}}{|h|^{n+\tau_s}}.
\]
The angular density is bounded below by one and has finite spherical
mass.  More precisely, the Fourier symbol of the full quadratic form is
\[
 \mathfrak m_e(\xi)
 =c_{n,\tau_s}\int_{\mathbb S^{n-1}}
 |\xi\cdot\omega|^{\tau_s}|e\cdot\omega|^{p-2}\dd\omega,
\]
and the lower bound and finite mass of the spectral density give
\(c|\xi|^{\tau_s}\le\mathfrak m_e(\xi)\le C|\xi|^{\tau_s}\).
After localization and extension from a ball, the associated energy is
therefore equivalent to the standard \(H^{\tau_s/2}\) energy.  Smooth
functions are dense in this space.  For a smooth function, pointwise
convergence away from the limiting equator and the uniform angular
\(L^a\)-bound in \eqref{eq:large-slope-angular-integrability} give
angular convergence by Vitali's theorem.  Radial dominated convergence
then applies; the radial majorant is integrable precisely because
\(\tau_s<2\).  A
diagonal smooth approximation supplies a recovery sequence.  Thus the
regional functionals have the strong-\(L^2\) \(\Gamma\)-liminf and
recovery properties on every \(B\Subset B_{4/3}\).  These are the only
variational convergence properties used below.

\emph{Step 3: the exterior functional.}
For the crossed interaction \(f_j\) introduced in
\eqref{eq:large-slope-crossed-source}, estimate
\eqref{eq:large-slope-crossed-source-bound} gives weak-star compactness.
After a subsequence,
\(f_j\stackrel{*}{\rightharpoonup}f\) in \(L^\infty(B_{5/4})\).
When convenient, \(f_j\) and \(f\) are extended by zero to \(D\); all
tests below are supported in \(B_{5/4}\).
Consequently, for every \(\varphi\in C_c^\infty(B_{5/4})\),
\begin{equation*}
 \int f_j\varphi\dd X\longrightarrow\int f\varphi\dd X.
\end{equation*}
No convergence of the exterior values of \(v_j\) is needed here; their
entire contribution is encoded in the bounded functions \(f_j\).

\emph{Step 4: identification of the equation and exclusion of an energy
defect.}
Let \(D=B_{4/3}\).  After multiplication of the shifted equation by
\((1-s_j)\lambda_{Q_j}^{2-p}\), its regional part satisfies
\begin{equation}
 \delta\mathscr G_j(v_j)[\varphi]=\int_Df_j\varphi\dd X,
 \qquad \varphi\in C_c^\infty(B_{5/4}),
 \label{eq:regional-euler-large-slope}
\end{equation}
where
\begin{equation*}
 \mathscr G_j(w):=(1-s_j)\lambda_{Q_j}^{2-p}
 \iint_{D\times D}
 \frac{D_\Phi(Q_j\cdot(X-Y);\delta w(X,Y))}
 {|X-Y|^{n+s_jp}}\dd X\!\dd Y.
\end{equation*}
For \(E\subset D^2\), we use
\(\mathscr G_j(u;E):=\mathscr G_{s_j,Q_j}(u;E)\) for the restriction
of this integral to \(E\), and set
\[
 \mathscr G_\infty(u;E):=
 \frac{(p-1)(1-s)}2
 \iint_E\frac{|e\cdot(X-Y)|^{p-2}|\delta u(X,Y)|^2}
 {|X-Y|^{n+sp}}\dd X\!\dd Y.
\]
Thus \(v_j\) locally minimizes
\(\mathscr G_j(w)-\int_Df_jw\) against variations supported in
\(B_{5/4}\).  The sign in the definition of \(f_j\) was chosen so that
\eqref{eq:regional-euler-large-slope} has this form.

For completeness, we spell out the localization which rules out a
diagonal energy defect.  Fix
\(0<r<r_0<r_1<r'<r''<5/4\).  The radii \(r_1,r'\) may be chosen from
the full-measure set on which the localized recovery energies
converge.  We use a bounded interior regularization rather than a
Markov truncation, since the latter need not decrease a Bregman energy
with an oriented base point.  Choose a cutoff which is one on a
neighborhood of \(\overline B_{r'}\) and is compactly supported in
\(B_{r''}\), extend the resulting bounded function by zero, and
mollify it.  This gives smooth functions \(v^{(k)}\) on \(B_{r'}\)
such that
\[
 \|v^{(k)}\|_{L^\infty(B_{r'})}\le\|v\|_{L^\infty(B_{r''})},\qquad
 v^{(k)}\longrightarrow v
 \quad\hbox{in }H^{\tau_s/2}(B_{r'})
        \cap L^q(B_{r'})
\]
for every finite \(q\).  For each fixed \(k\), pointwise convergence
away from the limiting equator, the uniform angular \(L^a\)-bound, and
Vitali's theorem, followed by radial dominated convergence, give
convergence of the prelimit energies of \(v^{(k)}\) on both
\(B_{r_1}^2\) and \(B_{r'}^2\).  Taking first \(j\to\infty\) at fixed
\(k\), and then a diagonal \(k=k(j)\to\infty\), we obtain a bounded
recovery sequence \(z_j:=v^{(k(j))}\) in \(B_{r'}\) such that
\begin{align}
 z_j-v_j&\longrightarrow0
 \quad\hbox{strongly in }L^p(B_{r'})\cap L^m(B_{r'}),\notag\\
 \sup_j\bigl(\|z_j\|_{L^\infty(B_{r'})}
       +\|v_j\|_{L^\infty(B_{r'})}\bigr)&<\infty,
 \label{eq:bounded-recovery-strong-lp-lm}\\
 \sup_j\mathscr G_j(z_j;B_{r'}^2)&<\infty,\notag\\
 \mathscr G_j(z_j;B_{r_1}^2)
 &\longrightarrow\mathscr G_\infty(v;B_{r_1}^2).
 \label{eq:localized-recovery-energy}
\end{align}
Here \(m>2\) is the exponent in
Lemma~\ref{lem:uniform-shifted-bregman-gluing}.  The strong
\(L^m\)-convergence follows from convergence in measure and the uniform
bound.  Outside \(B_{r'}\) set \(z_j=v_j\).  This extension may have a
jump across \(\partial B_{r'}\), and no bound for
\(\mathscr G_j(z_j;D^2)\) is asserted.  Only
\(\mathscr G_j(z_j;I_\eta^2)\) and
\(\mathscr G_j(z_j;\Gamma_\eta)\) enter the gluing argument.  The first
is contained in \(B_{r'}^2\), while the part of the second meeting
\(D\setminus B_{r'}\) is separated from the transition collar and is
estimated below.

For a fixed \(N\ge1\), put
\[
 \rho_i=r_0+\frac{i}{N}(r_1-r_0),\qquad
 C_i=B_{\rho_i}\setminus\overline{B_{\rho_{i-1}}},
 \qquad 1\le i\le N,
\]
and
\[
 \Gamma_i:=(C_i\times D)\cup(D\times C_i).
\]
Since a pair has at most two endpoints in the disjoint annuli,
\[
 \sum_{i=1}^N\mathscr G_j(v_j;\Gamma_i)
 \le2\mathscr G_j(v_j;D^2)\le C.
\]
The analogous sum for \(z_j\) is uniformly bounded as well.  Indeed,
the part with both endpoints in \(B_{r'}\) is bounded by the recovery
energy, while \(C_i\subset B_{r_1}\) is separated from
\(D\setminus B_{r'}\).  On the latter pairs, write
\[
 z_j(x)-v_j(y)=v_j(x)-v_j(y)+z_j(x)-v_j(x)
\]
and apply the uniform \(\Delta_2\) estimate and the separated-pair
bound from Lemma~\ref{lem:uniform-shifted-bregman-gluing}.  Consequently,
\begin{equation*}
 \sum_{i=1}^N\bigl[
 \mathscr G_j(z_j;\Gamma_i)+\mathscr G_j(v_j;\Gamma_i)\bigr]\le C,
\end{equation*}
with \(C\) independent of \(N\) and \(j\).

Start with a subsequence realizing
\(\limsup_j\mathscr G_j(v_j;B_r^2)\).  The pigeonhole principle and a
further subsequence, on which the selected index is constant, give
\begin{equation}
 \mathscr G_j(z_j;\Gamma_i)+\mathscr G_j(v_j;\Gamma_i)
 \le\frac CN
 \label{eq:small-energy-subcollar}
\end{equation}
for some \(i=i(N)\).  Choose a radial Lipschitz cutoff satisfying
\[
 \eta=1\quad\hbox{on }B_{\rho_{i-1}},\qquad
 \eta=0\quad\hbox{on }D\setminus B_{\rho_i},\qquad
 0<\eta<1\quad\hbox{on }C_i,
\]
and set \(w_j=\eta z_j+(1-\eta)v_j\).  Then \(w_j-v_j\) is supported
compactly in \(B_{5/4}\) and is an admissible variation.

Apply Lemma~\ref{lem:uniform-shifted-bregman-gluing} with \(v_j\) as
the reference function.  The energy on \(O_\eta^2\) cancels exactly,
and the complete interaction between \(I_\eta\) and \(O_\eta\) is
retained with coefficient \(\varepsilon\).  Local minimality and
\eqref{eq:small-energy-subcollar} yield, for fixed \(N\) and
\(\varepsilon\),
\begin{align*}
 \mathscr G_j(v_j;B_r^2)
 &\le \mathscr G_j(v_j;I_\eta^2)\\
 &\le \mathscr G_j(z_j;I_\eta^2)
 +\frac CN+\varepsilon\mathscr G_j(v_j;\Xi_\eta)
 +o_{j,N,\varepsilon}(1)\\
 &\le \mathscr G_j(z_j;B_{r_1}^2)
 +\frac CN+C\varepsilon+o_{j,N,\varepsilon}(1).
\end{align*}
The error in the last line of
\eqref{eq:uniform-shifted-bregman-gluing} tends to zero for each fixed
\(N\) by \eqref{eq:bounded-recovery-strong-lp-lm}; its constant is
allowed to depend on the subannulus width.  The linear term is also
included in the remainder, since
\[
 \left|\int_Df_j(w_j-v_j)\dd X\right|
 \le C\|z_j-v_j\|_{L^1(B_{r'})}\longrightarrow0.
\]
Using \eqref{eq:localized-recovery-energy} and taking, in this order,
\[
 j\to\infty,\qquad N\to\infty,\qquad
 \varepsilon\downarrow0,\qquad r_1\downarrow r,
\]
we obtain
\begin{equation}
 \limsup_{j\to\infty}\mathscr G_j(v_j;B_r^2)
 \le\mathscr G_\infty(v;B_r^2).
 \label{eq:localized-relative-gluing-limsup}
\end{equation}
The last passage follows by monotone convergence through good radii.
Together with \eqref{eq:infinite-slope-form-liminf}, this proves
convergence of the regional energies on every ball compactly contained
in \(B_{5/4}\), with no diagonal defect.  A Lipschitz exhaustion gives
the corresponding statement for general compactly contained
subdomains.

We finally pass local minimality, rather than merely the formal Euler
identity, to the limit.  Fix
\[
 K\Subset K_0\Subset K_1\Subset B_{5/4},\qquad
 \psi\in C_c^\infty(K),
\]
and put \(u=v+\psi\).  We describe a simultaneous recovery which keeps
the complete interaction difference.  Apply the bounded interior
regularization to \(v\) in \(K_1\), and use the same diagonal sequence
for \(v\) and \(v+\psi\); at the smooth level these are
\(v^{(k)}\) and \(v^{(k)}+\psi\).  For fixed \(k\), angular Vitali
convergence and radial domination, followed by the diagonal choice,
give recovery of both
limiting energies on every good subdomain of \(K_1\).

Partition the collar \(K_1\setminus K_0\) into \(N\) layers by a
Lipschitz exhaustion.  The sum of the \(v_j\)- and recovery energies
over these layers is uniformly bounded, so one layer has total energy
at most \(C/N\).  Glue the recovery of \(v\) to \(v_j\) across that
layer and call the result \(\widehat v_j\).  On the transition layer,
the two functions differ by a quantity tending to zero in
\(L^p\cap L^m\).  Lemma
\ref{lem:uniform-shifted-bregman-gluing} gives, for fixed
\(N\) and \(\varepsilon>0\),
\[
 \limsup_{j\to\infty}
 \bigl[\mathscr G_j(\widehat v_j)-\mathscr G_j(v_j)\bigr]
 \le\frac CN+C\varepsilon.
\]
Conversely, local minimality with \(\widehat v_j\) gives
\[
 \liminf_{j\to\infty}
 \bigl[\mathscr G_j(\widehat v_j)-\mathscr G_j(v_j)\bigr]\ge0,
\]
because
\(\|f_j\|_\infty\|\widehat v_j-v_j\|_{L^1(D)}\to0\).
Taking \(j\to\infty\), \(N\to\infty\), and
\(\varepsilon\downarrow0\), in that order, and then taking a diagonal
sequence, gives
\begin{equation}
 \widehat v_j-v_j\longrightarrow0
 \quad\hbox{in }L^p(D)\cap L^m(D),\qquad
 \mathscr G_j(\widehat v_j)-\mathscr G_j(v_j)\longrightarrow0.
 \label{eq:simultaneous-local-recovery-base}
\end{equation}

Now set \(w_j=\widehat v_j+\psi\).  Then \(w_j-v_j\) is compactly
supported in \(K_1\).  The uniform \(L^\infty(D)\) bound and local strong
\(L^p\)-convergence imply \(v_j\to v\) strongly in \(L^q(D)\) for every
finite \(q\).  Indeed, one first passes to the limit on \(B_r\),
\(r<4/3\), and then lets \(r\uparrow4/3\); the contribution of
\(D\setminus B_r\) to the \(L^q\)-norm is bounded by
\(C|D\setminus B_r|^{1/q}\).  Together with
\eqref{eq:simultaneous-local-recovery-base}, this gives
\(\widehat v_j\to v\) strongly in \(L^m(D)\).

We now keep track of the oriented density occurring in the energy.  For
\(Z\in\R^n\), set
\begin{equation*}
 \mathcal B_{j,Z}(t):=\lambda_{Q_j}^{2-p}
 D_\Phi(Q_j\cdot Z;t),\qquad
 \Delta_{j,Z}(t,r):=\mathcal B_{j,Z}(t+r)-\mathcal B_{j,Z}(t).
\end{equation*}
Apply \eqref{eq:relative-gluing-epsilon-delta-two} first to \((t,r)\)
and then to \((t+r,-r)\), and use the crude \(\Delta_2\) estimate from
the proof of Lemma~\ref{lem:uniform-shifted-bregman-gluing}.  After
decreasing the small parameter by a structural factor, one obtains, for
every \(\theta\in(0,1)\),
\begin{equation}
 |\Delta_{j,Z}(t,r)|
 \le\theta\mathcal B_{j,Z}(t)
 +C_\theta\bigl[\mathcal B_{j,Z}(r)+\mathcal B_{j,Z}(-r)\bigr]
 =\theta\mathcal B_{j,Z}(t)+C_\theta\mathcal H_{Q_j,Z}(r).
 \label{eq:oriented-bregman-perturbation-bound}
\end{equation}
No symmetrization of the base increment is made in this estimate.  With
\(t=\delta\widehat v_j(X,Y)\) and
\(r=\delta\psi(X,Y)\), the first term is bounded by
\(\theta\mathscr G_j(\widehat v_j;D^2)\), uniformly in \(j\), while
the pointwise estimates behind
\eqref{eq:shifted-modular-angular-upper} give the required bound even
though the angular amplitude is not fixed.  Indeed, write
\(X-Y=r\omega\) and
\[
 \delta\psi(X,Y)=r\,t_{X,r}(\omega),
 \qquad |t_{X,r}(\omega)|\le\|\nabla\psi\|_\infty.
\]
If \(|Q_j|<1\), the global \(p\)-growth estimate gives
\(\mathcal H_{Q_j,r\omega}(rt_{X,r})\le C_\psi r^p\).  If
\(Q_j=q_je_j\), \(q_j\ge1\), then
\[
 r^{-p}\mathcal H_{Q_j,r\omega}(rt_{X,r}(\omega))
 \le C|e_j\cdot\omega|^{p-2}|t_{X,r}(\omega)|^2.
\]
Since \(p-2>-1\), the last angular weight is integrable, uniformly in
the direction \(e_j\).  Thus the spherical integral is at most
\(C_\psi r^p\), pointwise in \((X,r)\), and radial integration gives
\[
 (1-s_j)\iint_{\substack{D^2\\|X-Y|<\delta}}
 \frac{\mathcal H_{Q_j,X-Y}(\delta\psi(X,Y))}
 {|X-Y|^{n+s_jp}}\dd X\!\dd Y
 \le C\delta^{p-s_jp}.
\]
Since \(s_j\to s<1\), the right-hand side tends to zero uniformly for
large \(j\).  Thus the prelimit near-diagonal perturbation is made small
by first choosing \(\theta\), and then \(\delta\).

The limiting quadratic form has the same absolute continuity at the
diagonal.  Indeed, Cauchy--Schwarz gives
\begin{align*}
 &\iint_{\substack{D^2\\|X-Y|<\delta}}
 \frac{|e\cdot(X-Y)|^{p-2}
 \bigl|2\,\delta v\,\delta\psi+|\delta\psi|^2\bigr|}
 {|X-Y|^{n+sp}}\dd X\!\dd Y\\
 &\quad\le C
 \mathscr G_\infty(v;\{|X-Y|<\delta\})^{1/2}
 \mathscr G_\infty(\psi;\{|X-Y|<\delta\})^{1/2}
 +C\mathscr G_\infty(\psi;\{|X-Y|<\delta\}).
\end{align*}
The right-hand side tends to zero: the first energy is absolutely
continuous and the smooth-function energy is
\(O(\delta^{p-sp})\).

On \(|X-Y|\ge\delta\), the angular \(L^a\)-bound, the strong
\(L^m(D)\)-convergence, and Vitali's theorem give convergence of the
oriented energy differences on every separated compact region.  The
crossed region between \(K_0\) and \(D\setminus K_1\) has positive
separation and is covered by the same estimate.  Letting first
\(j\to\infty\), then
\(\delta\downarrow0\), yields
\[
 \mathscr G_j(\widehat v_j+\psi)-\mathscr G_j(\widehat v_j)
 \longrightarrow
 \mathscr G_\infty^D(v+\psi)-\mathscr G_\infty^D(v).
\]
Together with \eqref{eq:simultaneous-local-recovery-base}, this yields
\begin{align}
 w_j-v_j&\longrightarrow\psi\quad\hbox{in }L^1(D),\notag\\
 \limsup_{j\to\infty}
 \bigl[\mathscr G_j(w_j)-\mathscr G_j(v_j)\bigr]
 &\le
 \mathscr G_\infty^D(v+\psi)-\mathscr G_\infty^D(v).
 \label{eq:localized-minimality-recovery}
\end{align}
Here the right-hand side denotes the finite energy difference over
pairs having at least one endpoint in \(K_1\); outside that set the two
functions agree.

Local minimality of \(v_j\), weak-star convergence of \(f_j\), and
\eqref{eq:localized-minimality-recovery} give
\[
 \int_Df_j(w_j-v_j)\dd X
 =\int_Df_j\bigl[(w_j-v_j)-\psi\bigr]\dd X
  +\int_Df_j\psi\dd X
 \longrightarrow\int_Df\psi\dd X.
\]
Consequently,
\[
 \mathscr G_\infty^D(v)-\int_Dfv\dd X
 \le
 \mathscr G_\infty^D(v+\psi)-\int_Df(v+\psi)\dd X.
\]
Apply this inequality to \(t\psi\), \(t\in\R\), divide by \(t\), and
let \(t\to0^\pm\).  Since the limiting functional is quadratic, the
two one-sided inequalities coincide and give
\eqref{eq:large-slope-limit-form}, hence also
\eqref{eq:large-slope-limit-equation}.

The limiting regional kernel has order \(\tau_s>1\), finite spectral
mass, and a uniform spectral lower bound.  After a bounded extension
outside \(D\), its regional equation becomes a full stable equation
with a bounded right-hand side on compact subsets of \(B_{5/4}\).
The interior stable estimate \cite{RosOtonSerraStable} therefore shows
that \(v\) is continuous there.  Apply Lemma
\ref{lem:shift-uniform-fixed-level-clearing}, for every
\(B\Subset B_{5/4}\), with
\(D'=B\), \(D=B_{4/3}\), \(D_0=B_{5/4}\), the source bound
\eqref{eq:large-slope-crossed-source-bound}, and
\eqref{eq:infinite-slope-strong-compactness}.  We conclude that
\begin{equation}
 v_j\longrightarrow v\qquad\hbox{locally uniformly in }B_{5/4}.
 \label{eq:infinite-slope-local-uniform-convergence}
\end{equation}

\emph{Step 5: affine recentering.}
Let \(m=c+b\cdot X\) be fixed.  Then \(v_j-m\) solves the equation with
tilt \(Q_j+b\).  Since
\[
 \frac{|Q_j+b|}{|Q_j|}\longrightarrow1,
 \qquad \frac{Q_j+b}{|Q_j+b|}\longrightarrow e,
\]
the normalized factors for \(Q_j+b\) and \(Q_j\) are comparable and
their ratio tends to one.  We also verify the two bounds used in
Steps~1--4.  The Bregman fundamental estimate above, with the fixed
affine increment \(b\cdot(X-Y)\), preserves the local energy bound.  For
the exterior flux, add and subtract \(\Jp(Q_j\cdot(X-Y))\).  The original
term is controlled by \(\mathcal T^{\rm pt}_{s_j,Q_j}(v_j)\), while
Lemma~\ref{lem:large-slope-scalar-angular} bounds the affine correction
by
\[
 C|b|\,|X-Y|^{p-1}|e_j\cdot\omega|^{p-2},
 \qquad \omega=\frac{X-Y}{|X-Y|}.
\]
Its angular integral is finite and its radial integral at infinity is
\(\int_1^\infty r^{p-2-s_jp}\dd r\), uniformly finite for all large
\(j\) because \(sp>p-1\).  Thus Steps~1--4 apply with unchanged
structural constants.  This proves the stated energy convergence after
subtracting \(m\) and completes the proof.
\end{proof}

\begin{proposition}[Fixed-order large-slope affine improvement]
\label{prop:large-slope-affine-improvement}
Fix \(1<p<2\), \(s\in(0,1)\) with \(sp>p-1\), and
\[
 0<\alpha<sp-p+1,\qquad L\ge1.
\]
There are \(\rho_\infty\in(0,1/8)\), \(Q_0<\infty\), and \(C<\infty\)
such that the following holds.  If \(v\) solves the shifted equation in
\(B_2\), \(|Q|\ge Q_0\), and
\[
 \fint_{B_2}|v|^p\dd X\le1,\qquad
 \mathcal E_{s,Q}(v;B_2)\le1,\qquad
 \mathcal T^{\rm pt}_{s,Q}(v)\le L,
\]
then there is an affine map \(m\), with \(|m(0)|+|\nabla m|\le C\),
such that
\begin{equation}
 \mathfrak a_{s,Q}(v,m;\rho_\infty)\le\rho_\infty^\alpha.
 \label{eq:large-slope-affine-improvement}
\end{equation}
More precisely, one may fix
\begin{equation}
 \alpha<\bar\alpha<sp-p+1
 \label{eq:large-slope-intermediate-exponent}
\end{equation}
and choose the same affine map so that
\begin{equation}
 \mathfrak a_{s,Q}(v,m;r)
 +r^{-1}\|v-m\|_{L^\infty(B_{2r})}
 \le C_{\rm L}r^{\bar\alpha}
 \quad(\rho_\infty\le r\le1/2),
 \qquad
 C_{\rm L}\rho_\infty^{\bar\alpha-\alpha}\le1.
 \label{eq:large-slope-intermediate-block}
\end{equation}
Here the constants may depend on \(L\), but not on \(Q\).
\end{proposition}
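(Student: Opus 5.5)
The plan is a contradiction--compactness argument built on Proposition~\ref{prop:fixed-order-infinite-slope-compactness}. We fix \(\bar\alpha\) as in \eqref{eq:large-slope-intermediate-exponent}. We also fix \(\rho_\infty\) small enough that \(C_{\rm L}\rho_\infty^{\bar\alpha-\alpha}\le1/2\), where \(C_{\rm L}\) is a constant produced by the limiting stable theory, with room to spare.

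\emph{Limiting estimate.} Let \(v\) solve the regional equation \eqref{eq:large-slope-limit-equation} of order \(\tau_s\in(1,2)\) in \(B_{5/4}\), with \(\|v\|_{L^\infty(B_{4/3})}\) and \(\|f\|_\infty\) bounded by \(C(L)\). Extend it boundedly outside \(D\), as in Step~4 of that proof. The interior \(C^{1,\gamma}\) estimate of \cite{RosOtonSerraStable} then applies for every \(\gamma<\tau_s-1=sp-p+1\), with a bounded right-hand side. Taking \(\gamma\in(\bar\alpha,sp-p+1)\), the affine map \(m_0(X)=v(0)+\nabla v(0)\cdot X\) satisfies \(|m_0(0)|+|\nabla m_0|\le C\) and
\[
\|v-m_0\|_{L^\infty(B_{2r})}\le C r^{1+\gamma}\qquad(0<r\le 1/2).
\]
This bound is uniform over the whole limiting class, since the ellipticity constants of \(\mathfrak m_e\) are uniform in \(e\).

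\emph{Control of both components of \(\mathfrak a\).} The zero modular obeys \(\mathfrak Z(v,m_0;r)\le Cr^{\gamma p}\). For the energy component, the limiting Bregman density \(\frac{p-1}2|e\cdot h|^{p-2}|\delta(v-m_0)|^2\) is integrated against \(|h|^{-n-sp}\) over \(B_{2r}^2\). Interpolating the \(C^{1,\gamma}\) bound, \(|\delta(v-m_0)|\le Cr^{\gamma}|h|\), and the angular integrability give a bound \(C r^{n+p-sp}\,r^{2\gamma}\), after the radial integral, which converges since \(\tau_s<2\). This yields a normalized limit energy of order \(r^{2\gamma}\). That is exactly the quadratic branch \((|Q+b|+a)^{p-2}a^2\) once the factor \(\lambda_Q^{2-p}\) is accounted for. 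This factor was built into \(\mathcal E\) but appears inside \(\mathfrak B\) only through \(|Q+b|^{p-2}\); I expect this normalization bookkeeping to be delicate.

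\emph{Contradiction.} Suppose the conclusion fails for a sequence \(v_j,Q_j\) with \(|Q_j|\to\infty\). Lemma~\ref{lem:shift-uniform-local-boundedness} gives \(\|v_j\|_{L^\infty(B_{4/3})}\le C(L)\). Proposition~\ref{prop:fixed-order-infinite-slope-compactness}, with \(s_j=s\), then yields locally uniform convergence \(v_j\to v\) and the recentered energy convergence \eqref{eq:infinite-slope-recentered-energy-convergence} with \(m=m_0\). Hence \(\mathfrak Z(v_j,m_0;r)\) and \(\mathfrak B_{s,Q_j}(v_j,m_0;r)\cdot|Q_j+b|^{2-p}\) converge to their limit values for each fixed \(r\in[\rho_\infty,1/2]\). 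The \(L^\infty\) term converges too. For \(r\) in this compact range the limit bounds are strictly smaller than \(C_{\rm L}r^{\bar\alpha}\), since \(\gamma>\bar\alpha\) leaves room.

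The uniformity in \(r\) is handled by monotonicity in \(r\) on a finite grid: the energy over \(B_{2r}^2\) is increasing in \(r\), and the prefactors are comparable on neighbouring grid points. Therefore, for large \(j\), \eqref{eq:large-slope-intermediate-block} holds with \(m=m_0\), contradicting the assumption. Then \eqref{eq:large-slope-affine-improvement} follows at \(r=\rho_\infty\) from \(C_{\rm L}\rho_\infty^{\bar\alpha-\alpha}\le1\).

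\emph{Main obstacle.} The delicate step is matching the two-regime definition of \(\mathfrak a\) with the limit. In the limit only the quadratic branch survives, and the effective slope there is \(|Q_j+b|\sim|Q_j|\). So I would check that \((|Q+b|+a)^{p-2}a^2\ge c|Q|^{p-2}a^2\) for \(a\le1\), which absorbs the \(\lambda_Q^{2-p}\) normalization uniformly.
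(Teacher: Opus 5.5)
Your proposal is correct and follows essentially the paper's own argument. Both use shift-uniform local boundedness, the fixed-order infinite-slope compactness with recentered energy convergence, and the Ros-Oton--Serra $C^{1,\bar\alpha}$ (here $C^{1,\gamma}$) estimate for the limiting anisotropic stable equation. Both then take the Taylor polynomial of the limit as the affine map and transfer the estimate back by contradiction uniformly on $[\rho_\infty,1/2]$.

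The normalization you flag is harmless, because $(1+|Q_j+b|)/(|Q_j+b|+a)\to1$ for bounded $a$. Your finite-grid monotonicity argument for uniformity in $r$ just replaces the paper's extraction of radii $r_j\to r$.
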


\begin{proof}
Lemma~\ref{lem:shift-uniform-local-boundedness} first gives a uniform
\(L^\infty(B_{4/3})\) bound.  If the conclusion failed, Proposition
\ref{prop:fixed-order-infinite-slope-compactness} would give a limit
solving \eqref{eq:large-slope-limit-equation}.  In polar coordinates the
kernel is
\[
 \frac{|e\cdot\omega|^{p-2}}{r^{n+\tau_s}}.
\]
Its spectral mass is bounded above by
\eqref{eq:large-slope-angular-integrability}; since \(p-2<0\) and
\(|e\cdot\omega|\le1\), it also satisfies the uniform spectral lower
ellipticity condition.  Extend \(v\) boundedly from \(B_{4/3}\) to
\(\R^n\).  On \(B_{3/4}\), the difference between the regional operator
in \eqref{eq:large-slope-limit-form} and the full stable operator is a
bounded function, because the two sets are separated.  Thus the usual
interior estimate for the full stable operator applies after this
localization.  Choose \(\bar\alpha\) as in
\eqref{eq:large-slope-intermediate-exponent}.  The interior estimate for
symmetric stable operators \cite{RosOtonSerraStable} gives
\[
 \|v\|_{C^{1,\bar\alpha}(B_{3/4})}\le C.
\]
Let \(m\) be the first-order Taylor polynomial of \(v\) at the origin.
The zero-order remainder is \(O(r^{1+\bar\alpha})\), while direct
integration in \eqref{eq:large-slope-limit-form} gives the normalized
quadratic energy remainder \(O(r^{2\bar\alpha})\).  Consequently
\begin{equation}
 \mathfrak a_{s,e}^{\rm lin}(v,m;r)
 \le C_*r^{\bar\alpha}
 \qquad(0<r\le1/2),
 \label{eq:large-slope-limit-full-block}
\end{equation}
Here \(\mathfrak a_{s,e}^{\rm lin}\) is defined by the zero-order
condition in \eqref{eq:block-zero-modular} and by
\[
 (1-s)r^{sp-p-n}
 \iint_{B_{2r}^2}
 \frac{|e\cdot(X-Y)|^{p-2}
       |\delta(v-m)(X,Y)|^2}
 {|X-Y|^{n+sp}}\dd X\!\dd Y\le a^2.
\]

We record why the estimate transfers uniformly on a compact interval of
radii.  Otherwise there would be a contradicting sequence
\(Q_j\to\infty\), affine maps \(m_j\) converging to \(m\), and radii
\(r_j\in[\rho,1/2]\) for which the estimate with constant \(2C_*\)
fails.  After a subsequence, \(r_j\to r\in[\rho,1/2]\).  Strong local
\(L^p\) convergence transfers the zero-order modular, and the recentered
no-defect statement in Proposition
\ref{prop:fixed-order-infinite-slope-compactness} transfers the energy
modular.  This contradicts \eqref{eq:large-slope-limit-full-block}.
Hence, for every fixed \(\rho>0\), the estimate is uniform on
\([\rho,1/2]\) once \(|Q|\) is sufficiently large.

Take \(C_{\rm L}=2C_*\), decrease \(\rho_\infty\) until
\(C_{\rm L}\rho_\infty^{\bar\alpha-\alpha}\le1\), and then choose
\(Q_0\) by the preceding compactness argument.  This proves
\eqref{eq:large-slope-intermediate-block}; its endpoint is
\eqref{eq:large-slope-affine-improvement}.
\end{proof}

\begin{corollary}[Large-slope improvement at a prescribed scale]
\label{cor:prescribed-large-slope-block-main}
Fix
\[
 0<\alpha<\bar\alpha<sp-p+1,
 \qquad L\ge1.
\]
There is \(C_{\rm L}=C_{\rm L}(n,p,s,\alpha,\bar\alpha,L)\)
such that, for every prescribed \(0<\rho<1/8\) satisfying
\begin{equation}
 C_{\rm L}\rho^{\bar\alpha-\alpha}\le1,
 \label{eq:prescribed-large-slope-radius-main}
\end{equation}
there is \(Q_{\rm th}<\infty\) for which
\begin{equation}
 \mathfrak a_{s,Q}(v,m;t)
 +t^{-1}\|v-m\|_{L^\infty(B_{2t})}
 \le C_{\rm L}t^{\bar\alpha}
 \qquad(\rho\le t\le1/2)
 \label{eq:prescribed-large-slope-full-block-main}
\end{equation}
holds whenever \(v\) solves the shifted equation in \(B_2\),
\(|Q|>Q_{\rm th}\), and
\[
 \fint_{B_2}|v|^p\dd X\le1,\qquad
 \mathcal E_{s,Q}(v;B_2)\le1,\qquad
 \mathcal T^{\rm pt}_{s,Q}(v)\le L.
\]
In particular,
\[
 \mathfrak a_{s,Q}(v,m;\rho)\le\rho^\alpha,
 \qquad |m(0)|+|\nabla m|\le C,
\]
where \(C\) is independent of \(Q_{\rm th},Q,\rho\).
\end{corollary}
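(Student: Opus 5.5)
The plan is to rerun the compactness argument in the proof of Proposition~\ref{prop:large-slope-affine-improvement}, this time keeping the scale \(\rho\) free and separating the choice of \(C_{\rm L}\) from the choice of \(\rho\). The affine map will again be the first-order Taylor polynomial of the stable limit at the origin, so \(C_{\rm L}\) has to be fixed from the limiting problem alone. For that, note that Lemma~\ref{lem:shift-uniform-local-boundedness} gives \(\|v\|_{L^\infty(B_{4/3})}\le C(1+L^{1/(p-1)})\), uniformly in \(Q\). Hence every potential limit \(v_\infty\) from Proposition~\ref{prop:fixed-order-infinite-slope-compactness} solves \eqref{eq:large-slope-limit-form} with \(\|v_\infty\|_{L^\infty(B_{4/3})}+\|f\|_{L^\infty(B_{5/4})}\le C(L)\) and with a direction \(e\in\mathbb S^{n-1}\).

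After the bounded extension and localization described in that proof, the interior estimate of \cite{RosOtonSerraStable} gives \(\|v_\infty\|_{C^{1,\bar\alpha}(B_{3/4})}\le C_*\). The constant \(C_*\) depends only on \(n,p,s,\bar\alpha,L\), and it is uniform in \(e\) because the ellipticity bounds of the kernel \(|e\cdot\omega|^{p-2}r^{-n-\tau_s}\) do not depend on \(e\). With \(m_\infty\) the Taylor polynomial at \(0\), this yields \(|m_\infty(0)|+|\nabla m_\infty|\le C_*\), together with \eqref{eq:large-slope-limit-full-block} and the bound \(t^{-1}\|v_\infty-m_\infty\|_{L^\infty(B_{2t})}\le C_*t^{\bar\alpha}\) for all \(0<t\le1/2\). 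We then set \(C_{\rm L}:=4C_*\). This choice precedes \(\rho\), so condition \eqref{eq:prescribed-large-slope-radius-main} is a genuine restriction on \(\rho\) only.

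Now fix \(\rho\) satisfying \eqref{eq:prescribed-large-slope-radius-main} and argue by contradiction. Suppose there are \(Q_j\) with \(|Q_j|\to\infty\) and solutions \(v_j\) obeying the hypotheses, such that for every affine \(m\) with \(|m(0)|+|\nabla m|\le C_*\) the bound \eqref{eq:prescribed-large-slope-full-block-main} fails at some \(t_j\in[\rho,1/2]\). Pass to a subsequence with \(e_j\to e\) and \(t_j\to t\), and apply Proposition~\ref{prop:fixed-order-infinite-slope-compactness} with \(s_j=s\). This gives \(v_j\to v_\infty\) locally uniformly in \(B_{5/4}\), and in particular uniformly on \(B_1\supset B_{2t}\). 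It also gives the recentered energy convergence \eqref{eq:infinite-slope-recentered-energy-convergence} with \(m=m_\infty\) fixed. Test with \(m=m_\infty\). The uniform convergence controls both \(\mathfrak Z\) and the \(L^\infty\) term. The energy convergence shows that \(\mathfrak B_{s,Q_j}(v_j,m_\infty;t)\) tends to one half of the linear quadratic modular. The normalizing factor satisfies \((|Q_j+b|+a)^{p-2}\lambda_{Q_j}^{2-p}\to1\), and one checks that this matches the factor \(\lambda_{Q_j}^{2-p}\) built into the limit form.

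The main obstacle is that \(t_j\) moves. Here I would use the fact that the energy over \(B_{2t_j}\) differs from the energy over \(B_{2t}\) by the energy on a thin shell region. By the uniform \(\Psi_p\)-type integrability and the absolute continuity of the limit energy (the no-defect part of Proposition~\ref{prop:fixed-order-infinite-slope-compactness}), this difference is small. Together with continuity of \(t\mapsto C_*t^{\bar\alpha}\), this shows that for large \(j\) the left side of \eqref{eq:prescribed-large-slope-full-block-main} at \(t_j\) is at most \(2C_*t_j^{\bar\alpha}<C_{\rm L}t_j^{\bar\alpha}\), which is a contradiction. This produces the threshold \(Q_{\rm th}\), which may depend on \(\rho\), while \(m\) obeys \(|m(0)|+|\nabla m|\le C_*\) independently of \(\rho,Q\).

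The final claim then follows at \(t=\rho\): \(\mathfrak a_{s,Q}(v,m;\rho)\le C_{\rm L}\rho^{\bar\alpha}\le\rho^\alpha\) by \eqref{eq:prescribed-large-slope-radius-main}.
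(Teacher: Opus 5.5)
Your proposal is correct and follows the paper's route. You fix \(C_{\rm L}\) from the interior stable estimate for the limit, uniform in \(e\), before \(\rho\) is chosen; then, for fixed \(\rho\), the compactness transfer over \(t\in[\rho,1/2]\) from the proof of Proposition~\ref{prop:large-slope-affine-improvement} gives \(Q_{\rm th}\), and the endpoint follows from \eqref{eq:prescribed-large-slope-radius-main}. The one minor difference is the coefficient bound: you read it off the \(C^{1,\bar\alpha}\) bound on the Taylor polynomial of the limit, while the paper derives it from the unit \(L^p(B_2)\) bound, the estimate at \(t=1/2\), and norm equivalence for affine maps on \(B_1\).
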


\begin{proof}
The stable estimate and therefore \(C_{\rm L}\) are obtained before
the radius is chosen.  For a fixed radius satisfying
\eqref{eq:prescribed-large-slope-radius-main}, the compactness transfer
on \(t\in[\rho,1/2]\), proved in Proposition
\ref{prop:large-slope-affine-improvement}, supplies \(Q_{\rm th}\).
The endpoint follows from
\eqref{eq:prescribed-large-slope-radius-main}.  The unit \(L^p(B_2)\)
bound for \(v\), the estimate at \(t=1/2\), and norm equivalence for
affine maps on \(B_1\) give the coefficient bound.
\end{proof}

\begin{proposition}[Noncritical pointwise expansion]
\label{prop:noncritical-pointwise-expansion}
Let \(U\) be a globally Lipschitz entire solution and let \(x_0\) be a
differentiability point with \(q=\nabla U(x_0)\ne0\).  For every
\(0<\alpha<sp-p+1\), there are \(r_0>0\) and \(C_{x_0}<\infty\) such that
\begin{equation}
 \left(\fint_{B_r(x_0)}
 |U-U(x_0)-q\cdot(\,\cdot-x_0)|^p\dd x\right)^{1/p}
 \le C_{x_0}r^{1+\alpha}
 \qquad(0<r<r_0).
 \label{eq:large-slope-noncritical-expansion}
\end{equation}
\end{proposition}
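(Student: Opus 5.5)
The plan is to blow up at the point \(x_0\) so that the slope becomes large, and then iterate the fixed-scale large-slope improvement of Corollary~\ref{cor:prescribed-large-slope-block-main} down a geometric sequence of scales. Translate so that \(x_0=0\), and replace \(U\) by \(U-U(0)\). For \(r>0\) set
\[
 U_r(X):=\frac{U(rX)}{r\varepsilon_r},\qquad
 \varepsilon_r:=\Bigl(\fint_{B_{2r}}|U-q\cdot x|^p\Bigr)^{1/p}\Big/ r ,
\]
with a floor \(\varepsilon_r\ge r^{\beta}\) for some fixed \(\beta<\alpha\), to avoid division by zero. Then \(U_r=Q_r\cdot X+v_r\), where \(Q_r=q/\varepsilon_r\) and \(v_r=(U(rX)-q\cdot rX)/(r\varepsilon_r)\). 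Differentiability at \(0\) gives \(\varepsilon_r\to0\), hence \(|Q_r|\to\infty\). By construction \(\fint_{B_2}|v_r|^p\le1\), and \(v_r\) solves the equation shifted by \(Q_r\) in \(B_2\); indeed in all of \(\R^n\).

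The next step is to check the two remaining hypotheses with constants independent of \(r\). For the tail \(\mathcal T^{\rm pt}_{s,Q_r}(v_r)\), I use the global Lipschitz bound. This gives \(|\delta v_r(X,Y)|\le C|X-Y|/\varepsilon_r\) and \(|v_r|\lesssim |X|/\varepsilon_r\). Combining this with the large-slope flux bound~\eqref{eq:large-slope-angular-flux} and the \((p-1)\)-H\"older branch, the factor \(\lambda_{Q_r}^{2-p}\sim\varepsilon_r^{p-2}\) against \(\varepsilon_r^{1-p}\) produces a loss of a negative power of \(\varepsilon_r\). This is the main obstacle. A crude tail bound is not scale invariant, so the normalization cannot simply be frozen. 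My remedy is to normalize not by \(\varepsilon_r\) but by the intrinsic amplitude \(\mathfrak a\), and to run the iteration in the form used in the Campanato scheme: at each step, store the amplitude \(H_k\) and the relative tail together, and show that the tail at scale \(\rho r\) is bounded by \(L\) whenever the amplitudes at all coarser scales decay like \(\rho^{k\alpha}\). This is a geometric sum, which converges because \(\alpha<sp-p+1=\tau_s-1\). Beyond the fixed initial scale \(r_0\), the far field contributes a term of order \((r/r_0)^{sp-p+1}\) times the global Lipschitz constant. This uses exactly \(sp>p-1\), and it is absorbed since \(\bar\alpha<sp-p+1\). The energy bound \(\mathcal E_{s,Q_r}(v_r;B_2)\le1\) will come from a Caccioppoli inequality obtained, as in Lemma~\ref{lem:uniform-shifted-level-estimate}, by testing with \(\eta^2 v_r\), with the cutoff term bounded uniformly in \(Q\) by~\eqref{eq:shifted-modular-angular-upper}. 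I then apply this at radius \(4/3\) after a harmless change of scale.

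With these bounds, the iteration proceeds as follows. Fix \(\alpha<\bar\alpha<sp-p+1\) and choose \(\rho\) by~\eqref{eq:prescribed-large-slope-radius-main}, which yields a threshold \(Q_{\rm th}\). Choose \(r_0\) so small that \(|q|/\varepsilon\) exceeds \(Q_{\rm th}\) for every admissible amplitude \(\varepsilon\le\varepsilon_{r_0}\). The corollary gives an affine \(m_0\) with \(\mathfrak a\le\rho^\alpha\). Rescaling by \(\rho\) replaces the slope by \(Q+\nabla m_0\), whose size only increases, because the amplitude shrinks by \(\rho^\alpha\) while the correction to the slope is \(O(1)\) in normalized units. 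So the hypotheses recur at the next scale. Summing the affine corrections, which form a geometric series with ratio \(\rho^{\alpha}\), gives a limit affine map. Its slope must equal \(q\), since \(q\) is the derivative at \(0\). The \(L^p\) excess of \(U\) relative to this map is then \(\le C r^{1+\alpha}\) along the scales \(r=\rho^kr_0\), and by monotonicity of the averages between consecutive scales it holds for all \(r<r_0\). This gives~\eqref{eq:large-slope-noncritical-expansion}, with \(C_{x_0}\) depending on \(r_0\) and \(|q|\).
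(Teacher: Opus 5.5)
\textbf{There is a genuine gap at the initialization of the exterior flux.} The ``remedy'' you describe does not close it.

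\textbf{Why the first step fails.} To enter the large-slope regime you need \(|Q_0|=|q|/H_0\ge Q_{\rm th}\), so the initial amplitude \(H_0\) is necessarily small. Outside \(B_{r_0}(x_0)\) you use only the global Lipschitz bound. In normalized variables this gives \(|\delta v_0(X,Y)|\le(\Lip(U)+|q|)|X-Y|/H_0\). Insert this in \eqref{eq:large-slope-angular-flux} and integrate over \(|Y|>4/3\); the integral converges because \(sp>p-1\). The result is a relative tail of size \(C(\Lip(U)+|q|)/H_0\), not \(O(1)\).

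Your claim that the far field beyond \(r_0\) contributes ``\((r/r_0)^{sp-p+1}\) times the global Lipschitz constant'' omits the division by the amplitude. With \(\gamma:=sp-p+1\), the correct factor at scale \(r_k\) is \((r_k/r_0)^{\gamma}\Lip(U)/H_k=\rho^{k(\gamma-\alpha)}\Lip(U)/H_0\). This is large at \(k=0\).

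Three natural repairs do not work:
\begin{itemize}
\item Replacing \(\varepsilon_{r_0}\) by the intrinsic amplitude \(\mathfrak a\) does not help, since whatever amplitude you start with must be at most \(|q|/Q_{\rm th}\).
\item You cannot simply enlarge the tail budget \(L\), because \(Q_{\rm th}\) is produced by compactness only after \(L\) is fixed.
\item Your geometric-sum induction over older annuli only propagates a tail bound that must already hold at the first step.
\end{itemize}

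\textbf{The missing idea: decouple the amplitude from the starting scale.} Use the monotone modulus \(\varepsilon_*(r):=\sup_{0<t\le r}t^{-1}\|U-U(x_0)-q\cdot(\cdot-x_0)\|_{L^\infty(B_t(x_0))}\). The paper proceeds in this order:
\begin{itemize}
\item First fix \(r_*\) so that \(H_*:=C_*(\varepsilon_*(r_*)+r_*^{\gamma})\) puts \(|q|/H_*\) well above the large-slope threshold and satisfies \(CH_*\le|q|/4\).
\item Only afterwards choose \(r_0\ll r_*\) with \(r_0^{\gamma}r_*^{-\gamma}/H_*\le1\).
\item Start the iteration at \(r_0\) with amplitude \(H_*\), not with the excess at \(r_0\).
\end{itemize}
On the annuli \(r_0<|x-x_0|<r_*\), differentiability bounds the deviation by \(H_*|x-x_0|\). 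This is a normalized linear majorant, whose relative flux is uniformly bounded by the angular estimate and \(sp>p-1\). The Lipschitz far field is then used only beyond \(r_*\), where it carries the factor \((r_0/r_*)^{\gamma}/H_*\le1\).

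Once the tail is initialized this way, the rest of your plan agrees with the paper's argument: Caccioppoli for the energy, persistence of the large tilt, the tail induction with ratio \(\rho^{\gamma-\alpha}\), summation of the affine increments, and identification of the limiting slope with \(q\).
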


\begin{proof}
Write
\[
 \varepsilon(r):=\frac1r
 \|U-U(x_0)-q\cdot(\,\cdot-x_0)\|_{L^\infty(B_r(x_0))}.
\]
Differentiability gives \(\varepsilon(r)\to0\).  Replace it by the
monotone modulus
\[
 \varepsilon_*(r):=\sup_{0<t\le r}\varepsilon(t),
\]
which also tends to zero.  Fix first a sufficiently large structural
number \(C_*\).  Choose \(r_*>0\) so small that
\[
 H_*:=C_*\bigl(\varepsilon_*(r_*)+r_*^{sp-p+1}\bigr)
 \quad\hbox{satisfies}\quad
 \frac{|q|}{H_*}\ge4Q_0
\]
and \(CH_*\le |q|/4\), where \(Q_0\) and \(C\) are the constants in
Proposition~\ref{prop:large-slope-affine-improvement}.  Next choose
\(0<r_0<r_*/8\) so small that the normalized zero-order and Caccioppoli
energies at scale \(r_0\), with amplitude \(H_*\), are at most one and
\[
 \frac{r_0^{sp-p+1}}{H_*}
 r_*^{p-1-sp}\le1.
 \label{eq:noncritical-initial-far-tail}
\]

We verify the tail initialization.  On the physical annuli between
\(r_0\) and \(r_*\), differentiability gives
\[
 |U(x)-U(x_0)-q\cdot(x-x_0)|\le H_*|x-x_0|.
\]
After division by \(r_0H_*\), these annuli have a common linear
majorant.  Lemma~\ref{lem:large-slope-scalar-angular} therefore gives a
uniform relative-flux contribution.  Outside \(B_{r_*}(x_0)\), the
global Lipschitz estimate, affine cancellation, and radial integration
give
\[
 C\frac{r_0^{sp-p+1}}{H_*}
 \int_{r_*}^{\infty}t^{p-2-sp}\dd t
 \le C
\]
by \eqref{eq:noncritical-initial-far-tail}.  Thus the pointwise tail,
the local \(L^\infty\) norm, and the normalized energy all satisfy the
hypotheses of Proposition
\ref{prop:large-slope-affine-improvement}.

Iterate that proposition with
\[
 r_k=r_0\rho_\infty^k,\qquad
 H_k=H_*\rho_\infty^{k\alpha}.
\]
The normalized affine increments have bounded slopes.  Their physical
sum is at most \(CH_*\le |q|/4\); hence every updated physical slope has
length at least \(|q|/2\), and the normalized tilt remains above \(Q_0\)
at all levels.  We record the tail induction.  If \(i<k\), summation of
the affine increments from level \(i\) to level \(k\) and the estimate at
level \(i\) give
\begin{equation}
 \|U-\ell_k\|_{L^\infty(B_{2r_i}(x_0))}
 \le Cr_iH_i.
 \label{eq:noncritical-old-annulus-control}
\end{equation}
On the physical annulus \(r_{i+1}<|y-x_0|\le r_i\), every updated
physical slope is bounded below by \(|q|/2\).  We may therefore use the
angular linearized estimate \eqref{eq:large-slope-angular-flux} at every
stage.  After normalization by \(H_k\), the contribution of this
annulus is bounded by
\[
 C\left(\frac{r_k}{r_i}\right)^\gamma\frac{H_i}{H_k}
 =C\rho_\infty^{(k-i)(\gamma-\alpha)},
 \qquad \gamma=sp-p+1.
\]
The angular singularity is integrable because \(p-2>-1\).  Since
\(\alpha<\gamma\), summation in \(i\) gives a uniform geometric bound,
independently of \(k\).  Notice that the global
\((p-1)\)-H\"older estimate is not used here: after multiplication by
the intrinsic factor \(\lambda_{Q_k}^{2-p}\), it would produce
\((r_k/r_i)^\gamma H_i^{p-1}/H_k\), which is not the state propagated
by the large-slope iteration.  The region exterior to the initial ball
carries the decaying factor
\[
 \frac{r_k^{sp-p+1}}{H_k}
 =\frac{r_0^{sp-p+1}}{H_*}
   \rho_\infty^{k(sp-p+1-\alpha)},
\]
and therefore also decreases with \(k\).  Thus the relative tail and the normalized Caccioppoli
energy are reproduced at every step.  Consequently
\[
 \left(\fint_{B_{r_k}(x_0)}
 |U-\ell_k|^p\dd x\right)^{1/p}
 \le Cr_kH_k.
\]
Summing the affine increments and using the differentiability of \(U\)
at \(x_0\) identifies the limiting affine map with
\(U(x_0)+q\cdot(x-x_0)\).  Interpolation between consecutive radii gives
\eqref{eq:large-slope-noncritical-expansion}, with
\(C_{x_0}=CH_*r_0^{-\alpha}\).
\end{proof}

\begin{proposition}[One affine blow-down is rigid]
\label{prop:affine-blowdown-rigidity}
Let \(U\) be a globally Lipschitz entire weak solution, normalized by
\(U(0)=0\), and set \(U_R(X)=U(RX)/R\).  If, for some \(R_j\to\infty\)
and \(b\in\R^n\),
\begin{equation}
 U_{R_j}\longrightarrow b\cdot X
 \quad\text{locally uniformly in }\R^n,
 \label{eq:affine-blowdown-convergence}
\end{equation}
then \(U(X)=b\cdot X\) in \(\R^n\).
\end{proposition}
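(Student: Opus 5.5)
The plan is to split according to whether the limiting slope \(b\) vanishes.  In both cases the strategy is to renormalize \(U_{R_j}-b\cdot X\) by a vanishing amplitude \(H_j\), transport a scale-invariant estimate from the large scale \(R_j\) down to a fixed scale, and let \(j\to\infty\).  I first fix the amplitudes.  Choose \(K_j\to\infty\) slowly, by a diagonal argument, so that
\[
 \varepsilon_j:=\|U_{R_j}-b\cdot X\|_{L^\infty(B_{K_j})}\to0 .
\]
Then set
\[
 H_j:=\max\bigl\{\varepsilon_j,\;K_j^{-(sp-p+1)/2},\;K_j^{-(sp-p+1)/(2(p-1))}\bigr\}\to0 .
\]
These choices make the far-field contributions from \(\R^n\setminus B_{K_jR_j}\) negligible after normalization by \(H_j\).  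Indeed, by the global Lipschitz bound \(|U_{R_j}-b\cdot X|\le (L+|b|)|X|\) there, the integrals in those contributions carry factors \(K_j^{p-1-sp}\), which are absorbed by the chosen powers of \(H_j\).

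\emph{Case \(b\neq0\).}  Put \(v_j:=(U_{R_j}-b\cdot X)/H_j\) and \(Q_j:=b/H_j\), so \(|Q_j|\to\infty\).  By homogeneity, \(v_j\) solves the equation shifted by \(Q_j\) in all of \(\R^n\), with \(|v_j|\le1\) on \(B_{K_j}\).  I would check the hypotheses of Proposition~\ref{prop:large-slope-affine-improvement} at unit scale:
\begin{itemize}
\item the zero-order modular is at most one;
\item the pointwise tail \(\mathcal T^{\rm pt}_{s,Q_j}(v_j)\) is bounded by \eqref{eq:large-slope-angular-flux}, using the linear majorant on \(B_{K_j}\) and the Lipschitz bound beyond;
\item the normalized energy is bounded by a Caccioppoli estimate, as in Lemma~\ref{lem:uniform-shifted-level-estimate}.
\end{itemize}
Then I iterate the proposition exactly as in the proof of Proposition~\ref{prop:noncritical-pointwise-expansion}, with \(r_k=\rho_\infty^k\), amplitudes \(H_j\rho_\infty^{k\alpha}\), and slope corrections summing to \(CH_j\ll|b|\).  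The tilt therefore stays above \(Q_0\), and the annular tail bounds reproduce geometrically because \(\alpha<sp-p+1\).  Iterating down to the physical scale \(1\), that is \(k\approx\log_{1/\rho_\infty}R_j\), gives an affine \(\ell_j\) with \(|\nabla\ell_j-b|\le CH_j\) and
\[
 \|U-\ell_j\|_{L^\infty(B_r)}\le C\,r\,H_j\,(r/R_j)^{\alpha},\qquad 1\le r\le R_j .
\]
Fix \(r\) and let \(j\to\infty\).  Then \(\ell_j\to b\cdot X\), using \(U(0)=0\), and \(U=b\cdot X\) on \(B_r\).

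\emph{Case \(b=0\).}  Here no large tilt is available, and I would use scale invariance of the Lipschitz seminorm instead.  The function \(V_j:=U_{R_j}/H_j\) solves \eqref{eq:equation}, satisfies \(|V_j|\le1\) on \(B_{K_j}\), and has a bounded tail.  The tail is bounded because
\[
 \int_{\R^n\setminus B_{K_j}}\frac{|V_j|^{p-1}}{|Y|^{n+sp}}\,dY\le C(L/H_j)^{p-1}K_j^{p-1-sp}\le C
\]
by the choice of \(H_j\), while the intermediate annuli are bounded by one.  The local Lipschitz estimate of \cite{BiswasTopp} then gives \([V_j]_{C^{0,1}(B_1)}\le C\).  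Hence
\[
 [U]_{C^{0,1}(B_{R_j})}=[U_{R_j}]_{C^{0,1}(B_1)}=H_j[V_j]_{C^{0,1}(B_1)}\le CH_j\to0 ,
\]
so \(U\equiv U(0)=0\).

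The main obstacle is the \(b\ne0\) iteration over roughly \(\log R_j\) scales.  One must verify that the relative tail stays uniformly bounded at every step, including the contribution of the unnormalized region beyond \(K_jR_j\) and of the old annuli.  The estimates in the proof of Proposition~\ref{prop:noncritical-pointwise-expansion} carry over almost verbatim, with \(H_*\) replaced by \(H_j\).  I expect the only new point to be the initial-scale tail check, where the choice of \(H_j\) relative to \(K_j\) is used.
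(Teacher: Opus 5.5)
Your proposal is correct and follows essentially the paper's route. The case \(b=0\) uses the scale invariance of the Lipschitz seminorm together with the Biswas--Topp estimate, and the case \(b\ne0\) normalizes \(U_{R_j}-b\cdot X\) by a vanishing amplitude, iterates the large-slope affine improvement down to a fixed physical scale, and lets \(j\to\infty\). The differences are only technical. The paper builds its amplitude as \(a_j=C_0(d_j+E_j^{1/2}+T_j+R_j^{-1})\) from the actual sup, energy and tail defects, bounding the energy directly by \(Cd_j^{p-sp}\). You instead use a diagonal radius \(K_j\), the crude Lipschitz far field, and a Caccioppoli bound, and you close by uniform convergence of the affine maps rather than by closedness of affine functions in \(L^p(B_L)\).
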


\begin{proof}
Assume first that \(b=0\).  If
\(\Lambda=\Lip(U;\R^n)\), then \(|U_{R_j}(Y)|\le\Lambda|Y|\).  For
\(M>2\), local uniform convergence and this linear majorant give
\begin{align*}
 \int_{\R^n\setminus B_2}
 \frac{|U_{R_j}(Y)|^{p-1}}{|Y|^{n+sp}}\dd Y
 &\le o_j(1)
 +C\Lambda^{p-1}\int_M^\infty r^{p-2-sp}\dd r\\
 &\le o_j(1)+C\Lambda^{p-1}M^{p-1-sp}.
\end{align*}
Letting first \(j\to\infty\) and then \(M\to\infty\) shows convergence
to zero in the full tail norm.  Since $sp>p-1$, the local Lipschitz
estimate \cite[Theorem~2.1]{BiswasTopp}, applied to \(U_{R_j}\) in
\(B_2\), yields
\[
 [U_{R_j}]_{C^{0,1}(B_1)}\longrightarrow0.
\]
But the left-hand side equals
\([U]_{C^{0,1}(B_{R_j})}\).  Hence \(U\equiv0\).

Suppose now that \(b\ne0\), and put \(V_j=U_{R_j}-b\cdot X\).  Define
\begin{align*}
 d_j&:=\|V_j\|_{L^\infty(B_4)},\\
 E_j&:=(1-s)\iint_{B_2\times B_2}
 \frac{D_\Phi(b\cdot(X-Y);\delta V_j(X,Y))}
 {|X-Y|^{n+sp}}\dd X\!\dd Y,\\
 T_j&:=\max\left\{
 {\operatorname*{ess\,sup}}_{X\in B_{5/4}}
 \int_{\R^n\setminus B_{4/3}}
 \frac{|\mathcal D_bV_j(X,Y)|}{|X-Y|^{n+sp}}\dd Y,\right.\\
 &\hspace{26mm}\left.
 {\operatorname*{ess\,sup}}_{X\in B_{3/2}}
 \int_{\R^n\setminus B_2}
 \frac{|\mathcal D_bV_j(X,Y)|}{|X-Y|^{n+sp}}\dd Y\right\}.
\end{align*}
Then
\begin{equation*}
 d_j+E_j+T_j\longrightarrow0.
\end{equation*}
Indeed, \(d_j\to0\) by hypothesis.  The common Lipschitz bound, the
estimate \(D_\Phi(a;t)\le C|t|^p\), and a radial split at \(d_j\) give
\(E_j\le Cd_j^{p-sp}\).  For both components of \(T_j\), local uniform
convergence handles every fixed exterior ball, whereas
\(|\mathcal D_bV_j(X,Y)|\le C|X-Y|^{p-1}\) gives the uniformly integrable
far-field majorant \(Cr^{p-2-sp}\).

Choose a large structural constant \(C_0\) and put
\[
 a_j=C_0\bigl(d_j+E_j^{1/2}+T_j+R_j^{-1}\bigr),\qquad
 v_j=\frac{V_j}{a_j},\qquad Q_j=\frac b{a_j}.
\]
Then \(a_j\to0\), \(|Q_j|\to\infty\), and the homogeneity identities
\[
 D_\Phi(A/a_j;B/a_j)=a_j^{-p}D_\Phi(A;B),\qquad
 \Jp(A/a_j)=a_j^{1-p}\Jp(A)
\]
show, after increasing \(C_0\), that \(v_j,Q_j\) satisfy all hypotheses
of Proposition~\ref{prop:large-slope-affine-improvement} with constants
independent of \(j\).

We record the far-field calculation needed to iterate that proposition
at every level.  Put \(r_k=\rho_\infty^k\),
\(H_k=\rho_\infty^{k\alpha}\), and let
\(m_{j,k}=c_{j,k}+q_{j,k}\cdot X\) be the accumulated affine
approximation.  The coefficient estimate gives \(|q_{j,k}|\le C\), and
the new tilt is
\[
 P_{j,k}=\frac{Q_j+q_{j,k}}{H_k}.
\]
Thus the large-slope regime persists.  In the part of the pointwise tail
coming from beyond the initial ball, the changes of variables
\(Z=r_kX\), \(W=r_kY\) give the exact factor
\begin{align}
 &\int_{\{|Y|>4/(3r_k)\}}
 \frac{\lambda_{P_{j,k}}^{2-p}
 |\mathcal D_{P_{j,k}}w_{j,k}(X,Y)|}
 {|X-Y|^{n+sp}}\dd Y\notag\\
 &\quad\le
 C\frac{r_k^{sp-p+1}}{a_jH_k}
 \int_{|W|>4/3}
 \frac{|\mathcal D_bV_j(Z,W)|
 +|\Jp((b+a_jq_{j,k})\cdot(Z-W))
       -\Jp(b\cdot(Z-W))|}
 {|Z-W|^{n+sp}}\dd W.
 \label{eq:affine-blowdown-far-field}
\end{align}
The first numerator is controlled by \(T_j\).  For the affine correction,
angular integration in
\eqref{eq:large-slope-angular-integrability} gives
\[
 \int_{\mathbb S^{n-1}}
 |\Jp((b+a_jq_{j,k})\cdot r\omega)
 -\Jp(b\cdot r\omega)|\dd\omega
 \le Ca_j|q_{j,k}|r^{p-1}.
\]
Consequently the right-hand side of
\eqref{eq:affine-blowdown-far-field} is bounded by
\[
 Cr_k^{sp-p+1-\alpha}
 \left(\frac{T_j}{a_j}+|q_{j,k}|\right)\le C.
\]
The intervening annuli are controlled by the affine increments already
obtained at the preceding levels.  Caccioppoli and local boundedness then
reproduce the remaining hypotheses.  Hence the large-slope improvement
iterates for every \(k\), uniformly in \(j\), and supplies affine maps
with
\[
 \left(\fint_{B_{2r_k}}|v_j-m_{j,k}|^p\dd X\right)^{1/p}
 \le Cr_k^{1+\alpha}.
\]

Fix \(L\ge1\), and choose \(k=k(j,L)\) so that
\(L\le R_jr_k<\rho_\infty^{-1}L\).  Returning to the original variables,
one obtains an affine map \(\ell_{j,L}\) satisfying
\[
 \left(\fint_{B_L}
 |U-b\cdot x-\ell_{j,L}|^p\dd x\right)^{1/p}
 \le Ca_jL^{1+\alpha}R_j^{-\alpha}\longrightarrow0.
\]
Affine functions form a closed finite-dimensional subspace of
\(L^p(B_L)\).  Thus \(U-b\cdot x\) is affine on every ball and hence
globally affine.  Finally \eqref{eq:affine-blowdown-convergence} forces the
additional affine slope to vanish, while the normalization \(U(0)=0\)
eliminates the remaining constant.
\end{proof}

\section{Dilation-recurrent blow-downs and the Liouville theorem}
\label{sec:dilation-liouville}

The analytic ingredients for this section were established in
Section~\ref{sec:large-slope-and-blowdown}.  We use the fixed-order
infinite-slope compactness of Proposition
\ref{prop:fixed-order-infinite-slope-compactness}, the large-slope affine
improvement of Proposition~\ref{prop:large-slope-affine-improvement}, the
noncritical pointwise expansion of Proposition
\ref{prop:noncritical-pointwise-expansion}, and the affine blow-down
rigidity of Proposition~\ref{prop:affine-blowdown-rigidity}.  None of
these results uses the nonlinear Liouville theorem proved below.

We first construct a recurrent centered blow-down and isolate a
record-scale profile on which the global Lipschitz seminorm is attained
at the origin.  We then analyze the singular linearized kernel in a
noncritical ball, using directional coarea to control its crossed part
in a Morrey--Kato class.  The resulting contact propagation closes the
Liouville argument in Subsection~\ref{subsec:liouville-completion}.

\subsection{Centered dilation recurrence}
\label{subsec:centered-dilation-recurrence}

Translation averages cannot detect a sparse sequence of bad centers.  We
therefore turn to the centered dilation action
\[
 (D_rV)(x):=\frac{V(rx)}r,
 \qquad r>0,
\]
on normalized profiles satisfying $V(0)=0$.  Proposition
\ref{prop:affine-blowdown-rigidity} shows that if one centered blow-down
of \(U\) is affine, then \(U\) itself is affine.

For a Lipschitz function $V$, write
\[
 \Lambda_V(r):=[V]_{C^{0,1}(B_r)},
 \qquad
 \Lambda_V(0+):=\lim_{r\downarrow0}\Lambda_V(r).
\]

\begin{proposition}[Dilation-recurrent profile]
\label{prop:dilation-recurrent-cascade}
Let $U$ be a nonaffine,
globally Lipschitz entire solution of \eqref{eq:equation}, normalized by
$U(0)=0$.  Then its centered blow-down hull contains a nonaffine profile
$V$ and a sequence $r_j\downarrow0$ such that
\begin{equation}
 D_{r_j}V\longrightarrow V
 \qquad\text{locally uniformly in }\R^n.
 \label{eq:dilation-recurrence}
\end{equation}
Moreover,
\begin{equation}
 [V]_{C^{0,1}(\R^n)}
 =\Lambda_V(R)=\Lambda_V(0+)=:L_V>0
 \qquad\text{for every }R>0.
 \label{eq:all-centered-scales-full-slope}
\end{equation}
If $a_k,b_k\to0$, $a_k\ne b_k$, and
\begin{equation}
 \frac{|V(a_k)-V(b_k)|}{|a_k-b_k|}\longrightarrow L_V,
 \label{eq:centered-extremizing-chords}
\end{equation}
then necessarily
\begin{equation}
 \frac{|a_k-b_k|}{|a_k|+|b_k|}\longrightarrow0.
 \label{eq:centered-microscopic-ratio}
\end{equation}
In particular, such an extremizing sequence exists, and every one of its
chords is asymptotically microscopic relative to its distance from the
center.
\end{proposition}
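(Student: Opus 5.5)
We work in the centered blow-down hull
\[
 \mathcal K(U):=\overline{\{D_rU:\ r\ge1\}}\ \cup\ \{\text{all locally uniform limits }\lim D_{R_j}U,\ R_j\to\infty\},
\]
where the closure is taken in the topology of local uniform convergence. We then iterate the hull construction. All members are entire solutions, vanish at the origin, and have Lipschitz constant at most \(\Lambda:=\Lip(U;\R^n)\). They are entire solutions because weak solutions are stable under local uniform limits with uniformly controlled linear growth. Tails converge since \(sp>p-1\), and the uniform Lipschitz bound gives \(W^{s,p}_{\rm loc}\) compactness. The space of such profiles is compact, metrizable, and invariant under every \(D_r\).

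\textbf{Step 1: the blow-downs of \(U\) are nonaffine.} By Proposition~\ref{prop:affine-blowdown-rigidity}, no blow-down of \(U\) is affine, since otherwise \(U\) would be affine. Let \(\Omega\subset\mathcal K\) be the set of all blow-down limits \(\lim D_{R_j}U\). It is nonempty, compact, and \(D_r\)-invariant for every \(r>0\), and it contains no affine function.

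\textbf{Step 2: select the extremal profile.} Consider the upper semicontinuous-type functional \(\Lambda_V(0+)\) on \(\Omega\), together with \([V]_{C^{0,1}(\R^n)}\). For \(V\in\Omega\) we have \(\Lambda_V(0+)\ge\) the limsup of the chord slopes near the origin. We want to maximize \(\ell(V):=\Lambda_V(0+)\) over the orbit closure. First take \(L^*:=\sup_{V\in\Omega}[V]_{C^{0,1}(\R^n)}>0\); it is positive because the zero profile is affine and excluded. Pick \(V_k\) and chords \((a_k,b_k)\) with slope tending to \(L^*\). Rescale by \(D_{|a_k-b_k|}\) and recentre. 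Centering matters because \(\Omega\) is only dilation-invariant, not translation-invariant, so we do not translate. Instead we dilate by \(\rho_k:=|a_k|+|b_k|\), so that the chord sits in \(\overline B_1\). A subsequential limit \(W\in\Omega\) then has \([W]_{C^{0,1}}\le L^*\) and carries a chord in \(\overline B_1\) of slope \(L^*\), unless the rescaled chord collapses. If the chord does not collapse, Theorem~\ref{thm:extremal-secant-rigidity} makes \(W\) affine, which is a contradiction. Hence every extremizing sequence of chords collapses relative to its distance from the center. This is the mechanism behind \eqref{eq:centered-microscopic-ratio}.

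\textbf{Step 3: localize the full slope at every scale.} Applying the same argument with collapsing chords shows that for each profile attaining the supremum, chords of near-maximal slope exist arbitrarily close to \(0\) after rescaling. Dilations then show that \(\Lambda_W(R)=L^*\) for every \(R>0\), which gives \(\Lambda_W(0+)=L^*=[W]_{C^{0,1}}\). Concretely, we restrict to the compact invariant subset \(\Omega^*:=\{W\in\Omega:[W]_{C^{0,1}}=L^*,\ \Lambda_W(R)=L^*\ \forall R\}\). This set is closed because \(\Lambda_W(R)\) is lower semicontinuous under local uniform limits and bounded above by \(L^*\). It is also invariant under all \(D_r\), and it is nonempty by a diagonal argument over \(R=2^{-m}\).

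\textbf{Step 4: recurrence.} Apply the dynamical recurrence theorem to the semiflow \(t\mapsto D_{e^{-t}}\) on the compact metric space \(\Omega^*\). Zorn's lemma yields a minimal closed invariant subset, and every point of a minimal set is recurrent. This produces \(V\) together with \(r_j\downarrow0\) satisfying \eqref{eq:dilation-recurrence}; in fact we take \(V\) in a minimal set, and such \(V\) is nonaffine by Step 1. Finally, \eqref{eq:centered-microscopic-ratio} for \(V\) follows by the Step 2 argument. Suppose the ratio stays bounded below along a subsequence. Dilate by \(|a_k|+|b_k|\to0\). The rescaled profiles \(D_{|a_k|+|b_k|}V\) stay in \(\Omega^*\), and a limit attains the slope \(L_V\) on a nondegenerate chord. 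Theorem~\ref{thm:extremal-secant-rigidity} then forces that limit to be affine, contradicting Step 1.

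\textbf{Main obstacle.} We expect the main obstacle to be Step 3: showing that the full-slope set \(\Omega^*\) is nonempty and closed, in particular that \(\Lambda_W(0+)=L^*\) survives limits. We would attack this with the lower semicontinuity of \(\Lambda_W(R)\) for each fixed \(R\), combined with the fact that \(\Lambda_W(R)\) is monotone in \(R\).
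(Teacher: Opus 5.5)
Your Steps 1, 2 and 4 are sound. Step 2's rescaling by $|a_k|+|b_k|$ is exactly the paper's argument for \eqref{eq:centered-microscopic-ratio}. Step 3, however, has a genuine gap, and it is the step that carries \eqref{eq:all-centered-scales-full-slope}.

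Lower semicontinuity of $W\mapsto\Lambda_W(R)$ under local uniform convergence gives only $\Lambda_W(R)\le\liminf_k\Lambda_{W_k}(R)$. That is the upper bound $\Lambda_W(R)\le L^*$, which you already have. Keeping $\Lambda_W(R)=L^*$ in the limit would require upper semicontinuity, and that fails: oscillations of fixed slope and vanishing amplitude converge uniformly to a function of smaller Lipschitz constant. Your own Step 2 shows that near-extremal chords in $\Omega$ must collapse, and collapse is exactly how slope is lost under local uniform limits.

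Consequently neither the closedness of $\Omega^*$ nor its nonemptiness via a diagonal argument over $R=2^{-m}$ follows. For the same reason, the supremum $L^*$ of the global seminorm, which is again only lower semicontinuous, need not be attained on $\Omega$, so ``profiles attaining the supremum'' may not exist. Without a compact invariant $\Omega^*$, you cannot run Zorn's lemma inside it in Step 4.

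The missing observation is that no maximization is needed. Recurrence itself forces full slope at every centered scale, and there lower semicontinuity points the right way. Take the minimal compact dilation-invariant set directly in $\Omega$, and let $V$ be any member with $D_{r_j}V\to V$, $r_j\downarrow0$. For fixed $R$ you have $\Lambda_{D_{r_j}V}(R)=\Lambda_V(r_jR)$, hence
$\Lambda_V(R)\le\liminf_j\Lambda_V(r_jR)=\Lambda_V(0+)$.
Monotonicity in $R$ then gives equality for every $R$, and letting $R\to\infty$ yields \eqref{eq:all-centered-scales-full-slope} with $L_V:=\Lambda_V(0+)>0$. This also provides the extremizing chords near the origin.

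For \eqref{eq:centered-microscopic-ratio}, $L_V$ does not need to be a supremum over $\Omega$. Each $D_\rho$ preserves the global seminorm, so any limit $W$ of $D_{|a_k|+|b_k|}V$ lies in the minimal set and satisfies $[W]_{C^{0,1}(\R^n)}\le L_V$ by lower semicontinuity. A non-collapsing chord then gives $W$ an extremal secant, contradicting Theorem~\ref{thm:extremal-secant-rigidity}. This is the paper's route.
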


\begin{proof}
Set
\[
 U_R(x):=\frac{U(Rx)-U(0)}R.
\]
The family $(U_R)_{R\ge1}$ is equi-Lipschitz and normalized at the origin.
Its set $\Omega_\infty(U)$ of local uniform limits as $R\to\infty$ is
therefore nonempty and compact.  The affine-tail condition
$sp>p-1$ permits passage to the weak equation, so every member of
$\Omega_\infty(U)$ is again an entire solution.  Furthermore,
$\Omega_\infty(U)$ is invariant under every $D_r$, $r>0$: if
$U_{R_i}\to W$, then
\[
 U_{rR_i}=D_rU_{R_i}\longrightarrow D_rW,
 \qquad rR_i\longrightarrow\infty.
\]

Choose a nonempty compact subset $\mathscr M\subset\Omega_\infty(U)$
that is minimal among the closed sets invariant under all dilations.  It
exists by Zorn's lemma, since the intersection of a decreasing chain of
nonempty compact invariant sets is nonempty and invariant.  To make the
recurrence under contractions explicit, write \(T_t=D_{e^t}\).  For
\(V\in\mathscr M\), minimality implies that the orbit
\(\{T_tV:t\in\R\}\) is dense in \(\mathscr M\).  Hence, for every
neighborhood \(\mathcal O\) of \(V\), the family
\(\{T_{-t}\mathcal O:t\in\R\}\) covers \(\mathscr M\); compactness gives a
finite subcover corresponding to \(t_1,\ldots,t_N\).  Given \(T>0\),
apply this cover to \(T_{-T}V\).  For some \(i\),
\(T_{t_i-T}V\in\mathcal O\), and \(t_i-T\to-\infty\) as
\(T\to\infty\).  A diagonal choice over a decreasing neighborhood basis
therefore gives \(s_j\to-\infty\) with \(T_{s_j}V\to V\).  Setting
\(r_j=e^{s_j}\) proves \eqref{eq:dilation-recurrence}.

No element of $\mathscr M$ is affine.  Otherwise it would be an affine
centered blow-down of $U$, and Proposition
\ref{prop:affine-blowdown-rigidity} would make $U$ affine.  In
particular, $V$ is nonconstant.  Put
$L_0=\Lambda_V(0+)$.  For fixed $R>0$ and distinct $x,y\in B_R$,
\eqref{eq:dilation-recurrence} gives
\begin{align*}
 \frac{|V(x)-V(y)|}{|x-y|}
 &=\lim_{j\to\infty}
 \frac{|D_{r_j}V(x)-D_{r_j}V(y)|}{|x-y|}\\
 &\le\lim_{j\to\infty}\Lambda_V(r_jR)=L_0.
\end{align*}
Taking the supremum over $x,y\in B_R$ yields
$\Lambda_V(R)\le L_0$.  The reverse inequality follows from the monotonicity
of $R\mapsto\Lambda_V(R)$.  Letting $R\to\infty$ proves
\eqref{eq:all-centered-scales-full-slope}; its common value is positive
because $V$ is nonconstant.

By \eqref{eq:all-centered-scales-full-slope}, one can choose
$a_k,b_k\in B_{1/k}$ satisfying
\eqref{eq:centered-extremizing-chords}.  It remains to prove that every
such sequence obeys \eqref{eq:centered-microscopic-ratio}.  If not, pass to
a subsequence for which
\[
 |a_k-b_k|\ge c\bigl(|a_k|+|b_k|\bigr)
\]
with $c>0$.  Put $\rho_k=|a_k|+|b_k|$ and
$W_k=D_{\rho_k}V$.  Since $\rho_k\downarrow0$ and $\mathscr M$ is compact
and dilation invariant, a subsequence converges locally uniformly to some
$W\in\mathscr M$.  The rescaled endpoints
$\alpha_k=a_k/\rho_k$ and $\beta_k=b_k/\rho_k$ lie in $\overline B_1$ and
satisfy $|\alpha_k-\beta_k|\ge c$.  After one more subsequence they converge
to distinct points $\alpha,\beta$, and
\eqref{eq:centered-extremizing-chords} becomes
\[
 |W(\alpha)-W(\beta)|=L_V|\alpha-\beta|.
\]
The global Lipschitz seminorm of $W$ is at most $L_V$.  The displayed
equality therefore shows both that its seminorm equals $L_V$ and that it is
attained on a nontrivial secant.  Theorem
\ref{thm:extremal-secant-rigidity} makes $W$ affine, contradicting
$W\in\mathscr M$.  This proves \eqref{eq:centered-microscopic-ratio}.
\end{proof}

\subsection{Collapse of affine radii}
\label{subsec:collapse-affine-radii}

For the rest of the paper, retain the minimal compact dilation hull
$\mathscr M$ selected in the proof of
Proposition~\ref{prop:dilation-recurrent-cascade}, together with the recurrent
profile $V\in\mathscr M$.

For a differentiability point $x\ne0$ of a Lipschitz function $V$, put
$q_x=\nabla V(x)$ and, for $0<\eta<1$, define its relative affine radius by
\begin{equation}
\begin{split}
 \mathfrak r_\eta(V;x):=\sup\biggl\{r\in(0,|x|/2]:\ &
 \sup_{0<\rho\le r}\frac1\rho
 \|V-V(x)-q_x\cdot(\,\cdot-x)\|_{L^\infty(B_\rho(x))}
 \le\eta\biggr\}.
\end{split}
 \label{eq:relative-affine-radius}
\end{equation}
Fr\'echet differentiability makes this number positive.  The following
consequence provides an additional pointwise scale separation for the
profile in Proposition~\ref{prop:dilation-recurrent-cascade}.

\begin{corollary}[Collapse of affine radii at near-extremal points]
\label{cor:collapse-affine-radii}
Let $V$ be the recurrent profile in Proposition
\ref{prop:dilation-recurrent-cascade}.  There are differentiability points
$x_k\to0$ such that, after passing to a subsequence,
\begin{equation}
 q_k:=\nabla V(x_k)\longrightarrow q,
 \qquad |q|=L_V.
 \label{eq:near-extremal-gradients}
\end{equation}
For every such sequence one necessarily has
\begin{equation}
 \frac{\mathfrak r_{1/k}(V;x_k)}{|x_k|}\longrightarrow0.
 \label{eq:affine-radius-collapse}
\end{equation}
More generally, \eqref{eq:affine-radius-collapse} holds with any sequence
$\eta_k\downarrow0$ in place of $1/k$.
\end{corollary}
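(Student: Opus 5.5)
Two things have to be proved: the existence of differentiability points $x_k\to0$ with near-extremal gradients, and the collapse \eqref{eq:affine-radius-collapse}.

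\emph{Existence.} By \eqref{eq:all-centered-scales-full-slope} and Proposition~\ref{prop:dilation-recurrent-cascade}, choose chords $a_k,b_k\to0$ satisfying \eqref{eq:centered-extremizing-chords}. By Rademacher's theorem and Fubini, after perturbing $a_k,b_k$ slightly (keeping the difference quotient within $1/k$ of $L_V$), almost every point of the segment $[a_k,b_k]$ is a differentiability point of $V$. Write $e_k=(b_k-a_k)/|b_k-a_k|$. Then
\[
 V(b_k)-V(a_k)=\int_0^{|b_k-a_k|}\nabla V(a_k+te_k)\cdot e_k\,dt .
\]
Hence some differentiability point $x_k$ on the segment satisfies $\nabla V(x_k)\cdot e_k\ge L_V-1/k$, up to the sign of the chord. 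Since also $|\nabla V|\le L_V$, we get $|q_k|\to L_V$. A subsequence gives $q_k\to q$ with $|q|=L_V$. The points $x_k$ tend to $0$, and we may take $x_k\ne0$.

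\emph{Collapse.} Argue by contradiction. Suppose $\mathfrak r_{\eta_k}(V;x_k)\ge c|x_k|$ along a subsequence, with $c>0$. Set $\rho_k=|x_k|$ and $W_k=D_{\rho_k}V$. Compactness and dilation invariance of $\mathscr M$ give $W_k\to W\in\mathscr M$ locally uniformly, and $\xi_k=x_k/\rho_k\to\xi$ with $|\xi|=1$. Rescaling the definition \eqref{eq:relative-affine-radius} shows that, for $\rho\le c$,
\[
 \frac1\rho\,\|W_k-W_k(\xi_k)-q_k\cdot(\,\cdot-\xi_k)\|_{L^\infty(B_\rho(\xi_k))}\le\eta_k .
\]
Passing to the limit gives $W(y)=W(\xi)+q\cdot(y-\xi)$ on $B_c(\xi)$. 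Thus $W$ is affine on a ball, with a slope of length $L_V$. The global Lipschitz constant of $W$ is at most $L_V$. Any segment in $B_c(\xi)$ in the direction $q/|q|$ is therefore an extremal secant. Theorem~\ref{thm:extremal-secant-rigidity} then makes $W$ affine, which contradicts the fact (from Proposition~\ref{prop:dilation-recurrent-cascade}) that $\mathscr M$ contains no affine element. This gives \eqref{eq:affine-radius-collapse}, and the same argument works for any sequence $\eta_k\downarrow0$.

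\emph{Main obstacle.} The delicate step is the passage to the limit in the affine-radius condition. The uniform bound must survive local uniform convergence even though the centers $\xi_k$ move. This works because the inequality is stated in $L^\infty$ on balls $B_\rho(\xi_k)$ with $\rho$ fixed below $c$, and $W_k(\xi_k)\to W(\xi)$ by equicontinuity. A further detail is that $\xi$ must avoid the origin only insofar as $B_c(\xi)$ is a genuine ball, which is automatic. The rest is routine once Theorem~\ref{thm:extremal-secant-rigidity} and the nonaffinity of $\mathscr M$ are invoked.
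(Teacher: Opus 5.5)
Your proof is correct and follows the paper's argument. The collapse step is the same as the paper's: you rescale by $|x_k|$, extract a limit in the compact dilation-invariant hull $\mathscr M$, obtain an affine piece of slope length $L_V$ on $B_c(\xi)$, and use Theorem~\ref{thm:extremal-secant-rigidity} to contradict the nonaffinity of $\mathscr M$. The existence step differs only cosmetically: the paper uses directly that $\operatorname{ess\,sup}_{B_r}|\nabla V|=\Lambda_V(r)=L_V$ on every ball, while you recover the same near-extremal differentiability points from extremizing chords via a Fubini/line-integral argument, which is essentially the same fact.
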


\begin{proof}
On every ball, the Lipschitz seminorm of a Lipschitz function equals the
essential supremum of the length of its gradient.  Indeed, one inequality
is Rademacher's theorem, and the other follows by restricting the function
to line segments in the convex ball.  Equation
\eqref{eq:all-centered-scales-full-slope} therefore gives
\[
 \operatorname*{ess\,sup}_{B_r}|\nabla V|=L_V
 \qquad(r>0).
\]
We may consequently choose differentiability points $x_k\in B_{1/k}$ such
that $|\nabla V(x_k)|>L_V-1/k$.  Compactness of the closed ball of radius
$L_V$ gives a subsequence satisfying
\eqref{eq:near-extremal-gradients}.

Write $d_k=|x_k|$ and suppose that
\eqref{eq:affine-radius-collapse} fails.  For some $c>0$ and a subsequence,
\[
 \mathfrak r_{1/k}(V;x_k)\ge cd_k.
\]
After decreasing $c$, assume $c<1/2$.  Set
$W_k=D_{d_k}V$ and $\xi_k=x_k/d_k\in\mathbb S^{n-1}$.  The definition
\eqref{eq:relative-affine-radius} yields
\begin{equation}
 \left\|W_k-\frac{V(x_k)}{d_k}
 -q_k\cdot(\,\cdot-\xi_k)\right\|_{L^\infty(B_c(\xi_k))}
 \le\frac ck.
 \label{eq:rescaled-affine-ball}
\end{equation}
Since the minimal hull $\mathscr M$ is compact and dilation invariant,
we may assume
\[
 W_k\longrightarrow W\in\mathscr M
 \quad\text{locally uniformly},
 \qquad \xi_k\to\xi\in\mathbb S^{n-1}.
\]
The constants $V(x_k)/d_k$ are bounded by $L_V$.  Passing to one more
subsequence in \eqref{eq:rescaled-affine-ball} gives
\[
 W(X)=a+q\cdot(X-\xi)
 \qquad(X\in B_c(\xi))
\]
for some $a\in\R$.  Since $|q|=L_V$ and
$[W]_{C^{0,1}(\R^n)}\le L_V$, any sufficiently short segment in
$B_c(\xi)$ parallel to $q$ is an extremal secant for $W$.  Theorem
\ref{thm:extremal-secant-rigidity} makes $W$ affine in $\R^n$, which
contradicts $W\in\mathscr M$.  This proves
\eqref{eq:affine-radius-collapse}.  The same proof applies verbatim to
any $\eta_k\downarrow0$.
\end{proof}

\subsection{Nondegeneracy of record-scale profiles}
\label{subsec:record-scale-nondegeneracy}

We next show that the collapse in
\eqref{eq:affine-radius-collapse} cannot occur with vanishing rescaled
amplitude.  Proposition~\ref{prop:noncritical-pointwise-expansion}
implies the following estimate.  If
$x$ is a differentiability point with $q_x\ne0$, then, for every
$\alpha<sp-p+1$,
\[
 \left(\fint_{B_r(x)}
 |V-V(x)-q_x\cdot(\,\cdot-x)|^p\dd y\right)^{1/p}
 \le C_xr^{1+\alpha}
\]
for all sufficiently small $r$.  Since $V$ is globally Lipschitz, the
standard Lipschitz--$L^p$ interpolation gives
\begin{equation}
 \frac1r\|V-V(x)-q_x\cdot(\,\cdot-x)\|_{L^\infty(B_r(x))}
 \le C'_xr^{\alpha p/(n+p)}.
 \label{eq:noncritical-sup-expansion}
\end{equation}
Indeed, a value of size $A$ in $B_r(x)$ persists on a ball of radius
comparable to $A/L_V$; integration on that ball and the displayed
$L^p$ estimate give $A^{n+p}\le C L_V^n r^{n+p(1+\alpha)}$.

Fix from now on
\begin{equation*}
 0<\beta<\frac{p}{n+p}(sp-p+1).
\end{equation*}
For a differentiability point $x$ with tangent $q_x$, define
\begin{equation*}
 \mathfrak e_{V,x}(r):=
 \frac1r\|V-V(x)-q_x\cdot(\,\cdot-x)\|_{L^\infty(B_r(x))},
 \qquad
 \mathfrak H_{V,x}(r):=r^{-\beta}\mathfrak e_{V,x}(r).
\end{equation*}
By \eqref{eq:noncritical-sup-expansion}, after choosing
$\alpha$ so that $\beta<\alpha p/(n+p)$,
$\mathfrak H_{V,x}(r)\to0$ as $r\downarrow0$.  The global Lipschitz bound
also gives $\mathfrak H_{V,x}(r)\to0$ as $r\to\infty$.  Consequently
$\mathfrak H_{V,x}$ attains its maximum at a positive finite radius.
Here \(r\mapsto\mathfrak e_{V,x}(r)\) is continuous, because it is the
normalized supremum of a continuous function over the nested compact
balls \(\overline B_r(x)\).

\begin{proposition}[Nondegeneracy of the rescaled amplitude]
\label{prop:no-linear-record-neck}
Let $V$ and $x_k$ be as in Corollary~\ref{cor:collapse-affine-radii}, and
let $d_k=|x_k|$ and $q_k=\nabla V(x_k)$.  Choose $t_k>0$ so that
\begin{equation}
 \mathfrak H_{V,x_k}(t_k)
 =\max_{r>0}\mathfrak H_{V,x_k}(r),
 \qquad
 a_k:=\mathfrak e_{V,x_k}(t_k).
 \label{eq:record-scale-choice}
\end{equation}
Then $t_k\to0$.  After passage to a subsequence,
\begin{equation}
 a_k\longrightarrow a_*>0.
 \label{eq:positive-record-amplitude}
\end{equation}
Moreover, the rescaled solutions
\[
 U_k(X):=\frac{V(x_k+t_kX)-V(x_k)}{t_k}
       =q_k\cdot X+a_kv_k(X)
\]
converge locally uniformly to a nonaffine, globally $L_V$-Lipschitz entire
solution
\begin{equation}
 U_*(X)=q\cdot X+a_*v_*(X),
 \qquad |q|=L_V,
 \label{eq:gradient-contact-profile}
\end{equation}
such that
\begin{equation}
 v_*(0)=0,\qquad
 \|v_*\|_{L^\infty(B_1)}=1,\qquad
 \|v_*\|_{L^\infty(B_R)}\le R^{1+\beta}\quad(R>0).
 \label{eq:record-profile-growth}
\end{equation}
In particular, $U_*$ is differentiable at the origin,
\begin{equation}
 \nabla U_*(0)=q,
 \qquad |\nabla U_*(0)|=[U_*]_{C^{0,1}(\R^n)}=L_V.
 \label{eq:attained-maximal-gradient}
\end{equation}
Thus the simultaneous collapse in
\eqref{eq:centered-microscopic-ratio} and
\eqref{eq:affine-radius-collapse} cannot be realized by a linearized
profile with vanishing amplitude.
\end{proposition}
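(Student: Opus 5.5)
The plan is to combine the maximality of $t_k$ with the collapse of affine radii in Corollary~\ref{cor:collapse-affine-radii} and with Theorem~\ref{thm:extremal-secant-rigidity}.

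\emph{Normalization and growth bound.} Define $v_k:=(U_k-q_k\cdot X)/a_k$. By the definition of $\mathfrak e$ we have $\|v_k\|_{L^\infty(B_1)}=1$ and $v_k(0)=0$. Maximality of $\mathfrak H_{V,x_k}$ at $t_k$ gives, for every $R>0$,
\[
 \mathfrak e_{V,x_k}(Rt_k)\le R^\beta a_k ,
\]
that is, $\|v_k\|_{L^\infty(B_R)}\le R^{1+\beta}$. Since $|q_k|\le L_V$, we also get $a_k\le 2L_V$.

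\emph{$t_k\to0$ and the lower bound on $a_k$.} Fix $\eta>0$ and set $r_k:=\mathfrak r_{\eta}(V;x_k)$. Corollary~\ref{cor:collapse-affine-radii} gives $r_k/d_k\to0$. By the definition of the affine radius there is a radius $\rho_k$, slightly larger than $r_k$, with $\mathfrak e_{V,x_k}(\rho_k)\ge\eta$. If instead $r_k=d_k/2$, that case is excluded for large $k$. Therefore
\[
 t_k^{-\beta}a_k\ \ge\ \rho_k^{-\beta}\eta\ \to\infty .
\]
Since $a_k\le 2L_V$, this forces $t_k\to0$. It does not by itself give $a_k\not\to0$, so I argue that by contradiction. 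Suppose $a_k\to0$ along a subsequence. Then $U_k\to q\cdot X$ locally uniformly, because $|a_kv_k|\le a_kR^{1+\beta}$ on $B_R$. The pair $(v_k,Q_k)$ with $Q_k=q_k/a_k$ then has slope tending to infinity. After checking the energy and the relative tail bounds (below), Proposition~\ref{prop:fixed-order-infinite-slope-compactness} shows that $v_k\to v_*$ locally uniformly. The limit $v_*$ solves the linear stable equation of order $\tau_s$ on every ball, with $v_*(0)=0$, $\|v_*\|_{L^\infty(B_1)}=1$ and growth $R^{1+\beta}$, where $1+\beta<\tau_s$.

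Two facts about $v_*$ lead to a contradiction.

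\emph{Tangency at the origin.} Since $q_k=\nabla V(x_k)$ is the exact tangent, $v_k(X)=o(|X|)$ at $0$. Proposition~\ref{prop:noncritical-pointwise-expansion}, applied through the uniform large-slope iteration, should pass this to the limit as $\nabla v_*(0)=0$ and $v_*(0)=0$.

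\emph{Liouville.} A Liouville theorem for the stable operator with subquadratic growth $R^{1+\beta}$ and $\tau_s>1+\beta$ makes $v_*$ affine. With vanishing value and gradient at $0$, this gives $v_*\equiv0$, contradicting $\|v_*\|_{L^\infty(B_1)}=1$. Hence $a_*>0$.

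The \textbf{main obstacle} is the tail bound needed to invoke the compactness proposition. The global relative tail of $v_k$ must be controlled using only the growth $R^{1+\beta}$ and the angular flux estimate \eqref{eq:large-slope-angular-flux}. This works because $\int^\infty r^{\beta+p-2-sp}\,dr<\infty$, which holds since $\beta<sp-p+1$.

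\emph{The nondegenerate limit.} When $a_*>0$, the $U_k$ are equi-$L_V$-Lipschitz. Passing to the limit in the weak equation, using the affine tail condition $sp>p-1$ as in Proposition~\ref{prop:dilation-recurrent-cascade}, gives an entire solution $U_*=q\cdot X+a_*v_*$ satisfying \eqref{eq:record-profile-growth}.

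Differentiability of $U_*$ at $0$ with gradient $q$ comes from the same limiting tangency argument. The key point is that the expansion in Proposition~\ref{prop:noncritical-pointwise-expansion} is uniform in $k$ for the large-slope normalized pairs. I would derive this uniformity by rerunning the iteration, since the slopes satisfy $|q_k|\to L_V>0$. This gives \eqref{eq:attained-maximal-gradient}, because $[U_*]\le L_V=|q|$.

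Finally, $U_*$ is nonaffine. If it were affine, its slope would be $q$, which forces $v_*\equiv0$ and again contradicts $\|v_*\|_{L^\infty(B_1)}=1$.
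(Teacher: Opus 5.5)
Your overall architecture matches the paper's. You get the growth bound from maximality of $\mathfrak H_{V,x_k}$, obtain $t_k\to0$ from nonaffinity at scales comparable to $d_k$, exclude $a_*=0$ by infinite-slope compactness plus the stable Liouville theorem, and then pass to the nondegenerate limit.

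\textbf{The gap: tangency at the origin.} You transfer $v_k(X)=o(|X|)$ to the limit through a uniform-in-$k$ version of Proposition~\ref{prop:noncritical-pointwise-expansion}. That proposition, however, produces $r_0$ and $C_{x_0}$ that depend on the modulus of differentiability at the individual point. You do not establish the required uniformity (``should pass this to the limit'', ``I would derive this uniformity by rerunning the iteration''). A pointwise $o(|X|)$ property does not survive local uniform limits without it. You already have what is needed. Because $t_k$ maximizes $\mathfrak H_{V,x_k}$ over \emph{all} $r>0$, your bound $\|v_k\|_{L^\infty(B_R)}\le R^{1+\beta}$ holds also for $R<1$; that is, $|v_k(X)|\le|X|^{1+\beta}$ for every $X$. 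This passes to any local uniform limit and gives $|v_*(X)|\le|X|^{1+\beta}$, hence $v_*(0)=0$ and $\nabla v_*(0)=0$. The paper uses exactly this small-radius part of the growth bound in two places: to kill the affine limit in the degenerate case, and to obtain $\nabla U_*(0)=q$ and nonaffinity of $U_*$ in the nondegenerate case. No iteration is needed.

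\textbf{Two smaller repairs.}

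First, Corollary~\ref{cor:collapse-affine-radii} gives collapse of $\mathfrak r_{\eta_k}(V;x_k)/d_k$ only for $\eta_k\downarrow0$, not for a fixed $\eta$. For a fixed $\eta$ its limit argument only shows that $W$ is $c\eta$-close to affine, which gives no contradiction, so your derivation of $t_k\to0$ misquotes it. Either of the following fixes it.
\begin{itemize}
\item Choose $\eta_k\downarrow0$ slowly, e.g.\ $\eta_k=d_k^{\beta/2}$. Collapse then yields some $\rho\le d_k/2$ with $\mathfrak e_{V,x_k}(\rho)>\eta_k$, so $\mathfrak H_{V,x_k}(t_k)\ge\eta_k(d_k/2)^{-\beta}\to\infty$.
\item Prove directly, as the paper does, that $\mathfrak e_{V,x_k}(cd_k)\ge\varepsilon_0>0$, using compactness of $\mathscr M$ and Theorem~\ref{thm:extremal-secant-rigidity}.
\end{itemize}

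Second, Proposition~\ref{prop:fixed-order-infinite-slope-compactness} only yields a regional equation on $B_{5/4}$ with an unidentified bounded source $f$. To invoke the whole-space Liouville theorem, you must do the following.
\begin{itemize}
\item Apply the proposition to $A^{-1-\beta}v_k(A\,\cdot)$, with tilt $A^{-\beta}Q_k$, for every dyadic $A$.
\item Check the unit energy bound by a Caccioppoli test.
\item Use your tail estimate, started at radius comparable to $A$, to show the sources on $B_{5A/4}$ are $O(A^{\beta-\gamma})$.
\item Combine this with the tail decay of the limit to pass from the regional forms to the full homogeneous equation as $A\to\infty$.
\end{itemize}
Your tail computation supplies the key estimate here, but this identification step has to be carried out explicitly.
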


\begin{proof}
We first establish uniform nonaffinity at a scale comparable to $d_k$.
Fix $c\in(0,1/4)$.  We claim that there is
$\varepsilon_0>0$ such that
\begin{equation}
 \mathfrak e_{V,x_k}(cd_k)\ge\varepsilon_0
 \label{eq:outer-record-nonaffinity}
\end{equation}
for all large $k$.  Otherwise, put $W_k=D_{d_k}V$ and
$\xi_k=x_k/d_k\in\mathbb S^{n-1}$.  Compactness and dilation invariance of
$\mathscr M$ give, along a subsequence,
$W_k\to W\in\mathscr M$ locally uniformly and $\xi_k\to\xi$.  The failure
of \eqref{eq:outer-record-nonaffinity}, together with $q_k\to q$, gives
\[
 W(X)=W(\xi)+q\cdot(X-\xi)
 \qquad (X\in B_c(\xi)).
\]
Since $|q|=L_V$ and $[W]_{C^{0,1}(\R^n)}\le L_V$, the ball contains an
extremal secant of $W$.  Theorem~\ref{thm:extremal-secant-rigidity} makes
$W$ affine, contradicting $W\in\mathscr M$.

Equations \eqref{eq:record-scale-choice} and
\eqref{eq:outer-record-nonaffinity} imply
\[
 \mathfrak H_{V,x_k}(t_k)
 \ge\varepsilon_0(cd_k)^{-\beta}\longrightarrow\infty.
\]
In particular, \(a_k>0\) for all sufficiently large \(k\), so all the
normalizations below are well defined.
Since $a_k\le2L_V$, the identity
$t_k^\beta=a_k/\mathfrak H_{V,x_k}(t_k)$ proves $t_k\to0$.
The definition of the maximizing scale gives, for every $R>0$,
\begin{equation}
 \|v_k\|_{L^\infty(B_R)}
 =\frac{R\mathfrak e_{V,x_k}(Rt_k)}{a_k}
 \le R^{1+\beta},
 \qquad
 \|v_k\|_{L^\infty(B_1)}=1.
 \label{eq:record-growth-before-limit}
\end{equation}

Since \(0\le a_k\le2L_V\), pass to a subsequence such that
\(a_k\to a_*\ge0\).  It remains to exclude \(a_*=0\).  Suppose this
happens.  Then
$Q_k=q_k/a_k$ is an infinite tilt and its direction tends to
$e=q/L_V$.  The polynomial bound
\eqref{eq:record-growth-before-limit} has exponent
$1+\beta<sp-p+2$.  We verify that this estimate controls the full shifted
flux tail, including the region in which the linearization is not
uniform.  Put
\[
 \gamma:=sp-p+1,
 \qquad R_k:=a_k^{-1/\beta}.
\]
For \(X\in B_{5/4}\), \(R\ge2\), and
\(Y=X+R\omega\), set
\begin{equation*}
 \mathcal F_k(X,R,\omega):=(1+|Q_k|)^{2-p}
 \left|\Jp\bigl(Q_k\cdot(X-Y)+\delta v_k(X,Y)\bigr)
       -\Jp\bigl(Q_k\cdot(X-Y)\bigr)\right|.
\end{equation*}
Since \(|q_k|\to L_V>0\), one has
\((1+|Q_k|)^{2-p}\asymp a_k^{p-2}\).  On \(2\le R\le R_k\), split the
sphere into
\begin{equation*}
 \bigl\{|e_k\cdot\omega|>C a_kR^\beta\bigr\}
 \quad\hbox{and}\quad
 \bigl\{|e_k\cdot\omega|\le C a_kR^\beta\bigr\},
 \qquad e_k:=q_k/|q_k|.
\end{equation*}
On the first set, the linearized flux estimate and
\eqref{eq:record-growth-before-limit} give
\begin{equation*}
 \mathcal F_k(X,R,\omega)
 \le C R^{p-2}|e_k\cdot\omega|^{p-2}R^{1+\beta}.
\end{equation*}
The angular integral is bounded by \(CR^{p-1+\beta}\), because
\(p-2>-1\).  On the second set, whose measure is at most
\(Ca_kR^\beta\), the global \((p-1)\)-H\"older estimate for \(\Jp\)
gives the same bound:
\begin{align*}
 &a_k^{p-2}R^{(1+\beta)(p-1)}a_kR^\beta\\
 &\qquad=R^{p-1+\beta}(a_kR^\beta)^{p-1}
 \le R^{p-1+\beta}.
\end{align*}
Consequently,
\begin{equation*}
 \sup_{X\in B_{5/4}}
 \int_2^{R_k}\int_{\mathbb S^{n-1}}
 \mathcal F_k(X,R,\omega)\dd\omega\,
 \frac{\dd R}{R^{sp+1}}
 \le C\int_2^\infty R^{\beta-\gamma-1}\dd R<\infty.
\end{equation*}
On \(R>R_k\), use the global Lipschitz bound for
\(U_k=q_k\cdot X+a_kv_k\).  Homogeneity and the global H\"older estimate
for the \(p\)-flux yield
\[
 \int_{R_k}^\infty
 a_k^{-1}R^{p-1}R^{-sp-1}\dd R
 \le C a_k^{\gamma/\beta-1}\longrightarrow0.
\]
Thus the shifted tails are uniformly bounded.  Moreover, for every fixed
\(R\ge2\), the portion exterior to \(B_R\) is bounded, uniformly for all
large \(k\), by
\begin{equation}
 C R^{\beta-\gamma}+o_k(1)
 =C R^{1+\beta-\tau_s}+o_k(1).
 \label{eq:vanishing-record-linearized-tail}
\end{equation}

We make the all-scale content of this estimate explicit.  For \(A\ge2\)
put
\begin{align*}
 \mathfrak T_k(A):=\max\biggl\{&
 {\operatorname*{ess\,sup}}_{X\in B_{5A/4}}
 \int_{\R^n\setminus B_{4A/3}}
 \frac{\lambda_{Q_k}^{2-p}|\mathcal D_{Q_k}v_k(X,Y)|}
 {|X-Y|^{n+sp}}\dd Y,\\
 &{\operatorname*{ess\,sup}}_{X\in B_{3A/2}}
 \int_{\R^n\setminus B_{2A}}
 \frac{\lambda_{Q_k}^{2-p}|\mathcal D_{Q_k}v_k(X,Y)|}
 {|X-Y|^{n+sp}}\dd Y\biggr\}.
\end{align*}
The preceding angular splitting is unchanged when its radial
integration begins at \(cA\).  Indeed, if \(|X|\le2A\) and
\(|X-Y|\ge cA\), then \eqref{eq:record-growth-before-limit} gives
\[
 |\delta v_k(X,Y)|
 \le C\bigl(A+|X-Y|\bigr)^{1+\beta}
 \le C|X-Y|^{1+\beta}.
\]
The part between \(cA\) and \(R_k\) is therefore bounded by
\[
 C\int_{cA}^{R_k}R^{\beta-\gamma-1}\dd R
 \le CA^{\beta-\gamma},
\]
whereas the part beyond \(R_k\) is
\(O(a_k^{\gamma/\beta-1})\).  Consequently,
\begin{equation}
 \limsup_{k\to\infty}\mathfrak T_k(A)
 \le CA^{\beta-\gamma}
 \qquad(A\ge2).
 \label{eq:record-tail-all-scales}
\end{equation}
This contains both exterior regions in
\eqref{eq:large-slope-pointwise-tail}; in particular, no enlargement of
the \(X\)-region is implicit.

For each fixed \(A\ge2\), define
\[
 \widehat v_{k,A}(X):=A^{-1-\beta}v_k(AX),
 \qquad
 \widehat Q_{k,A}:=A^{-\beta}Q_k.
\]
Then \(\widehat v_{k,A}\) solves the equation shifted by
\(\widehat Q_{k,A}\), and
\[
 \|\widehat v_{k,A}\|_{L^\infty(B_R)}\le R^{1+\beta},
 \qquad
 |\widehat Q_{k,A}|\longrightarrow\infty,
 \qquad
 \frac{\widehat Q_{k,A}}{|\widehat Q_{k,A}|}\longrightarrow e.
\]
A change of variables in the two tail components gives
\begin{align}
 \mathcal T^{\rm pt}_{s,\widehat Q_{k,A}}
       (\widehat v_{k,A})
 &=
 \left(\frac{\lambda_{\widehat Q_{k,A}}}{\lambda_{Q_k}}\right)^{2-p}
 A^{sp-(1+\beta)(p-1)}\mathfrak T_k(A)\notag\\
 &\le C+o_k(1).
 \label{eq:scaled-record-tail}
\end{align}
Indeed, the prefactor before \(\mathfrak T_k(A)\) converges to
\(A^{\gamma-\beta}\), which cancels the power in
\eqref{eq:record-tail-all-scales}.  We also need the corresponding
bound with a larger \(X\)-region.  The second component of
\(\mathfrak T_k(2A)\), after the same change of variables, gives
\begin{equation}
 \begin{split}
 &{\operatorname*{ess\,sup}}_{X\in B_3}
 \int_{\R^n\setminus B_4}
 \frac{\lambda_{\widehat Q_{k,A}}^{2-p}
 |\mathcal D_{\widehat Q_{k,A}}\widehat v_{k,A}(X,Y)|}
 {|X-Y|^{n+sp}}\dd Y\\
 &\hspace{35mm}\le C+o_k(1).
 \end{split}
 \label{eq:scaled-record-expanded-tail}
\end{equation}
Indeed, its prefactor is again \(A^{\gamma-\beta}+o_k(1)\), whereas
\eqref{eq:record-tail-all-scales} at \(2A\) is
\(O((2A)^{\beta-\gamma})\).

Choose \(\zeta\in C_c^\infty(B_3)\), \(\zeta=1\) on \(B_2\), and test
the shifted equation with
\(\zeta^2\widehat v_{k,A}\), using a standard bounded truncation and
then letting the truncation level tend to infinity.  Shifted
monotonicity and the product estimate in
Lemma~\ref{lem:angularly-averaged-shifted-modular} give
\begin{align*}
 &\mathcal E_{s,\widehat Q_{k,A}}
       (\widehat v_{k,A};B_2)\\
 &\quad\le C(1-s)
 \iint_{B_4^2}
 \frac{\mathcal H_{\widehat Q_{k,A},X-Y}
 \bigl((|\widehat v_{k,A}(X)|
       +|\widehat v_{k,A}(Y)|)\delta\zeta(X,Y)\bigr)}
 {|X-Y|^{n+sp}}\dd X\!\dd Y\\
 &\qquad+C\|\widehat v_{k,A}\|_{L^\infty(B_3)}
 {\operatorname*{ess\,sup}}_{X\in B_3}
 \int_{\R^n\setminus B_4}
 \frac{\lambda_{\widehat Q_{k,A}}^{2-p}
 |\mathcal D_{\widehat Q_{k,A}}\widehat v_{k,A}(X,Y)|}
 {|X-Y|^{n+sp}}\dd Y.
\end{align*}
The growth bound controls \(\widehat v_{k,A}\) on \(B_4\).  To make
the cutoff estimate uniform in \(k\), write \(X-Y=r\omega\) and
\[
 (|\widehat v_{k,A}(X)|+|\widehat v_{k,A}(Y)|)
 \delta\zeta(X,Y)=r\,t_{k,A,X,r}(\omega).
\]
For fixed \(A\), the preceding local bound and the Lipschitz norm of
\(\zeta\) give \(|t_{k,A,X,r}(\omega)|\le C_A\).  If
\(|\widehat Q_{k,A}|<1\), use the global \(p\)-growth estimate.  If
\(\widehat Q_{k,A}=q_{k,A}e_{k,A}\), \(q_{k,A}\ge1\), use instead
\[
 r^{-p}\mathcal H_{\widehat Q_{k,A},r\omega}
       (r t_{k,A,X,r}(\omega))
 \le C|e_{k,A}\cdot\omega|^{p-2}
       |t_{k,A,X,r}(\omega)|^2.
\]
The angular integral is bounded because \(p-2>-1\).  Consequently,
the spherical integral of the cutoff density is at most \(C_A r^p\),
and
\[
 (1-s)\int_0^8r^{p-sp-1}\dd r\le C(p).
\]
This controls the cutoff term without treating the variable amplitude
as a fixed scalar.  Estimate \eqref{eq:scaled-record-expanded-tail}
controls the crossed term.  Both bounds are independent of \(k\).
Hence
\begin{equation}
 \mathcal E_{s,\widehat Q_{k,A}}
       (\widehat v_{k,A};B_2)\le C_A.
 \label{eq:scaled-record-energy}
\end{equation}
For fixed \(A\), Proposition
\ref{prop:fixed-order-infinite-slope-compactness} now applies to
\(\widehat v_{k,A}\).  Applying it successively for \(A=2^m\),
\(m\in\mathbb N\), and taking a diagonal subsequence yields
\begin{equation}
 v_k\longrightarrow v
 \quad\hbox{locally uniformly in }\R^n.
 \label{eq:record-global-local-uniform-convergence}
\end{equation}
The limits obtained at different values of \(A\) agree on overlaps,
because they arise from the same sequence.

We finally identify the equation without suppressing the exterior
limit.  For each dyadic \(A=2^m\), Proposition
\ref{prop:fixed-order-infinite-slope-compactness} and
\eqref{eq:record-tail-all-scales} give, in \(B_{5A/4}\),
\[
 L^{B_{4A/3}}_{s,e}v=f_A,
 \qquad
 \|f_A\|_{L^\infty(B_{5A/4})}
 \le CA^{\beta-\gamma}.
\]
More explicitly, if \(\widehat f_A\) is the regional source obtained
for \(\widehat v_{k,A}\) on \(B_{5/4}\), then stable scaling gives
\[
 f_A(X)=A^{\beta-\gamma}\widehat f_A(X/A),
 \qquad \|\widehat f_A\|_{L^\infty(B_{5/4})}\le C.
\]
Since \(\beta<\gamma\), \(f_A\to0\) locally uniformly in norm as
\(A\to\infty\).  Moreover, the growth inherited from
\eqref{eq:record-growth-before-limit} gives
\begin{equation}
 \int_{\R^n\setminus B_A}
 \frac{|v(Y)|}{|Y|^{n+\tau_s}}\dd Y
 \le CA^{1+\beta-\tau_s}
 =CA^{\beta-\gamma}.
 \label{eq:record-limit-stable-tail}
\end{equation}
The same decay holds with the spectral density of the limiting
operator.  Indeed, polar coordinates, the growth estimate, and
\(|e\cdot\omega|^{p-2}\in L^1(\mathbb S^{n-1})\) give
\begin{equation*}
 \int_{\R^n\setminus B_A}
 \frac{|e\cdot(Y/|Y|)|^{p-2}|v(Y)|}
 {|Y|^{n+\tau_s}}\dd Y
 \le CA^{\beta-\gamma}.
\end{equation*}
Thus the regional forms converge to the full form on every compactly
supported test function.  Letting \(A\to\infty\) shows that \(v\)
solves the homogeneous translation-invariant stable equation associated
with the kernel
\begin{equation*}
 K_e(z)=\frac{|e\cdot z|^{p-2}}
 {|z|^{n+sp}}
 =\frac{|e\cdot(z/|z|)|^{p-2}}
 {|z|^{n+sp-p+2}}.
\end{equation*}
The angular density is integrable and spectrally elliptic.  Local
uniform convergence in
\eqref{eq:record-global-local-uniform-convergence} preserves both
statements in \eqref{eq:record-growth-before-limit}; thus
\begin{equation}
 v(0)=0,\qquad
 \|v\|_{L^\infty(B_1)}=1,\qquad
 \|v\|_{L^\infty(B_R)}\le R^{1+\beta}.
 \label{eq:nontrivial-linear-record-limit}
\end{equation}

Since \(1+\beta<\tau_s\), the whole-space Liouville theorem for
symmetric stable operators
\cite[Theorem~2.1]{RosOtonSerraStable}, applied with the growth in
\eqref{eq:nontrivial-linear-record-limit}, shows directly that \(v\)
is a polynomial of degree at most one.  Hence \(v\) is affine.  The
bound $|v(X)|\le|X|^{1+\beta}$ as $X\to0$ forces its affine part to
vanish, contradicting
$\|v\|_{L^\infty(B_1)}=1$.  This proves
\eqref{eq:positive-record-amplitude}.

We may now assume $a_k\to a_*>0$.  The functions $U_k$ are globally
$L_V$-Lipschitz, and the equation is stable under local uniform convergence
because $sp>p-1$.  Thus, after a subsequence, $U_k\to U_*$ locally uniformly
and $U_*$ is an entire solution.  Since $a_k$ stays away from zero,
\eqref{eq:record-growth-before-limit} also gives local uniform convergence
$v_k\to v_*$.  Passing to the limit yields
\eqref{eq:gradient-contact-profile} and
\eqref{eq:record-profile-growth}.  The small-radius part of
\eqref{eq:record-profile-growth} gives $\nabla v_*(0)=0$, and hence
\eqref{eq:attained-maximal-gradient}.  Finally, $v_*$ is nonconstant by its
unit norm on $B_1$, so $U_*$ is nonaffine.  The proposition follows.
\end{proof}

\subsection{Noncritical regularity at the maximal-gradient profile}
\label{subsec:noncritical-maximal-gradient}

\begin{corollary}[Dissipation along the contact direction]
\label{cor:tangent-ball-dissipation}
Let $U_*=q\cdot X+a_*v_*$ be the profile in
Proposition~\ref{prop:no-linear-record-neck}, and put $e=q/L_V$.  Then
$v_*\in W^{1,\infty}_{\rm loc}(\R^n)$ and
\begin{equation}
 \partial_ev_*
 \le-\frac{a_*}{2L_V}|\nabla v_*|^2\le0
 \qquad\text{a.e. in }\R^n.
 \label{eq:tangent-ball-dissipation}
\end{equation}
Consequently, for almost every $X\in e^\perp$ and $t_1<t_2$,
\begin{equation}
 \frac{a_*}{2L_V}
 \int_{t_1}^{t_2}|\nabla v_*(X+te)|^2\dd t
 \le v_*(X+t_1e)-v_*(X+t_2e).
 \label{eq:line-dissipation}
\end{equation}
Moreover,
\begin{equation}
 \limsup_{R\to\infty}\frac1R
 \|v_*\|_{L^\infty(B_R)}>0,
 \label{eq:mandatory-linear-far-field}
\end{equation}
and no centered blow-down of $U_*$ is affine.
\end{corollary}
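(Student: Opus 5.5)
The plan is to read \eqref{eq:tangent-ball-dissipation} off the global Lipschitz bound of \(U_*\) at the level of gradients. The far-field statements will then follow from Proposition~\ref{prop:affine-blowdown-rigidity}.

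\emph{Regularity and the pointwise inequality.} Since \(a_*>0\) by \eqref{eq:positive-record-amplitude}, we have \(v_*=(U_*-q\cdot X)/a_*\). Here \(U_*\) is globally \(L_V\)-Lipschitz, so \(v_*\) is globally Lipschitz and in particular \(v_*\in W^{1,\infty}_{\rm loc}(\R^n)\). By Rademacher's theorem, \(\nabla U_*=q+a_*\nabla v_*\) a.e. As recalled in the proof of Corollary~\ref{cor:collapse-affine-radii}, \(|\nabla U_*|\le[U_*]_{C^{0,1}(\R^n)}=L_V\) a.e. Squaring and using \(|q|=L_V\) gives
\[
 L_V^2+2a_*\,q\cdot\nabla v_*+a_*^2|\nabla v_*|^2\le L_V^2
 \qquad\text{a.e.}
\]
Dividing by \(2a_*L_V>0\) and writing \(q=L_Ve\) yields \eqref{eq:tangent-ball-dissipation} directly.

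\emph{Integration along lines.} By Fubini, for a.e. \(X\in e^\perp\) the restriction \(t\mapsto v_*(X+te)\) is Lipschitz. For such \(X\), its derivative equals \(\partial_ev_*(X+te)\) for a.e. \(t\), and \(\nabla v_*(X+te)\) is defined for a.e. \(t\). Integrating \eqref{eq:tangent-ball-dissipation} from \(t_1\) to \(t_2\) then gives \eqref{eq:line-dissipation} by the fundamental theorem of calculus for absolutely continuous functions.

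\emph{Far field.} We first show that no centered blow-down of \(U_*\) is affine. Note that \(U_*(0)=0\) because \(v_*(0)=0\). If some sequence \(R_j\to\infty\) gave \(U_*(R_jX)/R_j\to b\cdot X\) locally uniformly, then Proposition~\ref{prop:affine-blowdown-rigidity}, applied to the globally Lipschitz entire solution \(U_*\), would force \(U_*(X)=b\cdot X\). This contradicts the nonaffinity in Proposition~\ref{prop:no-linear-record-neck}.

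For \eqref{eq:mandatory-linear-far-field}, suppose instead that \(R^{-1}\|v_*\|_{L^\infty(B_R)}\to0\). Then
\[
 \sup_{|X|\le M}\Bigl|\frac{U_*(RX)}R-q\cdot X\Bigr|
 \le a_*M\,\frac{\|v_*\|_{L^\infty(B_{RM})}}{RM}\longrightarrow0
\]
for every \(M\). Hence the blow-downs converge to the affine map \(q\cdot X\), which is excluded by the previous step.

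I expect no serious obstacle. The only delicate point is the measure-theoretic bookkeeping in the line integration, namely choosing the a.e. lines on which the Rademacher gradient and the one-dimensional derivative agree. This is standard for Lipschitz functions.
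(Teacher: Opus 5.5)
Your proposal is correct and follows essentially the same route as the paper. Both square the a.e.\ bound \(|q+a_*\nabla v_*|\le L_V=|q|\), integrate along \(e\)-lines, and obtain the far-field statements from Proposition~\ref{prop:affine-blowdown-rigidity} applied to \(U_*\) with \(U_*(0)=0\). The only difference is cosmetic: you prove that no centered blow-down is affine first and deduce \eqref{eq:mandatory-linear-far-field} from it, whereas the paper argues in the reverse order.
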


\begin{proof}
Rademacher's theorem and the global Lipschitz bound give
$|q+a_*\nabla v_*|\le L_V=|q|$ almost everywhere.  Squaring this inequality
and dividing by $2a_*L_V$ proves
\eqref{eq:tangent-ball-dissipation}; integration on lines gives
\eqref{eq:line-dissipation}.

If \eqref{eq:mandatory-linear-far-field} failed, then
$U_*(X)-q\cdot X=o(|X|)$ uniformly on balls.  Every centered blow-down would
therefore be the affine map $q\cdot X$.  The affine blow-down rigidity used
in Proposition~\ref{prop:dilation-recurrent-cascade} would make $U_*$
affine, contradicting Proposition~\ref{prop:no-linear-record-neck}.  The
same rigidity theorem shows directly that no individual centered blow-down
can be affine.
\end{proof}

\begin{lemma}[Regional angular compactness at the gradient contact]
\label{lem:regional-angular-compactness}
Let $1<p<2$, $0<\alpha<1$, and let $U\in C^{1,\alpha}(B_{2r_0})$ satisfy
\begin{equation}
 \|\nabla U-q\|_{L^\infty(B_{2r_0})}\le\frac14|q|,
 \qquad q\ne0.
 \label{eq:noncritical-gradient-tube}
\end{equation}
Fix a unit vector $\xi$.  If $x$, $x-\rho\omega$ and their translates by
$h\xi$ belong to $B_{2r_0}$, put
\begin{align*}
 A_0(x,\rho,\omega)&:=U(x)-U(x-\rho\omega),\\
 A_h(x,\rho,\omega)&:=U(x+h\xi)-U(x-\rho\omega+h\xi),
\end{align*}
and
\begin{equation*}
 b_h(x,\rho,\omega)
 :=(p-1)\rho^{2-p}\int_0^1
 |(1-t)A_0+tA_h|^{p-2}\dd t.
\end{equation*}
Then, uniformly for the admissible $x$, $\rho$ and $h$,
\begin{equation}
 c\le b_h(x,\rho,\omega)\quad\text{for a.e. }\omega,
 \qquad
 \int_{\mathbb S^{n-1}}b_h(x,\rho,\omega)\dd\omega\le C.
 \label{eq:regional-angular-ellipticity}
\end{equation}
Moreover, if $|h|\le\rho$, then
\begin{equation}
 \int_{\mathbb S^{n-1}}
 \left|b_h(x,\rho,\omega)
 -(p-1)\left|\frac{A_0(x,\rho,\omega)}{\rho}\right|^{p-2}
 \right|\dd\omega
 \le C|h|^{\alpha(p-1)}.
 \label{eq:regional-angular-convergence}
\end{equation}
Here the constants depend only on $n,p,|q|$ and
$[\nabla U]_{C^\alpha(B_{2r_0})}$, after the geometry has been normalized
so that the displayed segments stay a fixed positive distance from the
boundary.
\end{lemma}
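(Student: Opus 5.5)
The plan is to reduce everything to the first-order structure of the increments $A_0$ and $A_h$ supplied by the gradient tube \eqref{eq:noncritical-gradient-tube}. By the mean-value formula along the segment from $x-\rho\omega$ to $x$,
\[
 A_0=\rho\,\omega\cdot\int_0^1\nabla U(x-\rho\omega+\sigma\rho\omega)\dd\sigma
 =\rho\bigl(q\cdot\omega+\varepsilon_0\bigr),\qquad |\varepsilon_0|\le\tfrac14|q|,
\]
and likewise $A_h=\rho(q\cdot\omega+\varepsilon_h)$ with $|\varepsilon_h|\le\frac14|q|$. Hence every convex combination satisfies
\[
 (1-t)A_0+tA_h=\rho\,G_t(\omega),\qquad G_t(\omega):=\omega\cdot\bar g_t,
\]
where $\bar g_t$ is a convex combination of averages of $\nabla U$ and so lies in the ball $\overline B_{|q|/4}(q)$. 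In particular $|G_t|\le\frac54|q|$. The factor $\rho^{2-p}$ then cancels exactly, and we get
\[
 b_h=(p-1)\int_0^1|G_t(\omega)|^{p-2}\dd t.
\]

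\emph{Lower bound.} Since $p-2<0$ and $|G_t|\le\frac54|q|$, we have $b_h\ge(p-1)(\frac54|q|)^{p-2}=:c$ for every $\omega$.

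\emph{Upper bound.} The difficulty is that the vector $\bar g_t$ depends on $\omega$, so one cannot directly quote \eqref{eq:large-slope-angular-integrability}. The plan is to use Fubini in $t$ and to exploit that $G_t(\omega)$ vanishes only near the equator $\{|e\cdot\omega|\lesssim\frac14\}$ with $e=q/|q|$.

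I would first try the sublevel estimate
\[
 |\{\omega:|G_t(\omega)|\le\lambda\}|\le C\lambda/|q|
\]
for small $\lambda$. This is a transversality statement: $\partial_\omega G_t$ contains the term $\bar g_t$ projected to the tangent space, which has size $\gtrsim|q|$ near the zero set, since $|\bar g_t|\ge\frac34|q|$. The remaining derivative terms come from the $\omega$-dependence of the averages and are bounded by $\rho[\nabla U]_{C^\alpha}\rho^{\alpha-1}$, which is the delicate part.

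If that derivative control fails, I would instead use the $C^\alpha$ hypothesis differently. Replace $\bar g_t$ by the frozen vector $\nabla U(x)$, at cost $|\bar g_t-\nabla U(x)|\le C\rho^\alpha$ plus the $h$-shift $C|h|^\alpha$. Then apply the layer-cake bound with $|\{|\nabla U(x)\cdot\omega|\le\lambda\}|\le C\lambda$, together with $|G_t|^{p-2}\le(|\nabla U(x)\cdot\omega|-C\rho^\alpha)_+^{p-2}$ outside a band of measure $C\rho^\alpha$. On that band I would use the distributional sublevel bound instead.

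The key point is that $\int_0^\Lambda\lambda^{p-2}\dd(\lambda)<\infty$ because $p-2>-1$. Integrating the sublevel estimate in $\lambda$ gives $\int_{\mathbb S^{n-1}}|G_t|^{p-2}\dd\omega\le C$ uniformly in $t$, and this yields the upper bound in \eqref{eq:regional-angular-ellipticity}.

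\emph{Convergence \eqref{eq:regional-angular-convergence}.} Here I would use
\[
 |A_h-A_0|\le\rho\,[\nabla U]_{C^\alpha}|h|^\alpha,
\]
which follows from the mean-value representation by comparing $\nabla U$ at points shifted by $h\xi$. So $|G_t-G_0|\le C|h|^\alpha$. The scalar estimate
\[
 \bigl||a|^{p-2}-|b|^{p-2}\bigr|\le\min\{|a|^{p-2}+|b|^{p-2},\;C|a-b|\min(|a|,|b|)^{p-3}\}
\]
is not directly integrable, so I would split the sphere. On $\{|G_0|\le2\delta\}$ with $\delta=C|h|^\alpha$, use the crude bound. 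The sublevel estimate gives a contribution $\le C\delta^{p-1}$, using the integrability of $\lambda^{p-2}$ near $0$ in its distributional form. On the complement, where $|G_t|\ge\delta$, use the derivative bound. This gives $C\delta\int_{\{|G_0|>2\delta\}}|G_0|^{p-3}\dd\omega\le C\delta\cdot\delta^{p-2}=C\delta^{p-1}$ by the layer-cake formula, since $p-3<-1$.

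Both pieces are $C|h|^{\alpha(p-1)}$, which is the claim.

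The main obstacle is the uniform sublevel estimate $|\{|G_t|\le\lambda\}|\le C\lambda$ when the vector $\bar g_t$ depends on $\omega$. I would secure it by the freezing argument above, since $\rho$ is bounded. Under that argument the band near the true equator has width $O(\lambda+\rho^\alpha)$, and the final constant absorbs the normalized geometry.
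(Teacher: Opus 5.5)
Your overall structure matches the paper's: the cancellation of $\rho^{2-p}$, the pointwise lower bound, a uniform sublevel estimate followed by layer cake for the angular upper bound, and the split at $\{|G_0|\le 2\delta\}$ with $\delta=C|h|^\alpha$ for \eqref{eq:regional-angular-convergence}. All of those steps are fine \emph{once} $|\{\omega:|G_t(\omega)|<\lambda\}|\le C\lambda$ is known uniformly in $x,\rho,h,t$. That estimate is the crux of the lemma, and you do not prove it.

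Your transversality attempt stalls because you bound the $\omega$-dependence of the averages by $C[\nabla U]_{C^\alpha}\rho^\alpha$, which need not be small for admissible $\rho$. The freezing fallback fails. Replacing $\bar g_t$ by $\nabla U(x)$ only gives $\{|G_t|<\lambda\}\subset\{|\nabla U(x)\cdot\omega|<\lambda+\delta'\}$ with $\delta'=C(\rho^\alpha+|h|^\alpha)$. The resulting measure bound $C(\lambda+\delta')$ does not vanish as $\lambda\downarrow0$, so the layer-cake integral $\int_0^\infty|\{|G_t|^{p-2}>\sigma\}|\dd\sigma$ diverges. The band of width $\delta'$ is exactly where $|G_t|^{p-2}$ is singular. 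Nothing in the freezing argument rules out, say, $G_t$ vanishing on a set of positive measure inside it. Appealing there to ``the distributional sublevel bound'' is circular, and smallness of $\rho$ would not change this.

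The missing observation is that the tangential derivatives of the difference quotients can be computed exactly, with no second derivatives and no H\"older loss. For $\zeta\in T_\omega\mathbb S^{n-1}$,
\[
 \partial_\zeta\Bigl(\frac{A_0}{\rho}\Bigr)=\nabla U(x-\rho\omega)\cdot\zeta,
 \qquad
 \partial_\zeta\Bigl(\frac{A_h}{\rho}\Bigr)=\nabla U(x-\rho\omega+h\xi)\cdot\zeta,
\]
because the chain-rule factor $\rho$ cancels the $1/\rho$. In your decomposition, the ``remaining terms'' are a convex combination of these two gradients minus $\bar g_t$, paired with $\zeta$. The tube \eqref{eq:noncritical-gradient-tube} bounds them by $|q|/2<\frac34|q|$, not merely by $C\rho^\alpha$. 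Hence every convex combination satisfies $\|G_t-q\cdot\omega\|_{C^1(\mathbb S^{n-1})}\le|q|/4$.

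From this the sublevel bound follows. On $\{|G_t|\le|q|/8\}$ one has $|e\cdot\omega|\le 3/8$, with $e=q/|q|$. Throughout the band $\{|e\cdot\omega|\le3/8\}$, the derivative of $G_t$ along the unit meridian direction towards $e$ is at least $|q|(\sqrt{55}/8-1/4)>|q|/2$. So $G_t$ is strictly monotone along meridians through the band, and $|\{|G_t|<\lambda\}|\le C\lambda/|q|$ uniformly. This is precisely the paper's estimate, which it obtains via the implicit function theorem and a covering of the equator. With this inserted, the rest of your proposal goes through as written.
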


\begin{proof}
Let $\nu=p-2\in(-1,0)$.  We first record the angular estimate used
below.  Suppose that \(F,G\in C^1(\mathbb S^{n-1})\) and
\begin{equation*}
 \max\left\{
 \|F-q\cdot\omega\|_{C^1(\mathbb S^{n-1})},
 \|G-q\cdot\omega\|_{C^1(\mathbb S^{n-1})}
 \right\}\le\frac14|q|.
\end{equation*}
At a zero of either function, one has
\(|q\cdot\omega|\le|q|/4\), and hence the tangential projection of \(q\)
has length at least \(\sqrt{15}|q|/4\).  The tangential gradient of the
function is therefore bounded from below by
\((\sqrt{15}-1)|q|/4\) on its zero set.  The
implicit function theorem, a finite covering of the equator, and
integration in the normal coordinate give
\begin{equation}
 \bigl|\{|F|<t\}\bigr|+\bigl|\{|G|<t\}\bigr|\le Ct
 \qquad(0<t<1).
 \label{eq:angular-sublevel-bound}
\end{equation}
Consequently, if $\|F-G\|_{L^\infty}\le\delta\le1$, splitting into
$\{|F|\le2\delta\}$ and its complement and using the mean value theorem on
the latter yields
\begin{equation}
 \int_{\mathbb S^{n-1}}
 \bigl||F|^\nu-|G|^\nu\bigr|\dd\omega
 \le C\delta^{\nu+1}=C\delta^{p-1}.
 \label{eq:singular-angular-modulus}
\end{equation}
Indeed, the near-zero contribution follows from
\eqref{eq:angular-sublevel-bound} by layer-cake integration, while the other
part is bounded by
$C\delta\int_\delta^1 t^{\nu-1}\dd t\le C\delta^{\nu+1}$.

Set $F_j=A_j/\rho$, $j\in\{0,h\}$.  The fundamental theorem of calculus
gives
\[
 F_0(x,\rho,\omega)
 =\int_0^1\nabla U(x-t\rho\omega)\cdot\omega\dd t,
\]
and the analogous formula for $F_h$.  If
\(\zeta\in T_\omega\mathbb S^{n-1}\), direct differentiation gives
\begin{align*}
 \partial_\zeta F_0(x,\rho,\omega)
 &=\nabla U(x-\rho\omega)\cdot\zeta,\\
 \partial_\zeta F_h(x,\rho,\omega)
 &=\nabla U(x-\rho\omega+h\xi)\cdot\zeta.
\end{align*}
Consequently, \eqref{eq:noncritical-gradient-tube} controls both the
values and the tangential gradients of \(F_0\) and \(F_h\) relative to
\(q\cdot\omega\).  Every convex combination
$(1-t)F_0+tF_h$ therefore has the uniform transversality used in
\eqref{eq:angular-sublevel-bound}.  It follows at once that its
$\nu$-power has a uniformly bounded angular integral.  On the other hand,
the Lipschitz bound on $U$ and $\nu<0$ give the pointwise lower bound in
\eqref{eq:regional-angular-ellipticity}.

Finally, H\"older continuity of the gradient gives
\[
 \|F_h-F_0\|_{L^\infty(\mathbb S^{n-1})}
 \le C|h|^\alpha.
\]
Apply \eqref{eq:singular-angular-modulus} to $F_0$ and each convex
combination $(1-t)F_0+tF_h$, and then integrate in $t$.  This proves
\eqref{eq:regional-angular-convergence}.
\end{proof}

\begin{lemma}[A noncritical neighborhood of the maximal-gradient profile]
\label{lem:record-noncritical-neighborhood}
Let \(U_*=q\cdot X+a_*v_*\) be the profile in Proposition
\ref{prop:no-linear-record-neck}.  If
\begin{equation*}
 \beta<\alpha<sp-p+1,
\end{equation*}
then \(U_*\in C^{1,\alpha}(B_{2r_0})\) for some \(r_0>0\).  After
decreasing \(r_0\),
\begin{equation}
 \|\nabla U_*-q\|_{L^\infty(B_{2r_0})}\le\frac14|q|.
 \label{eq:record-noncritical-gradient-tube}
\end{equation}
\end{lemma}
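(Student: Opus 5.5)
Since \(\nabla U_*(0)=q\ne0\) by \eqref{eq:attained-maximal-gradient}, the origin is a noncritical differentiability point, and the plan is to show that the pointwise expansion of Proposition~\ref{prop:noncritical-pointwise-expansion} holds \emph{uniformly} at all points of a small ball. Campanato's characterization then gives \(C^{1,\alpha}\). The key observation is that Proposition~\ref{prop:noncritical-pointwise-expansion} enters only through the iteration of Proposition~\ref{prop:large-slope-affine-improvement}. That iteration starts at a scale \(r_0\) and amplitude \(H_*\), and the only requirements at that stage are three:
\begin{itemize}
\item smallness of the initial relative error \(\varepsilon_*(r_*)\);
\item a lower bound \(|q|/H_*\ge4Q_0\) on the normalized tilt;
\item the tail initialization \eqref{eq:noncritical-initial-far-tail}.
\end{itemize}
I would therefore rerun that proof with centers \(x\) near \(0\) in place of \(x_0\), and check that every choice can be made independently of \(x\).

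\emph{Step 1 (uniform flatness near the origin).} I would first show that \(\sup_{|x|\le\delta}\frac1r\|U_*-U_*(x)-q\cdot(\cdot-x)\|_{L^\infty(B_r(x))}\to0\) as \(r,\delta\to0\). By \eqref{eq:record-profile-growth}, \(|a_*v_*(X)|\le a_*|X|^{1+\beta}\). Hence on \(B_{2r}\) the function \(U_*-q\cdot X\) has sup norm \(\le Cr^{1+\beta}\). For \(|x|\le r\) this gives relative error \(\le Cr^\beta\) with respect to the \emph{fixed} slope \(q\), uniformly in \(x\). I will use the fixed affine map \(q\cdot(\cdot-x)+U_*(x)\) as the initial approximation, so no differentiability at \(x\) is needed at this stage. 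The global Lipschitz bound controls the far field exactly as before, because the tail computation in Proposition~\ref{prop:noncritical-pointwise-expansion} uses only \(sp>p-1\) and the Lipschitz constant \(L_V\).

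\emph{Step 2 (uniform iteration).} With \(H_*=C_*(Cr_*^\beta+r_*^{sp-p+1})\) chosen independently of \(x\in B_{r_*/8}\), the iteration of Proposition~\ref{prop:large-slope-affine-improvement} runs as in Proposition~\ref{prop:noncritical-pointwise-expansion}, with \(r_k=r_0\rho_\infty^k\) and \(H_k=H_*\rho_\infty^{k\alpha}\). The geometric tail summation uses \(\alpha<\gamma=sp-p+1\). Summing the affine increments, I obtain for each such \(x\) a limiting affine map \(\ell_x\) with slope \(q_x\), where \(|q_x-q|\le CH_*\le|q|/4\). It satisfies
\[
 \Bigl(\fint_{B_r(x)}|U_*-\ell_x|^p\Bigr)^{1/p}\le Cr^{1+\alpha},\qquad 0<r<r_0,
\]
with \(C\) independent of \(x\). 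The Lipschitz--\(L^p\) interpolation of \eqref{eq:noncritical-sup-expansion} is not needed here. Instead, the standard Campanato comparison of \(\ell_x,\ell_y\) on overlapping balls gives \(|q_x-q_y|\le C|x-y|^\alpha\), identifies \(q_x=\nabla U_*(x)\), and yields \(U_*\in C^{1,\alpha}(B_{2r_0})\) after renaming \(r_0\).

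\emph{Step 3 (gradient tube).} Since \(\nabla U_*\) is continuous and \(\nabla U_*(0)=q\), shrinking \(r_0\) gives \eqref{eq:record-noncritical-gradient-tube}; alternatively it follows directly from \(|q_x-q|\le CH_*\le|q|/4\).

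The main obstacle is Step~2: verifying that every constant in the proof of Proposition~\ref{prop:noncritical-pointwise-expansion} is uniform in the center. This covers the tail reproduction across annuli and the lower bound on the normalized tilt at all levels. The only pointwise input was the modulus \(\varepsilon_*\), and Step~1 replaces it by the uniform bound \(Cr^\beta\). The hypothesis \(\beta<\alpha\) is what allows this: the initial amplitude \(Cr_*^\beta\) is small, while the target decay rate \(\alpha\) stays strictly below \(\gamma\). I would check this carefully in the far-field estimate, where \(r_0^{\gamma}/H_*\) must be small for all centers simultaneously.
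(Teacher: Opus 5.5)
Your argument is essentially the paper's. You freeze the slope \(q\), use \eqref{eq:record-profile-growth} to get an initial flatness \(Ca_*r^\beta\) that is uniform over centers, rerun the large-slope iteration with center-independent constants, and conclude by the Campanato characterization. The gradient tube then follows from \(|q_x-q|\le CH_*\le|q|/4\).

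One point must be corrected: the centers have to lie in a ball comparable to the starting scale \(r_0\) of the iteration, not in \(B_{r_*/8}\).
\begin{itemize}
\item In your two-scale setup the far-tail initialization forces \((r_0/r_*)^{\gamma}\le H_*\sim a_*r_*^\beta\), with \(\gamma=sp-p+1\). Hence \(r_0\ll r_*\).
\item For a center with \(|x|\sim r_*\), the available bounds (growth or Lipschitz) only control the fixed-slope error on \(B_{2r_0}(x)\) by \(C\min\{a_*r_*^{1+\beta},L_Vr_0\}\). After division by \(r_0H_*\) this is still \(\gg1\).
\item So the unit hypotheses of Proposition~\ref{prop:large-slope-affine-improvement} are not available at the first step. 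Doing better would require \(|\nabla U_*(x)-q|\lesssim H_*\), which is the conclusion.
\end{itemize}
Restricting to \(x\in B_{r_0}\), which is your own Step~1 condition \(|x|\le r\) with \(r=r_0\), repairs this. Then \(|y|\le2|y-x|\) whenever \(|y-x|\ge r_0\). So the linear majorant \(Ca_*r_*^\beta|y-x|\le H_*|y-x|\) holds on all annuli \(r_0\le|y-x|\le r_*\) uniformly in \(x\), the zero-order bound at scale \(r_0\) is \(Ca_*r_0^\beta/H_*\le1\), and the conclusion holds on \(B_{r_0/2}\).

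For comparison, the paper avoids the two-scale device. It starts at a single scale \(r\) with centers in \(B_r\) and amplitude \(H_0=C_2a_*r^\beta\). It controls every exterior annulus directly by the polynomial growth: the angular splitting of Proposition~\ref{prop:no-linear-record-neck} up to \(t_c=(|q|/a_*)^{1/\beta}\), and the global Lipschitz bound beyond. This yields summable factors \((r/t)^{\gamma-\beta}\).

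The paper also writes the iteration out rather than citing the proof of Proposition~\ref{prop:noncritical-pointwise-expansion}. It fixes \(C_{\rm L}\), then \(\rho_*\), then \(Q_{\rm rec}\), then \(r\), so that the tail budget \(L_{\rm rec}\) visibly closes at every step.

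Finally, the smallness of your initial amplitude \(Cr_*^\beta\) comes from \(\beta>0\) and the choice of \(r_*\), not from the hypothesis \(\beta<\alpha\).
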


\begin{proof}
Fix a small \(r>0\) and a center \(x\in B_r\).  Subtract
\[
 \ell_x(y):=U_*(x)+q\cdot(y-x)
\]
and use the initial amplitude \(H_0=C_2a_*r^\beta\), where \(C_2\) will
be fixed independently of \(x\) and \(r\).  Define
\begin{equation*}
 z_{x,r}(X):=
 \frac{U_*(x+rX)-\ell_x(x+rX)}{rH_0},
 \qquad Q_{x,r}:=\frac q{H_0}.
\end{equation*}
Then \(z_{x,r}\) solves the equation shifted by \(Q_{x,r}\), in the
sense defined at the beginning of
Section~\ref{sec:large-slope-and-blowdown}.  The growth estimate
\eqref{eq:record-profile-growth} gives
\begin{equation}
 \|U_*-\ell_x\|_{L^\infty(B_{4r}(x))}
 \le C a_*r^{1+\beta}.
 \label{eq:record-translated-affine-tube}
\end{equation}
In particular, \(\|z_{x,r}\|_{L^\infty(B_4)}\le C/C_2\).

We next verify the exterior estimate uniformly in the center.  For
\(t\ge r\), \(x\in B_r\), and \(y\in B_{2t}\),
\eqref{eq:record-profile-growth} yields
\[
 |U_*(y)-q\cdot y-U_*(x)+q\cdot x|
 \le C a_*t^{1+\beta}.
\]
Let \(t_c=(|q|/a_*)^{1/\beta}\), decreasing \(r\) so that
\(r<t_c/4\).  On the annuli \(t_j=2^jr\le t_c\), the angular decomposition used in
the proof of Proposition~\ref{prop:no-linear-record-neck} gives the
relative-flux contribution
\begin{equation*}
 C\left(\frac r{t_j}\right)^{sp-p+1-\beta}.
\end{equation*}
For \(t>t_c\), the global Lipschitz bound for \(U_*\), homogeneity of
\(\Jp\), and radial integration give
\begin{equation*}
 C\frac{r^{sp-p+1}}{H_0}
 \int_{t_c}^\infty t^{p-2-sp}\dd t
 \le C\left(\frac r{t_c}\right)^{sp-p+1-\beta}.
\end{equation*}
Hence, with \(\gamma=sp-p+1\),
\begin{equation}
 \begin{split}
 &{\operatorname*{ess\,sup}}_{X\in B_{5/4}}
 \int_{\R^n\setminus B_{4/3}}
 \frac{(1+|Q_{x,r}|)^{2-p}}
 {|X-Y|^{n+sp}}\\
 &\quad\times
 \left|\Jp\bigl(Q_{x,r}\cdot(X-Y)
                   +\delta z_{x,r}(X,Y)\bigr)
       -\Jp\bigl(Q_{x,r}\cdot(X-Y)\bigr)\right|\dd Y
 \le C\sum_{j\ge0}2^{-j(\gamma-\beta)}+C\le C.
 \end{split}
 \label{eq:record-uniform-relative-tail}
\end{equation}
The constant is independent of \(x\) and \(r\).  The shifted
flux over \(X\in B_{3/2}\), \(Y\notin B_2\) satisfies the same bound:
the two sets are separated and the identical annular decomposition
starts at a different fixed radius.  Thus both components of
\(\mathcal T^{\rm pt}_{s,Q_{x,r}}(z_{x,r})\) are controlled.
The shifted
Caccioppoli inequality, applied after
\eqref{eq:record-uniform-relative-tail}, now bounds the normalized local
Bregman energy.  Increasing \(C_2\) if necessary, the local
\(L^p\) and energy quantities satisfy the unit bounds in Proposition
\ref{prop:large-slope-affine-improvement}; the tail bound is the fixed
constant in \eqref{eq:record-uniform-relative-tail}.

We now specify the iteration and its tail budget.  Choose
\(\bar\alpha\) with
\[
 \alpha<\bar\alpha<\gamma.
\]
Let \(C_T\) be the uniform constant in
\eqref{eq:record-uniform-relative-tail} for the two tail geometries and
set \(L_{\rm rec}=2\max\{1,C_T\}\).  Corollary
\ref{cor:prescribed-large-slope-block-main}, with
\(L=L_{\rm rec}\), first fixes \(C_{\rm L}\); enlarge it once to
dominate the accompanying affine coefficient bound.  After that
choice, let \(C_0\) dominate the constants in the angular flux and
affine-base estimates for both tail geometries, and fix
a dyadic \(\rho_*\in(0,1/16)\) so small that
\begin{align}
 C_{\rm L}\rho_*^{\bar\alpha-\alpha}&\le\frac12,
 \label{eq:record-iteration-endpoint}\\
 C_0\bigl[\rho_*^{\bar\alpha-\alpha}
          +\rho_*^{\gamma-\alpha}\bigr]
 +C_0L_{\rm rec}\rho_*^{\gamma-\alpha}
 &\le L_{\rm rec}.
 \label{eq:record-iteration-tail-budget}
\end{align}
The prescribed-scale corollary then supplies a threshold \(Q_{\rm rec}\).
Finally decrease \(r\), uniformly for \(x\in B_r\), so that
\begin{equation}
 |Q_{x,r}|=\frac{|q|}{H_0}\ge2Q_{\rm rec},
 \qquad
 \frac{C_{\rm L}H_0}{1-\rho_*^\alpha}\le\frac14|q|.
 \label{eq:record-slope-stability}
\end{equation}

For completeness, define every normalized state.  Put
\[
 r_k=r\rho_*^k,\qquad H_k=H_0\rho_*^{k\alpha},
\]
let \(\ell_0=\ell_x\), and, writing \(q_k=\nabla\ell_k\), set
\begin{equation}
 v_k(X):=
 \frac{U_*(x+r_kX)-\ell_k(x+r_kX)}{r_kH_k},
 \qquad Q_k:=\frac{q_k}{H_k}.
 \label{eq:record-explicit-normalized-state}
\end{equation}
The preceding \(L^p\), energy, and tail estimates give at \(k=0\)
\[
 \fint_{B_2}|v_0|^p\dd X\le1,\qquad
 \mathcal E_{s,Q_0}(v_0;B_2)\le1,\qquad
 \mathcal T^{\rm pt}_{s,Q_0}(v_0)\le L_{\rm rec}.
\]
Suppose these bounds hold at level \(k\).  The prescribed-scale
large-slope estimate gives \(m_k=c_k+b_k\cdot X\), with
\(|c_k|+|b_k|\le C_{\rm L}\), and an admissible intrinsic amplitude
\(a_k<\eta_*:=\rho_*^\alpha\).  Define
\begin{align*}
 \ell_{k+1}(y)
 &:=\ell_k(y)+r_kH_k
 m_k\left(\frac{y-x}{r_k}\right),\\
 q_{k+1}&=q_k+H_kb_k,\qquad
 v_{k+1}(X)
 :=\frac{v_k(\rho_*X)-m_k(\rho_*X)}
 {\rho_*\eta_*},\\
 Q_{k+1}&=\frac{Q_k+b_k}{\eta_*}.
\end{align*}
After rescaling, the zero-order modular is bounded by
\((a_k/\eta_*)^p\).  If \(q_k'=|Q_k+b_k|\), exact homogeneity gives
\[
 \mathcal E_{s,Q_{k+1}}(v_{k+1};B_2)
 \le
 \left(\frac{q_k'+\eta_*}{q_k'+a_k}\right)^{2-p}
 \left(\frac{a_k}{\eta_*}\right)^2
 \le\left(\frac{a_k}{\eta_*}\right)^p\le1.
\]

It remains to record the tail step.  By
\eqref{eq:record-slope-stability},
\[
 |q_k-q|\le C_{\rm L}\sum_{i<k}H_i\le\frac14|q|,
\]
so every affine base lies in the same large-slope cone.  Keeping the
factor \(\lambda_{Q_{k+1}}^{2-p}\), the angular estimate on the
annuli processed at this step and the exact old-tail rescaling give
\[
 \lambda_{Q_{k+1}}^{2-p}
 \frac{|\mathcal D_{Q_{k+1}}v_{k+1}(X,Y)|}
 {|X-Y|^{n+sp}}\dd Y
 =
 \rho_*^\gamma\eta_*^{1-p}\lambda_{Q_{k+1}}^{2-p}
 \frac{|\mathcal D_{Q_k+b_k}h_k(Z,W)|}
 {|Z-W|^{n+sp}}\dd W,
\]
where \(h_k=v_k-m_k\), \(Z=\rho_*X\), and \(W=\rho_*Y\).  In
particular,
\[
 \rho_*^\gamma\eta_*^{1-p}
 \left(\frac{\lambda_{Q_{k+1}}}{\lambda_{Q_k}}\right)^{2-p}
 \le C\rho_*^\gamma\eta_*^{-1}
 =C\rho_*^{\gamma-\alpha}
\]
and hence
\begin{equation}
 \mathcal T^{\rm pt}_{s,Q_{k+1}}(v_{k+1})
 \le C_0\bigl[\rho_*^{\bar\alpha-\alpha}
              +\rho_*^{\gamma-\alpha}\bigr]
 +C_0\rho_*^{\gamma-\alpha}
   \mathcal T^{\rm pt}_{s,Q_k}(v_k).
 \label{eq:record-explicit-tail-recurrence}
\end{equation}
Indeed, the angular contribution inherited from a level \(i<k\) is
\[
 C\left(\frac{r_k}{r_i}\right)^\gamma\frac{H_i}{H_k}
 =C\rho_*^{(k-i)(\gamma-\alpha)};
\]
the spectral factor \(|e\cdot\omega|^{p-2}\) is integrable, and the
unprocessed far field has the same endpoint factor
\(\rho_*^{\gamma-\alpha}\).  The two endpoint terms at the new annuli
are those displayed in
\eqref{eq:record-explicit-tail-recurrence}.  Thus
\eqref{eq:record-iteration-tail-budget} preserves the budget
\(L_{\rm rec}\).  At the same time,
\(|q_k|\ge3|q|/4\) and \(H_k\le H_0\) imply
\(|Q_k|\ge Q_{\rm rec}\), so the induction never leaves the
large-slope regime.

The endpoint \(L^\infty\) estimate and the affine increments now give,
uniformly for \(x\in B_r\),
\begin{equation*}
 \inf_{m\in\mathcal A}
 \|U_*-m\|_{L^\infty(B_t(x))}
 \le C H_0r^{-\alpha}t^{1+\alpha}
 \qquad(0<t\le r/2).
\end{equation*}
The Campanato characterization gives
\(U_*\in C^{1,\alpha}(B_{r/2})\); here one uses the usual
overlapping-ball argument.  Set \(r_0=r/4\).  Since
\(\nabla U_*(0)=q\), continuity of the gradient gives
\eqref{eq:record-noncritical-gradient-tube} after a final decrease of
the radius.
\end{proof}

\subsection{Morrey--Kato propagation and weak linearization}
\label{subsec:morrey-kato-linearization}

Lemma~\ref{lem:record-noncritical-neighborhood} verifies the hypotheses
of Lemma~\ref{lem:regional-angular-compactness} for \(U_*\), with
\eqref{eq:noncritical-gradient-tube} replaced by the identical estimate
\eqref{eq:record-noncritical-gradient-tube}.

\begin{proposition}[Estimate of the exterior interaction]
\label{prop:crossed-sublinear-absorption}
Let $U_*$, $e=q/L_V$ and $r_0$ be as above.  For $0<h<r_0/4$, set
\begin{equation*}
 V_h(X):=U_*(X+he)-L_Vh,
 \qquad Z_h:=U_*-V_h.
\end{equation*}
Then $V_h$ is an entire solution, $0\le Z_h\le2L_Vh$ in $\R^n$, and
the regional secant kernel in $B_{r_0}$ satisfies the uniform stable bounds
of order $sp-p+2$ in Lemma
\ref{lem:regional-angular-compactness}.  Moreover, for every
$\varphi\in C_c^\infty(B_{r_0/2})$, $\varphi\ge0$,
\begin{equation}
 \begin{split}
 &(1-s)\iint_{B_{r_0}\times B_{r_0}}
 \frac{c_h(X,Y)(Z_h(X)-Z_h(Y))
       (\varphi(X)-\varphi(Y))}
 {|X-Y|^{n+sp}}\dd X\!\dd Y\\
 &\hspace{35mm}\ge
 -C\int_{B_{r_0/2}}Z_h^{p-1}\varphi\dd X,
 \end{split}
 \label{eq:regional-sublinear-absorption}
\end{equation}
where
\begin{equation*}
 c_h(X,Y):=(p-1)\int_0^1
 |(1-t)\delta V_h(X,Y)+t\delta U_*(X,Y)|^{p-2}\dd t.
\end{equation*}
On the set where both increments vanish, the integrand and \(c_h\)
are defined to be zero.  This convention does not alter the secant
identity, whose two sides vanish there.
The power $p-1$ in \eqref{eq:regional-sublinear-absorption} is sharp under
the sole global Lipschitz assumption.
\end{proposition}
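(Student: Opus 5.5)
The plan is to obtain \eqref{eq:regional-sublinear-absorption} by subtracting the weak equations for \(U_*\) and \(V_h\). The regional pairs will be rewritten through the secant identity, and the crossed pairs will be estimated one-sidedly using the sign of \(Z_h\).

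\emph{Preliminary facts.} First, \(V_h\) is an entire solution: the equation is invariant under translations and under addition of constants, and \(V_h\) is globally \(L_V\)-Lipschitz. Second, the global \(L_V\)-Lipschitz bound gives
\[
 -L_Vh\le U_*(X+he)-U_*(X)\le L_Vh .
\]
Since \(Z_h(X)=L_Vh-\bigl(U_*(X+he)-U_*(X)\bigr)\), this yields \(0\le Z_h\le 2L_Vh\).

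\emph{Kernel bounds.} For \(X,Y\in B_{r_0}\) write \(Y=X-\rho\omega\). In the notation of Lemma~\ref{lem:regional-angular-compactness} with \(\xi=e\), we have \(\delta U_*(X,Y)=A_0\) and \(\delta V_h(X,Y)=A_h\). Hence
\[
 \frac{c_h(X,Y)}{|X-Y|^{n+sp}}=\frac{b_h(X,\rho,\omega)}{\rho^{n+\tau_s}} .
\]
Lemma~\ref{lem:record-noncritical-neighborhood} supplies the hypotheses of that lemma, since \(h<r_0/4\) keeps all segments inside \(B_{2r_0}\). Then \eqref{eq:regional-angular-ellipticity} gives the uniform stable bounds of order \(\tau_s=sp-p+2\).

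\emph{Splitting the difference of the weak equations.} Fix \(\varphi\in C_c^\infty(B_{r_0/2})\) with \(\varphi\ge0\), and subtract the two weak formulations. Both fluxes have the same odd affine part near the diagonal, and the far field is absolutely integrable because \(sp>p-1\) and both functions are globally Lipschitz, exactly as in Lemma~\ref{lem:pointwise-contact}. So the double integral of \([\Jp(\delta U_*)-\Jp(\delta V_h)]\delta\varphi\,|X-Y|^{-n-sp}\) over \(\R^n\times\R^n\) converges absolutely and equals zero. I split it into the regional part over \(B_{r_0}^2\) and the crossed part. The crossed part equals
\[
 2\int\varphi(X)\int_{\R^n\setminus B_{r_0}}
 \frac{\Jp(\delta U_*)-\Jp(\delta V_h)}{|X-Y|^{n+sp}}\dd Y\dd X ,
\]
where \(\operatorname{supp}\varphi\) is at distance at least \(r_0/2\) from \(\R^n\setminus B_{r_0}\). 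On the regional part, the secant identity \(\Jp(\delta U_*)-\Jp(\delta V_h)=c_h\,\delta Z_h\) holds, with the stated convention on the set where both increments vanish. Thus the regional integral in \eqref{eq:regional-sublinear-absorption}, multiplied by \((1-s)\), equals minus \((1-s)\) times the crossed term.

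\emph{One-sided bound on the crossed term.} This is the step I expect to be the main obstacle, since no two-sided linearization is available for the crossed pairs; the plan is to use a one-sided bound. Because \(Z_h(Y)\ge0\),
\[
 \delta U_*(X,Y)=\delta V_h(X,Y)+Z_h(X)-Z_h(Y)\le\delta V_h(X,Y)+Z_h(X).
\]
Monotonicity of \(\Jp\) and its global \((p-1)\)-H\"older continuity then give
\[
 \Jp(\delta U_*)-\Jp(\delta V_h)\le \Jp(\delta V_h+Z_h(X))-\Jp(\delta V_h)\le C Z_h(X)^{p-1}.
\]
Integrating against \(|X-Y|^{-n-sp}\) over \(|X-Y|\ge r_0/2\) gives the bound \(C(r_0)Z_h(X)^{p-1}\). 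Since \(\varphi\ge0\), the crossed term is therefore at most \(C\int Z_h^{p-1}\varphi\), and \eqref{eq:regional-sublinear-absorption} follows. The only subtle point here is the absolute convergence that legitimizes the splitting, which is handled as in Lemma~\ref{lem:attained-translation}.

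\emph{Sharpness of the power \(p-1\).} The exterior increment \(\delta V_h\) may vanish on large sets when only the global Lipschitz bound is assumed. There the flux difference really is of size \(Z_h(X)^{p-1}\), with no gain from linearization. I would only indicate this with a remark, by exhibiting exterior data for which \(\delta V_h\equiv0\) on a set of positive measure.
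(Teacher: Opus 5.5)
Your proposal is correct and follows the paper's proof essentially step for step. The shared steps are: translation and constant invariance for $V_h$; the global Lipschitz bound giving $0\le Z_h\le 2L_Vh$; the identification $c_h|X-Y|^{-n-sp}=b_h\rho^{-n-\tau_s}$ with the kernel of Lemma~\ref{lem:regional-angular-compactness}; the secant identity $\Jp(\delta U_*)-\Jp(\delta V_h)=c_h\,\delta Z_h$ on $B_{r_0}^2$; and the one-sided crossed bound via $Z_h(Y)\ge0$, monotonicity, and $(p-1)$-H\"older continuity of $\Jp$.

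One small correction concerns absolute convergence of the subtracted weak forms. No odd-affine cancellation as in Lemma~\ref{lem:pointwise-contact} is needed, and in fact the two fluxes do not share the same affine part at a point, since $\nabla U_*(X)\ne\nabla U_*(X+he)$ in general. The global Lipschitz bound already gives $|\Jp(\delta U)\,\delta\varphi|\lesssim|X-Y|^{p}$ near the diagonal, and $sp>p-1$ handles the integral at infinity.
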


\begin{proof}
Translation invariance and invariance under addition of constants show that
$V_h$ is a solution.  The global Lipschitz bound gives
\[
 U_*(X+he)-L_Vh\le U_*(X)
\]
and also $Z_h\le2L_Vh$.  Subtract the weak equations of $U_*$ and $V_h$.
For pairs in $B_{r_0}\times B_{r_0}$, the fundamental theorem of calculus
for $\Jp$ gives
\[
 \Jp(\delta U_*)-\Jp(\delta V_h)
 =c_h\,\delta Z_h.
\]
The angular bounds for $c_h$ are precisely
\eqref{eq:regional-angular-ellipticity}, after multiplication by
$|X-Y|^{2-p}$.

It remains to estimate crossed pairs.  If
$X\in B_{r_0/2}$ and $Y\notin B_{r_0}$, write
$a=\delta V_h(X,Y)$ and
$\delta U_*(X,Y)=a+Z_h(X)-Z_h(Y)$.  Since $Z_h(Y)\ge0$ and $\Jp$ is
increasing,
\[
 \Jp(\delta U_*)-\Jp(\delta V_h)
 \le \Jp(a+Z_h(X))-\Jp(a)
 \le C Z_h(X)^{p-1}.
\]
The last inequality is the global $(p-1)$-H\"older continuity of $\Jp$.
The two sets are separated, and hence integration of
$|X-Y|^{-n-sp}$ over $Y\notin B_{r_0}$ gives a finite constant.  Moving the
crossed term to the other side of the regional identity proves
\eqref{eq:regional-sublinear-absorption}.  Finally, the scalar estimate
$|\Jp(a+t)-\Jp(a)|\le C|t|^{p-1}$ is optimal uniformly in $a$, as is seen
at $a=0$; thus the exponent cannot be improved without information on the
exterior level sets of $U_*$.  
\end{proof}

The exterior coefficient is estimated by integrating in the contact
direction before passing to the first difference quotient.  We use the
following local strong minimum principle, which does not require a
pointwise L\'evy bound for the crossed kernel.

For a symmetric kernel \(K\) on \(D\times D\), let
\(\mathcal F_K(D)\) denote the closure of \(C_c^\infty(D)\) under the
norm
\[
 \|\phi\|_{\mathcal F_K(D)}^2
 :=\|\phi\|_{L^2(D)}^2
 +\iint_{D\times D}K(x,y)|\delta\phi(x,y)|^2\dd x\!\dd y.
\]
We write \(z\in\mathcal F_{K,\mathrm{loc}}(D)\) if
\(\eta z\in\mathcal F_K(D)\) for every
\(\eta\in C_c^\infty(D)\), and use
\(\mathcal F_{K,c}(D)\) for elements of \(\mathcal F_K(D)\) with
compact support in \(D\).
Whenever an element of \(\mathcal F_{K,c}(D)\) is inserted into a
full-space integral, it is understood to be extended by zero on
\(\R^n\setminus D\).

\begin{lemma}[Contact propagation with a Morrey--Kato absorption]
\label{lem:kato-contact-propagation}
Let \(n\ge2\), let \(D\subset\R^n\) be a ball, let
\(1<\tau<2\), and let \(K\) be a symmetric measurable kernel in
\(D\times D\).  The same statement holds when \(D\) is a connected
convex open set.  Suppose that, in polar coordinates about every
$x\in D$,
\begin{equation}
 K(x,x+\rho\omega)
 =\frac{a(x,\rho,\omega)}{\rho^{n+\tau}},
 \qquad
 a\ge\lambda>0,\qquad
 \int_{\mathbb S^{n-1}}a(x,\rho,\omega)\dd\omega\le\Lambda
 \label{eq:spectral-regional-bounds}
\end{equation}
for almost every \((\rho,\omega)\) such that
\(x+\rho\omega\in D\).
It is enough that these bounds hold locally, with uniform constants on
the connected subdomain in which contact is to be propagated.
Let $A\ge0$ satisfy, locally uniformly in $D$,
\begin{equation}
 \int_{B_r(x)}A\dd y\le M r^{n-\kappa},
 \qquad 0<\kappa<\tau.
 \label{eq:morrey-absorption}
\end{equation}
Assume that \(z\ge0\), that
\(z\in C(D)\cap L^\infty(D)\cap\mathcal F_{K,\mathrm{loc}}(D)\), and
assume that
\begin{equation}
 \iint_{D\times D}K(x,y)\delta z(x,y)\delta\varphi(x,y)
 \dd x\!\dd y
 \ge-\int_DAz\varphi\dd x
 \label{eq:regional-supersolution-with-potential}
\end{equation}
for every nonnegative
\(\varphi\in\mathcal F_{K,c}(D)\).  If $z$ vanishes at one
point of $D$, then $z\equiv0$ in $D$.
The conclusion is local in the following sense: if
\eqref{eq:regional-supersolution-with-potential} is known only for tests
supported in an open set \(D_0\Subset D\), then every zero of \(z\) in
\(D_0\) has a neighborhood on which \(z\) vanishes.
\end{lemma}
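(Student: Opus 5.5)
The proof follows the standard route to a strong minimum principle for nonnegative weak supersolutions. The zero set \(N=\{z=0\}\) is closed in \(D\) by continuity, and \(D\) is connected, so it suffices to show that \(N\) is open. Fix \(x_0\in N\) and a small ball \(B_{2r}(x_0)\Subset D\) (or \(\Subset D_0\) in the local version). We aim to show that \(z\equiv0\) on \(B_r(x_0)\). The tool is a weak Harnack inequality for nonnegative supersolutions of \eqref{eq:regional-supersolution-with-potential}: for some \(\epsilon>0\),
\[
 \Bigl(\fint_{B_r(x_0)}z^\epsilon\Bigr)^{1/\epsilon}\le C\inf_{B_{r/2}(x_0)}z,
\]
valid for \(r\le r_1\), with \(r_1\) chosen so that the potential is absorbed. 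Since \(z(x_0)=0\) and \(z\) is continuous, the infimum vanishes. Hence \(z=0\) a.e.\ on \(B_r(x_0)\), and by continuity everywhere there.

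To prove the weak Harnack inequality we use the De Giorgi--Moser route of Di~Castro--Kuusi--Palatucci, adapted to the anisotropic kernel, with \(z+d\) (\(d>0\)) in place of \(z\).
\begin{enumerate}
\item \textbf{Coercivity.} By \eqref{eq:spectral-regional-bounds}, the bound \(a\ge\lambda\) makes the quadratic form dominate the \(H^{\tau/2}\) Gagliardo energy on balls contained in \(D\). The fractional Sobolev and Poincar\'e inequalities of order \(\tau/2\) are then available. Convexity of \(D\) guarantees that a segment between two points of \(D\) stays in \(D\), so the regional form is comparable to the standard one on interior balls.
\item \textbf{Upper bounds on cutoff terms.} The angular mass bound \(\int a\,d\omega\le\Lambda\) controls \(\iint K|\delta\eta|^2\) and the tail-type terms by \(Cr^{n-\tau}\). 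Only the spherical integral of \(a\) appears, because \(|\delta\eta|\le\|\nabla\eta\|\rho\) is independent of \(\omega\). No pointwise upper bound on \(K\) is needed.
\item \textbf{Logarithmic estimate.} Test with \(\varphi=\eta^2(z+d)^{-1}\), admissible since \(z+d\ge d\) and \(z\in\mathcal F_{K,\mathrm{loc}}\). This gives a bound for the \(\log(z+d)\) energy. The potential contributes at most \(\int_{B_{2r}}A\eta^2\le Mr^{n-\kappa}\), which is \(\le r^{n-\tau}\cdot Mr^{\tau-\kappa}\) and hence harmless for small \(r\) because \(\kappa<\tau\). This yields a BMO bound for \(\log(z+d)\), and John--Nirenberg then gives the crossover \(\fint (z+d)^{\epsilon}\fint (z+d)^{-\epsilon}\le C\).
\item \textbf{Moser iteration for negative powers.} Test with \(\eta^2(z+d)^{-\gamma}\), \(\gamma>1\), to bound \(\sup(z+d)^{-1}\) by \(L^\epsilon\) averages of \((z+d)^{-1}\). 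Letting \(d\to0\) gives the weak Harnack inequality.
\end{enumerate}
For nonlocal contributions from the part of \(D\) outside the ball, the sign helps: with \(z\ge0\), the negative part of the tail is absent, so these terms have a favorable sign or are bounded by the same \(\int a\) estimate.

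The main obstacle is absorbing the Morrey--Kato potential inside the Moser iteration. The term \(\int A\,\eta^2 (z+d)^{1-\gamma}\) must be dominated by the \(\tau/2\)-energy of \(\eta(z+d)^{(1-\gamma)/2}\). The first attempt is a Morrey-space trace inequality of Adams/Fefferman--Phong type: for \(A\) in the Morrey class with \(\kappa<\tau\),
\[
 \int A\,w^2\le C M r^{\tau-\kappa}\bigl([w]_{H^{\tau/2}}^2+r^{-\tau}\|w\|_2^2\bigr),
\]
which holds after slightly lowering the order to some \(\kappa<\sigma<\tau\) if needed. For \(r\) small the factor \(r^{\tau-\kappa}\) is small, so the potential is absorbed into the left-hand side. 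If the trace inequality at the full order is delicate, fall back on the inequality at order \(\sigma\), dominated via \(H^{\tau/2}\hookrightarrow H^{\sigma/2}\) on balls. The constants then depend on \(M,\kappa,\tau,\lambda,\Lambda,n\), and the resulting radius \(r_1\) is uniform on compact subsets, which closes the openness argument.
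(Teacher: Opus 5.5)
Your architecture is the same as the paper's. You use a logarithmic estimate with John--Nirenberg for the crossover, a negative-power Moser iteration, and Adams' trace inequality for the Morrey--Kato potential. The step that fails is the absorption at a fixed small radius. With the test \(\eta^2v^{-\gamma}\), \(v=z+d\), the best coercivity available is
\[
(a-b)(a^{-\gamma}-b^{-\gamma})\le-\frac{4\gamma}{(\gamma-1)^2}\bigl(a^{(1-\gamma)/2}-b^{(1-\gamma)/2}\bigr)^2,
\]
and this constant is sharp as \(a/b\to1\). So the energy of \(\psi=\eta v^{(1-\gamma)/2}\) carries a coefficient \(\asymp1/\gamma\), while the potential contributes \(\int A\psi^2\) with coefficient one.

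A bound \(\int A\psi^2\le CMr^{\tau-\kappa}(\ldots)\) with \(r\) fixed absorbs this only while \(\gamma\lesssim(CMr^{\tau-\kappa})^{-1}\). The iteration needs \(\gamma_j=\chi^j\to\infty\) on radii between \(R/2\) and \(3R/4\), which do not shrink. It therefore stops at a finite exponent and never reaches \(\operatorname*{ess\,sup}v^{-1}\), so you do not obtain the weak Harnack inequality. A finite negative moment does not rescue the argument: for a merely continuous \(z\), \(\int z^{-\gamma}\) can be finite near a zero (take \(z\sim|x-x_0|^{\alpha}\) with \(\alpha\gamma<n\)).

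What is missing is an infinitesimal form bound
\[
\int A\psi^2\le\epsilon\,\mathcal E_K(\psi,\psi)+C\epsilon^{-N}\|\psi\|_2^2,\qquad 0<\epsilon\le1,
\]
with \(N<\infty\), used with \(\epsilon\asymp1/\gamma\) at each step. The polynomial loss \((1+\gamma_j)^N\) is then harmless in the Moser product, since \(\sum_j\chi^{-j}\log(1+\chi^j)<\infty\). The paper proves this bound as follows:
\begin{itemize}
\item It applies Adams' inequality on balls of radius \(\rho\sim\epsilon^{1/(\tau-\kappa)}\).
\item It sums over a bounded-overlap partition \(\sum_i\zeta_i^2=1\); the fractional IMS estimate costs \(C\rho^{-\tau}\|\psi\|_2^2\).
\item It controls the zero-extension seminorm by \(C\lambda^{-1}\mathcal E_K+Cd_0^{-\tau}\|\psi\|_2^2\).
\end{itemize}
This gives \(C_\epsilon\sim(1+M)^{\tau/(\tau-\kappa)}\epsilon^{-\kappa/(\tau-\kappa)}\), which is then used with \(\epsilon=\gamma_q/2\).

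Your fallback would also work if you interpolate with a free parameter: apply Adams at order \(\sigma\in(\kappa,\tau)\) and use \([w]^2_{\dot H^{\sigma/2}}\le\delta[w]^2_{\dot H^{\tau/2}}+C\delta^{-\sigma/(\tau-\sigma)}\|w\|_2^2\) with \(\delta\asymp1/\gamma\). Your logarithmic step is fine as written, because \(z/(z+d)\le1\) there and no absorption is needed. One smaller point: \(z+d\ge d\) does not by itself make the nonlinear tests admissible in the closure \(\mathcal F_{K,c}(D)\). The paper uses the contraction (Markov) property of the closed form for this.
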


\begin{proof}
It suffices to prove the local assertion.  Fix a ball
\(B_{4R_0}\Subset D\), with \(R_0\) small enough that
\eqref{eq:morrey-absorption} holds with the same constant \(M\) on all
balls under consideration.  All constants below may depend on this
fixed localization, but not on the regularizing parameter introduced
below.

We first record the form estimate for the potential.  Restrict
\(\dd\mu=A\dd x\) to \(B_{3R_0}\).  The Morrey bound and a dyadic
decomposition give
\begin{equation}
 \sup_{x\in B_{3R_0}}
 \sum_{j=0}^{\infty}(2^{-j}R)^{\tau-n}
 \mu(B_{2^{-j}R}(x))
 \le CMR^{\tau-\kappa},
 \qquad 0<R\le R_0.
 \label{eq:morrey-to-kato}
\end{equation}
For a ball \(B_R\) used below, put
\(\mu_R:=\mu\lfloor B_R\).  Since \(0<\kappa<\tau<n\), Adams' trace
inequality \cite{AdamsTrace}, applied with Riesz order \(\tau/2\),
domain exponent \(2\), and target exponent
\[
 q_\mu:=\frac{2(n-\kappa)}{n-\tau}>2,
\]
gives
\[
 \|\psi\|_{L^{q_\mu}(\dd\mu_R)}^2
 \le CM^{2/q_\mu}
 [\widetilde\psi]_{\dot H^{\tau/2}(\R^n)}^2.
\]
Indeed, the Morrey norm of \(\mu_R\), viewed as a measure of dimension
\(n-\kappa\), is bounded by \(CM\).  H\"older's inequality and
\(\mu(B_R)\le MR^{n-\kappa}\), together with
\((n-\kappa)(1-2/q_\mu)=\tau-\kappa\), therefore yield
\begin{equation}
 \int_{B_R}A\psi^2\dd x
 \le CMR^{\tau-\kappa}
 [\widetilde\psi]_{\dot H^{\tau/2}(\R^n)}^2
 \label{eq:kato-form-bound}
\end{equation}
for \(\psi\) supported in a ball \(B_R\) whose double is contained in
\(B_{3R_0}\), where \(\widetilde\psi\) denotes its zero extension.

We spell out the passage to an infinitesimal form bound.  Fix
\(\psi\in C_c^\infty(B_{2R_0})\), and let
\(\{\zeta_i\}\) be a smooth partition of unity subordinate to a
covering of \(B_{2R_0}\) by balls of radius \(\rho\), chosen so that
the doubled balls are contained in \(B_{3R_0}\), with bounded overlap
and \(\sum_i\zeta_i^2=1\).  The fractional IMS estimate gives
\[
 \sum_i[\zeta_i\psi]_{\dot H^{\tau/2}(\R^n)}^2
 \le C[\widetilde\psi]_{\dot H^{\tau/2}(\R^n)}^2
      +C\rho^{-\tau}\|\psi\|_2^2.
\]
For completeness, this follows from
\[
 \sum_i|\zeta_i(x)\psi(x)-\zeta_i(y)\psi(y)|^2
 \le2|\delta\psi(x,y)|^2
 +2|\psi(y)|^2\sum_i|\zeta_i(x)-\zeta_i(y)|^2
\]
and the bounded-overlap estimate
\[
 \sum_i|\zeta_i(x)-\zeta_i(y)|^2
 \le C\min\left\{1,\frac{|x-y|^2}{\rho^2}\right\}.
\]
Integration against \(|x-y|^{-n-\tau}\dd x\!\dd y\) gives the stated
remainder \(C\rho^{-\tau}\|\psi\|_2^2\).
Applying \eqref{eq:kato-form-bound} to each \(\zeta_i\psi\), summing,
and using bounded overlap, we obtain
\[
 \int A\psi^2\dd x
 \le CM\rho^{\tau-\kappa}
       [\widetilde\psi]_{\dot H^{\tau/2}(\R^n)}^2
      +CM\rho^{-\kappa}\|\psi\|_2^2.
\]
For later use we make the zero-extension term explicit.  If
\(d_0=\operatorname{dist}(\operatorname{supp}\psi,\partial D)>0\),
then the lower spectral bound in
\eqref{eq:spectral-regional-bounds} and direct integration outside
\(D\) give
\begin{equation}
 [\widetilde\psi]_{\dot H^{\tau/2}(\R^n)}^2
 \le C\lambda^{-1}\mathcal E_K(\psi,\psi)
 +Cd_0^{-\tau}\|\psi\|_2^2.
 \label{eq:kato-zero-extension-localization}
\end{equation}
Indeed, the part over \(D\times D\) is controlled by the regional
form, while for \(x\in\operatorname{supp}\psi\),
\[
 \int_{\R^n\setminus D}|x-y|^{-n-\tau}\dd y
 \le Cd_0^{-\tau}.
\]
Choose
\(\rho=c(\epsilon/(1+M))^{1/(\tau-\kappa)}\), with \(c\) fixed small
and with \(\rho\) capped by the preceding geometric radius.  Insert
\eqref{eq:kato-zero-extension-localization} in the partition estimate.
The energy coefficient is at most \(\epsilon\), and the remaining
terms are zeroth order.  Hence
\begin{equation}
 \int A\psi^2\dd x
 \le \epsilon\mathcal E_K(\psi,\psi)
 +C_*(1+M)^{\tau/(\tau-\kappa)}
       \epsilon^{-\kappa/(\tau-\kappa)}\|\psi\|_2^2,
 \qquad 0<\epsilon\le1,
 \label{eq:kato-infinitesimal-form-bound}
\end{equation}
where
\[
 \mathcal E_K(\psi,\psi)
 :=\iint_{D\times D}K(x,y)|\psi(x)-\psi(y)|^2\dd x\!\dd y.
\]

We shall also use the following cutoff bound.  If \(0\le\eta\le1\),
\(\eta\in C_c^{0,1}(B_R)\), and
\(\|\nabla\eta\|_\infty\le C/d\), then
\begin{equation}
 \mathcal K_\eta(x):=
 \int_DK(x,y)|\eta(x)-\eta(y)|^2\dd y
 \le C\Lambda d^{-\tau}
 \qquad \bigl(x\in D\bigr).
 \label{eq:kato-cutoff-kernel-bound}
\end{equation}
Indeed, after extending \(\eta\) by zero outside \(D\), one has
\(|\eta(x)-\eta(y)|\le C\min\{1,|x-y|/d\}\).  The angular upper bound
then gives
\[
 \mathcal K_\eta(x)
 \le C\Lambda\int_0^\infty
 \min\left\{1,\frac{\rho^2}{d^2}\right\}
 \frac{\dd\rho}{\rho^{1+\tau}}
 \le C\Lambda d^{-\tau}.
\]

Put \(v=z+\varepsilon\), where \(\varepsilon>0\).  Let
\(B_r\Subset B_R\Subset B_{4R_0}\), with \(R/2\le r<R\), choose
\(\eta\in C_c^{0,1}(B_R)\), \(0\le\eta\le1\), so that
\(\eta=1\) on \(B_r\) and
\(\|\nabla\eta\|_\infty\le C/(R-r)\), and put
\[
 S=\{\eta>0\},\qquad m=1+2q,\qquad \psi=\eta v^{-q}.
\]
For every fixed \(q_0>0\), the elementary negative-power inequality
gives, for \(q\ge q_0\),
\begin{align}
 &(a-b)(\xi^2a^{-1-2q}-\zeta^2b^{-1-2q})\notag\\
 &\quad\le-\gamma_q
 |\xi a^{-q}-\zeta b^{-q}|^2
 +C_q(a^{-2q}+b^{-2q})|\xi-\zeta|^2,
 \label{eq:kato-interior-negative-power-scalar}
\end{align}
whenever \(a,b>0\) and \(0\le\xi,\zeta\le1\), where
\begin{equation*}
 0<\gamma_q\le1,\qquad
 \gamma_q\ge\frac{c_{q_0}}{1+q},
 \qquad C_q\le C_{q_0}(1+q)^2.
\end{equation*}
To see this, one starts from
\[
 (a-b)(a^{-1-2q}-b^{-1-2q})
 \le-\frac{1+2q}{q^2}(a^{-q}-b^{-q})^2
\]
and applies Young's inequality to the two cutoff terms.  Thus the
coercive constant is a fixed multiple of
\((1+2q)/q^2\); the exact constant from the uncut inequality is not
needed.

The pairs crossing the cutoff require a separate argument.  If
\(x\in S\) and \(y\in D\setminus S\), then the test vanishes at \(y\).
Moreover,
\begin{equation*}
 (v(x)-v(y))\eta(x)^2v(x)^{-1-2q}
 \le
 \begin{cases}
  0, & v(x)\le v(y),\\
  \eta(x)^2v(x)^{-2q}, & v(x)>v(y).
 \end{cases}
\end{equation*}
Consequently, after decreasing \(\gamma_q\) by a fixed factor,
\begin{align}
 &(v(x)-v(y))\eta(x)^2v(x)^{-1-2q}\notag\\
 &\quad\le-\gamma_q|\psi(x)-\psi(y)|^2
 +(1+\gamma_q)v(x)^{-2q}
       |\eta(x)-\eta(y)|^2.
 \label{eq:kato-crossed-negative-power-scalar}
\end{align}
In particular, no negative power of \(v(y)\) occurs outside \(S\).

Use bounded energy approximations of
\(\eta^2v^{-1-2q}\) in
\eqref{eq:regional-supersolution-with-potential}.  Since \(z\le v\),
\[
 \int_DAz\eta^2v^{-1-2q}\dd x
 \le\int_DA\psi^2\dd x.
\]
Integrating \eqref{eq:kato-interior-negative-power-scalar} on
\(S\times S\), using
\eqref{eq:kato-crossed-negative-power-scalar} on
\(S\times(D\setminus S)\), and exploiting symmetry, we obtain
\begin{equation}
 \gamma_q\mathcal E_K(\psi,\psi)
 \le C_q\int_Sv^{-2q}\mathcal K_\eta\dd x
      +\int_SA\psi^2\dd x.
 \label{eq:kato-negative-caccioppoli-preabsorption}
\end{equation}
Apply \eqref{eq:kato-infinitesimal-form-bound} with
\(\epsilon=\gamma_q/2\).  The energy term is absorbed on the left.
Using \eqref{eq:kato-cutoff-kernel-bound}, and recalling that
\(\gamma_q\asymp(1+q)^{-1}\), gives
\begin{equation}
 \mathcal E_K(\psi,\psi)
 \le C(1+q)^\nu
 \bigl((R-r)^{-\tau}+C_0\bigr)
 \int_{B_R}v^{-2q}\dd x,
 \qquad
 \nu=\max\left\{3,\frac{\tau}{\tau-\kappa}\right\}.
 \label{eq:kato-negative-energy}
\end{equation}
Here \(C_0\) depends only on the fixed localization and the constants in
the hypotheses.  The lower spectral bound,
\eqref{eq:kato-zero-extension-localization}, and the fractional
Sobolev inequality now yield, with \(\chi=n/(n-\tau)>1\),
\begin{equation}
 \left(\fint_{B_r}v^{-2q\chi}\dd x\right)^{1/\chi}
 \le C(1+q)^\nu
 \left[1+\left(\frac{R}{R-r}\right)^\tau\right]
 \fint_{B_R}v^{-2q}\dd x.
 \label{eq:kato-negative-power-step}
\end{equation}
Notice that \(\tau<n\), since \(n\ge2\) and \(\tau<2\).

Let \(q_{j+1}=\chi q_j\), and choose nested radii decreasing from
\(3R/4\) to \(R/2\).  After taking the power \(1/(2q_j)\) in the
\(j\)-th inequality, the logarithms of both the cutoff factor and
\((1+q_j)^\nu\) are summable.  Therefore
\begin{equation}
 \operatorname*{ess\,sup}_{B_{R/2}}v^{-1}
 \le C\left(\fint_{B_{3R/4}}v^{-2q_0}\dd x\right)^{1/(2q_0)}
 \qquad(q_0>0).
 \label{eq:kato-reverse-moser}
\end{equation}

It remains to obtain an initial negative moment.  Use the logarithmic
test \(\eta^2/v\).  On \(S\times S\), the scalar inequality
\begin{align*}
 &(a-b)(\xi^2a^{-1}-\zeta^2b^{-1})\\
 &\quad\le-c\min\{\xi^2,\zeta^2\}
       |\log a-\log b|^2+C|\xi-\zeta|^2
\end{align*}
follows from
\((a-b)(a^{-1}-b^{-1})=-(a-b)^2/(ab)\),
\(|\log a-\log b|^2\le(a-b)^2/(ab)\), and Young's inequality.
For \(x\in S\), \(y\notin S\), split once more according to whether
\(v(x)\le v(y)\) or \(v(x)>v(y)\).  The crossed contribution is
nonpositive in the first case and at most \(\eta(x)^2\) in the second.
Since \(z/v\le1\), the cutoff bound and the Morrey estimate imply, on
every sufficiently small ball,
\begin{equation}
 \iint_{B_{3R/4}\times B_{3R/4}}
 \frac{|\log v(x)-\log v(y)|^2}{|x-y|^{n+\tau}}
 \dd x\!\dd y
 \le C R^{n-\tau}+CMR^{n-\kappa}
 \le C R^{n-\tau}.
 \label{eq:kato-logarithmic-energy}
\end{equation}
The last inequality uses \(\kappa<\tau\) and \(R\le R_0\).
Repeating the estimate on each subball, fractional Poincar\'e gives,
for every ball \(B_\varrho\) in the smaller concentric ball,
\[
 \fint_{B_\varrho}
 \left|\log v-(\log v)_{B_\varrho}\right|\dd x
 \le C\varrho^{(\tau-n)/2}
 \left(
 \iint_{B_\varrho^2}
 \frac{|\delta\log v(x,y)|^2}{|x-y|^{n+\tau}}\dd x\!\dd y
 \right)^{1/2}
 \le C.
\]
Thus \(\log v\) has a local BMO seminorm bounded independently of
\(\varepsilon\).  The John--Nirenberg inequality then
gives \(\theta>0\), also independent of \(\varepsilon\), such that
\begin{equation}
 \left(\fint_{B_{3R/4}}v^\theta\dd x\right)
 \left(\fint_{B_{3R/4}}v^{-\theta}\dd x\right)\le C.
 \label{eq:kato-positive-negative-moment}
\end{equation}
Taking \(q_0=\theta/2\) in \eqref{eq:kato-reverse-moser} and using
\eqref{eq:kato-positive-negative-moment} yields
\begin{equation}
 \left(\fint_{B_{3R/4}}(z+\varepsilon)^\theta\dd x\right)^{1/\theta}
 \le C\operatorname*{ess\,inf}_{B_{R/2}}(z+\varepsilon).
 \label{eq:kato-weak-harnack}
\end{equation}

We finally justify the nonlinear tests.  For fixed \(\varepsilon>0\)
and \(m>0\), put \(F_{\varepsilon,m}(t)=(t+\varepsilon)^{-m}\).
Choose \(\chi\in C_c^\infty(D)\), \(0\le\chi\le1\), which equals one
on a neighborhood of the cutoff support.  By hypothesis,
\(\chi z\in\mathcal F_K(D)\).
The function
\(G_{\varepsilon,m}:=F_{\varepsilon,m}-F_{\varepsilon,m}(0)\) is
Lipschitz on the bounded range under consideration and vanishes at
zero.  Extend it from \([0,\|z\|_\infty]\) to a globally Lipschitz
function \(\widehat G_{\varepsilon,m}\) on \(\R\), with
\(\widehat G_{\varepsilon,m}(0)=0\).  The contraction property of the
closed form therefore gives
\[
 \mathcal E_K(\widehat G_{\varepsilon,m}(\chi z),
              \widehat G_{\varepsilon,m}(\chi z))
 \le \operatorname{Lip}(\widehat G_{\varepsilon,m})^2
       \mathcal E_K(\chi z,\chi z).
\]
Multiplication by the cutoff preserves \(\mathcal F_K(D)\); its
commutator is controlled by \eqref{eq:kato-cutoff-kernel-bound}.  The
constant part \(F_{\varepsilon,m}(0)\) times the cutoff has finite form
energy for the same reason.  Hence
\(\eta v^{-q}\), \(\eta^2v^{-1-2q}\), and \(\eta^2/v\) belong to
\(\mathcal F_{K,c}(D)\) and are admissible in
\eqref{eq:regional-supersolution-with-potential}.  The form bound
\eqref{eq:kato-infinitesimal-form-bound} makes the potential term
continuous in the form norm.  Thus the scalar inequalities above pass
to the closed-form tests directly, and every resulting estimate is
uniform as \(\varepsilon\downarrow0\).

If \(z(x_0)=0\), take the preceding balls centered at \(x_0\).
For every \(\delta>0\), continuity makes
\(\{z<\delta\}\cap B_{R/2}(x_0)\) a set of positive measure.
Consequently, \(z\ge0\) gives
\[
 \operatorname*{ess\,inf}_{B_{R/2}(x_0)}(z+\varepsilon)=\varepsilon.
\]
Letting \(\varepsilon\downarrow0\) in
\eqref{eq:kato-weak-harnack} shows that \(z=0\) in a neighborhood of
\(x_0\).  Thus the zero set is relatively open.  It is relatively
closed by continuity; connectedness then gives \(z\equiv0\) in \(D\).
\end{proof}

\begin{proposition}[Directional coarea and the full weak linearization]
\label{prop:coarea-weak-linearization}
Let $U$ be a globally $L$-Lipschitz entire solution of
\eqref{eq:equation}.  Suppose that, for some $q\ne0$ and
$e=q/|q|$,
\begin{equation}
 U\in C^{1,\alpha}(B_{2r_0}),
 \qquad
 \|\nabla U-q\|_{L^\infty(B_{2r_0})}\le\frac14|q|.
 \label{eq:strict-directional-monotonicity}
\end{equation}
In particular, $\partial_eU\ge\lambda_0:=3|q|/4$ in $B_{2r_0}$.
Set
\[
 \tau:=sp-p+2,
 \qquad
 J_U(x,y):=(p-1)
 \frac{|U(x)-U(y)|^{p-2}}{|x-y|^{n+sp}}.
\]
On the set \(U(x)=U(y)\) we set \(J_U(x,y)=0\).  For every fixed
\(y\), this set has zero \(x\)-measure locally in \(B_{2r_0}\), by
strict monotonicity on the \(e\)-lines, so the convention does not
change any of the forms below.
Then $J_U$ is locally a well-defined symmetric Dirichlet kernel.  If
$B_r(x_0)\subset B_{r_0/2}$ and
\[
 A_U(x):=\int_{\R^n\setminus B_{r_0}}J_U(x,y)\dd y,
\]
then
\begin{equation}
 \int_{B_r(x_0)}A_U(x)\dd x\le Cr^{n+p-2}.
 \label{eq:crossed-morrey-bound}
\end{equation}
More generally, for every \(D_0\Subset B_{r_0}\) the same estimate
holds for balls \(B_r(x_0)\subset D_0\), with a constant which may also
depend on \(\operatorname{dist}(D_0,\partial B_{r_0})\).
In particular, $A_U$ is a Morrey--Kato absorption for the regional order
$\tau$, since
\begin{equation}
 \tau-(2-p)=sp>0.
 \label{eq:strict-kato-gap}
\end{equation}

Moreover, the a.e. directional derivative \(w=\partial_eU\) belongs to
\(\mathcal F_{J_U,\mathrm{loc}}(B_{r_0})\) and satisfies
\begin{equation}
 \iint_{\R^n\times\R^n}
 J_U(x,y)\delta w(x,y)\delta\varphi(x,y)\dd x\!\dd y=0
\label{eq:full-weak-linearization}
\end{equation}
for every \(\varphi\in C_c^\infty(B_{r_0})\).  The identity extends to
every \(\varphi\in\mathcal F_{J_U,c}(B_{r_0})\).
\end{proposition}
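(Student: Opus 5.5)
The proof has three parts: the structure of \(J_U\) near the diagonal, the Morrey bound \eqref{eq:crossed-morrey-bound} by directional coarea, and the linearized identity \eqref{eq:full-weak-linearization} by difference quotients in the contact direction \(e\).

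\emph{Local structure of \(J_U\).}  On \(B_{2r_0}^2\) we write \(J_U(x,x+\rho\omega)=a(x,\rho,\omega)\rho^{-n-\tau}\) with
\[
 a=(p-1)\Bigl|\frac{U(x)-U(x+\rho\omega)}{\rho}\Bigr|^{p-2}.
\]
This is the case \(h=0\) of Lemma~\ref{lem:regional-angular-compactness}, applied with \eqref{eq:strict-directional-monotonicity} in place of \eqref{eq:noncritical-gradient-tube}.  It gives \(a\ge c\) and \(\int_{\mathbb S^{n-1}}a\dd\omega\le C\), which are the bounds \eqref{eq:spectral-regional-bounds} with the order \(\tau\).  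Symmetry of \(J_U\) is evident.  For fixed \(y\), the set \(\{x:U(x)=U(y)\}\) meets each \(e\)-line in at most one point, because \(\partial_eU\ge\lambda_0\); hence it has zero \(x\)-measure.  So \(J_U\) is a symmetric Dirichlet kernel locally.

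\emph{Morrey bound.}  By Fubini,
\[
 \int_{B_r(x_0)}A_U\dd x
 =(p-1)\int_{\R^n\setminus B_{r_0}}\int_{B_r(x_0)}
 \frac{|U(x)-U(y)|^{p-2}}{|x-y|^{n+sp}}\dd x\dd y .
\]
Since \(B_r(x_0)\subset B_{r_0/2}\), we have \(|x-y|\ge c|y|\) and \(|x-y|\ge r_0/2\).  The \(y\)-integral of \(|x-y|^{-n-sp}\) is therefore finite, and it suffices to show
\[
 \sup_{c\in\R}\int_{B_r(x_0)}|U(x)-c|^{p-2}\dd x\le Cr^{n+p-2}.
\]
Write \(x=x'+te\) with \(x'\in e^\perp\); each slice has length at most \(2r\).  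On each slice substitute \(\sigma=U(x'+te)\).  Then \(\dd t\le \dd\sigma/\lambda_0\), and the image is an interval of length at most \(2Lr\).  Since \(p-2>-1\),
\[
 \int|\sigma-c|^{p-2}\dd\sigma\le C(Lr)^{p-1}.
\]
Integrating over the \((n-1)\)-dimensional disc of \(x'\) gives \(Cr^{n-1}\cdot r^{p-1}=Cr^{n+p-2}\).  For general \(D_0\Subset B_{r_0}\), only the separation constant changes.  This gives \eqref{eq:morrey-absorption} with \(\kappa=2-p<\tau\), which is \eqref{eq:strict-kato-gap}.

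\emph{Weak linearization.}  Set \(w_h=(U(\cdot+he)-U)/h\), so \(|w_h|\le L\).  Subtracting the weak equations of \(U\) and its translate gives, for \(\varphi\) supported in \(B_{r_0-h}\),
\[
 \iint c_h\,\delta w_h\,\delta\varphi\,|x-y|^{-n-sp}\dd x\dd y=0,
\]
where \(c_h\) is the secant coefficient of \(\Jp\) between \(\delta U\) and its translate.  This holds over all of \(\R^n\times\R^n\).

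First I prove a Caccioppoli estimate for \(w_h\), uniform in \(h\).  Test with \(\eta^2(w_h-k)\).  The regional part is coercive by \eqref{eq:regional-angular-ellipticity}.  The crossed part is bounded by \(C\int\eta^2\widetilde A_h\), where \(\widetilde A_h(x)=\int_{\R^n\setminus B_{r_0}}c_h(x,y)|x-y|^{-n-sp}\dd y\).  Because \(c_h\) is an average of \(|\,\cdot\,|^{p-2}\) along the segment between \(U(x)-U(y)\) and its translate, the same coarea computation bounds \(\widetilde A_h\) in \(L^1\) on balls, uniformly in \(h\).  This gives \(\sup_h\mathcal E_{J}(\eta w_h,\eta w_h)<\infty\).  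Since \(U\in C^{1,\alpha}\), we also have \(w_h\to w\) uniformly on compacts.  Lower semicontinuity then gives \(w\in\mathcal F_{J_U,\mathrm{loc}}(B_{r_0})\).  A pointwise \(C^\alpha\) bound alone would not suffice, since \(2\alpha<\tau\).

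To pass \(h\to0\) in the identity, split it into regional and crossed parts.  In the regional part, \eqref{eq:regional-angular-convergence} gives angular \(L^1\) convergence of \(c_h\) to \(J_U\) at rate \(|h|^{\alpha(p-1)}\).  Combined with the uniform energy bound and Cauchy--Schwarz near the diagonal, this passes the limit.

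\emph{Main obstacle: the crossed part.}  Far from \(B_{r_0}\), \(U\) is only Lipschitz, so \(w_h(y)\) converges only weakly-star in \(L^\infty\), not pointwise.  I would integrate in \(x\) first and set
\[
 g_h(y):=\int_{B_{r_0}}c_h(x,y)\varphi(x)|x-y|^{-n-sp}\dd x .
\]
The coarea bound shows that the \(x\)-slices of \(c_h\) are equi-integrable, since \(p-2>-1\).  Together with pointwise convergence \(c_h\to J_U\) off a null set and the tail majorant \(|y|^{-n-sp}\), this gives \(g_h\to g_0\) strongly in \(L^1(\R^n\setminus B_{r_0})\).  Strong \(L^1\) convergence paired with weak-star \(L^\infty\) convergence of \(w_h\) then passes the limit in the term containing \(w_h(y)\).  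The term with \(w_h(x)\) converges because \(w_h\to w\) uniformly there and \(\widetilde A_h\to A_U\) in \(L^1\).  Finally, the identity extends to \(\varphi\in\mathcal F_{J_U,c}(B_{r_0})\) by density: the crossed part is continuous in the form norm by the Morrey--Kato bound \eqref{eq:kato-infinitesimal-form-bound}.
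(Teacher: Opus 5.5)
Your Morrey bound (slicing along $e$-lines, uniformly in the level $c$) and your uniform secant Caccioppoli estimate are the paper's arguments. Your treatment of the crossed term is a valid alternative to the paper's three truncations: you integrate in $x$ first and pair $g_h\to g_0$ in $L^1$ with $w_h\stackrel{*}{\rightharpoonup}\partial_eU$. Your reason for needing it is wrong, though. $U$ is globally Lipschitz, so Rademacher's theorem gives $w_h(y)\to\partial_eU(y)$ for a.e.\ $y\in\R^n$. The paper uses this pointwise convergence directly; your weak-star argument simply needs less.

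The genuine gap is the step ``Lower semicontinuity then gives $w\in\mathcal F_{J_U,\mathrm{loc}}(B_{r_0})$.'' Fatou along $c_h\to J_U$ only shows that $\eta w$ has finite $J_U$-energy, i.e.\ lies in the maximal finite-energy class. But $\mathcal F_{J_U}(B_{r_0})$ is by definition the closure of $C_c^\infty(B_{r_0})$ in the form norm, and neither obvious repair works:
\begin{itemize}
\item The functions $\eta w_h$ are $C^1_c$ and hence in $\mathcal F_{J_U}$. However, $c_h\asymp(|\delta U|+|\delta U_h|)^{p-2}\lesssim|\delta U|^{p-2}$, so your uniform bound is on the weaker $c_h$-energies. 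It gives no bounded sequence in the Hilbert space $\mathcal F_{J_U}$, hence no weak-limit argument.
\item Mollifying $\eta w$ directly fails because $J_U(x+z,y+z)$ is not comparable to $J_U(x,y)$: the singular set $\{U(x)=U(y)\}$ moves under translation. So the Jensen argument is unavailable.
\end{itemize}

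The paper supplies exactly this core property. The map $\Theta(x)=P_{e^\perp}x+U(x)e$ is bi-Lipschitz with $\det D\Theta=\partial_eU\asymp1$. Since $U(x)-U(y)=e\cdot(\Theta(x)-\Theta(y))$, it transforms $J_U$ into a kernel pointwise comparable to the translation-invariant kernel $|e\cdot H|^{p-2}|H|^{-n-sp}$, for which mollification converges in energy. Pulling back and mollifying once more gives $C_c^\infty$ approximants.

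This is not a formality. The statement asserts closed-form membership, and Lemma~\ref{lem:kato-contact-propagation} relies on it: the contraction property of the closed form, the admissibility of the tests $\eta^2v^{-1-2q}$ and $\eta^2/v$, and $\eta z\in\mathcal F_{J_{U_*}}$ in Corollary~\ref{cor:gradient-contact-bernstein}.
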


\begin{proof}
We first record the quantitative form of the directional coarea
estimate used below.  For \(t\in[0,1]\) and \(0\le h<r_0/4\), put
\[
 W_{t,h}(x):=(1-t)U(x)+tU(x+he),
\]
with \(W_{t,0}=U\).  If \(K\Subset B_{r_0}\), then, after decreasing
the upper bound for \(h\) by a number depending only on \(K\),
\begin{equation}
 \sup_{\substack{0\le h<h_K\\t\in[0,1]\\a\in\R}}
 \int_E|W_{t,h}(x)-a|^{p-2}\dd x
 \le C_K|E|^{p-1}
 \qquad(E\subset K).
 \label{eq:quantitative-directional-coarea}
\end{equation}
Indeed, write \(x=x'+\ell e\), with \(x'\in e^\perp\).
On every line section, \(\partial_eW_{t,h}\ge\lambda_0\).  The
one-dimensional area formula, followed by decreasing rearrangement
about \(a\), gives
\[
 \int_{E_{x'}}|W_{t,h}(x'+\ell e)-a|^{p-2}\dd\ell
 \le C|E_{x'}|^{p-1}.
\]
Integration in \(x'\), followed by H\"older's inequality on the
projection of \(K\), proves
\eqref{eq:quantitative-directional-coarea}.

For \(E=B_r(x_0)\), the line calculation gives the sharper bound
\(Cr^{n-1}r^{p-1}=Cr^{n+p-2}\).  Apply it with \(h=0\) and
\(a=U(y)\), and then integrate in \(y\notin B_{r_0}\).  The two sets
are separated, while for large \(y\) the remaining kernel is bounded
by \(C(1+|y|)^{-n-sp}\), uniformly in \(x\in B_r(x_0)\).
This proves \eqref{eq:crossed-morrey-bound}.
Replacing \(B_{r_0/2}\) by any fixed \(D_0\Subset B_{r_0}\) in the
same argument proves the stated local version, since the two regions
remain positively separated.

We next justify differentiation of the equation.  For $0<h<r_0/4$ put
\begin{align*}
 U_h(x)&:=U(x+he),\qquad w_h:=\frac{U_h-U}{h},\\
 c_h(x,y)&:=(p-1)\int_0^1
 |(1-t)\delta U(x,y)+t\delta U_h(x,y)|^{p-2}\dd t.
\end{align*}
Here and below \(c_h=0\) when
\(\delta U(x,y)=\delta U_h(x,y)=0\).  The integral is finite otherwise,
even if the affine segment between the two increments crosses zero,
because \(p-2>-1\).
Subtraction of the two entire weak equations gives the exact identity
\begin{equation}
 \iint_{\R^n\times\R^n}
 \frac{c_h(x,y)\delta w_h(x,y)\delta\varphi(x,y)}
 {|x-y|^{n+sp}}\dd x\!\dd y=0.
 \label{eq:exact-secant-linear-equation}
\end{equation}
Estimate \eqref{eq:quantitative-directional-coarea} applies uniformly
to every convex combination \((1-t)U+tU_h\).  Together with Lemma
\ref{lem:regional-angular-compactness}, it gives, for each compact
\(K\Subset B_{r_0}\), a number \(h_K>0\) such that
\begin{equation}
 \sup_{0<h<h_K}
 \int_K\int_{\R^n}(1\wedge|x-y|^2)
 \frac{c_h(x,y)}{|x-y|^{n+sp}}\dd y\!\dd x<\infty.
 \label{eq:averaged-secant-levy-bound}
\end{equation}
after reducing the admissible upper bound for \(h\), if necessary.
For clarity, on the near-diagonal part the angular lemma gives
\[
 c_h(x,x-\rho\omega)=\rho^{p-2}b_h(x,\rho,\omega),
 \qquad
 \int_{\mathbb S^{n-1}}b_h(x,\rho,\omega)\dd\omega\le C,
\]
and the corresponding radial integral is
\[
 \int_0^1\rho^{p-2}\rho^2\rho^{-sp-1}\dd\rho
 =\int_0^1\rho^{1-\tau}\dd\rho<\infty.
\]
On separated sets,
\eqref{eq:quantitative-directional-coarea} gives uniform
integrability.  More precisely, if \(K\Subset B_{r_0}\),
\(0<\varepsilon<R<\infty\), and
\[
 G\subset\{(x,y):x\in K,\ \varepsilon\le|x-y|\le R\},
\]
then application of that estimate to each \(x\)-fiber, followed by
H\"older's inequality in \(y\), gives
\begin{equation}
 \sup_{0<h<h_K}\iint_G
 \frac{c_h(x,y)}{|x-y|^{n+sp}}\dd x\!\dd y
 \le C_{K,\varepsilon,R}|G|^{p-1}.
 \label{eq:separated-secant-uniform-integrability}
\end{equation}

Choose a cutoff \(\eta\in C_c^\infty(B_{r_0})\) and test
\eqref{eq:exact-secant-linear-equation} with $\eta^2w_h$.  Since
\(|w_h|\le L\), the identity
\[
 (a-b)(\eta_x^2a-\eta_y^2b)
 =|\eta_xa-\eta_yb|^2-ab|\eta_x-\eta_y|^2
\]
and \eqref{eq:averaged-secant-levy-bound} give
\begin{equation}
 \sup_{0<h<h_\eta}
 \iint_{\R^n\times\R^n}
 \frac{c_h(x,y)|\delta(\eta w_h)(x,y)|^2}
 {|x-y|^{n+sp}}\dd x\!\dd y<\infty.
 \label{eq:uniform-secant-caccioppoli}
\end{equation}
where \(h_\eta\) is the number attached above to a compact
neighborhood of \(\operatorname{supp}\eta\).  We clarify the
admissibility of this test without invoking density for the measurable
secant form.  For each fixed \(h>0\),
\(U,U_h\in W^{s,p}_{\rm loc}\), and
\[
 c_h|\delta w_h|^2
 =h^{-2}\bigl[\Jp(\delta U_h)-\Jp(\delta U)\bigr]
             (\delta U_h-\delta U)
 \le Ch^{-2}|\delta U_h-\delta U|^p.
\]
Moreover, \(\eta^2w_h=\eta^2(U_h-U)/h\in W^{s,p}_0(B_{r_0})\).
Approximate this function in \(W^{s,p}_0(B_{r_0})\) by smooth compactly
supported functions, use the same approximants separately in the weak
equations for \(U_h\) and \(U\), and then subtract.  The local
\(W^{s,p}\) bounds and the global Lipschitz tail, which is integrable
because \(sp>p-1\), justify the limit in the two original weak forms.
The secant identity then gives
\eqref{eq:exact-secant-linear-equation} with the test
\(\eta^2w_h\).  The restrictions
\(|x-y|>\varepsilon\) and \(c_h\le N\) are used only to estimate the
resulting integrals; they do not replace the kernel in the equation.
Letting first \(N\uparrow\infty\) and then
\(\varepsilon\downarrow0\), the averaged L\'evy bound and Fatou's
lemma give \eqref{eq:uniform-secant-caccioppoli}.

As $h\downarrow0$, $w_h\to w$ locally uniformly in $B_{r_0}$ and almost
everywhere in $\R^n$; the latter follows from Rademacher's theorem, or
equivalently from one-dimensional differentiation on almost every line
parallel to \(e\).  For almost every pair with one point in
$B_{r_0}$,
\[
 \frac{c_h(x,y)}{|x-y|^{n+sp}}\longrightarrow J_U(x,y).
\]
Indeed, the exceptional set \(U(x)=U(y)\) has zero measure in \(x\)
for each fixed \(y\), by strict monotonicity on the \(e\)-lines.
Fatou's lemma in \eqref{eq:uniform-secant-caccioppoli} first gives the
local \(J_U\)-energy of \(w\).

We verify at this point that the limiting energy belongs to the local
closed form domain, rather than only to the maximal finite-energy
class.  We first prove the required core property directly on
\(D_0:=B_{r_0}\).  Define
\[
 \Theta(x):=P_{e^\perp}x+U(x)e,
 \qquad x\in D_0.
\]
The strict bound \(\partial_eU\ge\lambda_0\), the local Lipschitz
bound, and integration on the \(e\)-lines show that \(\Theta\) is
bi-Lipschitz from \(D_0\) onto \(D^\sharp:=\Theta(D_0)\); moreover,
\(\det D\Theta=\partial_eU\) is bounded above and below.  Indeed,
\(\Theta\) preserves the transverse coordinates, while strict
monotonicity on each \(e\)-line controls the remaining coordinate of
\(\Theta^{-1}\).  More explicitly, after rotating so that \(e=e_n\)
and inserting the intermediate point \((x',y_n)\in B_{2r_0}\), one has
\[
 |x_n-y_n|\le\lambda_0^{-1}
 \bigl(|U(x)-U(y)|+L|x'-y'|\bigr),
\]
which gives \(|x-y|\le C|\Theta(x)-\Theta(y)|\).

Let \(f\in L^2(D_0)\) have compact support in \(D_0\) and finite
regional \(J_U\)-energy, and put
\(\widetilde f=f\circ\Theta^{-1}\).  With
\(\xi=\Theta(x)\) and \(\zeta=\Theta(y)\), change of variables gives
\begin{equation}
 \iint_{D_0\times D_0}J_U(x,y)|\delta f(x,y)|^2\dd x\!\dd y
 \asymp
 \iint_{D^\sharp\times D^\sharp}
 \frac{|e\cdot(\xi-\zeta)|^{p-2}
       |\delta\widetilde f(\xi,\zeta)|^2}
 {|\xi-\zeta|^{n+sp}}\dd\xi\!\dd\zeta.
 \label{eq:noncritical-coordinate-form-equivalence}
\end{equation}
The kernel on the right is the translation-invariant kernel
\[
 L(H):=\frac{|e\cdot H|^{p-2}}{|H|^{n+sp}}
 =\frac{|e\cdot\omega|^{p-2}}{|H|^{n+\tau}},
 \qquad \omega=\frac H{|H|}.
\]
We denote its full-space quadratic form by
\[
 \mathcal E_L(g):=\iint_{\R^n\times\R^n}
 L(\xi-\zeta)|g(\xi)-g(\zeta)|^2\dd\xi\!\dd\zeta.
\]
Its angular density is integrable because \(p-2>-1\), and its Fourier
symbol satisfies
\[
 c_{n,\tau}\int_{\mathbb S^{n-1}}
 |\vartheta\cdot\omega|^\tau|e\cdot\omega|^{p-2}\dd\omega
 \asymp |\vartheta|^\tau.
\]
Thus its full-space form domain is \(H^{\tau/2}(\R^n)\).

Extend \(\widetilde f\) by zero outside \(D^\sharp\).  Since its
support has positive distance from \(\partial D^\sharp\),
\eqref{eq:noncritical-coordinate-form-equivalence} and direct radial
integration show that its full-space \(L\)-energy is finite.  If
\(\rho_\varepsilon\) is a standard mollifier, translation invariance
and Jensen's inequality give
\[
 \mathcal E_L(\rho_\varepsilon*\widetilde f-\widetilde f)
 \le\int_{\R^n}\rho_\varepsilon(h)
       \mathcal E_L(\widetilde f(\,\cdot+h)-\widetilde f)\dd h
 \longrightarrow0.
\]
For completeness, the last convergence follows by writing the energy
as the integral in \(H\) of the squared \(L^2\)-norm of the
corresponding increment.  Translation continuity in \(L^2\) gives
pointwise convergence in \(H\), while
\(4L(H)\|\widetilde f(\,\cdot+H)-\widetilde f\|_2^2\) is an integrable
majorant.  For small \(\varepsilon\), the convolution remains compactly
supported in \(D^\sharp\).

Pulling it back by \(\Theta\) gives a compactly supported
\(C^{1,\alpha}\) function in \(D_0\), converging to \(f\) in
\(L^2\) and in the regional \(J_U\)-energy by
\eqref{eq:noncritical-coordinate-form-equivalence}.  Mollifying once
more in the original variables produces functions in
\(C_c^\infty(D_0)\).  For this second mollification the differences
converge in \(C^1\); Lemma~\ref{lem:regional-angular-compactness} and
\(\int_0^1r^{1-\tau}\dd r<\infty\) then give convergence in the
\(J_U\)-energy.  If \(f\ge0\), both mollifications preserve
nonnegativity.  We have
therefore proved that every compactly supported finite-energy function
in \(D_0\) belongs to \(\mathcal F_{J_U}(D_0)\).  The approximants may
be chosen with supports in one fixed compact subset of \(D_0\).

Fatou's lemma applied to \eqref{eq:uniform-secant-caccioppoli} gives
finite \(J_U\)-energy for \(\eta w\), not merely for \(w\) on pairs
inside the cutoff set.  Applying the preceding core property to
\(f=\eta w\) proves
\(\eta w\in\mathcal F_{J_U}(B_{r_0})\).  Thus
\(w\in\mathcal F_{J_U,\mathrm{loc}}(B_{r_0})\).

To pass to the equation, put
\(K=\operatorname{supp}\varphi\) and choose the preceding cutoff
\(\eta\) to be one in a fixed neighborhood of \(K\).  We spell out the
three truncations used in \eqref{eq:exact-secant-linear-equation}.

First, on
\[
 \{(x,y):x\in K,
       \varepsilon\le|x-y|\le R\},
\]
the uniform integrability following
\eqref{eq:quantitative-directional-coarea}, the almost everywhere
convergence of the kernels, and \(|w_h|\le L\) permit the use of
Vitali's theorem.  Hence the integral converges to the corresponding
integral with kernel \(J_U\).

Second, if \(|y|>R\), then \(\delta\varphi(x,y)=\varphi(x)\) and
\(|\delta w_h(x,y)|\le2L\).  Directional coarea in the \(x\)-variable
gives
\begin{equation*}
 \sup_{\substack{0<h<h_K\\t\in[0,1]}}\int_K
 |(1-t)\delta U+t\delta U_h|^{p-2}\dd x\le C
\end{equation*}
for each \(y\).  After multiplication by
\(|x-y|^{-n-sp}\) and integration in \(y\), the contribution is
\(O(R^{-sp})\), uniformly in \(h\).

Third, choose \(\varepsilon\) smaller than the distance from \(K\) to
\(\{\eta\ne1\}\).  Then every pair with \(|x-y|<\varepsilon\) which
meets \(K\) lies in \(\{\eta=1\}\).  Cauchy--Schwarz,
\eqref{eq:uniform-secant-caccioppoli}, and the angular upper bound
therefore give
\[
 C\left(\int_0^\varepsilon r^{1-\tau}\dd r\right)^{1/2}
 =C\varepsilon^{(2-\tau)/2}\longrightarrow0.
\]
The estimate is uniform in \(h\).  Letting successively
\(h\downarrow0\), \(R\uparrow\infty\), and
\(\varepsilon\downarrow0\) permits passage to the limit in
\eqref{eq:exact-secant-linear-equation}.  This proves
\eqref{eq:full-weak-linearization}.

It remains to justify the asserted extension of the identity.  Let
\(\varphi\in\mathcal F_{J_U,c}(B_{r_0})\).  Multiplying a form-norm
approximating sequence by a fixed cutoff which equals one near
\(\operatorname{supp}\varphi\), we may choose
\(\varphi_j\in C_c^\infty(B_{r_0})\) with supports in a common compact
set and
\[
 \|\varphi_j-\varphi\|_2^2
 +\iint_{B_{r_0}\times B_{r_0}}J_U
       |\delta(\varphi_j-\varphi)|^2\dd x\!\dd y\longrightarrow0.
\]
Choose \(K\Subset K_1\Subset B_{r_0}\) so that all the supports lie in
\(K\).  On \(K_1\times K_1\), the regional part of
\eqref{eq:full-weak-linearization} passes to the limit by
Cauchy--Schwarz and the local form membership of \(w\).  Pairs meeting
\(K\) and \(B_{r_0}\setminus K_1\) are separated; directional coarea
gives the same local Morrey bound for their coefficient, and
\eqref{eq:kato-infinitesimal-form-bound} makes their contribution tend
to zero.  For the crossed part, write
\(A_U(x)=\int_{\R^n\setminus B_{r_0}}
J_U(x,y)\dd y\).  Since \(|w|\le L\),
\begin{align*}
 &\left|\int_{B_{r_0}}\int_{\R^n\setminus B_{r_0}}
 J_U(x,y)\delta w(x,y)(\varphi_j-\varphi)(x)\dd y\!\dd x\right|\\
 &\qquad\le2L
 \left(\int_KA_U\dd x\right)^{1/2}
 \left(\int_KA_U|\varphi_j-\varphi|^2\dd x\right)^{1/2}.
\end{align*}
The same
directional-coarea argument used for \eqref{eq:crossed-morrey-bound},
now with constants depending also on
\(\operatorname{dist}(K,\partial B_{r_0})\), gives the required local
Morrey bound for \(A_U\) on a neighborhood of \(K\).  The last factor
therefore tends to zero by the density extension of
\eqref{eq:kato-infinitesimal-form-bound}, applied to the kernel
\(J_U\).  Hence
\eqref{eq:full-weak-linearization} holds for every compactly supported
form-domain test.
\end{proof}

At the contact point, local $C^{1,\alpha}$ regularity gives
\begin{equation}
 0\le Z_h(0)=L_Vh-U_*(he)+U_*(0)\le C h^{1+\alpha}.
 \label{eq:ordered-gap-contact-rate}
\end{equation}
Nevertheless, division of
\eqref{eq:regional-sublinear-absorption} by $h$ produces the coefficient
$h^{p-2}$ in front of $(Z_h/h)^{p-1}$.  It diverges as $h\downarrow0$.
Thus ordinary first difference quotients lose exactly the fixed gain
provided by \eqref{eq:ordered-gap-contact-rate}; this is the precise
singular crossed-tail endpoint for a pointwise tail estimate.  Proposition
\ref{prop:coarea-weak-linearization} avoids that loss by integrating in
the strictly monotone direction before passing to the quotient.

\subsection{Completion of the Liouville theorem}
\label{subsec:liouville-completion}

\begin{corollary}[Rigidity at a maximal gradient point]
\label{cor:gradient-contact-bernstein}
Let $U_*$ be the nonaffine profile in Proposition
\ref{prop:no-linear-record-neck}.  Then such a profile cannot exist.
\end{corollary}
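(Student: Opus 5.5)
We will apply the contact propagation Lemma~\ref{lem:kato-contact-propagation} to the directional derivative of \(U_*\) in the contact direction, and show that it forces \(U_*\) to be affine. Set \(e=q/L_V\) and \(w:=L_V-\partial_eU_*\). By Lemma~\ref{lem:record-noncritical-neighborhood}, \(U_*\in C^{1,\alpha}(B_{2r_0})\) and the gradient tube \eqref{eq:record-noncritical-gradient-tube} holds. Hence Proposition~\ref{prop:coarea-weak-linearization} applies with \(U=U_*\). It shows that \(\partial_eU_*\in\mathcal F_{J_{U_*},\mathrm{loc}}(B_{r_0})\) satisfies the full weak linearization \eqref{eq:full-weak-linearization}. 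Since constants are annihilated by \(\delta\), the same identity holds for \(w\). The global \(L_V\)-Lipschitz bound gives \(w\ge0\) everywhere. By \eqref{eq:attained-maximal-gradient}, \(w(0)=0\). Finally, \(w\) is continuous in \(B_{2r_0}\) and bounded by \(2L_V\).

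First we pass from the full-space identity to a regional supersolution inequality on \(D=B_{r_0}\). Split \eqref{eq:full-weak-linearization} into pairs in \(D\times D\) and crossed pairs. For \(x\in D\), \(y\notin D\), and a nonnegative test \(\varphi\), the crossed contribution is \(2\int\!\!\int J_U(x,y)(w(x)-w(y))\varphi(x)\). Since \(w(y)\ge0\), this is at most \(2\int A_U w\varphi\dd x\). Here \(A_U\) is the exterior coefficient of Proposition~\ref{prop:coarea-weak-linearization}, so moving the crossed part to the right-hand side yields \eqref{eq:regional-supersolution-with-potential} with potential \(A=2A_U\). By \eqref{eq:crossed-morrey-bound}, \(A\) satisfies the Morrey bound \eqref{eq:morrey-absorption} with \(\kappa=2-p\). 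The gap \eqref{eq:strict-kato-gap} gives \(\kappa<\tau=sp-p+2\), and \(1<\tau<2\) holds by \(sp>p-1\).

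The regional kernel \(J_{U_*}\) satisfies the polar bounds \eqref{eq:spectral-regional-bounds}. This follows from Lemma~\ref{lem:regional-angular-compactness} with \(h=0\), since \(J_{U_*}(x,x-\rho\omega)=\rho^{-n-\tau}(p-1)|A_0/\rho|^{p-2}\). The admissible test class is \(\mathcal F_{J_U,c}\), by the extension statement at the end of Proposition~\ref{prop:coarea-weak-linearization}. Lemma~\ref{lem:kato-contact-propagation}, in its local form on \(D_0=B_{r_0/2}\), then shows that \(w\) vanishes on a neighborhood of \(0\). So \(\partial_eU_*=L_V\) on a small ball \(B_\delta\).

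It remains to derive a contradiction. On \(B_\delta\) we have \(|\nabla U_*|\le L_V\) and \(\partial_eU_*=L_V\), so \(\nabla U_*=L_Ve=q\) there. Hence \(U_*\) is affine on \(B_\delta\). Any short segment in \(B_\delta\) parallel to \(e\) is then an extremal secant of slope \(L_V=[U_*]_{C^{0,1}(\R^n)}\). Theorem~\ref{thm:extremal-secant-rigidity} makes \(U_*\) globally affine, which contradicts the nonaffinity in Proposition~\ref{prop:no-linear-record-neck}.

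The main obstacle is the regional reduction and the verification that all hypotheses of Lemma~\ref{lem:kato-contact-propagation} hold with uniform constants, in particular the sign handling of the crossed term. The favorable sign \(w(y)\ge0\) is exactly what replaces the lossy pointwise bound of Proposition~\ref{prop:crossed-sublinear-absorption}. Integrability of \(A_Uw\varphi\) follows from the Morrey--Kato form bound \eqref{eq:kato-infinitesimal-form-bound}. If the local form membership causes trouble, the fallback is to run the secant version \(Z_h\) with the coefficient \(c_h\) and pass \(h\downarrow0\) using the coarea uniform integrability.
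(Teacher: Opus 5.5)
Your proposal is correct and follows the paper's proof essentially step by step. The paper also sets $z=L_V-\partial_eU_*$ and uses $z(y)\ge0$ on crossed pairs to get the regional supersolution inequality with potential $2A_{U_*}$. It then applies the localized Lemma~\ref{lem:kato-contact-propagation} with $\kappa=2-p<\tau$, and concludes with the extremal-secant rigidity of Theorem~\ref{thm:extremal-secant-rigidity}.

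The one point you leave implicit is that $L_V-\partial_eU_*\in\mathcal F_{J_{U_*},\mathrm{loc}}(B_{r_0})$. The paper settles this in one line: it writes $\eta z=L_V\eta-\eta\,\partial_eU_*$ and notes that $L_V\eta$ has finite form energy by the cutoff bound.
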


\begin{proof}
Put $e=q/L_V$ and $w=\partial_eU_*$.  The global Lipschitz bound gives
$w\le L_V$ almost everywhere, while
\eqref{eq:attained-maximal-gradient} gives $w(0)=L_V$.  In the ball
selected after Lemma~\ref{lem:regional-angular-compactness}, condition
\eqref{eq:strict-directional-monotonicity} holds with, for instance,
$\lambda_0=L_V/2$.  Proposition
\ref{prop:coarea-weak-linearization} applies.

Set $z=L_V-w\ge0$.  It is continuous in $B_{r_0}$ and $z(0)=0$.
For every \(\eta\in C_c^\infty(B_{r_0})\), the cutoff estimate and
Proposition~\ref{prop:coarea-weak-linearization} give
\[
 \eta z=L_V\eta-\eta w\in\mathcal F_{J_{U_*}}(B_{r_0}).
\]
Thus \(z\in\mathcal F_{J_{U_*},\mathrm{loc}}(B_{r_0})\); in
particular, its local energy is finite.  Split
\eqref{eq:full-weak-linearization} into pairs in
$B_{r_0}\times B_{r_0}$ and crossed pairs.  For every nonnegative test
\(\varphi\in\mathcal F_{J_{U_*},c}(B_{r_0})\) supported in
$B_{r_0/2}$, the form-domain extension in Proposition
\ref{prop:coarea-weak-linearization} is applicable, and one has
\(\delta z(x,y)=z(x)-z(y)\le z(x)\) whenever
\(x\in B_{r_0/2}\) and \(y\notin B_{r_0}\).  Symmetry of the full form
and the inequality \(z(y)\ge0\) therefore give
\[
 \iint_{B_{r_0}\times B_{r_0}}
 J_{U_*}(x,y)\delta z\,\delta\varphi\dd x\!\dd y
 \ge-2\int_{B_{r_0/2}}A_{U_*}z\varphi\dd x.
\]
Lemma~\ref{lem:regional-angular-compactness} gives
\eqref{eq:spectral-regional-bounds} with order $\tau=sp-p+2$, while
\eqref{eq:crossed-morrey-bound} gives
\eqref{eq:morrey-absorption} with $\kappa=2-p<\tau$.  Lemma
\ref{lem:kato-contact-propagation}, in its localized form with tests
supported in \(B_{r_0/2}\), therefore shows that $z=0$ in a ball about
the origin.  Thus $\partial_eU_*=L_V$ there.  Since
$|\nabla U_*|\le L_V$, all transverse derivatives vanish there, and
$U_*$ is affine with slope $L_Ve$ on that ball.  A short segment in the
ball parallel to $e$ is an extremal secant.  Theorem
\ref{thm:extremal-secant-rigidity} makes $U_*$ affine in $\R^n$, contrary
to Proposition~\ref{prop:no-linear-record-neck}.
\end{proof}

\begin{theorem}[Fixed-order Lipschitz Liouville theorem]
\label{thm:fixed-order-centered-closure}
If \(n\ge2\), $1<p<2$, $0<s<1$, and $sp>p-1$, every
globally Lipschitz entire weak solution of \eqref{eq:equation} is affine.
\end{theorem}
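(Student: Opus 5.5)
The plan is a proof by contradiction that assembles the results of Section~\ref{sec:dilation-liouville} in order. Suppose \(U\) is a globally Lipschitz entire weak solution of \eqref{eq:equation} that is not affine. Since the equation is invariant under addition of constants, we may assume \(U(0)=0\). Proposition~\ref{prop:dilation-recurrent-cascade} applies to \(U\). It produces the minimal compact dilation-invariant hull \(\mathscr M\) of centered blow-downs and a recurrent nonaffine profile \(V\in\mathscr M\). This profile has the property \eqref{eq:all-centered-scales-full-slope}: its global Lipschitz seminorm \(L_V>0\) is already attained on every ball centered at the origin. The nonaffinity of every element of \(\mathscr M\) comes from Proposition~\ref{prop:affine-blowdown-rigidity}, and the rigidity Theorem~\ref{thm:extremal-secant-rigidity} is what rules out macroscopic extremizing chords.

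Next I would invoke Corollary~\ref{cor:collapse-affine-radii}. It gives differentiability points \(x_k\to0\) with \(\nabla V(x_k)\to q\) and \(|q|=L_V\). Along this sequence I choose the record scales \(t_k\) exactly as in \eqref{eq:record-scale-choice}. Proposition~\ref{prop:no-linear-record-neck} then gives that the rescaled amplitudes stay nondegenerate, \(a_k\to a_*>0\). This uses the large-slope compactness of Proposition~\ref{prop:fixed-order-infinite-slope-compactness} and the stable Liouville theorem, with growth exponent \(1+\beta<\tau_s\). The same proposition yields the limiting profile \(U_*=q\cdot X+a_*v_*\), which is a nonaffine, globally \(L_V\)-Lipschitz entire solution with \(\nabla U_*(0)=q\) and \(|\nabla U_*(0)|=[U_*]_{C^{0,1}(\R^n)}\). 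So the maximal gradient is attained at an interior point.

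Finally, Corollary~\ref{cor:gradient-contact-bernstein} rules out such a profile, which is the desired contradiction. That corollary rests on three ingredients. Lemma~\ref{lem:record-noncritical-neighborhood} provides a \(C^{1,\alpha}\) noncritical ball around the origin. Proposition~\ref{prop:coarea-weak-linearization} provides the weak linearized equation for \(w=\partial_eU_*\) and the Morrey--Kato bound for the crossed coefficient, whose codimension \(2-p\) is strictly less than \(\tau\). Lemma~\ref{lem:kato-contact-propagation} is the strong minimum principle applied to \(z=L_V-w\). Once \(z\) vanishes near the origin, \(U_*\) contains an extremal secant, and Theorem~\ref{thm:extremal-secant-rigidity} makes \(U_*\) affine.

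All the analytic work is done in the earlier statements, so the proof itself is a short chaining argument. The one step I would check most carefully is the compatibility of hypotheses across the chain. Specifically, the exponent \(\beta\) fixed before Proposition~\ref{prop:no-linear-record-neck} must satisfy \(\beta<\tfrac{p}{n+p}(sp-p+1)\). The exponent \(\alpha\) in Lemma~\ref{lem:record-noncritical-neighborhood} must lie in \((\beta,sp-p+1)\). The profile \(U_*\) must itself be a globally Lipschitz entire solution, so that both the linearization and the secant rigidity apply to it. All of these conditions hold under the standing assumption \(sp>p-1\) and \(n\ge2\), which is exactly the hypothesis of the theorem.
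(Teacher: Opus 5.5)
Your proposal is correct and matches the paper's proof: assuming a nonaffine solution, Proposition~\ref{prop:dilation-recurrent-cascade} and Corollary~\ref{cor:collapse-affine-radii} supply the near-extremal sequence, Proposition~\ref{prop:no-linear-record-neck} turns it into a nonaffine entire profile attaining its global Lipschitz slope at the origin, and Corollary~\ref{cor:gradient-contact-bernstein} excludes that profile. Your extra bookkeeping (the normalization \(U(0)=0\) and the compatibility conditions \(\beta<\tfrac{p}{n+p}(sp-p+1)\) and \(\beta<\alpha<sp-p+1\)) agrees with how the paper fixes these parameters.
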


\begin{proof}
If a nonaffine solution existed, Proposition
\ref{prop:dilation-recurrent-cascade} and Corollary
\ref{cor:collapse-affine-radii} would produce the maximizing sequence used in
Proposition~\ref{prop:no-linear-record-neck}.  That proposition gives a
nonaffine entire profile attaining its global Lipschitz slope at the
origin.  Corollary~\ref{cor:gradient-contact-bernstein} excludes precisely
such a profile.
\end{proof}

\begin{remark}[Subcriticality of the crossed singularity]
\label{rem:gradient-contact-endpoint}
The pointwise estimate in Proposition
\ref{prop:crossed-sublinear-absorption} is sharp and produces the
divergent factor $h^{p-2}$.  The geometry of the contact direction
provides the additional information required to control this factor.  Since
$\partial_eU_*$ stays positive in the noncritical ball, every level set is
crossed transversally on an $e$-line.  The singular power $p-2>-1$ is
therefore integrable along that line, and
\eqref{eq:crossed-morrey-bound} assigns the exterior coefficient the
codimension $2-p$.  The regional derivative kernel has order
$\tau=sp-p+2$.  Identity \eqref{eq:strict-kato-gap} is exactly the
strict subcriticality required by the form absorption.  Thus the global
pointwise L\'evy bound used in a general strong-minimum principle such as
\cite{JarohsWethSMP} is unnecessary for this special kernel: regional
angular control and the directional Morrey estimate suffice.
\end{remark}

\begin{remark}[From rigidity to finite-scale decay]
Theorem~\ref{thm:extremal-secant-rigidity} treats exact macroscopic
saturation.  The centered dilation-hull argument reduces every
putative nonaffine entire solution to a recurrent profile whose gradient
attains the global Lipschitz amplitude.  Proposition
\ref{prop:coarea-weak-linearization} and Corollary
\ref{cor:gradient-contact-bernstein} exclude this profile directly.
Proposition
\ref{prop:affine-blowdown-rigidity} supplies the blow-down step in the
centered argument.  Hence the fixed-order Bernstein theorem
is unconditional throughout \(sp>p-1\).

The finite-scale conversion from this Liouville theorem is carried out
below without an affine-tube assumption.  Section
\ref{sec:common-recursion} places the bounded- and large-slope
alternatives in a single intrinsic iteration.
\end{remark}

\section{The intrinsic improvement-of-flatness iteration}
\label{sec:common-recursion}

We formulate the two analytic alternatives in a common normalization.  Put
\begin{equation*}
 \gamma:=sp-p+1>0.
\end{equation*}
At a center $x_0$, a radius $r$, an affine map
$\ell(x)=a+q\cdot(x-x_0)$, and an amplitude $H>0$, set
\begin{equation}
 v(X):=\frac{u(x_0+rX)-\ell(x_0+rX)}{rH},
 \qquad Q:=\frac qH.
 \label{eq:common-intrinsic-profile}
\end{equation}
Fix once and for all
\begin{equation*}
 \ell>\max\left\{p,\frac1{p-1}\right\}.
\end{equation*}
Let \(\mathcal A\) denote the finite-dimensional space of affine
functions on \(\R^n\).
For \(\beta\ge0\) and a fixed constant \(G\ge1\), the
common exterior state is
\begin{equation}
 \mathcal T_{\beta,G}(v,Q)
 :=\max\left\{G^{-1}\mathscr C_{\ell,\beta}(v),
   \mathcal T^{\rm pt}_{s,Q}(v)\right\},
 \label{eq:common-tail-state}
\end{equation}
where
\begin{align*}
 \mathbf c_\ell(v;j)
 &:=2^{-j}\inf_{m\in\mathcal A}
 \left(\fint_{B_{2^{j+1}}}|v-m|^\ell\dd X\right)^{1/\ell},\\
 \mathscr C_{\ell,\beta}(v)
 &:=\sup_{j\ge0}2^{-j\beta}\mathbf c_\ell(v;j),
\end{align*}
and
\begin{align*}
 \mathcal T^{\rm pt}_{s,Q}(v)
 &:=\max\left\{
 {\operatorname*{ess\,sup}}_{X\in B_{5/4}}
 \int_{\R^n\setminus B_{4/3}}
 \frac{(1+|Q|)^{2-p}|\mathcal D_Qv(X,Y)|}
 {|X-Y|^{n+sp}}\dd Y,\right.\\[-2mm]
 &\hspace{31mm}\left.
 {\operatorname*{ess\,sup}}_{X\in B_{3/2}}
 \int_{\R^n\setminus B_2}
 \frac{(1+|Q|)^{2-p}|\mathcal D_Qv(X,Y)|}
 {|X-Y|^{n+sp}}\dd Y\right\},\\
 \mathcal D_Qv(X,Y)
 &:=\Jp\bigl(Q\cdot(X-Y)+v(X)-v(Y)\bigr)
       -\Jp\bigl(Q\cdot(X-Y)\bigr).
\end{align*}
The number \(G\) is fixed before the bounded-slope compactness
argument and will not change during the iteration.  The first component
is unchanged by adding an affine function.  Thus
the bounded-slope and large-slope regimes see the same affine-invariant
excess and the same relative exterior flux.

Choose below a dyadic contraction \(\rho=2^{-N}\).  If
\begin{equation}
 w(X):=\frac{v(\rho X)-m(\rho X)}{\rho\eta},
 \qquad m\in\mathcal A,
 \label{eq:affine-invariant-rescaling}
\end{equation}
then, for \(j\ge N\), change of variables and invariance of
\(\mathcal A\) give the exact identity
\begin{equation}
 \mathbf c_\ell(w;j)=\eta^{-1}\mathbf c_\ell(v;j-N).
 \label{eq:exact-affine-excess-shift}
\end{equation}
Consequently, when \(\eta=\rho^\beta\),
\begin{equation}
 \sup_{j\ge N}2^{-j\beta}\mathbf c_\ell(w;j)
 =\mathscr C_{\ell,\beta}(v).
 \label{eq:exact-affine-weight-cancellation}
\end{equation}
If one affine map satisfies
\begin{equation}
 t^{-1}\|v-m\|_{L^\infty(B_{2t})}\le\varepsilon\eta
 \qquad(\rho\le t\le1/2),
 \label{eq:common-affine-inner-block}
\end{equation}
then the competitor zero in the definition of
\(\mathbf c_\ell(w;j)\), with \(t=\rho2^j\), also gives
\begin{equation}
 \sup_{0\le j<N}2^{-j\beta}\mathbf c_\ell(w;j)\le\varepsilon.
 \label{eq:common-affine-inner-levels}
\end{equation}
Thus the inner and outer levels are combined by a maximum, with no
multiplicative tail-renormalization constant.

We shall use the following physical intrinsic amplitude.  If
$\ell(x)=a+q\cdot(x-x_0)$, put
\begin{align*}
 \mathbf Z_r(u,\ell;x_0)
 &:={\fint}_{B_{2r}(x_0)}
 \left|\frac{u-\ell}{r}\right|^p\dd x,\\
 \mathbf E_{s,r}(u,\ell;x_0)
 &:=(1-s)r^{sp-p-n}
 \iint_{B_{2r}(x_0)\times B_{2r}(x_0)}
 \frac{D_\Phi\bigl(q\cdot(x-y);\delta(u-\ell)(x,y)\bigr)}
 {|x-y|^{n+sp}}\dd x\!\dd y.
\end{align*}
Then $\mathbf A_{s,r}(u,\ell;x_0)$ is the infimum of all $A>0$
such that
\begin{equation*}
 \mathbf Z_r(u,\ell;x_0)\le A^p,
 \qquad
 \mathbf E_{s,r}(u,\ell;x_0)
 \le(|q|+A)^{p-2}A^2.
\end{equation*}

We next prove the joint compactness statement needed by the
affine-invariant state.  For \(R\ge1\), put
\begin{equation}
 \mathscr E_\ell(U;R):=R^{-1}\inf_{m\in\mathcal A}
 \left(\fint_{B_{2R}}|U-m|^\ell\dd X\right)^{1/\ell}.
 \label{eq:continuous-affine-excess-main}
\end{equation}

\begin{lemma}[Coherent affine fits]
\label{lem:coherent-affine-fits-main}
Suppose that \(\sup_{R\ge1}\mathscr E_\ell(U;R)\le G\).  For dyadic
\(R\), let \(m_R=c_R+b_R\cdot X\) minimize
\eqref{eq:continuous-affine-excess-main}.  Then
\begin{align}
 |b_{2R}-b_R|+R^{-1}|c_{2R}-c_R|&\le CG,
 \label{eq:coherent-affine-coefficients-main}\\
 \left(\fint_{B_{2^{h+1}R}}|U-m_R|^\ell\dd X\right)^{1/\ell}
 &\le CG(h+1)2^hR\qquad(h\ge0).
 \label{eq:coherent-log-shells-main}
\end{align}
\end{lemma}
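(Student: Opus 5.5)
The plan is to compare consecutive dyadic fits on a common ball and then telescope, using only equivalence of norms on the finite-dimensional space \(\mathcal A\).

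\emph{Step 1: consecutive fits.} By definition of \(m_R\) and the hypothesis,
\[
 \Bigl(\fint_{B_{2R}}|U-m_R|^\ell\Bigr)^{1/\ell}\le GR,
 \qquad
 \Bigl(\fint_{B_{4R}}|U-m_{2R}|^\ell\Bigr)^{1/\ell}\le 2GR.
\]
Since \(|B_{4R}|=2^n|B_{2R}|\), the second bound also controls the average of \(|U-m_{2R}|^\ell\) over \(B_{2R}\), with constant \(2^{n/\ell}\cdot2GR\). Minkowski's inequality on \(B_{2R}\) then gives
\[
 \Bigl(\fint_{B_{2R}}|m_{2R}-m_R|^\ell\Bigr)^{1/\ell}\le CGR .
\]

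\emph{Step 2: from the average to coefficients.} For an affine map \(m=c+b\cdot X\), set \(\widetilde m(Y)=m(2RY)=c+2R\,b\cdot Y\) on \(B_1\). Since \(\mathcal A\) is finite-dimensional,
\[
 \|\widetilde m\|_{L^\ell(B_1)}\asymp |c|+2R|b|,
\]
with constants depending only on \(n,\ell\); the averaged norm is scale invariant. Applying this to \(m_{2R}-m_R\) gives \(|c_{2R}-c_R|+R|b_{2R}-b_R|\le CGR\), which is \eqref{eq:coherent-affine-coefficients-main}. One could recentre the constants at the origin, but the fits are already centred at \(0\), so this is unnecessary.

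\emph{Step 3: shells.} Fix \(h\ge0\) and write
\[
 U-m_R=(U-m_{2^hR})+\sum_{i=0}^{h-1}\bigl(m_{2^{i+1}R}-m_{2^iR}\bigr).
\]
The first term has \(L^\ell\)-average over \(B_{2^{h+1}R}\) at most \(G2^hR\). Each difference \(d_i:=m_{2^{i+1}R}-m_{2^iR}\) is affine with \(|\Delta c|\le CG2^iR\) and \(|\Delta b|\le CG\), by Step 2 applied at scale \(2^iR\). Hence
\[
 \sup_{B_{2^{h+1}R}}|d_i|\le CG2^iR+CG\,2^{h+1}R\le CG2^hR .
\]
Summing the \(h\) terms and adding the first gives the bound \(CG(h+1)2^hR\), which is \eqref{eq:coherent-log-shells-main}. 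The logarithmic factor \(h+1\) arises exactly because the slope drifts are only bounded, not summable.

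The only point needing care is keeping track of constants and radii in Steps 1–2, namely the doubling between \(B_{2R}\) and \(B_{4R}\) and the uniformity of norm equivalence under scaling. There is no genuine analytic obstacle. Minimizers exist because the space is finite-dimensional and the functional is coercive; if they are not unique, any minimizer works.
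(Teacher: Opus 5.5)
Your proposal is correct and follows essentially the same route as the paper: compare consecutive fits on \(B_{2R}\) by the triangle inequality, convert the averaged bound to coefficient bounds by norm equivalence on the affine space after rescaling, then telescope from \(R\) to \(2^hR\). In that last step, each affine difference is estimated on the largest ball and the minimizing error at scale \(2^hR\) is added.
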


\begin{proof}
On \(B_{2R}\), the triangle inequality bounds the \(L^\ell\) norm of
\(m_{2R}-m_R\) by \(CGR\).  Finite-dimensional norm equivalence for
affine maps, after scaling by \(R\), gives
\eqref{eq:coherent-affine-coefficients-main}.  Sum that estimate from
\(R\) to \(2^hR\), estimate every affine difference on the largest
ball, and add the minimizing error at scale \(2^hR\).  This proves
\eqref{eq:coherent-log-shells-main}.
\end{proof}

\begin{lemma}[Sublinear normalization at a best-affine scale]
\label{lem:sublinear-best-affine-normalization-main}
Let \(U\) be an entire weak solution satisfying
\(\sup_{R\ge1}\mathscr E_\ell(U;R)\le G\), and set
\[
 v_R(X)=\frac{U(RX)-m_R(RX)}R,\qquad Q_R=b_R.
\]
With \(\theta=n/(n+p)<1\), there is
\begin{equation}
 1\le A_R\le C(1+|b_R|^\theta)
 \label{eq:sublinear-best-affine-amplitude-main}
\end{equation}
such that \(\widehat v_R=v_R/A_R\), with tilt
\(\widehat Q_R=Q_R/A_R\), satisfies
\begin{equation}
 \fint_{B_2}|\widehat v_R|^p\dd X
 +\mathcal E_{s,\widehat Q_R}(\widehat v_R;B_2)
 +\mathcal T^{\rm pt}_{s,\widehat Q_R}(\widehat v_R)\le1.
 \label{eq:sublinear-best-affine-unit-data-main}
\end{equation}
In particular, \(A_R/|b_R|\to0\) whenever \(|b_R|\to\infty\).
\end{lemma}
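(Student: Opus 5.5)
The plan is to take \(A_R\) as the smallest amplitude, at least one, for which the three normalized quantities are each at most \(1/3\). We then bound each requirement separately by a power of \(1+|b_R|\) no worse than \(\theta\).

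\emph{Zero-order modular.} By the definition of \(m_R\) and \(\ell>p\), Jensen gives
\[
 \fint_{B_2}|v_R|^p\le \mathscr E_\ell(U;R)^p\le G^p .
\]
Hence \(A_R\ge CG\) suffices for this term, with no dependence on \(b_R\).

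\emph{Pointwise tail.} We split \(\mathcal D_{Q}v_R\) into annuli \(2^h\le|Y|<2^{h+1}\). On each annulus Lemma~\ref{lem:coherent-affine-fits-main} bounds \(|v_R|\) in \(L^\ell\) by \(CG(h+1)2^h\). For \(|\widehat Q_R|\ge1\) we use the angular flux bound \eqref{eq:large-slope-angular-flux}. It gives a contribution of order \(\sum_h (h+1)2^{h}2^{h(p-2)}2^{-hsp}/A_R\), which is summable because \(sp>p-1\) and so is \(O(G/A_R)\). The angular singularity is integrable in \(L^a\), and we pair it by Hölder with the \(L^\ell\) averages. This is why \(\ell\) large enough (\(\ell>1/(p-1)\), dual to an admissible \(a\)) was fixed. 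For \(|\widehat Q_R|<1\) we use the global \((p-1)\)-Hölder bound, giving \(O((G/A_R)^{p-1})\) times \(\lambda^{2-p}\le C\). In both cases the tail is at most \(1/3\) once \(A_R\ge CG\).

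\emph{Energy.} We follow the Caccioppoli argument from the proof of Proposition~\ref{prop:no-linear-record-neck}. We test with \(\zeta^2\widehat v_R\), using the tail bound and the cutoff estimate from Lemma~\ref{lem:angularly-averaged-shifted-modular}. The difficulty is that the cutoff term is only bounded by \(\lambda_{\widehat Q}^{2-p}\) times a \(p\)- or quadratic modular of \(\widehat v_R\). When \(|b_R|\) is large and \(\widehat v_R\) is only \(L^\ell\)-bounded, the quadratic branch weighted by \(\lambda^{2-p}\) is where the loss appears.

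This energy step is the main obstacle, and the plan is to control it through an \(L^\infty\) bound. The function \(U\) is locally Lipschitz by \cite{BiswasTopp}, since the tail is controlled. The Lipschitz--\(L^p\) interpolation used for \eqref{eq:noncritical-sup-expansion} then converts the \(L^p\) bound \(G\) into \(\|v_R\|_{L^\infty(B_3)}\le C\,G^{p/(n+p)}\Lambda^{n/(n+p)}\). Here \(\Lambda\le C(1+|b_R|)\) is the local Lipschitz bound of \(v_R\), coming from the Biswas--Topp estimate applied to \(U\) together with \(|b_R|\). Choosing \(A_R\ge C(1+|b_R|)^{\theta}\) makes \(\widehat v_R\) bounded by one on \(B_3\). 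The cutoff term is then \(\lambda^{2-p}\) times \(|t|^2\) with \(|t|\le C\), and this is integrable against \(|e\cdot\omega|^{p-2}\) uniformly in the tilt.

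Taking \(A_R=C(G)(1+|b_R|^\theta)\) satisfies all three requirements and gives \eqref{eq:sublinear-best-affine-amplitude-main} and \eqref{eq:sublinear-best-affine-unit-data-main}. Since \(\theta<1\), we get \(A_R/|b_R|\to0\).
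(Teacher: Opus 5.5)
Your proposal follows essentially the same route as the paper. The ingredients match: the $L^p$ bound from minimality, the Biswas--Topp Lipschitz bound $C(1+|b_R|)$ for the constant-recentred solution, Lipschitz--$L^p$ interpolation to an $L^\infty(B_3)$ bound of order $1+|b_R|^\theta$, a dyadic-shell tail estimate with the H\"older and angular linearized branches paired with the $L^\ell$ shell bounds via $(p-2)\ell'>-1$, and the shifted Caccioppoli step, all closed by taking $A_R=C_0(1+|b_R|^\theta)$ with $C_0$ large. One correction: the tail is not small once $A_R\ge CG$ independently of $b_R$, because $\delta\widehat v_R(X,Y)$ contains the pointwise value $\widehat v_R(X)$, $X\in B_{3/2}$, which only the $L^\infty$ bound controls (as the paper notes); your final choice of $A_R$ already makes that term small, so the argument stands.
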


\begin{proof}
The minimizing property and \(\ell>p\) give a uniform local \(L^p\)
bound for \(v_R\).  Apply local boundedness and the Lipschitz estimate
\cite[Theorem~2.1]{BiswasTopp} to the constant-recentered total solution
\[
 W_R(X)=\frac{U(RX)-c_R}{R}=b_R\cdot X+v_R(X).
\]
Lemma~\ref{lem:coherent-affine-fits-main} controls its local \(L^p\)
norm and scalar tail by \(C(1+|b_R|)\).  The weak solution is locally
continuous and agrees with the viscosity solution used in
\cite[Proposition~1.5 and Theorem~2.1]{BiswasTopp}.  Applying that
estimate on a slightly larger ball and then using a finite covering
gives
\[
 \operatorname{Lip}(v_R;B_3)\le C(1+|b_R|).
\]
Lipschitz--\(L^p\) interpolation yields
\begin{equation}
 \|v_R\|_{L^\infty(B_3)}
 \le C(1+|b_R|^{n/(n+p)}).
 \label{eq:sublinear-best-affine-sup-main}
\end{equation}
Put \(S_R=C(1+|b_R|^\theta)\) and take \(A_R=C_0S_R\), where
\(C_0\) will be fixed at the end.  It remains to check the relative
tail.  If \(|\widehat Q_R|\le2\), use the global
\((p-1)\)-H\"older estimate for \(\Jp\).  If
\(\widehat Q_R=q e\), \(q>2\), the linearized scalar estimate gives
\begin{equation}
 (1+q)^{2-p}|\mathcal D_{\widehat Q_R}\widehat v_R(X,Y)|
 \le C|X-Y|^{p-2}|e\cdot\omega|^{p-2}
       |\delta\widehat v_R(X,Y)|,
 \label{eq:best-affine-angular-flux-main}
\end{equation}
where \(\omega=(X-Y)/|X-Y|\).  Since
\((p-2)\ell'>-1\), H\"older's inequality on the sphere and
\eqref{eq:coherent-log-shells-main} bound the \(h\)-th exterior shell,
after division by \(A_R\), by
\begin{equation}
 C A_R^{1-p}(h+1)^{p-1}2^{-h\gamma}
 +C A_R^{-1}(h+1)2^{-h\gamma}.
 \label{eq:best-affine-shell-series-main}
\end{equation}
The first term is the global \((p-1)\)-H\"older branch and the second
is the angular linearized branch.
The series is summable because \(\gamma>0\); the local value in the
pointwise difference is controlled by
\eqref{eq:sublinear-best-affine-sup-main}.  The same shell estimate
applies to both separated geometries in the definition of
\(\mathcal T^{\rm pt}\).

For completeness, choose \(\zeta\in C_c^\infty(B_3)\), with
\(\zeta=1\) on \(B_2\), and test the equation for \(v_R\) with
\(\zeta^2v_R\).  We give the two estimates entering this test.  If
\(|b_R|\le4S_R\), the global \(p\)-growth inequality, the bound
\eqref{eq:sublinear-best-affine-sup-main}, and the shell estimate
\eqref{eq:coherent-log-shells-main} control both the cutoff term and
the exterior interaction by
\[
 CS_R^p\le C(|b_R|+S_R)^{p-2}S_R^2.
\]
If \(|b_R|>4S_R\), use the angular modular bound for the local cutoff.
For the exterior interaction, apply the linearized angular flux bound
on every dyadic shell and then H\"older's inequality on the sphere.
The condition \((p-2)\ell'>-1\) and
\eqref{eq:coherent-log-shells-main} give
\[
 C|b_R|^{p-2}S_R^2
 \sum_{h\ge0}(h+1)2^{-h\gamma}
 \le C(|b_R|+S_R)^{p-2}S_R^2.
\]
Here the fixed factor \(G\) in the coherent shell estimate has been
absorbed in the constant defining \(S_R\).  Shifted monotonicity
therefore gives
\begin{equation}
 \begin{split}
 &(1-s)\iint_{B_2^2}
 \frac{D_\Phi(b_R\cdot(X-Y);\delta v_R(X,Y))}
 {|X-Y|^{n+sp}}\dd X\!\dd Y\\
 &\hspace{25mm}\le
 C(|b_R|+S_R)^{p-2}S_R^2.
 \end{split}
 \label{eq:best-affine-shifted-caccioppoli}
\end{equation}
Homogeneity gives
\[
 D_\Phi\left(\frac{b_R}{A_R}\cdot Z;
             \frac{t}{A_R}\right)
 =A_R^{-p}D_\Phi(b_R\cdot Z;t),
\qquad
 \lambda_{\widehat Q_R}^{2-p}
 =A_R^{p-2}(A_R+|b_R|)^{2-p}.
\]
Equations \eqref{eq:best-affine-shell-series-main} and
\eqref{eq:best-affine-shifted-caccioppoli} show, after increasing
\(C_0\), that the three quantities in
\eqref{eq:sublinear-best-affine-unit-data-main} have sum at most one.
The last assertion follows from
\(\theta<1\).
\end{proof}

\begin{lemma}[Large-slope state reproduction]
\label{lem:large-slope-escape-state-main}
Fix
\[
 0<\alpha<\bar\alpha<\gamma
\]
and set \(L_{\rm esc}=2\).  There are a dyadic
\(\rho_{\rm esc}\in(0,1/16)\), a threshold \(Q_{\rm esc}\), and
\(C_{\rm esc}<\infty\) with the following property.  Suppose that \(v\)
solves the equation shifted by \(Q\), \(|Q|\ge Q_{\rm esc}\), and
\[
 \fint_{B_2}|v|^p\dd X\le1,\qquad
 \mathcal E_{s,Q}(v;B_2)\le1,\qquad
 \mathcal T^{\rm pt}_{s,Q}(v)\le L_{\rm esc}.
\]
Then there is an affine map \(m=c+b\cdot X\), with
\(|c|+|b|\le C_{\rm esc}\), such that, with
\[
 \eta_{\rm esc}:=\rho_{\rm esc}^{\alpha},\qquad
 w(X):=\frac{v(\rho_{\rm esc}X)-m(\rho_{\rm esc}X)}
 {\rho_{\rm esc}\eta_{\rm esc}},\qquad
 P:=\frac{Q+b}{\eta_{\rm esc}},
\]
one has
\begin{equation}
 \fint_{B_2}|w|^p\dd X\le1,\qquad
 \mathcal E_{s,P}(w;B_2)\le1,\qquad
 \mathcal T^{\rm pt}_{s,P}(w)\le L_{\rm esc},\qquad
 |P|\ge Q_{\rm esc}.
 \label{eq:large-slope-escape-state-reproduction}
\end{equation}
\end{lemma}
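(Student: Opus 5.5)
The plan is to rerun, with the fixed tail budget $L_{\rm esc}=2$, the one-step iteration already carried out in the proof of Lemma~\ref{lem:record-noncritical-neighborhood}. The order of choices is essential. First apply Corollary~\ref{cor:prescribed-large-slope-block-main} with $L=L_{\rm esc}$ and the exponents $\alpha<\bar\alpha<\gamma$. This fixes $C_{\rm L}$, which we enlarge once so that it also dominates the affine coefficient bound; that bound becomes $C_{\rm esc}$. Next let $C_0$ be the constant in the angular flux and affine-base estimates of Lemma~\ref{lem:large-slope-scalar-angular}, taken for both tail geometries in \eqref{eq:large-slope-pointwise-tail}. Then choose a dyadic $\rho_{\rm esc}\in(0,1/16)$ small enough that
\[
 C_{\rm L}\rho_{\rm esc}^{\bar\alpha-\alpha}\le\tfrac12,\qquad
 C_0\bigl[\rho_{\rm esc}^{\bar\alpha-\alpha}+\rho_{\rm esc}^{\gamma-\alpha}\bigr]
 +C_0L_{\rm esc}\rho_{\rm esc}^{\gamma-\alpha}\le L_{\rm esc},
\]
which is possible because $\alpha<\bar\alpha<\gamma$. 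The corollary with this radius supplies a threshold $Q_{\rm th}$. Finally set $Q_{\rm esc}:=\max\{Q_{\rm th},2C_{\rm esc}\}$, enlarged further if needed.

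Apply the corollary at scale $\rho_{\rm esc}$. It gives an affine map $m=c+b\cdot X$ with $|c|+|b|\le C_{\rm esc}$. Moreover, \eqref{eq:prescribed-large-slope-full-block-main} holds for $t\in[\rho_{\rm esc},1/2]$, and the intrinsic amplitude satisfies $a:=\mathfrak a_{s,Q}(v,m;\rho_{\rm esc})\le\rho_{\rm esc}^\alpha=\eta_{\rm esc}$. The zero-order bound for $w$ follows by rescaling: $\fint_{B_2}|w|^p\le(a/\eta_{\rm esc})^p\le1$. For the energy, put $q'=|Q+b|$. Exact homogeneity of $D_\Phi$ and of $\lambda^{2-p}$ gives
\[
 \mathcal E_{s,P}(w;B_2)\le\Bigl(\tfrac{q'+\eta_{\rm esc}}{q'+a}\Bigr)^{2-p}\Bigl(\tfrac a{\eta_{\rm esc}}\Bigr)^2\le\Bigl(\tfrac a{\eta_{\rm esc}}\Bigr)^p\le1,
\]
exactly as in that lemma; the factor $(1-s)$ and the scale $\rho_{\rm esc}^{sp-p-n}$ match the definition of $\mathfrak B_{s,Q}$. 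The slope condition is immediate: $|P|\ge(|Q|-C_{\rm esc})/\eta_{\rm esc}\ge Q_{\rm esc}/(2\eta_{\rm esc})\ge Q_{\rm esc}$ once $\eta_{\rm esc}\le1/2$.

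The main obstacle is the tail reproduction $\mathcal T^{\rm pt}_{s,P}(w)\le L_{\rm esc}$. The change of variables $Z=\rho_{\rm esc}X$, $W=\rho_{\rm esc}Y$ gives the identity
\[
 \lambda_P^{2-p}\frac{|\mathcal D_Pw|}{|X-Y|^{n+sp}}\,dY
 =\rho_{\rm esc}^\gamma\eta_{\rm esc}^{1-p}\lambda_P^{2-p}\frac{|\mathcal D_{Q+b}(v-m)|}{|Z-W|^{n+sp}}\,dW,
\]
and the prefactor ratio $(\lambda_P/\lambda_Q)^{2-p}\le C\eta_{\rm esc}^{p-2}$ turns the total prefactor into $C\rho_{\rm esc}^{\gamma-\alpha}$. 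Split the $W$-region into two parts. The first is the new annuli $\{4\rho_{\rm esc}/3\le|W|\le 4/3\}$, respectively up to $2$. There \eqref{eq:prescribed-large-slope-full-block-main} controls $|v-m|$ by $C_{\rm L}t^{1+\bar\alpha}\eta_{\rm esc}$ on dyadic shells. Since $|Q+b|$ is large, the linearized angular flux bound \eqref{eq:large-slope-angular-flux} applies, with the integrable weight $|e\cdot\omega|^{p-2}$, and summing over shells gives $C_0\rho_{\rm esc}^{\bar\alpha-\alpha}$; the endpoint shell gives $C_0\rho_{\rm esc}^{\gamma-\alpha}$. The second part is the old exterior $\{|W|\ge4/3\}$. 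There we subtract the affine correction: $\mathcal D_{Q+b}(v-m)$ is $\mathcal D_Qv$ minus $[\Jp((Q+b)\cdot H)-\Jp(Q\cdot H)]$ plus a mixed term. Its integral is bounded by $\mathcal T^{\rm pt}_{s,Q}(v)\le L_{\rm esc}$ plus the affine error $C|b|r^{p-1}|e\cdot\omega|^{p-2}\lambda_Q^{p-2}$, whose radial integral converges because $sp>p-1$; this yields $C_0\rho_{\rm esc}^{\gamma-\alpha}(L_{\rm esc}+1)$. The budget inequality then closes the estimate. The delicate points are keeping $\lambda^{2-p}$ attached to the correct tilt at each stage, and checking that the $X$-regions $B_{5/4}$ and $B_{3/2}$ after rescaling sit inside the regions where \eqref{eq:prescribed-large-slope-full-block-main} holds, which requires $\rho_{\rm esc}<1/16$.
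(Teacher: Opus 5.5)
Your proposal is correct and follows the paper's proof essentially step for step. You use the same order of constants ($C_{\rm L}$ first, then $C_0$, then $\rho_{\rm esc}$ from the same two smallness conditions, then the threshold), the same homogeneity computation for the energy, and the same change of variables with the exact decomposition $\mathcal D_{Q+b}(v-m)=\mathcal D_Qv+\Jp(Q\cdot H)-\Jp((Q+b)\cdot H)$ to reproduce the tail budget; note that this decomposition is exact, so there is no extra mixed term. Two details need tightening. The shells $1\le|W|\le4/3$ lie outside the range $t\le1/2$ of the corollary, so they must be controlled by the $L^\infty(B_{4/3})$ bound of Lemma~\ref{lem:shift-uniform-local-boundedness} together with the coefficient bound for $m$. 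Also, $\eta_{\rm esc}\le1/2$ is not automatic and must be imposed when choosing $\rho_{\rm esc}$; alternatively, take $Q_{\rm esc}\ge C_{\rm esc}/(1-\eta_{\rm esc})$, as the paper does.
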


\begin{proof}
Apply Corollary~\ref{cor:prescribed-large-slope-block-main} with the
fixed budget \(L=L_{\rm esc}\) and the two exponents
\(\alpha,\bar\alpha\).  Its stable estimate first fixes a constant
\(C_{\rm L}\), independently of the radius.  Let \(C_0\) dominate the
constants in the scalar and angular flux estimates, including the
coefficient bound for the affine map furnished by that corollary.
Fix \(C_{\rm esc}\) to dominate this affine coefficient bound.
Choose a dyadic \(\rho_{\rm esc}\) so small that
\begin{align}
& C_{\rm L}\rho_{\rm esc}^{\bar\alpha-\alpha}\le\frac12,
 \label{eq:escape-large-slope-endpoint}\\
& C_0\bigl[\rho_{\rm esc}^{\bar\alpha-\alpha}
 +\rho_{\rm esc}^{\gamma-\alpha}\bigr]+C_0L_{\rm esc}\rho_{\rm esc}^{\gamma-\alpha}
 \le L_{\rm esc}.
 \label{eq:escape-large-slope-tail-budget}
\end{align}
All exponents are positive.  With this radius fixed, the prescribed
scale corollary gives a threshold \(Q_{\rm L}\) and an affine map \(m\)
such that
\[
 \mathfrak a_{s,Q}(v,m;\rho_{\rm esc})
 \le C_{\rm L}\rho_{\rm esc}^{\bar\alpha}
 \le\frac12\eta_{\rm esc},
\qquad
 t^{-1}\|v-m\|_{L^\infty(B_{2t})}
 \le C_{\rm L}t^{\bar\alpha}
\]
for \(\rho_{\rm esc}\le t\le1/2\).  By the strict slack in the
preceding estimate, choose an admissible \(a<\eta_{\rm esc}\) in the
definition of \(\mathfrak a_{s,Q}(v,m;\rho_{\rm esc})\), and put
\(q=|Q+b|\).  The zero-order term scales as
\((a/\eta_{\rm esc})^p\).  Exact
homogeneity of the energy gives
\begin{equation*}
 \mathcal E_{s,P}(w;B_2)
 \le
 \left(\frac{q+\eta_{\rm esc}}{q+a}\right)^{2-p}
 \left(\frac a{\eta_{\rm esc}}\right)^2
 \le\left(\frac a{\eta_{\rm esc}}\right)^p\le1.
\end{equation*}
Here the middle inequality follows from
\((q+\eta_{\rm esc})/(q+a)\le\eta_{\rm esc}/a\).  This proves the
first two inequalities in
\eqref{eq:large-slope-escape-state-reproduction} without losing a
structural constant.

We record the pointwise-tail calculation because it fixes the order of
the parameters.  Put \(h=v-m\), \(Z=\rho_{\rm esc}X\), and
\(W=\rho_{\rm esc}Y\).  The change of variables gives
\begin{equation}
 \begin{split}
 &\lambda_P^{2-p}
 \frac{|\mathcal D_Pw(X,Y)|}{|X-Y|^{n+sp}}\dd Y\\
 &\qquad=
 \rho_{\rm esc}^{\gamma}\eta_{\rm esc}^{1-p}
 \lambda_P^{2-p}
 \frac{|\mathcal D_{Q+b}h(Z,W)|}
 {|Z-W|^{n+sp}}\dd W .
 \end{split}
 \label{eq:escape-pointwise-flux-scaling}
\end{equation}
We shall choose \(Q_{\rm esc}\) so large that both \(Q\) and
\(Q+b\) lie in the large-slope range.  The comparison
\begin{equation*}
 \frac{\lambda_P}{\lambda_{Q+b}}\le C\eta_{\rm esc}^{-1},
 \qquad
 \frac{\lambda_P}{\lambda_Q}\le C\eta_{\rm esc}^{-1}
\end{equation*}
shows why the factor \(\lambda_P^{2-p}\) in
\eqref{eq:escape-pointwise-flux-scaling} cannot be discarded.  On the
intervening annuli
\(\rho_{\rm esc}\lesssim|W|\lesssim1\), the angular linearized bound
gives
\begin{equation*}
 C\rho_{\rm esc}^{\gamma}\eta_{\rm esc}^{-1}
 \sum_{\rho_{\rm esc}\le t\le1}t^{\bar\alpha-\gamma}.
\end{equation*}
The finitely many shells above \(1/2\) are controlled by the local
bound and the coefficient bound for \(m\); for the second tail
geometry the old first tail component is used once
\(|W|\ge4/3\).  Thus the sum is bounded by the two terms in the
first line of \eqref{eq:escape-large-slope-tail-budget}.

On the remaining exterior region, decompose
\[
 \mathcal D_{Q+b}h
 =\mathcal D_Qv+
 \Jp(Q\cdot(Z-W))-\Jp((Q+b)\cdot(Z-W)).
\]
The old relative flux has the exact multiplier
\begin{equation*}
 \rho_{\rm esc}^{\gamma}\eta_{\rm esc}^{1-p}
 \left(\frac{\lambda_P}{\lambda_Q}\right)^{2-p}
 \le C\rho_{\rm esc}^{\gamma}\eta_{\rm esc}^{-1}.
\end{equation*}
The angular estimate applied to the affine-base correction gives the
same upper bound; more explicitly, its spherical and radial integrals
are bounded by
\[
 C|b|\rho_{\rm esc}^{\gamma}\eta_{\rm esc}^{-1}
 \int_1^\infty t^{-\gamma-1}\dd t
 \le C\rho_{\rm esc}^{\gamma-\alpha}.
\]
Consequently,
\begin{align}
 \mathcal T^{\rm pt}_{s,P}(w)
 \le C_0\bigl[
 \rho_{\rm esc}^{\bar\alpha-\alpha}
 +\rho_{\rm esc}^{\gamma-\alpha}\bigr]
 +C_0\rho_{\rm esc}^{\gamma-\alpha}
 \mathcal T^{\rm pt}_{s,Q}(v).
 \label{eq:escape-pointwise-tail-recurrence}
\end{align}
The choice \eqref{eq:escape-large-slope-tail-budget} closes the same
budget \(L_{\rm esc}\); in particular, there is no dependence
\(L\mapsto\rho(L)\mapsto L_{\rm new}\).

Finally choose
\[
 Q_{\rm esc}\ge
 \max\left\{Q_{\rm L}+1,
 2C_{\rm esc}+2,
 \frac{C_{\rm esc}}{1-\eta_{\rm esc}}\right\}.
\]
Then
\(|P|\ge\eta_{\rm esc}^{-1}(|Q|-|b|)\ge Q_{\rm esc}\).
This proves \eqref{eq:large-slope-escape-state-reproduction}.
\end{proof}

\begin{proposition}[Affine-Campanato Liouville theorem]
\label{prop:affine-campanato-liouville-main}
If \(U\) is an entire weak solution, \(U(0)=0\), and
\begin{equation*}
 \sup_{R\ge1}\mathscr E_\ell(U;R)<\infty,
\end{equation*}
then \(U\) is affine.
\end{proposition}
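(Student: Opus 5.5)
The plan is to reduce to the Lipschitz Liouville theorem, Theorem~\ref{thm:fixed-order-centered-closure}, by showing that \(U\) is globally Lipschitz. Set \(G:=\sup_{R\ge1}\mathscr E_\ell(U;R)\). For dyadic \(R\ge1\) let \(m_R=c_R+b_R\cdot X\) be the best affine fits, and let \(v_R\) be the normalized profiles of Lemma~\ref{lem:sublinear-best-affine-normalization-main}.

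Lemma~\ref{lem:coherent-affine-fits-main} gives \(|b_{2R}-b_R|\le CG\), so \(|b_R|\le C(1+G\log_2 R)\). Hence the slopes grow at most logarithmically, but they need not be bounded a priori. First, assume the slopes \(b_R\) stay bounded along all dyadic \(R\). Then Lemma~\ref{lem:sublinear-best-affine-normalization-main} gives \(A_R\le C\), and the proof of that lemma gives \(\operatorname{Lip}(v_R;B_3)\le C\). Undoing the scaling, \(\operatorname{Lip}(U;B_{3R})\le |b_R|+C\operatorname{Lip}(v_R;B_3)\le C\), uniformly in \(R\). Thus \(U\) is globally Lipschitz, and Theorem~\ref{thm:fixed-order-centered-closure} makes it affine.

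The main obstacle is excluding \(|b_{R_k}|\to\infty\) along a sequence. In that case, Lemma~\ref{lem:sublinear-best-affine-normalization-main} gives \(A_{R_k}/|b_{R_k}|\to0\). The normalized pair \((\widehat v_{R_k},\widehat Q_{R_k})\) therefore has unit data and tilt \(|\widehat Q_{R_k}|\to\infty\), so it enters the large-slope regime. I would iterate Lemma~\ref{lem:large-slope-escape-state-main} downward from scale \(R_k\) to a fixed scale, say \(1\). The budget \(L_{\rm esc}=2\) and the threshold \(|P|\ge Q_{\rm esc}\) are reproduced at every step, and the accumulated slope corrections \(\sum_i A_{R_k}\eta_{\rm esc}^iC_{\rm esc}\) are \(O(A_{R_k})=o(|b_{R_k}|)\). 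After \(N_k\approx\log_{1/\rho_{\rm esc}}R_k\) steps one obtains an affine \(\ell_k\) with \(\nabla\ell_k=b_{R_k}+O(A_{R_k})\) and
\[
 \left(\fint_{B_{2\rho_{\rm esc}^{N_k}R_k}}|U-\ell_k|^p\dd x\right)^{1/p}\le C A_{R_k}\,\rho_{\rm esc}^{N_k}R_k\,\eta_{\rm esc}^{N_k}.
\]
Here \(\rho_{\rm esc}^{N_k}R_k\asymp1\) and \(\eta_{\rm esc}^{N_k}\asymp R_k^{-\alpha}\). So on \(B_1\), \(U\) is approximated by affine maps with slope \(\sim|b_{R_k}|\to\infty\), with error \(CA_{R_k}R_k^{-\alpha}\le C|b_{R_k}|^\theta R_k^{-\alpha}\to0\), because \(|b_{R_k}|\) grows only logarithmically. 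Since \(U\) is locally bounded on \(B_1\), the slopes of \(\ell_k\) must stay bounded. This contradicts \(|b_{R_k}|\to\infty\).

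Within this step, the delicate point is that the tail-state reproduction in Lemma~\ref{lem:large-slope-escape-state-main} has to start from the unit data of Lemma~\ref{lem:sublinear-best-affine-normalization-main}. That data carries the log-shell growth of \eqref{eq:coherent-log-shells-main} in the far field, and I expect it to be absorbed by \(\gamma>0\) exactly as in \eqref{eq:best-affine-shell-series-main}. If the direct contradiction is awkward, the fallback is the same iteration applied at every center of a fixed ball at the fixed scale. This gives a uniform \(C^{1,\alpha}\) bound there, hence a local Lipschitz bound, which again bounds the slopes at scale one.

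Finally, once the slopes are bounded along every sequence, the first case applies at all dyadic scales. \(U\) is then globally Lipschitz and hence affine; the normalization \(U(0)=0\) plays no further role.
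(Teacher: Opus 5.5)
Your proposal is correct and follows the paper's proof essentially step for step. Unbounded best-affine slopes are excluded by iterating Lemma~\ref{lem:large-slope-escape-state-main}, started from the normalization of Lemma~\ref{lem:sublinear-best-affine-normalization-main}, down to a fixed scale; logarithmic slope growth makes the error $CA_jR_j^{-\alpha}$ vanish, and bounded slopes then give a global Lipschitz bound via \cite{BiswasTopp}, so Theorem~\ref{thm:fixed-order-centered-closure} applies. The ``delicate point'' you flag is already settled, because the unit data in \eqref{eq:sublinear-best-affine-unit-data-main} include the pointwise tail, so your fallback argument is not needed.
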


\begin{proof}
Let \(m_R=c_R+b_R\cdot X\) be the dyadic minimizing maps.  We claim
that \(\sup_R|b_R|<\infty\).  Otherwise choose \(R_j\to\infty\) with
\(|b_{R_j}|\to\infty\), and apply Lemma
\ref{lem:sublinear-best-affine-normalization-main}.  Put
\[
 \widetilde v_j(X)=
 \frac{U(R_jX)-m_{R_j}(R_jX)}{A_jR_j},
 \qquad \widetilde Q_j=\frac{b_{R_j}}{A_j},
 \qquad A_j=A_{R_j}.
\]
Then \(|\widetilde Q_j|\to\infty\).  Fix
\(0<\alpha<\bar\alpha<\gamma\), and fix
\(\rho_{\rm esc},Q_{\rm esc}\) by Lemma
\ref{lem:large-slope-escape-state-main}.  Discarding finitely many
indices, \(|\widetilde Q_j|\ge Q_{\rm esc}\).  Iterate that lemma while
\(R_j\rho_{\rm esc}^k\ge4\).  Writing
\(r_k=\rho_{\rm esc}^k\), \(H_k=r_k^\alpha\), one obtains accumulated
affine maps \(a_{j,k}\) such that
\begin{align}
 \left(\fint_{B_{2r_k}}|\widetilde v_j-a_{j,k}|^p\dd X\right)^{1/p}
 &\le Cr_kH_k,
 \label{eq:escaping-affine-error-main}\\
 |\nabla a_{j,k+1}-\nabla a_{j,k}|&\le CH_k.
 \label{eq:escaping-affine-increments-main}
\end{align}
At every step, \eqref{eq:large-slope-escape-state-reproduction}
preserves both the lower bound for the normalized tilt and the fixed
tail budget \(L_{\rm esc}\).  Thus the iteration is valid without a
parameter-dependence loop.

Choose \(k=k(j)\) so that
\(4\le R_jr_k<4\rho_{\rm esc}^{-1}\).  On this fixed physical ball,
\eqref{eq:escaping-affine-error-main} gives an affine approximation to
\(U\) whose slope is
\[
 b_{R_j}+A_j\nabla a_{j,k(j)}=b_{R_j}+O(A_j)
\]
and whose \(L^p\) error is at most \(CA_jR_j^{-\alpha}\).  By
\eqref{eq:coherent-affine-coefficients-main},
\(|b_R|\le C(1+\log R)\).  Hence
\(A_j\le C(1+(\log R_j)^\theta)\) by
\eqref{eq:sublinear-best-affine-amplitude-main}, so this error tends to
zero.  Norm
equivalence for affine maps on the fixed ball bounds the displayed
slope.  Since \(A_j/|b_{R_j}|\to0\), this is a contradiction.

Thus the slopes \(b_R\) are bounded.  Lemma
\ref{lem:coherent-affine-fits-main} now gives
\((\fint_{B_{2R}}|U|^\ell)^{1/\ell}\le CR\) and the corresponding
scale-invariant scalar-tail bound.  Local boundedness and
\cite[Proposition~1.5 and Theorem~2.1]{BiswasTopp}, applied to the
continuous weak solution \(U(R\,\cdot)/R\), show with a scale-independent
constant that \(U\) is globally Lipschitz.
Theorem~\ref{thm:fixed-order-centered-closure}
makes it affine.
\end{proof}

\begin{proposition}[Bounded-slope affine improvement]
\label{prop:bounded-slope-affine-invariant-block-main}
Fix \(Q_0,G_0<\infty\), \(0<\rho<1/16\), and
\(0<\alpha_1<\gamma\).  There are \(\beta_*>0\) and \(R_*\ge8\) such
that, for every \(0\le\beta\le\beta_*\) and every \(|Q|\le Q_0\), the
following holds.
If \(v\) solves the equation shifted by \(Q\) in \(B_{2R_*}\) and
\begin{equation}
 \mathfrak a_{s,Q}(v,0;1)\le1,
 \qquad\mathscr C_{\ell,\beta}(v)\le G_0,
 \qquad\mathcal T^{\rm pt}_{s,Q}(v)\le1,
 \label{eq:bounded-slope-affine-block-assumptions-main}
\end{equation}
then there is an affine map \(m\) satisfying
\begin{equation*}
 |m(0)|+|\nabla m|\le C_{\rm B},
\end{equation*}
where \(C_{\rm B}=C_{\rm B}(n,p,s,\alpha_1)\) is independent of
\(Q_0,G_0,\rho\), such that
\begin{equation}
\mathfrak a_{s,Q}(v,m;t)
 +t^{-1}\|v-m\|_{L^\infty(B_{2t})}\le t^{\alpha_1}
 \qquad(\rho\le t\le1/2).
 \label{eq:bounded-slope-affine-full-block-main}
\end{equation}
\end{proposition}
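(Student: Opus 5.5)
The plan is a compactness argument by contradiction, with the Liouville theorems of Section~\ref{sec:dilation-liouville} and Proposition~\ref{prop:affine-campanato-liouville-main} supplying the rigidity. First fix the limiting profile's decay. For an affine limit \(U_\infty(X)=Q_\infty\cdot X+v_\infty(X)\), with \(v_\infty\) affine, one may take \(m=v_\infty\). The left side of \eqref{eq:bounded-slope-affine-full-block-main} then vanishes in the limit, so any target \(t^{\alpha_1}\) is reached with room to spare. The constant \(C_{\rm B}\) is fixed as a bound, independent of \(Q_0,G_0,\rho\), for the affine part of such a limit. It comes from \(\mathfrak a_{s,Q}(v,0;1)\le1\): this controls \(\fint_{B_2}|v|^p\le1\), and norm equivalence for affine maps on \(B_1\) bounds the coefficients by a structural constant.

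Suppose the statement fails for every \(\beta_*=1/j\) and \(R_*=j\). Then there are \(\beta_j\le1/j\), tilts \(|Q_j|\le Q_0\), and solutions \(v_j\) of the shifted equation in \(B_{2j}\) satisfying \eqref{eq:bounded-slope-affine-block-assumptions-main}, for which no admissible \(m\) gives \eqref{eq:bounded-slope-affine-full-block-main}.

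The second step is uniform local control. Set \(U_j=Q_j\cdot X+v_j\), which solves \eqref{eq:equation} in \(B_{2j}\). The bound \(\mathscr C_{\ell,\beta_j}(v_j)\le G_0\) gives \(\mathbf c_\ell(v_j;k)\le G_02^{k\beta_j}\). By the coherent-fit argument of Lemma~\ref{lem:coherent-affine-fits-main}, applied with this slowly growing weight, this yields uniform \(L^\ell\) bounds on \(B_{2^k}\) with at most \(C(k+1)2^{k(1+\beta_j)}\) growth. Together with \(\mathcal T^{\rm pt}_{s,Q_j}(v_j)\le1\), this controls the tails uniformly, since \(\gamma>0\) and \(\beta_j\to0\). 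Local boundedness and the Lipschitz estimate of \cite[Theorem~2.1]{BiswasTopp} then give locally uniform Lipschitz bounds for \(U_j\) on every fixed ball. Passing to a subsequence, \(Q_j\to Q_\infty\) and \(v_j\to v_\infty\) locally uniformly in \(\R^n\). The limit \(U_\infty\) is an entire weak solution, because the shell bounds give uniformly integrable far fields, exactly as in the stability under local uniform convergence used in Section~\ref{sec:dilation-liouville}. Lower semicontinuity of \(\mathbf c_\ell\) gives \(\sup_k\mathbf c_\ell(v_\infty;k)\le G_0\), that is, \(\sup_{R\ge1}\mathscr E_\ell(U_\infty;R)<\infty\). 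Proposition~\ref{prop:affine-campanato-liouville-main} therefore makes \(U_\infty\) affine, and hence \(v_\infty\) affine.

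The third step transfers this back to the prelimit and is the main obstacle. Take \(m=v_\infty\). The sup term in \eqref{eq:bounded-slope-affine-full-block-main} tends to zero uniformly for \(t\in[\rho,1/2]\), by local uniform convergence on \(B_1\). The zero-order part of \(\mathfrak a\) follows in the same way. The difficulty is the energy part of \(\mathfrak a_{s,Q_j}(v_j,m;t)\): one must show that the Bregman energies \(\mathfrak B_{s,Q_j}(v_j,m;t)\) tend to \(0\), not merely stay bounded. I would prove this with a Caccioppoli inequality for \(v_j-m\) under the tilt \(Q_j+\nabla m\). This is the same test \(\zeta^2(v_j-m)\) as in Lemma~\ref{lem:sublinear-best-affine-normalization-main}, using Lemma~\ref{lem:angularly-averaged-shifted-modular}. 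It bounds the energy on \(B_{2t}\) by the local \(L^\infty\) norm of \(v_j-m\) on \(B_{4t}\), which tends to \(0\), plus the exterior flux tested against \(v_j-m\). The flux factor \(\mathcal D\) is uniformly bounded by the shell estimates, and the test factor is small on the support. Both pieces vanish as \(j\to\infty\) uniformly for \(t\in[\rho,1/2]\). The requirement \(\mathfrak B\le(|Q+b|+a)^{p-2}a^2\) is then met with \(a=\tfrac12 t^{\alpha_1}\) once \(j\) is large, since the right-hand side is bounded below for bounded \(|Q+b|\). This contradicts the choice of \(v_j\).

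In the energy step one must keep the oriented shift, without Markov truncation. If the Caccioppoli route fails near zero slope, the fallback is the energy-convergence argument of Proposition~\ref{prop:fixed-order-infinite-slope-compactness}, adapted to bounded tilts.
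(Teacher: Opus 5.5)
Your proposal is correct and follows the paper's proof. The shared steps are a contradiction sequence with $\beta_j\to0$ and $R_j\to\infty$, coherent affine fits giving $L^\ell$ growth $C(1+G_0)(h+1)2^{h(1+\beta_j)}$, Lipschitz compactness and passage to an entire solution, the affine-Campanato Liouville theorem, and $C_{\rm B}$ fixed from the unit $L^p(B_2)$ bound by norm equivalence.

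The energy step you call the main obstacle is simpler in the paper. The uniform local Lipschitz bound you already have gives $D_\Phi\bigl((Q_j+\nabla m)\cdot(X-Y);\delta(v_j-m)\bigr)\le C|\delta(v_j-m)|^p\le C|X-Y|^p$, which is an integrable majorant against $|X-Y|^{-n-sp}$ since $p>sp$. Dominated convergence then gives $\mathfrak B_{s,Q_j}(v_j,m;t)\to0$ uniformly for $t\in[\rho,1/2]$, so your Caccioppoli argument (which also works) and the fallback are unnecessary.
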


\begin{proof}
Let \(C_{\rm aff}\) be the structural norm-equivalence constant for
affine functions on \(B_1\).  Fix \(C_{\rm B}>4C_{\rm aff}\) before
selecting the counterexample sequence.

Suppose that the assertion is false.  Its precise negation says that,
for every \(\widehat\beta>0\) and every \(\widehat R\ge8\), there are
\(0\le\beta\le\widehat\beta\), \(|Q|\le Q_0\), and a solution on
\(B_{2\widehat R}\) satisfying
\eqref{eq:bounded-slope-affine-block-assumptions-main} but violating
\eqref{eq:bounded-slope-affine-full-block-main} for every affine map
\(m\) satisfying \(|m(0)|+|\nabla m|\le C_{\rm B}\).  Taking
\(\widehat\beta=1/j\) and
\(\widehat R=R_j\ge j\), choose corresponding counterexamples
\((v_j,Q_j,\beta_j)\).  Thus \(\beta_j\le1/j\), \(R_j\to\infty\),
and \(v_j\) solves the shifted equation in \(B_{2R_j}\).
After a subsequence, \(Q_j\to Q\).  Let
\(m_{j,h}=c_{j,h}+b_{j,h}\cdot X\) be a best affine fit on
\(B_{2^{h+1}}\).  Comparison on \(B_{2^{h+1}}\) and
finite-dimensional norm equivalence give
\begin{equation*}
 |b_{j,h+1}-b_{j,h}|+2^{-h}|c_{j,h+1}-c_{j,h}|
 \le C G_0 2^{h\beta_j}.
\end{equation*}
The unit-scale intrinsic bound and
\(\mathscr C_{\ell,\beta_j}(v_j)\le G_0\) give
\[
 |c_{j,0}|+|b_{j,0}|\le C(1+G_0)
\]
by finite-dimensional norm equivalence on \(B_2\).
Summing the preceding estimate yields
\begin{equation*}
 \left(\fint_{B_{2^{h+1}}}|v_j|^\ell\dd X\right)^{1/\ell}
 \le C(1+G_0)(h+1)2^{h(1+\beta_j)}.
\end{equation*}
In particular, \(v_j\) is uniformly bounded in \(L^\ell\) on every
fixed ball.  The same estimate gives the scalar tail required by local
boundedness.  Since the tilts \(Q_j\) are bounded, local boundedness and
the interior Lipschitz estimate give a locally uniform subsequential
limit \(v_j\to v_\infty\); no additional normalization by constants is
needed because the unit-scale \(L^p\) norm is bounded.

On the \(h\)-th exterior shell, the global \((p-1)\)-H\"older bound for
\(\Jp\) and the preceding estimate give
\begin{equation}
 C(h+1)^{p-1}2^{-h[sp-(p-1)(1+\beta_j)]}.
 \label{eq:bounded-slope-tail-tightness-main}
\end{equation}
Since \((1+\beta_j)(p-1)\le sp-\gamma/2\) for all large \(j\), the
series in \eqref{eq:bounded-slope-tail-tightness-main} is uniformly
summable.  We make the two truncations explicit.  On the near
diagonal, the local Lipschitz bound and the first-order difference of a
test function give
\[
 |\delta U_j|^{p-1}|\delta\varphi|
 |X-Y|^{-n-sp}
 \le C|X-Y|^{p-n-sp},
\]
whose radial integral is finite because \(p-sp>0\).  On the far
shells, \eqref{eq:bounded-slope-tail-tightness-main} is bounded by a
summable series with decay \(2^{-h\gamma/2}\).  The weak equation
therefore passes to the limit on every compact set, and
\(U_\infty=Q\cdot X+v_\infty\) is an entire weak solution.  Passing
first at dyadic radii and then
using radius comparability gives
\begin{equation*}
 \sup_{R\ge1}R^{-1}
 \inf_{m\in\mathcal A}
 \left(\fint_{B_{2R}}|U_\infty-m|^\ell\dd X\right)^{1/\ell}
 \le C(1+G_0).
\end{equation*}
After subtracting the constant \(U_\infty(0)\), Proposition
\ref{prop:affine-campanato-liouville-main} shows that \(U_\infty\), and
hence \(v_\infty\), is affine.  Write \(v_\infty=m_\infty\).  Its
coefficients are bounded by the unit-scale \(L^p\) bound and
finite-dimensional norm equivalence.  By the preceding choice,
\(|m_\infty(0)|+|\nabla m_\infty|\le C_{\rm B}/2\).

On each fixed ball the preceding Lipschitz bound supplies the integrable
majorant \(C|X-Y|^{p-n-sp}\) for every recentered Bregman integrand.
Local uniform convergence, dominated convergence, and compactness of the
interval of radii give
\begin{equation*}
 \sup_{\rho\le t\le1/2}
 \mathfrak a_{s,Q_j}(v_j,m_\infty;t)\longrightarrow0,
 \qquad
 \sup_{\rho\le t\le1/2}t^{-1}
 \|v_j-m_\infty\|_{L^\infty(B_{2t})}\longrightarrow0.
\end{equation*}
Since \(|m_\infty(0)|+|\nabla m_\infty|\le C_{\rm B}/2\), this
contradicts \eqref{eq:bounded-slope-affine-full-block-main}: for all
sufficiently large \(j\), the sum of the two displayed suprema is
smaller than \(\rho^{\alpha_1}\), and hence than \(t^{\alpha_1}\) for every
\(t\in[\rho,1/2]\).
\end{proof}

We fix the parameters in the following order.  First choose
\begin{equation}
 0<\alpha_1<\bar\alpha<\gamma.
 \label{eq:affine-route-intermediate-exponents-main}
\end{equation}
Let \(C_{\rm L}\) dominate both the constant and the affine coefficient
bound in Corollary~\ref{cor:prescribed-large-slope-block-main}, with
\(L=1\).  Replacing it by
\(\max\{C_{\rm L},2^{\alpha_1}C_{\rm B},1\}\), fix
\begin{equation}
 G_*:=2+C_{\rm L}.
 \label{eq:affine-route-campanato-budget-main}
\end{equation}
Next choose
\begin{equation}
 0<\bar\beta<\frac12\min\{\alpha_1,\gamma\}.
 \label{eq:affine-route-beta-bar-main}
\end{equation}
The scalar and angular flux inequalities, with constants enlarged by
the fixed number \(C_{\rm L}\), determine \(C_0\).  Choose a dyadic
\(\rho=2^{-N}<1/16\) so small that
\begin{align}
 C_{\rm L}\rho^{\bar\alpha-\alpha_1}&\le1,
 \label{eq:affine-route-large-endpoint-main}\\
 C_0\bigl[
 \rho^{(\alpha_1-\bar\beta)(p-1)}
 +\rho^{\gamma-(p-1)\bar\beta}
 +\rho^{\alpha_1-\bar\beta}
 +\rho^{\gamma-\bar\beta}\bigr]&\le\frac14.
 \label{eq:affine-route-tail-small-main}
\end{align}
All four exponents are positive.  Corollary
\ref{cor:prescribed-large-slope-block-main} now gives a threshold
\(Q_{\rm th}\) for this fixed \(\rho\).  Apply Proposition
\ref{prop:bounded-slope-affine-invariant-block-main} with
\[
 Q_0=Q_{\rm th},\qquad G_0=G_*,\qquad
 \rho\ \hbox{as above},\qquad
 \alpha_1\ \hbox{as in }\eqref{eq:affine-route-intermediate-exponents-main},
\]
and denote its compactness exponent by \(\beta_*\).  Finally set
\begin{equation}
 \beta:=\frac12\min\{\beta_*,\bar\beta\},
 \qquad\eta:=\rho^\beta.
 \label{eq:affine-route-final-beta-main}
\end{equation}
No radius, threshold, or structural constant is changed after
\(\beta_*\) has been obtained.

\begin{lemma}[Stability of the normalized state]
\label{lem:affine-invariant-state-reproduction-main}
Suppose that \(v\) solves the equation shifted by \(Q\) on the ball
required by the bounded-slope estimate and that
\begin{equation}
 \mathfrak a_{s,Q}(v,0;1)\le1,
 \qquad \mathcal T_{\beta,G_*}(v,Q)\le1.
 \label{eq:affine-route-unit-assumptions-main}
\end{equation}
If \(|Q|\le Q_{\rm th}\), let \(m=c+b\cdot X\) be the affine map in
Proposition~\ref{prop:bounded-slope-affine-invariant-block-main}; if
\(|Q|>Q_{\rm th}\), use the map in Corollary
\ref{cor:prescribed-large-slope-block-main}.  Put
\begin{equation}
 w(X)=\frac{v(\rho X)-m(\rho X)}{\rho\eta},
 \qquad P=\frac{Q+b}{\eta}.
 \label{eq:affine-route-renormalized-profile-main}
\end{equation}
Then
\begin{equation}
 \mathfrak a_{s,P}(w,0;1)\le1,\qquad
 \mathcal T_{\beta,G_*}(w,P)\le1.
 \label{eq:affine-route-unit-reproduction-main}
\end{equation}
\end{lemma}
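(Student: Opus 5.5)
The plan is to treat the two regimes $|Q|\le Q_{\rm th}$ and $|Q|>Q_{\rm th}$ in parallel. The only goal in each case is one common block estimate: for the chosen affine map $m=c+b\cdot X$,
\[
 \mathfrak a_{s,Q}(v,m;t)+t^{-1}\|v-m\|_{L^\infty(B_{2t})}\le C_{\rm L}\,t^{\alpha_1},
 \qquad \rho\le t\le 1/2,\qquad |c|+|b|\le C_{\rm L}.
\]
Once this is available, the three components of \eqref{eq:affine-route-unit-reproduction-main} are proved by rescaling, with the case distinction entering only through the scalar/angular branch used in the tail.

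\emph{Bounded-slope case $|Q|\le Q_{\rm th}$.} Hypothesis \eqref{eq:affine-route-unit-assumptions-main} gives $\mathscr C_{\ell,\beta}(v)\le G_*=G_0$ and $\mathcal T^{\rm pt}_{s,Q}(v)\le1$. Since $\beta\le\beta_*$, Proposition~\ref{prop:bounded-slope-affine-invariant-block-main} applies and yields the estimate with constant one and $C_{\rm B}\le C_{\rm L}$.

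\emph{Large-slope case $|Q|>Q_{\rm th}$.} The bound $\mathfrak a_{s,Q}(v,0;1)\le1$ gives the unit $L^p$ bound. It also gives $\mathcal E_{s,Q}(v;B_2)\le\lambda_Q^{2-p}(|Q|+1)^{p-2}\le1$. Hence Corollary~\ref{cor:prescribed-large-slope-block-main} with $L=1$ applies and gives $C_{\rm L}t^{\bar\alpha}\le C_{\rm L}t^{\alpha_1}$.

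\emph{Unit amplitude.} Exact homogeneity of $\Phi$ under $X\mapsto\rho X$ and division by $\rho\eta$ gives $\mathfrak a_{s,P}(w,0;1)=\eta^{-1}\mathfrak a_{s,Q}(v,m;\rho)$. The tilt is correctly $(Q+b)/\eta$. In the bounded case the right side is at most $\rho^{\alpha_1-\beta}\le1$. In the large-slope case it is at most $C_{\rm L}\rho^{\bar\alpha-\beta}\le C_{\rm L}\rho^{\bar\alpha-\alpha_1}\le1$ by \eqref{eq:affine-route-large-endpoint-main}.

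\emph{Campanato component.} For $j\ge N$, identity \eqref{eq:exact-affine-weight-cancellation} with $\eta=\rho^\beta$ gives exactly $\mathscr C_{\ell,\beta}(v)\le G_*$. For $j<N$, use the competitor $0$ with $t=\rho2^j$, as in \eqref{eq:common-affine-inner-levels}. This gives
\[
 2^{-j\beta}\mathbf c_\ell(w;j)\le\eta^{-1}2^{-j\beta}C_{\rm L}t^{\alpha_1}=C_{\rm L}t^{\alpha_1-\beta}\le C_{\rm L}\le G_*.
\]
Hence $G_*^{-1}\mathscr C_{\ell,\beta}(w)\le1$, with no renormalization constant.

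\emph{Pointwise tail.} I expect this to be the main obstacle. Change variables $Z=\rho X$, $W=\rho Y$, $h=v-m$, as in \eqref{eq:escape-pointwise-flux-scaling}. This produces the multiplier $\rho^\gamma\eta^{1-p}\lambda_P^{2-p}$, where $\lambda_P/\lambda_{Q+b}\le C\eta^{-1}$. The exterior region is then split into three parts.
\begin{enumerate}
\item \emph{Intervening annuli} $\rho\lesssim|W|\lesssim1$. Here $|h|\le C_{\rm L}|W|^{1+\alpha_1}$ by the block estimate.
 \begin{itemize}
 \item If $|P|$ is small, use the global $(p-1)$-Hölder branch. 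This gives a contribution $\lesssim\rho^{(\alpha_1-\beta)(p-1)}$.
 \item If $|P|$ is large, use the angular linearized branch \eqref{eq:large-slope-angular-flux}. Integrability of $|e\cdot\omega|^{p-2}$ gives $\lesssim\rho^{\alpha_1-\beta}$.
 \end{itemize}
\item \emph{Far field beyond $B_1$.} Do not use the old pointwise tail alone here. Use the Campanato control $\mathscr C_{\ell,\beta}(v)\le G_*$ through the coherent shell estimate, Lemma~\ref{lem:coherent-affine-fits-main} with slow growth $2^{h(1+\beta)}$. Hölder on the sphere with $(p-2)\ell'>-1$ then gives terms $\lesssim\rho^{\gamma-(p-1)\bar\beta}$ and $\rho^{\gamma-\bar\beta}$.
\item \emph{Affine-base correction.} The term $\Jp((Q+b)\cdot)-\Jp(Q\cdot)$ is absorbed into the same $\rho^{\gamma-\bar\beta}$ term, using $|b|\le C_{\rm L}$ and $sp>p-1$.
\end{enumerate}
The delicate point is that $\lambda_P^{2-p}$ cannot be discarded in the Hölder branch. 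I would handle it by checking that $\eta^{1-p}\lambda_P^{2-p}\lambda_{Q+b}^{p-2}\le C\eta^{-1}$ and pairing each branch with the matching power of $\eta$. Collecting terms, $\mathcal T^{\rm pt}_{s,P}(w)\le C_0[\dots]$, the bracket of four powers in \eqref{eq:affine-route-tail-small-main}, which is at most $1/4\le1$. This closes \eqref{eq:affine-route-unit-reproduction-main}.
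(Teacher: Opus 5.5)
Your proof is correct. For the amplitude and Campanato components it is essentially the paper's argument. Your exact identity $\mathfrak a_{s,P}(w,0;1)=\eta^{-1}\mathfrak a_{s,Q}(v,m;\rho)$ is a tidier form of the paper's inequality $((q+\eta)/(q+a))^{2-p}(a/\eta)^2\le(a/\eta)^p$. You also check explicitly that the large-slope corollary applies with $L=1$.

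The genuine difference is in the far field of the pointwise tail.

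\emph{The paper's route.} It uses $\mathscr C_{\ell,\beta}(v)\le G_*$ only on the finitely many shells with old radii between $1/2$ and $2$. Beyond them it feeds in the \emph{old pointwise state}. It splits $\mathcal D_{Q+b}h=\mathcal D_Qv+\Jp(Q\cdot(Z-W))-\Jp((Q+b)\cdot(Z-W))$. The first term is bounded by the exact multiplier $\rho^\gamma\eta^{1-p}(\lambda_P/\lambda_Q)^{2-p}$ times $\mathcal T^{\rm pt}_{s,Q}(v)\le1$, and the base change is estimated separately.

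\emph{Your route.} You regenerate the whole far-field flux from the $\beta$-weighted coherent-fit growth $(h+1)2^{h(1+\beta)}$ of $v$. You estimate $\mathcal D_Pw$ shell by shell, either in the scalar branch or in the linearized branch with spherical H\"older, using $(p-2)\ell'>-1$. The shell series converge because $\gamma-(p-1)\beta>0$ and $\gamma-\beta>0$. The resulting constant depends on $G_*$, which is admissible because $G_*$ is fixed before $C_0$ and $\rho$.

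\emph{What each route buys.} Yours shows that the pointwise component is slaved to the Campanato component plus the block estimate. The old pointwise bound is used only as a hypothesis of the two block estimates, and no base change or $\lambda_P/\lambda_Q$ comparison is ever needed. In fact your item 3 is superfluous in your own scheme, since you never change the base point of an old flux. The paper's route yields a tail recurrence of the same form as in Lemma~\ref{lem:large-slope-escape-state-main} and Proposition~\ref{prop:noncritical-pointwise-expansion}, where no Campanato state is available. It uses no information on $v$ outside $B_2$ beyond the relative flux.

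Two small points.
\begin{enumerate}
\item In the scalar branch $|P|\le1$ the factor $\lambda_P^{2-p}\le2^{2-p}$ is harmless. The ``delicate point'' you flag therefore concerns only the linearized branch.
\item In the linearized branch you should apply \eqref{eq:large-slope-angular-flux} to $(w,P)$ directly, rather than to $(h,Q+b)$ normalized by $\lambda_{Q+b}$. The latter requires $|Q+b|\ge1$ and would miss the regime $|Q+b|<1<|P|$; the paper covers that regime via $\mathcal D_{Q+b}h=\eta^{p-1}\mathcal D_P(h/\eta)$.
\end{enumerate}
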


\begin{proof}
The change of variables in the weak formulation and the homogeneity of
\(\Jp\) show that \(w\) solves the equation shifted by \(P\) on the
correspondingly rescaled domain.  In either regime the endpoint estimate is
\begin{equation}
 \mathfrak a_{s,Q}(v,m;\rho)\le\rho^{\alpha_1}<\eta.
 \label{eq:affine-route-common-endpoint-main}
\end{equation}
Indeed, this is the bounded-slope conclusion in the first regime and
follows from \eqref{eq:affine-route-large-endpoint-main} in the second.
Choose an admissible \(a<\eta\) in the definition of the left-hand
side of \eqref{eq:affine-route-common-endpoint-main}.  With
\(q=|Q+b|\), exact homogeneity gives
\[
 \fint_{B_2}|w|^p\dd X\le(a/\eta)^p,\qquad
 \mathcal E_{s,P}(w;B_2)
 \le
 \left(\frac{q+\eta}{q+a}\right)^{2-p}
 \left(\frac a\eta\right)^2
 \le\left(\frac a\eta\right)^p\le1.
\]
Thus the intrinsic estimate is preserved with constant one.

For \(0\le j<N\), put \(t=\rho2^j\).  The two finite-scale estimates give
\begin{equation}
 t^{-1}\|v-m\|_{L^\infty(B_{2t})}
 \le C_{\rm L}t^{\alpha_1}.
 \label{eq:affine-route-common-intermediate-main}
\end{equation}
Since \(\rho=2^{-N}\), one has \(\rho\le t\le1/2\); hence the displayed
range is exactly the range supplied by the bounded-slope proposition.
Here we used \(t^{\bar\alpha}\le t^{\alpha_1}\) in the large-slope
regime.  Consequently
\[
2^{-j\beta}\mathbf c_\ell(w;j)
 \le C_{\rm L}t^{\alpha_1-\beta}\le C_{\rm L}<G_*.
\]
For \(j\ge N\), the exact identity
\eqref{eq:exact-affine-weight-cancellation} gives the preceding state without
loss.  Hence \(\mathscr C_{\ell,\beta}(w)\le G_*\).

It remains to verify the pointwise flux.  After the change of variables,
split its domain into the intervening annuli
\(\rho\lesssim t\lesssim2\) and the remaining exterior region.  On the
intervening annuli put \(h=v-m\), \(Z=\rho X\), and \(W=\rho Y\).
The exact scaling identity is
\begin{equation}
 \begin{split}
 &\lambda_P^{2-p}
 \frac{|\mathcal D_Pw(X,Y)|}{|X-Y|^{n+sp}}\dd Y\\
 &\qquad=
 \rho^\gamma\eta^{1-p}\lambda_P^{2-p}
 \frac{|\mathcal D_{Q+b}h(Z,W)|}
 {|Z-W|^{n+sp}}\dd W .
 \end{split}
 \label{eq:affine-route-pointwise-flux-scaling}
\end{equation}
If \(|P|\le1\), the global \((p-1)\)-H\"older estimate gives the shell
bound
\begin{equation*}
 C\rho^\gamma\eta^{1-p}
 \int_{|W|\asymp t}
 \frac{|\delta h(Z,W)|^{p-1}}{|Z-W|^{n+sp}}\dd W.
\end{equation*}
If \(|P|>1\), put \(e=P/|P|\).  The angular linearized estimate gives
instead
\begin{equation*}
 C\rho^\gamma\eta^{-1}
 \int_{|W|\asymp t}
 \frac{|Z-W|^{p-2}|e\cdot\omega|^{p-2}
       |\delta h(Z,W)|}
 {|Z-W|^{n+sp}}\dd W,
 \qquad \omega=\frac{Z-W}{|Z-W|}.
\end{equation*}
By \eqref{eq:affine-route-common-intermediate-main},
\(|\delta h(Z,W)|\le Ct^{1+\alpha_1}\); angular H\"older and
Lemma~\ref{lem:large-slope-scalar-angular} therefore give
\begin{align}
 &C\rho^\gamma\eta^{1-p}
   \sum_{\rho\le t\le1/2}t^{\alpha_1(p-1)-\gamma}
 +C\rho^\gamma\eta^{-1}
   \sum_{\rho\le t\le1/2}t^{\alpha_1-\gamma}
 \notag\\
 &\quad\le C_0\bigl[
 \rho^{(\alpha_1-\beta)(p-1)}
 +\rho^{\gamma-(p-1)\beta}
 +\rho^{\alpha_1-\beta}
 +\rho^{\gamma-\beta}\bigr].
 \label{eq:affine-route-intervening-flux-main}
\end{align}
The two terms in each bracket are the lower- and upper-endpoint bounds
for the corresponding finite geometric sum; thus no sign of the sum
exponent is being assumed.  The finitely many remaining shells with
old radii between \(1/2\) and \(2\) are controlled in the scalar
branch by the local \(L^\ell\)-bound and in the angular branch by
H\"older's inequality, the same \(L^\ell\)-bound, and
\((p-2)\ell'>-1\).  The coefficient bound for \(m\) controls the
affine part.  More explicitly,
\(\mathscr C_{\ell,\beta}(v)\le G_*\) controls \(v\) modulo an affine
map on these fixed balls; the unit \(L^p(B_2)\)-bound and
finite-dimensional norm equivalence control that affine map, while the
estimate at \(t=\rho\) controls the inner endpoint \(Z=\rho X\).
Thus no estimate above the range \(t\le1/2\) is used.  These shells
contribute only
\(C\rho^{\gamma-(p-1)\beta}\) and
\(C\rho^{\gamma-\beta}\), which are already the second and fourth
upper-endpoint terms in \eqref{eq:affine-route-intervening-flux-main}.

On the remaining exterior region the H\"older and linearized alternatives
carry, respectively, the factors
\[
 \rho^\gamma\eta^{1-p}
 =\rho^{\gamma-(p-1)\beta},
 \qquad
 \rho^\gamma\eta^{-1}=\rho^{\gamma-\beta}.
\]
Here one uses the exact decomposition
\begin{equation*}
 \mathcal D_{Q+b}h(Z,W)
 =\mathcal D_Qv(Z,W)
 +\Jp(Q\cdot(Z-W))-\Jp((Q+b)\cdot(Z-W)).
\end{equation*}
For the first term, the exact coefficient relative to the old state is
\begin{equation*}
 \rho^\gamma\eta^{1-p}
 \left(\frac{\lambda_P}{\lambda_Q}\right)^{2-p}.
\end{equation*}
When \(|P|\le1\), this is bounded by
\(C\rho^\gamma\eta^{1-p}\); when \(|P|>1\), the comparison
\(\lambda_P\le C\eta^{-1}\lambda_Q\) bounds it by
\(C\rho^\gamma\eta^{-1}\).  Thus the two alternatives above are
precisely the two factors carried by the old pointwise state.

The change of affine base from \(Q\) to \(Q+b\) obeys the same
dichotomy.  More explicitly, homogeneity and the scalar/angular
estimates give, in spherical average,
\begin{equation*}
 \eta^{1-p}\lambda_P^{2-p}
 \int_{\mathbb S^{n-1}}
 \left|\Jp(Q\cdot(r\omega))
       -\Jp((Q+b)\cdot(r\omega))\right|\dd\omega
 \le Cr^{p-1}\bigl(\eta^{1-p}+\eta^{-1}\bigr).
\end{equation*}
This includes the transition region \(|Q+b|\asymp\eta\).  In both
finite-scale estimates
\(|b|+|c|\le C\) independently of \(Q,Q_{\rm th},\rho\); hence this
correction only changes \(C_0\).  Notice that, in the linearized case,
\[
 \mathcal D_{Q+b}h
 =\eta^{p-1}\mathcal D_P(h/\eta),
 \qquad Q+b=\eta P.
\]
Thus this branch also covers \(|Q+b|<1<|P|\).  Since
\(\beta\le\bar\beta\),
\eqref{eq:affine-route-tail-small-main}, with its constant enlarged at
the time it was fixed, makes the sum of
\eqref{eq:affine-route-intervening-flux-main} and the old far-field
contribution at most one.  This proves
\eqref{eq:affine-route-unit-reproduction-main}.
\end{proof}

\begin{proposition}[Intrinsic improvement of flatness]
\label{prop:unified-variable-block-recursion}
Let \(\rho,\beta,\eta,G_*\) and the two finite-scale estimates be fixed by
\eqref{eq:affine-route-intermediate-exponents-main}--\eqref{eq:affine-route-final-beta-main}.
Let \(r_0,H_0>0\), let \(\ell_0\) be affine, and define \(v_0,Q_0\) by
\eqref{eq:common-intrinsic-profile}.  Assume
\begin{equation*}
 \mathbf A_{s,r_0}(u,\ell_0;x_0)\le H_0,\qquad
 \mathcal T_{\beta,G_*}(v_0,Q_0)\le1.
\end{equation*}
Assume also that every rescaled profile is defined on the ball required
by the bounded-slope estimate and solves the corresponding shifted equation.
Then there are affine maps
\(\ell_k=a_k+q_k\cdot(x-x_0)\), radii \(r_k\), and amplitudes \(H_k\)
such that, with \(v_k,Q_k\) defined by
\eqref{eq:common-intrinsic-profile} using
\((r,H,\ell)=(r_k,H_k,\ell_k)\),
\begin{align}
 r_{k+1}&=\rho r_k,
 \notag\\
 H_{k+1}&=\rho^\beta H_k,
 \label{eq:unified-amplitude-recursion}\\
 \mathbf A_{s,r_k}(u,\ell_k;x_0)&\le H_k,
 \label{eq:unified-physical-amplitude}\\
 |a_{k+1}-a_k|+r_k|q_{k+1}-q_k|
 &\le C_\sharp r_kH_k,
 \label{eq:unified-affine-increments}\\
 \mathcal T_{\beta,G_*}(v_k,Q_k)&\le1.
 \label{eq:unified-tail-invariant}
\end{align}
Moreover,
\begin{equation}
 H_k\le C r_k^{\beta}.
 \label{eq:unified-amplitude-decay}
\end{equation}
The constants are independent of the normalized slopes \(Q_k\).
\end{proposition}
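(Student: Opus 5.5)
The plan is an induction on \(k\), with Lemma~\ref{lem:affine-invariant-state-reproduction-main} as the inductive step. The main task is translating between the normalized quantities \((v_k,Q_k)\) and the physical ones \((u,\ell_k,r_k,H_k)\).

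At \(k=0\) the hypotheses give \(\mathcal T_{\beta,G_*}(v_0,Q_0)\le1\). A change of variables in the definitions of \(\mathbf Z_r\) and \(\mathbf E_{s,r}\) gives the exact identity
\[
 \mathbf A_{s,r}(u,\ell;x_0)=H\,\mathfrak a_{s,Q}(v,0;1),
\]
where \(v,Q\) are as in \eqref{eq:common-intrinsic-profile}. To see it, note first that \(\mathbf Z_r(u,\ell;x_0)=H^p\fint_{B_2}|v|^p\). For the energy, substitute \(x=x_0+rX\) and use \(D_\Phi(Ha;Ht)=H^pD_\Phi(a;t)\). This gives \(\mathbf E_{s,r}=H^p\mathfrak B_{s,Q}(v,0;1)\). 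The threshold in the physical definition, \((|q|+A)^{p-2}A^2\) with \(A=Ha'\), equals \(H^p(|Q|+a')^{p-2}a'^2\). So the two infima correspond under \(A=Ha'\). Hence \(\mathbf A_{s,r_0}(u,\ell_0;x_0)\le H_0\) is equivalent to \(\mathfrak a_{s,Q_0}(v_0,0;1)\le1\), and the state \eqref{eq:affine-route-unit-assumptions-main} holds at level zero.

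For the inductive step, assume \(\mathfrak a_{s,Q_k}(v_k,0;1)\le1\) and \(\mathcal T_{\beta,G_*}(v_k,Q_k)\le1\). Choose the regime according to whether \(|Q_k|\le Q_{\rm th}\) or \(|Q_k|>Q_{\rm th}\), and let \(m_k=c_k+b_k\cdot X\) be the affine map supplied by Proposition~\ref{prop:bounded-slope-affine-invariant-block-main} or Corollary~\ref{cor:prescribed-large-slope-block-main}. In both regimes \(|c_k|+|b_k|\le C_\sharp\) with \(C_\sharp:=\max\{C_{\rm B},C_{\rm L}\}\), and this bound does not depend on \(Q_k\). Define
\[
 \ell_{k+1}(x):=\ell_k(x)+r_kH_k\,m_k\Bigl(\frac{x-x_0}{r_k}\Bigr),\qquad r_{k+1}=\rho r_k,\qquad H_{k+1}=\eta H_k=\rho^\beta H_k.
\]
Then \(a_{k+1}-a_k=r_kH_kc_k\) and \(q_{k+1}-q_k=H_kb_k\), which gives \eqref{eq:unified-affine-increments}. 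A direct computation shows that \(v_{k+1}\), built from \((r_{k+1},H_{k+1},\ell_{k+1})\), coincides with the function \(w\) in \eqref{eq:affine-route-renormalized-profile-main}, and that \(Q_{k+1}=q_{k+1}/H_{k+1}=(Q_k+b_k)/\eta=P\).

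Lemma~\ref{lem:affine-invariant-state-reproduction-main} then yields \(\mathfrak a_{s,Q_{k+1}}(v_{k+1},0;1)\le1\) and \eqref{eq:unified-tail-invariant} at level \(k+1\). The scaling identity above converts the first of these into \eqref{eq:unified-physical-amplitude}. The standing assumption that every rescaled profile solves the shifted equation on the required ball is exactly what this lemma needs at every level. Since the parameters \(\rho,\beta,G_*,Q_{\rm th}\) were all fixed in \eqref{eq:affine-route-intermediate-exponents-main}--\eqref{eq:affine-route-final-beta-main} before the iteration begins, the constants do not depend on the \(Q_k\).

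The recursions give \(r_k=\rho^kr_0\) and \(H_k=\rho^{k\beta}H_0\). Therefore \(H_k=H_0r_0^{-\beta}r_k^\beta\), which is \eqref{eq:unified-amplitude-decay} with \(C=H_0r_0^{-\beta}\), or with an absolute constant after the normalization \(r_0=1\). The step I expect to need the most care is checking that the regime switch between bounded and large slopes can happen at any level without loss. That this works rests entirely on the common state in Lemma~\ref{lem:affine-invariant-state-reproduction-main} and on the exact identity \(v_{k+1}=w\), so the verification consists of checking that the affine update and the slope rescaling match those used in that lemma.
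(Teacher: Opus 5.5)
Your proposal is correct and follows the paper's proof. The paper also runs an induction with Lemma~\ref{lem:affine-invariant-state-reproduction-main} as the step, makes the same update of $\ell_{k+1}$, identifies $v_{k+1}=w$ and $Q_{k+1}=P$, and concludes $H_k=H_0r_0^{-\beta}r_k^\beta$. The only cosmetic difference is how the amplitude is transferred: you use the identity $\mathbf A_{s,r}(u,\ell;x_0)=H\,\mathfrak a_{s,Q}(v,0;1)$ at level $k+1$, while the paper reads it off from $\mathbf A_{s,r_{k+1}}(u,\ell_{k+1};x_0)=H_k\,\mathfrak a_{s,Q_k}(v_k,m_k;\rho)\le H_k\eta$. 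One aside is inaccurate: normalizing $r_0=1$ does not make the constant in \eqref{eq:unified-amplitude-decay} absolute, since $C=H_0r_0^{-\beta}$ still depends on $H_0$, but nothing in your argument relies on it.
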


\begin{proof}
By exact homogeneity in \eqref{eq:common-intrinsic-profile}, the initial
condition \(\mathbf A_{s,r_0}(u,\ell_0;x_0)\le H_0\) is equivalent to
\(\mathfrak a_{s,Q_0}(v_0,0;1)\le1\).
Suppose the objects are constructed at level \(k\).  Apply Lemma
\ref{lem:affine-invariant-state-reproduction-main}.  Thus the
bounded-slope estimate is used when \(|Q_k|\le Q_{\rm th}\), and the
prescribed-scale large-slope estimate is used otherwise.  Write the
corresponding affine map as \(m_k=c_k+b_k\cdot X\), and set
\begin{equation*}
 \ell_{k+1}(x):=\ell_k(x)+r_kH_k
 m_k\left(\frac{x-x_0}{r_k}\right).
\end{equation*}
Since \(H_{k+1}/H_k=\eta=\rho^\beta\), Lemma
\ref{lem:affine-invariant-state-reproduction-main} gives simultaneously
\eqref{eq:unified-physical-amplitude} and
\eqref{eq:unified-tail-invariant}.  The common coefficient bound in the
two finite-scale estimates gives
\eqref{eq:unified-affine-increments}.

For clarity, the exact homogeneity behind the physical amplitude is
\begin{align*}
 \mathbf Z_{r_{k+1}}(u,\ell_{k+1};x_0)
 &=H_k^p\mathfrak Z(v_k,m_k;\rho),\\
 \mathbf E_{s,r_{k+1}}(u,\ell_{k+1};x_0)
 &=H_k^p\mathfrak B_{s,Q_k}(v_k,m_k;\rho),\\
 q_{k+1}&=H_k(Q_k+b_k),\qquad
 H_{k+1}=H_k\eta .
\end{align*}
Thus
\(\mathfrak a_{s,Q_k}(v_k,m_k;\rho)\le\eta\) is exactly
\(\mathbf A_{s,r_{k+1}}(u,\ell_{k+1};x_0)\le H_{k+1}\).

The new normalized remainder is
\begin{equation*}
 v_{k+1}(X)=\frac{1}{\rho\eta}
 [v_k(\rho X)-m_k(\rho X)].
\end{equation*}
Thus the same amplitude is used in both regimes.  Iterating the two
exact recursions gives
\[
 r_k=\rho^kr_0,\qquad H_k=\rho^{k\beta}H_0
      =H_0r_0^{-\beta}r_k^\beta,
\]
which proves \eqref{eq:unified-amplitude-decay}.  The affine increments
are summable because \(\sum_k r_kH_k<\infty\) and
\(\sum_kH_k<\infty\).
\end{proof}
\begin{corollary}[Campanato conclusion]
\label{cor:common-recursion-campanato}
Under the hypotheses of Proposition
\ref{prop:unified-variable-block-recursion}, the affine slopes converge
to a vector $q(x_0)$ and
\begin{equation}
 \left(\fint_{B_r(x_0)}
 |u-u(x_0)-q(x_0)\cdot(x-x_0)|^p\dd x\right)^{1/p}
 \le Cr^{1+\beta}
 \label{eq:common-recursion-campanato-decay}
\end{equation}
for all sufficiently small $r$.  Consequently
if the hypotheses hold with the same constants at every center in an
open set, then \(\nabla u\in C^{\beta}_{\rm loc}\) there.
\end{corollary}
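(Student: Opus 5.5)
The plan is to read off the Campanato decay directly from the outputs of Proposition~\ref{prop:unified-variable-block-recursion}, namely \eqref{eq:unified-amplitude-recursion}--\eqref{eq:unified-amplitude-decay}. The pointwise statement comes first, and the H\"older conclusion is then obtained by the standard Campanato characterization.

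\emph{Convergence of the affine maps.} By \eqref{eq:unified-affine-increments} and \eqref{eq:unified-amplitude-decay},
\[
 |q_{k+1}-q_k|\le C_\sharp H_k\le C r_k^\beta,
 \qquad
 |a_{k+1}-a_k|\le C r_k^{1+\beta}.
\]
Since \(r_k=\rho^k r_0\), both series are geometric. Hence \(q_k\to q(x_0)\) and \(a_k\to a_\infty\), with tail bounds
\[
 |q_k-q(x_0)|\le Cr_k^\beta,
 \qquad
 |a_k-a_\infty|\le Cr_k^{1+\beta}.
\]
For the zero-order part of \eqref{eq:unified-physical-amplitude}, the definition of \(\mathbf A\) gives
\[
 \Bigl(\fint_{B_{2r_k}(x_0)}|u-\ell_k|^p\Bigr)^{1/p}\le r_kH_k\le Cr_k^{1+\beta}.
\]
Replacing \(\ell_k\) by the limit map \(\ell_\infty=a_\infty+q(x_0)\cdot(x-x_0)\) costs only \(Cr_k^{1+\beta}\) on \(B_{2r_k}(x_0)\), by the tail bounds above. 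This gives the \(L^p\) decay of \(u-\ell_\infty\) on the balls \(B_{2r_k}(x_0)\).

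\emph{All radii and identification of \(a_\infty\).} For \(r_{k+1}\le r\le r_k\), one passes to the ball \(B_{2r_k}(x_0)\). Since \(\rho\) is fixed, this changes the mean by at most a factor \(\rho^{-n/p}\), so the estimate holds for all small \(r\). To see that \(a_\infty=u(x_0)\), use continuity of \(u\): it is locally Lipschitz by \cite{BiswasTopp}. The \(L^p\) averages of \(u-\ell_\infty\) over shrinking balls tend to zero, while \(u-\ell_\infty\) is continuous. Alternatively, the \(L^p\)-to-\(L^\infty\) interpolation already used in \eqref{eq:noncritical-sup-expansion} gives pointwise smallness at \(x_0\). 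This proves \eqref{eq:common-recursion-campanato-decay}.

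\emph{Gradient H\"older regularity.} If the hypotheses hold with uniform constants at every center of an open set, then the constant \(C=C\,H_0r_0^{-\beta}\) in \eqref{eq:common-recursion-campanato-decay} is uniform. The usual Campanato argument then applies. For two centers \(x,y\) with \(|x-y|=r\), compare both affine approximations on \(B_{2r}(x)\cap B_{2r}(y)\). This set contains a ball of radius comparable to \(r\). Finite-dimensional norm equivalence for affine maps then gives
\[
 |q(x)-q(y)|\le Cr^\beta.
\]
Moreover, the decay at each point shows that \(u\) is differentiable there with \(\nabla u(x)=q(x)\). Hence \(\nabla u\in C^\beta_{\rm loc}\).

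The main (though mild) obstacle is the uniformity of the constants in the last step: it must be checked that \(H_0r_0^{-\beta}\) and the initial tail state \(\mathcal T_{\beta,G_*}\le1\) can be arranged uniformly in the center. The corollary sidesteps this by assuming it explicitly ("with the same constants at every center"), so the proof only needs to track that every constant above depends on \(C_\sharp\), \(\rho\), \(\beta\), \(H_0\) and \(r_0\) alone.
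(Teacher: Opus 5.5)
Your proposal is correct and follows essentially the same route as the paper. You sum the geometric increments from \eqref{eq:unified-affine-increments} and \eqref{eq:unified-amplitude-decay}, use the zero-order part of \eqref{eq:unified-physical-amplitude} together with continuity of $u$ to identify the limiting constant as $u(x_0)$, pass to intermediate radii via $r_{k+1}=\rho r_k$, and conclude with the overlapping-ball comparison and finite-dimensional norm equivalence; the only differences (making the replacement of $\ell_k$ by the limiting affine map explicit, and comparing on $B_{2r}$ rather than $B_{4d}$) are cosmetic.
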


\begin{proof}
By \eqref{eq:unified-affine-increments} and
\eqref{eq:unified-amplitude-decay},
\[
 |q_{k+1}-q_k|\le Cr_k^{\beta},
 \qquad
 |a_{k+1}-a_k|\le Cr_k^{1+\beta}.
\]
Both series converge because \(r_k=r_0\rho^k\).  The zero-order part
of \eqref{eq:unified-physical-amplitude}, together with the continuity
of \(u\), identifies \(\lim_{k\to\infty}a_k=u(x_0)\) and then gives
\eqref{eq:common-recursion-campanato-decay} first at the radii $r_k$ and
then at every intermediate radius, using
\(r_{k+1}=\rho r_k\).  The standard overlapping-ball
argument identifies the limiting slope with $\nabla u$.  More
explicitly, if \(d=|x_0-x_1|\) and a ball of radius \(4d\) stays in the
interior, compare the two limiting affine maps on the overlap of
\(B_{4d}(x_0)\) and \(B_{4d}(x_1)\).  Finite-dimensional norm
equivalence and \eqref{eq:common-recursion-campanato-decay} give
\[
 |q(x_0)-q(x_1)|\le Cd^\beta.
\]
This is the asserted \(C^\beta\) modulus.
\end{proof}

\begin{proof}[Proof of Theorem~\ref{thm:main}]
By homogeneity suppose that
$\|u\|_{L^\infty(\R^n)}\le1$.  The local H\"older theory first gives a
continuous representative.  For the homogeneous equation, the
weak--viscosity equivalence in
\cite[Proposition~1.5]{BiswasTopp} therefore permits the use of its
viscosity estimate.  Since \(sp>p-1\), Theorem~2.1 there, applied on
interior balls and combined by a finite covering of \(B_{3/2}\),
supplies
\begin{equation}
 \Lip(u;B_{3/2})\le L_0=L_0(n,p,s).
 \label{eq:main-initial-lipschitz}
\end{equation}
Use the dyadic radius and exponent \(\beta>0\) fixed in
\eqref{eq:affine-route-beta-bar-main}--\eqref{eq:affine-route-final-beta-main}.  Take \(r_0>0\) so small that
\(2R_*r_0\le1\).  Then every ball required by the bounded-slope
estimate, at the initial or any subsequent scale and with center in
\(B_{1/2}\), is contained in \(B_{3/2}\).

Fix \(x_0\in B_{1/2}\) and take \(\ell_0\equiv u(x_0)\).  Choose
\(H_0=C(1+L_0)\), with the structural factor
large enough to normalize both the intrinsic amplitude and the exterior
state.  From \eqref{eq:main-initial-lipschitz},
\begin{align*}
& \fint_{B_{2r_0}(x_0)}
 \left|\frac{u-u(x_0)}{r_0}\right|^p\dd x
 \le C L_0^p,\\
 &(1-s)r_0^{sp-p-n}
 \iint_{B_{2r_0}(x_0)^2}
 \frac{|\delta u(x,y)|^p}{|x-y|^{n+sp}}\dd x\!\dd y
 \le C L_0^p.
\end{align*}
Thus \(\mathbf A_{s,r_0}(u,u(x_0);x_0)\le H_0\).

For the initial normalized remainder
\[
 v_0(X)=\frac{u(x_0+r_0X)-u(x_0)}{r_0H_0},
\]
the competitor zero in the affine excess and the Lipschitz estimate
give \(\mathbf c_\ell(v_0;j)\le CL_0/H_0\) as long as the
corresponding physical ball stays in \(B_{3/2}\).  On the remaining
balls, boundedness of \(u\) gives
\(\mathbf c_\ell(v_0;j)\le C/(r_0H_0 2^j)\).  Consequently
\(\mathscr C_{\ell,\beta}(v_0)\le C(1+L_0)/H_0\).
Since the initial normalized tilt is zero, the relative flux is estimated by
\(|\delta v_0|^{p-1}\).  The annuli inside the Lipschitz region give the
convergent series
\[
 C\left(\frac{L_0}{H_0}\right)^{p-1}
 \sum_{m\ge0}2^{-m[sp-(p-1)]},
\]
whereas the part beyond that region is bounded by
\[
 C(r_0H_0)^{1-p}r_0^{sp}
 =C H_0^{1-p}r_0^{sp-p+1}.
\]
These estimates apply to both separated components in
\(\mathcal T^{\rm pt}_{s,0}(v_0)\); changing the fixed inner and outer
radii only changes the structural constant.
The affine-excess component scales as \(H_0^{-1}\), whereas the
relative-flux component scales as \(H_0^{1-p}\).  Increasing the
structural factor in \(H_0\), and then
decreasing \(r_0\) if necessary, gives
\[
 \mathcal T_{\beta,G_*}(v_0,0)\le1.
\]
At every subsequent scale, the alternatives
\(|Q_k|\le Q_{\rm th}\) and \(|Q_k|>Q_{\rm th}\) exhaust the normalized
slopes.  Lemma~\ref{lem:affine-invariant-state-reproduction-main}
uses the corresponding finite-scale estimate without changing the common
amplitude or exterior state.  Proposition
\ref{prop:unified-variable-block-recursion} therefore constructs the
affine maps and gives \(H_k\le Cr_k^\beta\), with constants independent
of the center.
Corollary~\ref{cor:common-recursion-campanato} yields the affine
Campanato expansion at every point of $B_{1/2}$.  Its overlapping-ball
argument gives the gradient modulus when
\(|x_0-x_1|\le c r_0\).  For larger distances, the initial Lipschitz
bound absorbs the estimate:
\[
 |\nabla u(x_0)-\nabla u(x_1)|
 \le2L_0\le C r_0^{-\beta}|x_0-x_1|^\beta.
\]
This proves \eqref{eq:fixed-order-gradient-holder-estimate}, with
\(\alpha_0=\beta\).
\end{proof}

\begin{remark}[Uniformity with respect to the normalized slope]
\label{rem:logical-content-common-splice}
The affine-invariant excess absorbs every accumulated affine map and is
transported exactly by \eqref{eq:exact-affine-weight-cancellation}.
The bounded-slope estimate follows from the nonlinear affine-Campanato
Liouville theorem, whereas the large-slope estimate follows from
finite-scale convergence to the anisotropic stable equation.  After
fixing a common contraction, the amplitude update
\eqref{eq:unified-amplitude-recursion} and the state
\eqref{eq:common-tail-state} hold in both regimes.  The
exponent \(\beta\) is limited by the nonexplicit compactness number
\(\beta_*\) and by the strict tail margin \(sp-p+1\).  Its choice does
not require an additional annular renormalization constant.
\end{remark}

\subsection*{Acknowledgments}
This work was supported by the National Natural Science Foundation of China (No. 12471128).

\subsection*{Conflict of interest}
The author declares that there is no conflict of interest.

\subsection*{Data availability}
No datasets were generated or analyzed for this work.


\begin{thebibliography}{99}

\bibitem{AdamsTrace}
D.~R. Adams,
\emph{A trace inequality for generalized potentials},
Studia Math. \textbf{48} (1973), 99--105.

\bibitem{AlbericoCianchiPickSlavikova}
A.~Alberico, A.~Cianchi, L.~Pick and L.~Slav\'ikov\'a,
\emph{Fractional Orlicz--Sobolev embeddings},
J. Math. Pures Appl. (9) \textbf{149} (2021), 216--253.

\bibitem{BiswasTopp}
A.~Biswas and E.~Topp,
\emph{Lipschitz regularity of fractional $p$-Laplacian},
Ann. PDE \textbf{11} (2025), Paper No.~27, 43~pp.

\bibitem{BoegeleinDuzaarLiaoMolicaBisciServadei}
V.~B\"ogelein, F.~Duzaar, N.~Liao, G.~Molica Bisci and R.~Servadei,
\emph{Gradient regularity for $(s,p)$-harmonic functions},
Calc. Var. Partial Differential Equations \textbf{64} (2025),
Paper No.~253, 55~pp.

\bibitem{BoegeleinDuzaarLiaoMolicaBisciServadeiJFA}
V.~B\"ogelein, F.~Duzaar, N.~Liao, G.~Molica Bisci and R.~Servadei,
\emph{Regularity for the fractional $p$-Laplace equation},
J. Funct. Anal. \textbf{289} (2025), Paper No.~111078, 69~pp.

\bibitem{BoegeleinDuzaarLiaoMolicaBisciServadeiHigher}
V.~B\"ogelein, F.~Duzaar, N.~Liao, G.~Molica Bisci and R.~Servadei,
\emph{Higher regularity theory for $(s,p)$-harmonic functions},
Rend. Lincei Mat. Appl. \textbf{35} (2024), 311--321.

\bibitem{BrascoLindgren}
L.~Brasco and E.~Lindgren,
\emph{Higher Sobolev regularity for the fractional $p$-Laplace equation
in the superquadratic case},
Adv. Math. \textbf{304} (2017), 300--354.

\bibitem{BrascoLindgrenSchikorra}
L.~Brasco, E.~Lindgren and A.~Schikorra,
\emph{Higher H\"older regularity for the fractional $p$-Laplacian in the
superquadratic case},
Adv. Math. \textbf{338} (2018), 782--846.

\bibitem{DeFilippisMingione}
C.~De Filippis and G.~Mingione,
\emph{Gradient regularity in mixed local and nonlocal problems},
Math. Ann. \textbf{388} (2024), 261--328.

\bibitem{DiCastroKuusiPalatucci}
A.~Di Castro, T.~Kuusi and G.~Palatucci,
\emph{Local behavior of fractional $p$-minimizers},
Ann. Inst. H. Poincar\'e C Anal. Non Lin\'eaire \textbf{33} (2016),
1279--1299.

\bibitem{DiCastroKuusiPalatucciHarnack}
A.~Di Castro, T.~Kuusi and G.~Palatucci,
\emph{Nonlocal Harnack inequalities},
J. Funct. Anal. \textbf{267} (2014), 1807--1836.

\bibitem{DieningKimLeeNowak}
L.~Diening, K.~Kim, H.-S.~Lee and S.~Nowak,
\emph{Higher differentiability for the fractional $p$-Laplacian},
Math. Ann. \textbf{391} (2025), 5631--5693.

\bibitem{DieningKimLeeNowakGradient}
L.~Diening, K.~Kim, H.-S.~Lee and S.~Nowak,
\emph{Nonlinear nonlocal potential theory at the gradient level},
J. Eur. Math. Soc. (2025), published online first.

\bibitem{DieningNowak}
L.~Diening and S.~Nowak,
\emph{Calder\'on--Zygmund estimates for the fractional $p$-Laplacian},
Ann. PDE \textbf{11} (2025), Paper No.~6, 69~pp.

\bibitem{GarainLindgren}
P.~Garain and E.~Lindgren,
\emph{Higher H\"older regularity for the fractional $p$-Laplace equation
in the subquadratic case},
Math. Ann. \textbf{390} (2024), 5753--5792.

\bibitem{GJS}
\begingroup\emergencystretch=1em
D.~Giovagnoli, D.~Jesus and L.~Silvestre,
\emph{$C^{1+\alpha}$ regularity for fractional $p$-harmonic functions},
Preprint, arXiv:2509.26565, 2025.\par
\endgroup

\bibitem{IannizzottoMosconi}
A.~Iannizzotto and S.~Mosconi,
\emph{Fine boundary regularity for the singular fractional $p$-Laplacian},
J. Differential Equations \textbf{412} (2024), 322--379.

\bibitem{IannizzottoMosconiSquassina}
A.~Iannizzotto, S.~Mosconi and M.~Squassina,
\emph{Global H\"older regularity for the fractional $p$-Laplacian},
Rev. Mat. Iberoam. \textbf{32} (2016), 1353--1392.

\bibitem{JarohsWethSMP}
S.~Jarohs and T.~Weth,
\emph{On the strong maximum principle for nonlocal operators},
Math. Z. \textbf{293} (2019), 81--111.

\bibitem{KKL}
J.~Korvenp\"a\"a, T.~Kuusi and E.~Lindgren,
\emph{Equivalence of solutions to fractional $p$-Laplace type
equations}, J. Math. Pures Appl. (9) \textbf{132} (2019), 1--26.

\bibitem{KuusiMingioneSireMeasure}
T.~Kuusi, G.~Mingione and Y.~Sire,
\emph{Nonlocal equations with measure data},
Comm. Math. Phys. \textbf{337} (2015), 1317--1368.

\bibitem{KuusiMingioneSireSelf}
T.~Kuusi, G.~Mingione and Y.~Sire,
\emph{Nonlocal self-improving properties},
Anal. PDE \textbf{8} (2015), 57--114.

\bibitem{KuusiMingioneSireSurvey}
T.~Kuusi, G.~Mingione and Y.~Sire,
\emph{Regularity issues involving the fractional $p$-Laplacian},
in: \emph{Recent Developments in Nonlocal Theory},
De Gruyter, Berlin, 2018, 303--334.

\bibitem{KuusiNowakSire}
T.~Kuusi, S.~Nowak and Y.~Sire,
\emph{Gradient regularity and first-order potential estimates for a class
of nonlocal equations},
to appear in Amer. J. Math., arXiv:2212.01950.

\bibitem{LindgrenViscosity}
E.~Lindgren,
\emph{H\"older estimates for viscosity solutions of equations of
fractional $p$-Laplace type},
NoDEA Nonlinear Differential Equations Appl. \textbf{23} (2016),
Paper No.~55, 18~pp.

\bibitem{RosOtonSerraStable}
X.~Ros-Oton and J.~Serra,
\emph{Regularity theory for general stable operators},
J. Differential Equations \textbf{260} (2016), 8675--8715.

\bibitem{ZhangPartial}
C.~Zhang,
\emph{Towards gradient H\"older regularity for singular fractional
$p$-Laplace equations},
arXiv:2608.16243, 2026.

\end{thebibliography}
\end{document}